\newcommand{\mc}{\mathcal}
\newcommand{\mb}{\mathbb}
\newcommand{\R}{\mb R}
\newcommand{\N}{\mb N}
\newcommand{\Z}{\mb Z}
\newcommand{\T}{\mb T}
\newcommand{\eea}{\end{align}}
\renewcommand{\epsilon}{\varepsilon}
\renewcommand{\bar}{\overline}
\renewcommand{\tilde}{\widetilde}
\newcommand{\bo}{\boldsymbol}
\renewcommand{\phi}{\varphi}
\DeclareMathOperator{\Leb}{Leb}
\DeclareMathOperator{\diam}{diam}
\DeclareMathOperator{\Osc}{Osc}
\renewcommand\upsilon{\theta}
\newtheorem{theorem}{Theorem}[section]
\newtheorem{corollary}[theorem]{Corollary}
\newtheorem{claim}[theorem]{Claim}
\newtheorem{lemma}[theorem]{Lemma}
\newtheorem{proposition}[theorem]{Proposition}
\theoremstyle{definition}
\newtheorem{definition}[theorem]{Definition}
\theoremstyle{definition}
\newtheorem{assumption}{Assumption}[section]
\newtheorem{remark}[theorem]{Remark}
\newtheoremstyle{algorithm}
{4pt}
{4pt}
{}
{}
{}
{:}
{\newline}
{}
\newtheorem{algorithm}{Algorithm}
\newcommand{\balgorithm}{\begin{algorithm}\begin{framed}\ }
\newcommand{\ealgorithm}{\end{framed}\end{algorithm}}
\newcommand{\bd}{\begin{definition}}
\newcommand{\ed}{\end{definition}}
\newcommand{\bt}{\begin{theorem}}
\newcommand{\et}{\end{theorem}}
\newcommand{\bp}{\begin{proposition}}
\newcommand{\ep}{\end{proposition}}
\newcommand{\bc}{\begin{corollary}}
\newcommand{\ec}{\end{corollary}} 
\newcommand{\bl}{\begin{lemma}}
\newcommand{\el}{\end{lemma}}
\newcommand{\br}{\begin{remark}}
\newcommand{\er}{\end{remark}}
\DeclareMathOperator{\Id}{Id}
\title[Uniformly Expanding Coupled Maps]{Uniformly Expanding Coupled Maps: Self-Consistent Transfer Operators and Propagation of Chaos}
\author{Matteo Tanzi} 
\address{Matteo Tanzi: Laboratoire Probabilit\'e Statistique et Mod\'elisation, CNRS - Universit\'e Paris Cit\'e - Sorbonne Universit\'e}
\email{mtanzi@lpsm.paris}
\begin{document}

\maketitle
\begin{abstract}
In this paper we study systems of $N$ uniformly expanding coupled maps when $N$ is finite but large. We introduce self-consistent transfer operators that approximate the evolution of measures under the dynamics, and quantify this approximation explicitly with respect to $N$. Using this result, we prove that  uniformly expanding coupled maps satisfy propagation of chaos when $N\rightarrow \infty$, and  characterize the absolutely continuous invariant measures for the finite dimensional system. The main working assumption is that the  expansion is not too small and the strength of the interactions is not too large, although both can be of order one. In contrast with previous approaches, we do not require the coupled maps and the interactions to be identical. The  technical advances that allow us to describe the system are: the introduction of a framework to  study the evolution of conditional measures along  some \emph{non}-invariant foliations where the dependence of all estimates on the dimension is explicit; and the characterization of an invariant class of measures close to products that satisfy exponential concentration inequalities.  
\end{abstract}
%\tableofcontents
\section{Introduction}

The  equations for a system of $N$ identical globally coupled maps with pairwise additive interactions  have the form
\begin{equation}\label{Eq:GenModel}
x_i(t+1)=f(x_i(t))+\frac{1}{N}\sum_{j=1}^Nh(x_i(t),x_j(t))\quad\quad i=1,...,N
\end{equation}
where $x_i(t)$ denotes the state at time $t$ of the $i$-th map\footnote{See Section \ref{Sec:Setup} for a more precise formulation.}. Since the beginning of their study at the end of the '80s (e.g. \cite{kaneko1990clustering,pikovsky1994globally,kaneko1990globally,just1995globally,ershov1995mean}),  numerical evidence immediately suggested that, despite the apparent  simplicity of their equations,  these systems present complex behavior: from coherence  (e.g. synchronization and clustering),  to turbulence and chaos (e.g. attractors with absolutely continuous invariant measures). Their rigorous mathematical study, however, is notoriously hard with a majority  of the results in the literature coming from numerical experiments. Most of the rigorous analysis, especially in the chaotic regime,  is restricted to the study of the \emph{thermodynamic limit} obtained letting $N$ go to infinity. 

 In the thermodynamic limit, the state of the system is given by a probability measure describing the distribution of the states of the maps, and its time evolution is prescribed by a self-consistent transfer operator (STO) that acts nonlinearly on measures. Fixed points of STOs can be interpreted as equilibrium states for the thermodynamic limit and one is concerned with establishing their existence, uniqueness, stability, stability under perturbations of the equations, linear response,... For uniformly expanding coupled maps, which are the topic of this paper, these questions have been addressed in the case of small coupling by  extending various results from perturbation theory of linear transfer operator to nonlinear STOs  (\cite{balint2018synchronization, galatolo2022self,  keller2000ergodic,selley2016mean, selley2021linear}).  For the treatment of  other types of maps see e.g.  \cite{bahsoun2022globally,bardet2009stochastically,selley2022synchronization}.
 
The following question now arises: \emph{to which extent does the thermodynamic limit describe the finite dimensional system?} In this paper we give a quantitative answer to this question for uniformly expanding globally coupled maps, and we show that, under certain assumptions on the expansion of the maps and strength of the interactions, one can define a STO that approximates the evolution of the finite dimensional system and we provide quantitative estimates showing that the approximation error decays polynomially with $N$.  

The main feature of these globally coupled systems is their large number of degrees of freedom. On one hand, high-dimensional phenomena like concentration of measure help us approximate the system with a simplified \emph{mean-field}  version where the average of the interactions in \eqref{Eq:GenModel} can be substituted by an expectation; on the other hand we incur in the \emph{dimensionality curse} and for large $N$ it becomes unclear which measure should be used as reference,  and  which spaces of measures and distance between measures one should consider. These issues are exemplified by the fact that product measures, if not identical, tend to become singular with respect to each other when $N\rightarrow \infty$\footnote{This is also related to the well known fact that in infinite dimensions,  $N=\infty$, there is no natural reference measure on the phase space.}. Furthermore, the system converges to its mean-field approximation only when $N$ becomes large, thus we end up with a perturbation problem where the perturbation parameter is the dimension of the system. Existing perturbation results for uniformly expanding maps cannot deal with this scenario, and new frameworks that keep explicit account of the dimension are needed. To the best of our knowledge, such frameworks are lacking and what we present in this paper is the first instance where this problem is addressed in the context of coupled chaotic maps.

Our approach introduces a class of measures close to products (i.e. close to  \emph{factorized} measures) that are kept invariant by uniformly expanding maps. The main feature of these  measures is that dependencies between different coordinates are of order $N^{-1}$, and that they satisfy exponential concentration inequalities analogous to those for product measures. To prove invariance of such a class we are going to study the evolution of  conditional measures with respect to  \emph{non}-invariant low dimensional foliations whose leaves are obtained by fixing all coordinates but one. This requires geometric control on the evolution of the foliations, with estimates explicit on the dimension $N$.

As a byproduct of our main result, we show that the unique absolutely continuous invariant probability measure of the coupled system is close, in some sense that will be made precise, to an attracting fixed point of the STO. Furthermore, we prove that the system of coupled maps exhibits propagation of chaos, i.e.  pushing  forward a product measure under the coupled dynamics, the marginals over (fixed) finitely many coordinates converge to a product measure  in the limit for $N\rightarrow \infty$. 
 
Another achievement of our approach is that the results hold for systems lacking symmetry. Few exceptions aside (\cite{galatolo2022self,selley2022synchronization}), full permutation symmetry has been assumed in the study of coupled maps, and more generally in the study of interacting particle systems. Generalizations to systems lacking symmetry are especially important having applications to biology and artificial systems in mind, where the components making up a system  are rarely identical. 

The rest of the paper is organized as follows. In Section \ref{Sec:Setup} we present the system of coupled maps we are going to study, together with a definition for the self-consistent  transfer operator. In Section \ref{Sec:Results} we define some relevant spaces of quasi-product measures, and state the main results. In Section \ref{Sec:EVMEASCOUP} we proceed with the study of evolution of quasi-product measures under the dynamics of uniformly expanding coupled maps and give sufficient conditions for invariance of some classes of quasi-product measures. In Section \ref{Sec:Quasi-Product} we provide a result on concentration of  quasi-product measures. In Section \ref{Sec:ProofMainResBig} we prove the main results.  Appendix \ref{Sec:AppCones} gathers some general results on cones and the projective Hilbert metric, while Appendix \ref{Sec:AppDHinv} and \ref{App:PushForwardReg} contain some of the more technical and computationally demanding proofs for the statements in section Section \ref{Sec:EVMEASCOUP}.
 
 \smallskip
{\bf Acknowledgments:} The author was supported by the MSCA project ``Ergodic Theory of Complex Systems" p.n. 843880.

\section{Setup}\label{Sec:Setup}
Let  $\T=\R/\Z$ be the 1D torus and for every $i,j\in\N$ with $i<j$  define the set of indices $[i,j]:=\{i, i+1,...,j\}$.

 We are going to consider a system of $N\in \N$ coupled maps where each map is described by a variable $x_i\in \T$. For every $i\in[1,N]$, let $f_i:\T\rightarrow \T$ be the  $i$-th \emph{uncoupled} map, and for every $i,j\in[1,N]$ let $h_{ij}:\T\times \T \rightarrow \R$ prescribe the shape of the interactions between $i$-th and $j$-th maps.  The regularity of these functions will be prescribed later. Given $\bar f_i:\R\rightarrow \R$ and $\bar h_{ij}:\R\times\R\rightarrow \R$  lifts of $f_i$ and $h_{ij}$, define  $\bo F:=(F_1,...,F_N):\T^N\rightarrow \T^N$ as
 \begin{equation}\label{eq:defF}
 F_i(x_1,...,x_N):= \bar f_i(x_i)+\frac{1}{N}\sum_{j=i}^N\bar h_{ij}(x_i,x_j)\mod 1
 \end{equation}
 which is the map for the evolution equations of the system of globally coupled maps.
 Later on we are going to drop the bar from the notation as there is no risk of confusion.

\subsection{Regularity assumptions on $\bo F$}  In our results we study maps as in \eqref{eq:defF} satisfying the following assumption.
 \begin{assumption}\label{Ass:CondF} Given $(\kappa,K,E)\in (\R^+_0)^3$ with $\kappa-E>1$,  $f_i,\,h_{ij}$ are in $C^3$ and satisfy
\begin{equation}\label{Eq:BOundh}
\|h_{ij}\|_{C^3} \le E
\end{equation}
\begin{equation}\label{Eq:CondDHEntries}
|\partial_i^2f_i|_\infty:=\sup_{x\in\T}|\partial_i^2f_i(x)|,\,|\partial_i^3f_i|_\infty\le K,\quad\quad \inf_{x\in \T} |\partial_i f_i(x)|>\kappa
 \end{equation}
 for all $j,i\in[1,N]$ and $j\neq i$.
\end{assumption}
The above assumption implies that $\bo F$ is a local diffeomorphism\footnote{By Gershgorin circle theorem all the eigenvalues of the Jacobian matrix $D\bo F$ have modulus bounded away from zero. }. Notice that condition  \eqref{Eq:BOundh} implies that the influence of the $j$-th coordinate  on the evolution of the $i$-th coordinate is of order at most $N^{-1}$. Furthermore, given $\bo F$ as in \eqref{eq:defF} satisfying Assumption  \ref{Ass:CondF},  for every fixed $i\in[1,N]$ and $\hat{\bo x}_i\in \T^{N-1}$, $F_i(\cdot;\,\hat{\bo x}_i):\T\rightarrow \T$\footnote{A ``$\cdot$" in place of $x_i$, emphasizes that $x_i$ is thought as the free variable for this function, while $\hat{\bo x}_i$ is considered a fixed parameter.} is a $C^3$ uniformly expanding map with expansion lower bounded by $\kappa-E$ and distortion upper bounded by
\[
\mc D:=\frac{K+E}{\kappa^2}.
\]

\subsection{Self-consistent transfer operator} \label{Sec:HeuristTransferOperator}For $\bo x=(x_1,...,x_N)\in \T^N$ and $i\in[1,N]$ we adopt the notation  \[\hat {\bo x}_i=(x_1,...,x_{i-1},x_i,...,x_N)\in \T^{N-1}\mbox{ and }\bo x=(x_i;\,\hat{\bo x}_i).\] 
We will denote by $\pi_i:\T^N\rightarrow \T$ the projection on the $i$-th coordinate
\[
\pi_i(x_i;\,\hat{\bo x}_i)=x_i,
\] and by $\hat{\bo\pi}_i:\T^N\rightarrow \T^{N-1}$ the projection on all coordinates but the $i$-th one,
\[
\hat{\bo \pi}_i(x_i;\,\hat{\bo x}_i)=\hat{\bo x}_i.
\]  Let also $\Pi_i:=\pi_{i*}$ and $\hat{\bo \Pi}_i:=\hat{\bo\pi}_{i*} $ denote the push-forwards of these projections that when applied to a measure return, respectively, the marginal on the $i$-th coordinate and the marginal on all but the $i$-th coordinate. 

\begin{definition}[Mean-Field Approximation]
Given $\bo F:\T^N\rightarrow \R$ and $\mu\in \mc M_1(\T^N)$, for every $i\in [1,N]$ define $F_{\mu,i}:\T\rightarrow \T$ 
\[
F_{\mu,i}(x_i):=\int_{\T^{N-1}}  F_{i}(x_i;\,\hat{\bo x}_i)\,\, d\hat{\bo \Pi}_i\mu(\hat{\bo x}_i)\mod 1
\]
and  the product map $\bo F_\mu:=(F_{\mu,1},...,F_{\mu,N})$ on $\T^N$, which we calle the \emph{mean-field approximation} of $\bo F$ with respect to $\mu$.
\end{definition}
The rationale for introducing the above mean-field maps is the following: Consider $\mu=\mu_1\otimes...\otimes \mu_N$ a product measure on $\T^N$ with $\mu_1,...,\mu_N\in \mc M_1(\T)$; then if $\bo F$ satisfies Assumption \ref{Ass:CondF}, the dependence of $F_i$ on each coordinate $j\neq i$ is of order $N^{-1}$ and classical results on concentration of measure\footnote{E.g. McDirmind's inequality \cite{mcdiarmid1998concentration}.} imply that the map $F_i(\cdot;\,\hat{\bo x}_i):\T\rightarrow \T$ is ``close" to $F_{\mu,i}$ for $\hat{\bo x}_i$ in a subset of $\T^{N-1}$ whose complement has  $\hat{\bo \Pi}_i\mu$-measure exponentially small in the system's dimension $N$. This suggests the heuristics that there is $\mc G\subset \T^N$ with $\mu(\mc G)\approx 1$ where $\bo F$ can be approximated by the product map $\bo F_\mu$. In practical terms, this means that for $N$ finite, but very large, if we were to investigate the evolution under $\bo F$ with finite precision and drew an initial condition at random with respect to the measure $\mu$, with high probability, we would expect  the evolution of the $i$-th coordinate to be indistinguishable from the map $F_{\mu,i}$. In turn, the measure $\bo F_*\mu$ is expected to be approximately $(\bo F_{\mu})_*\mu$. 

Guided by these heuristic arguments, we give the following definition.
\begin{definition}[Self-Consistent Transfer Operator] Given $\bo F:\T^N\rightarrow \T^N$, let $\bo{\mc F}:\mc M_1(\T^N)\rightarrow \mc M_1(\T^N)$ be defined as
\[
\bo{\mc F}\,\mu:= (\bo F_\mu)_*\mu.
\]
\end{definition} 

In general, $\bo{\mc F}$ is nonlinear, as the linear operator $(\bo F_\mu)_*$ depends on the measure it is applied to. $\bo{\mc F}$ has been given the name of \emph{self-consistent transfer operator} (STO). Notice that if the uncoupled maps $f_i$ and the interaction functions $h_{ij}$ are all identical and $\bo{ \mc F}$ is restricted to product measures having identical factors, i.e. the system has full permutation symmetry, then we recover the standard definition of self-consistent operator previously appeared in the literature, therefore this definition is a generalization of the STO to cases without full permutation symmetry\footnote{Another subtle difference between the definition of STO given here and the one given in the literature, is that here $\bo {\mc F}$ depends on $N$.}.  

 The goal of this paper is to study to which extent $\bo F_*$ can be approximated by $\bo{\mc F}$.

\section{Results}\label{Sec:Results}
Roughly speaking, we find sufficient conditions on $\bo F$  so that $\bo F_*$ keeps invariant a class of measures close to product that satisfy  concentration inequalities and whose evolution is well approximated by the self-consistent transfer operator $\bo{\mc F}^t$, in a sense that will be made precise below.
\subsection{Measures with Lipschitz disintegrations along coordinates}
The first task to make the heuristic picture in Section \ref{Sec:HeuristTransferOperator} rigorous, is to characterize the measures that are close to a product. We are going to do so in terms of their disintegrations\footnote{For a general treatment of disintegration of measures with respect to measurable foliations the reader can consult for example \cite{simmons2012conditional}.} with respect to a natural class of foliations:
\begin{definition}[Disintegrations along Coordinates]
 Given $\mu\in\mc M(\T^N)$, for every $i\in[1,N]$, we denote by  $\{\mu_{\hat{\bo x}_i}\}_{\hat {\bo x}_i\in \T^{N-1}}$  the disintegration with respect to the measurable foliation
\[
\{\T_{\hat {\bo x}_i}\}_{\hat {\bo x}_i\in \T^{N-1}}:=\left\{(x_1,...,x_{i-1})\times \T\times(x_{i+1},...,x_N):\,\hat {\bo x}_i\in \T^{N-1}\right\}
\] 
where $\mu_{\hat{\bo x}_i}$ is the conditional probability measure of $\mu$ on $\T_{\hat{\bo x}_i}$\footnote{We interchangeably see $\mu_{\hat{\bo x}_i}$ as a measure on $\T_{\hat{\bo x}_i}$ and on $\T$.}.  We are going to refer to $\{\T_{\hat {\bo x}_i}\}_{\hat {\bo x}_i\in \T^{N-1}}$ as the \emph{foliation along the $i$-th coordinate}, to $\T_{\hat{\bo x}_i}$ as the \emph{leaf} or \emph{fiber over $\hat{\bo x}_i$}, and to $\{\mu_{\hat{\bo x}_i}\}_{\hat {\bo x}_i\in \T^{N-1}}$ as the disintegration of $\mu$ \emph{along the $i$-th coordinate}.
\end{definition}

In the following, we restrict to the case where $\mu\in \mc M(\T^{N})$ is absolutely continuous with respect to Lebesgue and has density $\rho:\T^N\rightarrow \R^+$. Then $\mu_{\hat{\bo x}_i}$ can be chosen to be the probability measure on $\T$ having density
\begin{equation}\label{Eq:ConditionalMeasure}
\rho_{\hat{\bo x}_i}(x):= \frac{\rho(x;\,\hat{\bo x}_i)}{\int_\T \rho(s;\,\hat{\bo x}_i)ds}\, .
\end{equation}

Notice that $\mu\in \mc M^{\otimes}(\T^N)$, i.e. is a product or equivalently a factorized measure, if and only if all disintegrations of $\mu$ along coordinates can be chosen to be constant. This suggests that one possibility to control how far a measure is from being a product we should control, for every $i\in[1,N]$, how $\mu_{\hat{\bo x}_i}$ varies with $\hat{\bo x}_i$. This still leaves a lot of freedom on the metric space to adopt and on the regularity to impose on $\hat{\bo x}_i\mapsto \mu_{\hat{\bo x}_i}$. We are going to consider:
  $\mu_{\hat{\bo x}_i}$ with density $\rho_{\hat{\bo x}_i}$ in $(\mc V_a,\theta_a)$, the convex cone of twice continuously differentiable positive function with bounded $\log$-Lipschitz constant:
\begin{equation}\label{Eq:ConeFunclogLip}
\mc V_a:=\left\{\psi\in C^2(\T,\R^+):\, \left|\frac d{dx}\log \psi(x)\right|<a\right\};
\end{equation}
and  $\hat{\bo x}_i\mapsto \mu_{\hat{\bo x}_i}$ to be Lipschitz.
More precisely
\begin{definition}[$\mc M_{a,b,L}$-spaces]\label{Def:Mabl}
Given $a\ge 0$, $b\ge a$, $L\ge 0$, $i\in[1,N]$ and $j\neq i$ define $\mc M_{a,b,L}^{(i,j)}\subset \mc M(\T^N)$ the space of measures $\mu$ whose disintegration along the $i$-th coordinate satisfies
\begin{itemize}
\item[i)] $\rho_{\hat{\bo x}_i}\in \mc V_a$ for every $\hat{\bo x}_i\in \T^{N-1}$,
\item[ii)] for every $\hat{\bo x}_i,\,\hat{\bo x}_i'\in \T^{N-1}$ differing only for their $k$-th coordinates $x_k,\,x_k'\in \T$
\[
\theta_b\left(\rho_{\hat{\bo x}_i},\,\rho_{\hat{\bo x}_i'}\right)\le L\;|x_k-x_k'|.
\]\noindent\end{itemize}
Define also
\[
\mc M_{a,b,L}^{(i)}:=\bigcap_{j\neq i}\mc M_{a,b,L}^{(i,j)} \quad\quad\mbox{and}\quad\quad\mc M_{a,b,L}:=\bigcap_{ i\in [1,N]}\mc M_{a,b,L}^{(i)}.
\]
\end{definition}
 \begin{remark}
Notice that if $b>a$ in the definition above,   the conditional measures on the leaves belong to $\mc V_a$, but when prescribing the Lipschitz constant $L$, we measure their distance with respect to the  ``weaker" Hilbert metric on $\mc V_b\supset\mc V_a$. This will play a crucial role in our arguments to control the Lipschitz constant under application of $\bo F_*$.
\end{remark}
It is well known that there is an Hilbert projective metric $\theta_a:\mc V_a\times \mc V_a\rightarrow \R^+$ intrinsically defined on the convex cone $\mc V_a$ and that linear transformations are contractions  with respect to it (see Theorem \ref{Thm:ContCones} in the Appendix).  For this reason, cones of functions have been successfully used to study  transfer operators of uniformly hyperbolic maps and, in particular, of uniformly expanding maps \cite{liverani1995decay}.  Additional information on cones and the Hilbert  metric can be found in Appendix \ref{Sec:AppCones}.  

One advantage of using cones is that linear transfer operators contract the Hilbert metric  in one step.  This is in contrast with the contraction observed on (suitably) normed linear spaces where operators have a spectral gap for which, in general,  multiple iterates are needed before observing shrinking of the norms. This fact is often exploited when composing different maps that keep the same cone invariant, for example in the study of random and sequential dynamical systems \cite{nicol2018central}. Another advantage is that being a projective metric, the Hilbert metric only distinguishes directions. This allows to compare conditional measures without worrying about normalization factors that  add extra terms to already involved computations (e.g.  the normalizing factor in \eqref{Eq:ConditionalMeasure} can be omitted when measuring the Hilbert distance between  conditional densities, but not when comparing densities with respect to norms).

In Section \ref{Sec:Quasi-Product} -- whose content can be read independently from the rest of the paper -- we are going to provide concentration estimates for measures in $\mc M_{a,b,cN^{-1}}$ analogous to estimates classically obtained for product measures. In virtue of this fact, measures having Lipschitz disintegrations along coordinates with Lipschitz constant scaling as $N^{-1}$ will be referred to as \emph{quasi-product} measures.
 These estimates will be crucial to extend the heuristic argument in Section \ref{Sec:HeuristTransferOperator} from product measures, to measures in the image of  $\bo F_*$ that, due to the interactions, are bound to have dependencies among the coordinates.

\subsection{Main results}

Loosely speaking, the following theorem claims that one can find $E_0$ sufficiently small  and $\kappa_0$ sufficiently large -- independent of $N$ -- such that for $N$ sufficiently large, a map $\bo F$ satisfying Assumption \ref{Ass:CondH} with $E\le E_0$ and $\kappa\ge\kappa_0$ leaves a space of quasi-product measures  invariant and the evolution of the measures in this set can be well approximated by the self-consistent transfer operator.

To control the evolution of the measures under $\bo F$, they will have to  satisfy an additional uniform bound on their second derivatives.
\begin{definition}[$\mc C_\alpha$] For $\alpha\ge 0$
\[
\mc C^2_\alpha(\T^N):=\left\{\mu\in \mc M_1(\T^N):\quad \rho:=\frac{d\mu}{d\Leb_{\T^N}}\in C^{2}(\T^N,\, \R^+),\,\left|\frac{\partial_i\partial_j\rho}{\rho}\right|_\infty\le \alpha\;\;\forall i,j\right\}.
\]
\end{definition}

Now we are ready to state the main result.
\begin{theorem}\label{Thm:Main}
 Fixing  $K>0$, there are $E_0> 0$ and $\kappa_0>1$ such that for every $E\le E_0$ and $\kappa\ge \kappa_0$ if  Assumption \ref{Ass:CondH} holds with datum $(\kappa, K, E)$,  then
\begin{itemize}
\item[1.] there are $a_0\ge 0$,  $b_0>a_0$, $\alpha_0\ge 0$, and $c>0$ such that for  any $N$  sufficiently large
\[
\bo F_*(\mc M_{a_0,b_0,cN^{-1}}\cap \mc C^2_{\alpha_0})\subset \mc M_{a_0,b_0,cN^{-1}}\cap \mc C^2_{\alpha_0}
\]

\smallskip
\item[2.] For any $\gamma\in[0,\frac12)$ there is $C_\gamma>0$ independent of $N$ such that 
\[
\sup_{i\in [1,N]}\theta_{b_0}\left( \Pi_i\bo F_*\mu, \; \Pi_i\bo{\mc F}\mu \right) \le C_\gamma N^{-\gamma}
\]
for every $\mu\in \mc M_{a_0,b_0,cN^{-1}}\cap \mc C^2_{\alpha_0} $.
\end{itemize}
\end{theorem}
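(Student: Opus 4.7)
The statement splits into two parts of rather different flavour, and I would attack them in order. For item~1 (invariance), I would fix $i\in[1,N]$ and $\hat{\bo y}_i\in\T^{N-1}$ and study the conditional density $\rho^{\bo F_*\mu}_{\hat{\bo y}_i}$ of $\bo F_*\mu$ on the leaf $\T_{\hat{\bo y}_i}$. Since $\bo F$ is a local diffeomorphism whose Jacobian has diagonal entries bounded below by $\kappa-E>1$ and off-diagonal entries of order $E/N$, an implicit function argument shows that $\bo F^{-1}(\T_{\hat{\bo y}_i})$ is a $C^2$ curve which is a graph over the $i$-th coordinate of a map at $C^2$-distance $O(E/N)$ from the constant curve $\hat{\bo y}_i$. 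Pulling $\rho$ back along this curve and dividing by the appropriate Jacobian realises $\rho^{\bo F_*\mu}_{\hat{\bo y}_i}$, up to an $O(N^{-1})$ perturbation, as the image of the source conditional $\rho_{\hat{\bo x}_i}$ under a $1$-D transfer operator $\mc L_{\hat{\bo y}_i}$ associated with a $C^2$ uniformly expanding map with expansion $\ge\kappa-E$ and distortion $\le\mc D$. Item~(i) of Definition~\ref{Def:Mabl} then follows from strict invariance of $\mc V_{a_0}$ under $\mc L_{\hat{\bo y}_i}$ for $a_0$ large enough, and the Birkhoff contraction of the Hilbert metric (Theorem~\ref{Thm:ContCones}) provides a contraction constant $\lambda<1$ in $\theta_{b_0}$ when $\kappa_0$ is large. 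For item~(ii), shifting one coordinate $y_k$ of $\hat{\bo y}_i$ by $\delta$ moves the source fiber in the $k$-th direction by $\delta/(\kappa-E)+O(N^{-1}\delta)$ and perturbs the curve, its Jacobian and the map $\mc L_{\hat{\bo y}_i}$ itself by $O(N^{-1}\delta)$ (because $\|h_{ij}\|_{C^3}\le E$). Combining the Lipschitz hypothesis on $\mu$, the contraction of $\mc L_{\hat{\bo y}_i}$ in $\theta_{b_0}$, and the residual $O(N^{-1})$ perturbation yields a recursion $L\mapsto (\lambda/(\kappa-E))\,L+c'N^{-1}$, whose attracting fixed point $\le cN^{-1}$ gives the invariant Lipschitz constant for suitable $c$. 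The $\mc C^2_{\alpha_0}$ bound is propagated by direct estimates on second derivatives of $\rho\circ\bo F^{-1}$ using Assumption~\ref{Ass:CondF} and the expansion lower bound.

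For item~2 (STO approximation), the starting point is the clean identity
\[
\Pi_i\bo F_*\mu-\Pi_i\bo{\mc F}\mu=\int_{\T^{N-1}}\bigl[(F_i(\cdot;\hat{\bo x}_i))_*-(F_{\mu,i})_*\bigr]\mu_{\hat{\bo x}_i}\,d\hat{\bo\Pi}_i\mu(\hat{\bo x}_i),
\]
which follows from disintegration along the $i$-th coordinate together with the product structure of $\bo F_\mu$. By Assumption~\ref{Ass:CondF}, $\hat{\bo x}_i\mapsto F_i(x_i;\hat{\bo x}_i)$ is $(E/N)$-Lipschitz in each of its $N-1$ arguments, and $F_{\mu,i}(x_i)$ is precisely its expectation under $\hat{\bo\Pi}_i\mu$. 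Part~1 gives $\mu\in\mc M_{a_0,b_0,cN^{-1}}$, so the concentration inequality for quasi-product measures from Section~\ref{Sec:Quasi-Product} applies: for any $\gamma\in[0,\tfrac12)$ there exists $\mc G_\gamma\subset\T^{N-1}$ with $\hat{\bo\Pi}_i\mu(\mc G_\gamma^c)\le e^{-c_\gamma N^{1-2\gamma}}$ on which $\|F_i(\cdot;\hat{\bo x}_i)-F_{\mu,i}\|_{C^2}\le C N^{-\gamma}$, the $C^2$ bound coming from applying the same concentration to first and second derivatives (which remain $O(N^{-1})$-Lipschitz sums thanks to $h_{ij}\in C^3$). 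A standard perturbation estimate for transfer operators of $C^2$ uniformly expanding maps, together with the cone invariance of $\mc V_{b_0}$ ensured by Part~1, upgrades this $C^2$-proximity of the maps to a $\theta_{b_0}$-bound of order $C'N^{-\gamma}$ on the pushed-forward densities for $\hat{\bo x}_i\in\mc G_\gamma$; the exponentially small contribution from $\mc G_\gamma^c$ is subdominant and is absorbed into the final constant $C_\gamma$, giving the claim after taking the supremum over $i$.

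The main obstacle is the propagation of the Lipschitz constant $cN^{-1}$ in Part~1. Classical transfer-operator machinery operates on a fixed function cone, but here one must simultaneously track the geometry of a genuinely \emph{non}-invariant foliation and the densities living on its leaves, keeping every estimate uniform in $N$. The central quantitative issue is that the Birkhoff contraction of the Hilbert metric must strictly dominate the $O(N^{-1})$ perturbations produced by the off-diagonal entries of $D\bo F$ over the course of a single iteration; this is precisely the mechanism that forces the requirements $\kappa\ge\kappa_0$ and $E\le E_0$ with $\kappa_0^{-1}$ and $E_0$ small enough.
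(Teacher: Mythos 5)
Your identity for Part~2,
\[
\Pi_i\bo F_*\mu-\Pi_i\bo{\mc F}\mu=\int_{\T^{N-1}}\bigl[(F_i(\cdot;\hat{\bo x}_i))_*-(F_{\mu,i})_*\bigr]\mu_{\hat{\bo x}_i}\,d\hat{\bo\Pi}_i\mu(\hat{\bo x}_i),
\]
is correct (it follows from $\Pi_i\bo F_*\mu=(F_i)_*\mu$ and the disintegration of $\mu$ along the $i$-th coordinate, together with $\Pi_i\bo{\mc F}\mu=(F_{\mu,i})_*\Pi_i\mu$), and the chain concentration $\to$ $C^2$-closeness on a good set $\to$ Proposition~\ref{Prop:DistanceOpHilbMetric} $\to$ convex-combination bound is exactly what is needed. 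This is in fact cleaner than the route taken in the paper, which compares $g_{\hat{\bo x}_i*}[\bo\Phi_{i*}^{-1}\mu]_{\hat{\bo x}_i}$ against $(F_{\mu,i})_*[\bo\Phi_{i*}^{-1}\mu]_{\hat{\bo x}_i}$ with respect to the $\hat{\bo\Pi}_i\nu$-weights; the paper pays that extra cost in order to reuse the $\bo\Phi_i$-machinery from Part~1, but your direct disintegration over the straight coordinate foliation works just as well.

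For Part~1, however, there is a genuine gap. You claim that $\rho^{\bo F_*\mu}_{\hat{\bo y}_i}$ is, up to an $O(N^{-1})$ perturbation, the image of a \emph{single} source conditional under a 1-D transfer operator $\mc L_{\hat{\bo y}_i}$. This is not what happens: the preimage $\bo F^{-1}(\T_{\hat{\bo y}_i})$ is a \emph{union} of roughly $(\kappa-E)^{N-1}$ disjoint circles (cf.\ Proposition~\ref{Lem:Preimagefoliation}), and the target conditional is the $\hat{\bo\Pi}_i\bo F_*\mu$-weighted average over all of them of the corresponding pushforwards. This averaging over leaves is not an $O(N^{-1})$ error but a structural step (the ``evolution of the fibers'' via $(\Id_\T;\,\hat{\bo G}_i)_*$, i.e.\ Proposition~\ref{Prop:Evetamarginal}): it contributes a multiplicative factor $(1+a_0)\kappa^{-1}+\mc D K_{L}+O(N^{-1})$ to the Lipschitz constant that depends on the distortion of $\hat{\bo G}_i$ and does not vanish with $N$. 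Moreover, your recursion $L\mapsto (\lambda/(\kappa-E))L+c'N^{-1}$ also omits the multiplicative loss $\mc L$ (possibly $>1$) produced by the straightening change of charts $\bo\Phi_i$ (Lemma~\ref{Lem:CompByPhi} / Proposition~\ref{Prop:RegPhieta}). The actual invariance condition needed is the three-factor inequality $[(1+a_0)\kappa^{-1}+\mc D]\,\Lambda\,\mc L<1$ of Proposition~\ref{Prop:LipschitzReg2}; a single Birkhoff contraction $\lambda$ times $(\kappa-E)^{-1}$ is not enough to reproduce it, and without it the fixed-point argument for the Lipschitz constant does not close.
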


\begin{remark} The values of $E_0$ and $\kappa_0$ depend on $K$.  Keeping track of explicit estimates leading to $E_0$,  $\kappa_0$, although possible, is a daunting task and we are going to avoid it. Nonetheless, the size of $E$ is expected to be of the same order of $\kappa$ and, assuming that $K$ is constant,  larger values of $\kappa$ allow one to pick larger values of $E$.      
\end{remark}

Theorem \ref{Thm:Main} implies propagation of chaos with quantitative estimates.
\begin{corollary}\label{Cor:PropagationofChaos}
Let the assumptions of Theorem \ref{Thm:Main} hold. Then, for any fixed $k\in \N$ and $t>0$, and   measures $\{\mu_i\}_{i=1}^N\subset \mc V_{a_0}\cap\mc C^2_{\alpha_0}(\T)$, letting $\mu:={\mu_1\otimes...\otimes\mu_N}$  
\[
\| \bo\Pi_{[1,k]}\bo F^t_*\mu- \Pi_1\bo{\mc F}^t\mu\otimes...\otimes\Pi_k\bo{\mc F}^t\mu\|_{TV}=O(N^{-\gamma})\quad\quad\forall\gamma\in[0,1/2)
\]
where $\bo\Pi_{[1,k]}$ denotes the projection to the marginal on the first $k$ coordinates\footnote{$\bo\pi_{[1,k]}:\T^N\rightarrow \T^k$ with $\pi_{[1,k]}(x_1,...,x_N)=(x_1,...,x_k)$ and $\bo \Pi_{[1,k]}=(\bo\pi_{[1,k]})_*$.}, and $\|\cdot\|_{TV}$ denotes the total variation norm.
\end{corollary}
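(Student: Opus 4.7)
Since each factor $\mu_i\in\mc V_{a_0}\cap\mc C^2_{\alpha_0}(\T)$, the product $\mu=\mu_1\otimes\cdots\otimes\mu_N$ has disintegrations along every coordinate that are constant in the remaining ones, so $\mu\in\mc M_{a_0,b_0,0}\cap\mc C^2_{\alpha_0}\subset \mc M_{a_0,b_0,cN^{-1}}\cap \mc C^2_{\alpha_0}$. By Theorem \ref{Thm:Main}(1) the orbit $\mu_s:=\bo F^s_*\mu$ stays in this invariant class for every $s\le t$. Moreover $\nu_s:=\bo{\mc F}^s\mu$ is a product measure at every step: if $\nu$ is product then $F_{\nu,i}$ depends on $x_i$ alone, so $\bo F_\nu$ is a product map and pushes products to products. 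Consequently $\bo\Pi_{[1,k]}\nu_t=\Pi_1\nu_t\otimes\cdots\otimes\Pi_k\nu_t$, which is precisely the approximant appearing on the right-hand side of the statement.

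\textbf{Telescoping the STO approximation.} Setting $A_s:=\bo{\mc F}^{t-s}\mu_s$ one has $A_0=\nu_t$, $A_t=\mu_t$, and
\[
\mu_t-\nu_t=\sum_{s=0}^{t-1}\bigl(\bo{\mc F}^{t-s-1}\bo F_*\mu_s-\bo{\mc F}^{t-s-1}\bo{\mc F}\mu_s\bigr).
\]
Theorem \ref{Thm:Main}(2) bounds the innermost discrepancy $\bo F_*\mu_s-\bo{\mc F}\mu_s$ on every one-dimensional marginal by $C_\gamma N^{-\gamma}$ in $\theta_{b_0}$. To iterate $\bo{\mc F}^{t-s-1}$ without losing this gain, I exploit two structural facts: (a) $\bo{\mc F}\eta$ depends on $\eta$ only through its one-dimensional marginals, because $\int h_{ij}(x,y)\,d\hat{\bo\Pi}_i\eta=\int h_{ij}(x,y)\,d\Pi_j\eta$; and (b) the linear operators $(F_{\nu,i})_*$ preserve and contract $\mc V_{b_0}$ in the Hilbert metric (Theorem \ref{Thm:ContCones}). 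Combining (a), (b) and the $C^3$ bound on $h_{ij}$ gives a one-step Lipschitz estimate for $\bo{\mc F}$, with $N$-independent constant, in the quantity $\sup_i\theta_{b_0}(\Pi_i\cdot,\Pi_i\cdot)$. Summing over $s$ yields
\[
\sup_{i\in[1,N]}\theta_{b_0}\bigl(\Pi_i\mu_t,\Pi_i\nu_t\bigr)\le C_{t,\gamma}\,N^{-\gamma}.
\]

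\textbf{From individual marginals to the $k$-marginal in total variation.} A standard comparison between $\theta_{b_0}$ and the $L^1$-norm on $\mc V_{b_0}$ promotes the above to $\sup_i\|\Pi_i\mu_t-\Pi_i\nu_t\|_{TV}\le C'_{t,\gamma}N^{-\gamma}$. Tensorisation then gives
\[
\Bigl\|\bigotimes_{i=1}^k\Pi_i\mu_t-\bigotimes_{i=1}^k\Pi_i\nu_t\Bigr\|_{TV}\le \sum_{i=1}^k\|\Pi_i\mu_t-\Pi_i\nu_t\|_{TV}=O(k\,N^{-\gamma}).
\]
It remains to compare $\bo\Pi_{[1,k]}\mu_t$ with $\bigotimes_{i=1}^k\Pi_i\mu_t$. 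For this I invoke the quasi-product concentration of Section \ref{Sec:Quasi-Product}: for $\mu_t\in\mc M_{a_0,b_0,cN^{-1}}$, telescoping the chain-rule factorisation of the $k$-marginal against the product of its one-dimensional marginals, using that each conditional is $O(jN^{-1})$-close in TV to the corresponding 1-marginal, produces an error of order $k^2N^{-1}$, which for fixed $k$ is $o(N^{-\gamma})$. The triangle inequality concludes.

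\textbf{Main obstacle.} The delicate step is the iteration in Step 2: a naïve triangle inequality through the nonlinear STO would a priori amplify errors by $N$-dependent factors and destroy the rate. What saves the argument is that $\bo{\mc F}$ depends on its argument only through the one-dimensional marginals combined with the uniform cone contraction of the linearised dynamics; these two ingredients allow the Lipschitz estimate for $\bo{\mc F}$ to close on the very same quantity being propagated. The ceiling $\gamma<1/2$ is inherited directly from Theorem \ref{Thm:Main}(2) and originates in the McDiarmid-type concentration argument sketched in Section \ref{Sec:HeuristTransferOperator}.
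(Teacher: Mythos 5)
Your proof is correct and follows essentially the same route as the paper. You use the same key ingredients: the observation that $\bo{\mc F}$ depends on its argument only through the one-dimensional marginals (used implicitly in the paper's Lipschitz estimate around equation \eqref{Eq:TRiangIneq}), the cone-contraction argument of Proposition \ref{Prop:DistanceOpHilbMetric} to get an $N$-independent one-step Lipschitz bound for $\bo{\mc F}$ in $\sup_i\theta_{b_0}(\Pi_i\cdot,\Pi_i\cdot)$, the resulting geometric-series accumulation over $t$ steps, the $\mc M_{a_0,b_0,cN^{-1}}$ invariance of the $k$-marginal $\bo\Pi_{[1,k]}\bo F_*^t\mu$, and the final conversion from Hilbert metric to total variation. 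Your telescoping identity for $\mu_t-\nu_t$ is a cosmetic rearrangement of the paper's induction, and your decision to first compare $\bo\Pi_{[1,k]}\mu_t$ with $\bigotimes_i\Pi_i\mu_t$ and then tensorise, rather than peel coordinates one at a time while simultaneously swapping $\bo F_*^t$ for $\bo{\mc F}^t$, is an equivalent ordering of the same triangle-inequality steps and yields the same final rate.
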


We say that $\bar \mu$ is a fixed point for the self-consistent transfer operator if $\bo{\mc F}\bar \mu=\bar \mu$. Under Assumption \ref{Ass:CondH}, $\bo F_{\bar \mu}$ is an uncoupled map and $F_{\bar\mu,i}$ are maps with distortion and minimal expansion uniformly bounded with respect to $i$ and having  a unique invariant measure with $\log$-Lipschitz density in $\mc V_{a_0}$. This immediately implies, that if $\bar\mu$ is a fixed point for $\bo{\mc F}$ with  disintegrations along coordinates having densities in $\mc V_{a_0}$,  then $\bar\mu$ is a product measure.

The following corollary states that if $\bo {\mc F}$ has a fixed point with some stability properties, then $\bo F_*$ leaves a neighborhood of this fixed point invariant and the size of the neighborhood decays polynomially in $N$.

\begin{corollary}\label{Cor:FixedPointofOperators} Let  the assumptions of Theorem \ref{Thm:Main} stand, and assume that: $\bo {\mc F}$ has a fixed point  $\bar\mu\in \mc M_{a_0,b_0,0}$, and  there are $\delta>0$ and $\lambda\in[0,1)$ such that for every $\mu$ in
\[
B_\delta(\bar\mu):=\left\{\mu\in\mc M_{a_0,b_0,cN^{-1}}\cap\mc C^2_{\alpha_0}: \sup_{i\in[1,N]}\theta_{b_0}(\Pi_i\mu,\Pi_i\bar\mu)<\delta \right\}
\]
the following holds
\[
\theta_{b_0}(\Pi_i\bo{\mc F}\mu,\Pi_i\bo{\mc F}\nu)\le \lambda\theta_{b_0}(\Pi_i\mu,\Pi_i\nu) \quad\quad\forall \mu,\nu\in B_\delta(\bar\mu),\;\forall i\in[1,N].
\]
Then for every $\gamma\in[0,1/2)$ there is $c_\gamma>0$ such that for every $N$ sufficiently large
\[
\bo F_*\left(\,B_{c_\gamma N^{-\gamma}}(\bar\mu)\,\right)\subset B_{c_\gamma N^{-\gamma}}(\bar\mu).
\]  
\end{corollary}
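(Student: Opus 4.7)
The plan is a standard contraction-plus-perturbation argument in the Hilbert metric. Fix $\gamma\in[0,1/2)$. For $c_\gamma>0$ to be chosen, take any $\mu\in B_{c_\gamma N^{-\gamma}}(\bar\mu)$ and any $i\in[1,N]$. Using that $\theta_{b_0}$ is a projective metric (hence satisfies the triangle inequality on $\mc V_{b_0}$, see Appendix \ref{Sec:AppCones}) and that $\bar\mu=\bo{\mc F}\bar\mu$, I would write
\[
\theta_{b_0}\bigl(\Pi_i\bo F_*\mu,\Pi_i\bar\mu\bigr)\le \theta_{b_0}\bigl(\Pi_i\bo F_*\mu,\Pi_i\bo{\mc F}\mu\bigr)+\theta_{b_0}\bigl(\Pi_i\bo{\mc F}\mu,\Pi_i\bo{\mc F}\bar\mu\bigr).
\]

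The first term on the right is controlled by Theorem \ref{Thm:Main} (part 2) provided $\mu\in\mc M_{a_0,b_0,cN^{-1}}\cap\mc C^2_{\alpha_0}$ (which is part of the definition of $B_\delta(\bar\mu)$, hence automatic once $c_\gamma N^{-\gamma}<\delta$): it is bounded by $C_\gamma N^{-\gamma}$. The second term is estimated by the contraction hypothesis, giving $\lambda\,\theta_{b_0}(\Pi_i\mu,\Pi_i\bar\mu)\le \lambda\, c_\gamma N^{-\gamma}$, as soon as both $\mu$ and $\bar\mu$ lie in $B_\delta(\bar\mu)$. Combining,
\[
\theta_{b_0}\bigl(\Pi_i\bo F_*\mu,\Pi_i\bar\mu\bigr)\le (C_\gamma+\lambda\, c_\gamma)\, N^{-\gamma}.
\]
Choosing $c_\gamma:=C_\gamma/(1-\lambda)$ makes the right-hand side equal to $c_\gamma N^{-\gamma}$, which delivers the desired inclusion of marginal distances. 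Since $i$ was arbitrary, the uniform bound $\sup_i\theta_{b_0}(\Pi_i\bo F_*\mu,\Pi_i\bar\mu)\le c_\gamma N^{-\gamma}$ follows.

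It remains to check that $\bo F_*\mu$ actually belongs to $\mc M_{a_0,b_0,cN^{-1}}\cap\mc C^2_{\alpha_0}$, which is the ambient space of $B_{c_\gamma N^{-\gamma}}(\bar\mu)$: this is exactly the invariance statement of Theorem \ref{Thm:Main} (part 1). One also needs $N$ large enough so that $c_\gamma N^{-\gamma}<\delta$ (to remain in the contraction region) and so that Theorem \ref{Thm:Main} applies; both are threshold conditions depending only on $\gamma$, $\delta$, $\lambda$ and the constants of the theorem.

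The only potentially subtle point, and the one I would double-check carefully, is that the contraction hypothesis is stated for $\mu,\nu\in B_\delta(\bar\mu)$ and $\bar\mu$ is assumed to be in $\mc M_{a_0,b_0,0}$; to apply it with $\nu=\bar\mu$ one needs $\bar\mu$ itself to lie in $B_\delta(\bar\mu)$, which is trivially true since $\theta_{b_0}(\Pi_i\bar\mu,\Pi_i\bar\mu)=0<\delta$. Everything else is mechanical: the argument is essentially the fact that if a fibered map is an exact contraction near a fixed point and is $O(N^{-\gamma})$-close to another map in the same fibered sense, then the latter admits an invariant neighborhood of radius $O(N^{-\gamma})$ around that fixed point. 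No delicate estimate beyond Theorem \ref{Thm:Main} is needed.
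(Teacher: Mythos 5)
Your proof is correct and follows exactly the same approach as the paper: triangle inequality through $\Pi_i\bo{\mc F}\mu$, Theorem \ref{Thm:Main}(2) for the first term, the contraction hypothesis for the second, then the fixed-point choice $c_\gamma=C_\gamma/(1-\lambda)$. The extra checks you spell out (invariance of $\mc M_{a_0,b_0,cN^{-1}}\cap\mc C^2_{\alpha_0}$ via Theorem \ref{Thm:Main}(1), the threshold $c_\gamma N^{-\gamma}<\delta$, and $\bar\mu\in B_\delta(\bar\mu)$) are the same ones the paper relies on, just stated more explicitly.
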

In particular, one expects the unique absolutely continuous invariant probability measure of $\bo F_*$ to have marginals at distance $O(N^{-\gamma})$ from the marginals of the fixed point of $\bo {\mc F}$.
For examples of STOs having unique attracting fixed points as above see e.g. \cite{selley2021linear}.

Before proceeding with the proofs of the results above, which are given in Section \ref{Sec:ProofMainRes}, we first study, in Section \ref{Sec:EVMEASCOUP}, the evolution of quasi-product measures under the dynamics of globally coupled maps and provide a result on concentration for quasi-product measures, in Section \ref{Sec:Quasi-Product}.

\section{Evolution of Measures under Coupled Maps}\label{Sec:EVMEASCOUP}

The objective of this section is to study what happens to disintegrations along coordinates under evolution with respect to $\bo H_*$ when $\bo H:\T^N\rightarrow \T^N$ is a local diffeomorphism satisfying assumptions analogous to those imposed on $\bo F$, but where the dependencies among coordinates are more general than pairwise interactions.

\begin{assumption}\label{Ass:CondH} With the datum $(E,\,\kappa)\in \R^+_0$ satisfying $\kappa-E>1$,  $\bo H \in C^1(\T^N,\T^N)$ and 
\begin{equation}\label{Eq:CondDHEntries}
\quad\quad|\partial_i H_i|>\kappa, \quad\quad |\partial_j H_i|<EN^{-1}
\end{equation}
 for all $j,i\in[1,N]$ and $j\neq i$.
\end{assumption}

\begin{assumption}\label{Ass:SecondDerivH} With the datum $(E,K)\in \R^+_0$, 
$\bo H\in C^3(\T^N,\T^N)$ and  for every $i\in [1,N]$
\begin{equation}\label{Eq:CondHSecondDer}
|\partial_i^2H_i|\le K, \quad |\partial_{k}\partial_jH_i|\le\left\{\begin{array}{ll}
EN^{-1} & j\neq i\\
EN^{-2} & i,j,k\mbox{ distinct}
\end{array}\right. 
\end{equation}
and if $\ell$, $j$ and $k$ are distinct indices
\begin{equation}
|\partial_\ell\partial^2_jH_j|\le EN^{-1},\quad\quad|\partial_\ell\partial_k\partial_jH_j|\le EN^{-2}.
\end{equation}
\end{assumption}

The main step of our analysis is the investigation of $\{(\bo H_*\mu)_{{\hat{\bo x}_i}}\}_{\hat {\bo x}_i\in \T^{N-1}}$, the disintegration of the push-forward $\bo H_*\mu$ with respect to coordinate foliations. To this end we study $\{\mu_{\bo H^{-1}(\T_{\hat {\bo x}_i})}\}_{\hat {\bo x}_i\in \T^{N-1}}$,   the disintegration 
of $\mu$ with respect to the pullback foliation $\{\bo H^{-1}(\T_{\hat{\bo x}_i})\}_{\hat{\bo x}_i\in\T^{N-1}}$. It is important to notice that the foliation $\{\T_{\hat{\bo x}_i}\}_{\hat{\bo x}_i\in\T^{N-1}}$ is not invariant  due to the interactions between coordinates, and that the generality of our setup does not allow  to rely on  discernible invariant foliations. Therefore, studying  the relation between $\{\mu_{\hat{\bo x}_i}\}_{\hat {\bo x}_i\in \T^{N-1}}$ and $\{\mu_{\bo H^{-1}(\T_{\hat {\bo x}_i})}\}_{\hat {\bo x}_i\in \T^{N-1}}$ will be crucial for our analysis and represents one of the main technical advances of this paper. 
Since the proofs in this section are long and technical, they are postponed to the appendix.

\subsection{Pull-back foliation: a global change of charts straightening the leaves}
The following proposition describes  $\{\bo H^{-1}(\T_{\hat{\bo x}_i})\}_{\hat {\bo x}_i\in \T^{N-1}}$,  the pull back of the foliation $\{\T_{\hat{\bo x}_i}\}_{\hat {\bo x}_i\in \T^{N-1}}$ under $\bo H$. 
 It is immediate to show that if $\bo H$ is a local diffeomorphism, then $\{\bo H^{-1}(\T_{\hat{\bo x}_i})\}_{\hat {\bo x}_i\in \T^{N-1}}$ is  a foliation with  $\bo H^{-1}(\T_{\hat{\bo x}_i})$ integrating the vector field $\mc X_i:=\bo H^*e_i$, where $\bo H^*$ denotes the pull-back of $\bo H$, and $e_i$ is the $i$-th vector of the standard basis in $\R^N$.

The following proposition states that under the assumptions above and when $N$ is sufficiently large,  each leaf $\bo H^{-1}(\T_{\hat{\bo x}_i})$ can be written as the disjoint union of circles roughly aligned with the $i$-th coordinate, where the deviation from straight circles is carefully estimated with respect to the dimension $N$.    To this end we need careful estimates on the the size of the entries of the inverse Jacobian matrix, $D\bo H^{-1}$ with respect to the dimension $N$ that are provided in Appendix \ref{Sec:AppDHinv}. 
Since keeping track of all constant dependencies on the parameters in assumptions \ref{Ass:CondH} and \ref{Ass:SecondDerivH} is a daunting task, from now on, we are going to denote by $\mc K_\#$ a generic constant that depends on the parameters $E$, $\kappa$, and $K$ only -- in particular is independent of $N$ --  and $\mc K_\#\rightarrow 0$ when either $E\rightarrow 0$ or $\kappa\rightarrow \infty$, with all the other constants fixed. Following the proofs it would be possible, in principle, to  explicitly estimate $\mc K_\#$ once the parameters are known.

We will use the following notation:
 let $H_i:=\pi_i\circ \bo H$ and $\hat{\bo H}_i:= \hat{\bo \pi}_i\circ \bo H$, so that 
\[
\bo H(\bo x)=(H_i(x_i;\,\hat{\bo x}_i);\,\hat{\bo H}_i(x_i;\,\hat{\bo x}_i)),
\]
and let $\hat D_i$ denote the differential with respect to all coordinates, but the $i$-th one.

\begin{proposition}\label{Lem:Preimagefoliation}
Let $\bo H:\T^N\rightarrow \T^N$ be a map satisfying  Assumption \ref{Ass:CondH}. Then, for $N$ sufficiently large -- depending on $E$ and $\kappa$ -- and every $i\in[1,N]$, there is a diffeomorphism  $\bo \Phi_i=(\Phi_{i,1},...,\Phi_{i,N}):\T\times \T^{N-1}\rightarrow \T^N$ implicitly defined by
\begin{equation}\label{Eq:ImplicitForm}
\Phi_{i,i}(y_i,\,\hat{\bo y}_i)=y_i\quad\quad\quad\hat {\bo H}_i(\bo\Phi_i(y_i,\,\hat{\bo y}_i))=\hat{\bo H}_i(0;\,\hat{\bo y}_i)
\end{equation}
 such that for every $\hat{\bo x}_i\in \T^{N-1}$  there is a finite set $\mc Y_{\hat{\bo x}_i}\subset \T^{N-1}$ satisfying
\begin{equation}\label{Eq:FoliationRelation}
\bo H^{-1}(\T_{\hat{\bo x}_i})=\bigcup_{\hat{\bo y}_i\in \mc Y_{\hat{\bo x}_i}} \bo\Phi_i(\T\times\{\hat{\bo y}_i\})
\end{equation}
and $\bo\Phi_i(\T\times\{\hat{\bo y}_i\})$ is homotopic to $\T\times \{\hat{\bo y}_i\}$.

Furthermore, if Assumption \ref{Ass:SecondDerivH} is satisfied, we get the following estimates for the derivatives of $\bo \Phi_i$:\

\begin{itemize}
\item[i)] \emph{First Derivatives:} for every $m,k\in[1,N]$ 
\begin{equation}\label{Eq:DerivPhi}
|\partial_k\Phi_{i,m}|_\infty\,\,\left\{\begin{array}{ll}
 =1 & m=k=i\\
 =0& m=i,\, k\neq i\\
 = 1+O(N^{-1}) & m=k\\
\le \mc K_\#N^{-1} & k=i,\, m\neq k \\
 \le \mc K_\#N^{-2} & k\neq i,\, m\neq k
\end{array} \right.
\end{equation}
where $|O(N^{-1})|\le \mc K_\#N^{-1}$. 
\smallskip

\item[ii)] \emph{ Second Derivatives: }
\begin{equation}\label{Eq:DerPhi2ordcont1}
\left|\partial_i\partial_\ell\Phi_{i,m}\right|_\infty \left\{
\begin{array}{ll}
=0& m=i\\
\le\mc K_\#N^{-1} & m\neq i, \, \ell =m \\
\le\mc K_\#N^{-1} & m\neq i, \, \ell = i\\
\le\mc K_\#N^{-2} & \mbox{otherwise}
\end{array}
\right.
\end{equation}
instead for $k,\,\ell,\,m\neq i$
\begin{equation}\label{Eq:SecDerivChart}
\left|\partial_k\partial_\ell  \Phi_{i,m}\right|_\infty\le  \left\{
\begin{array}{ll}
\mc K_\#N^{-1} & \ell=m=k\\
\mc K_\#N^{-2} & \ell=m\neq k\\
\mc K_\#N^{-2} & \ell=k\mbox{ xor }m=k\\
\mc K_\#N^{-3} & \mbox{otherwise}
\end{array}
\right.
\end{equation}
\end{itemize}
\end{proposition}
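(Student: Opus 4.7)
The plan is to construct $\bo\Phi_i$ by the implicit function theorem. Setting $\Phi_{i,i}=y_i$, the remaining components $\hat{\bo\Phi}_i$ are determined by solving $\hat{\bo H}_i(y_i,\hat{\bo z}_i)=\hat{\bo H}_i(0;\,\hat{\bo y}_i)$ for $\hat{\bo z}_i$. Solvability rests on uniform invertibility of $\hat D_i\hat{\bo H}_i$, which by Assumption \ref{Ass:CondH} is $\kappa$-diagonally dominant with off-diagonal entries of size $EN^{-1}$; Gershgorin's theorem (or equivalently a Neumann-series argument) delivers this invertibility as soon as $(N-2)EN^{-1}<\kappa$. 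For the global identification \eqref{Eq:FoliationRelation}, the key observation is that $\bo H$, being a local diffeomorphism of the compact torus $\T^N$, makes $\hat{\bo H}_i(0,\cdot):\T^{N-1}\to\T^{N-1}$ a finite-sheeted covering map; I would take $\mc Y_{\hat{\bo x}_i}$ to be the finite preimage of $\hat{\bo x}_i$ under this covering, and each branch of $\bo H^{-1}(\T_{\hat{\bo x}_i})$ is then the image of $\T\times\{\hat{\bo y}_i\}$ under $\bo\Phi_i$. The homotopy assertion follows because the first-derivative estimates below force $\hat{\bo\Phi}_i(y_i,\hat{\bo y}_i)-\hat{\bo y}_i=O(N^{-1})$ uniformly in $y_i$, so the closed curve traced as $y_i$ winds once around $\T$ has trivial monodromy.

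For the first derivatives I would differentiate the implicit identity. The $y_i$-derivative yields $\partial_i\hat{\bo\Phi}_i=-(\hat D_i\hat{\bo H}_i)^{-1}\partial_i\hat{\bo H}_i$, and since $\partial_i\hat{\bo H}_i$ has components of size $EN^{-1}$ while $(\hat D_i\hat{\bo H}_i)^{-1}$ has diagonal $O(\kappa^{-1})$ and off-diagonal entries $O(EN^{-1}\kappa^{-2})$ via Neumann inversion, this produces $|\partial_i\Phi_{i,m}|\le\mc K_\#N^{-1}$ for $m\neq i$. Differentiating in $\hat y_k$ ($k\neq i$) gives instead
\[
\partial_k\hat{\bo\Phi}_i=(\hat D_i\hat{\bo H}_i)^{-1}(y_i,\hat{\bo\Phi}_i)\,(\hat D_i\hat{\bo H}_i)(0,\hat{\bo y}_i)\,e_k,
\]
from which $\partial_k\Phi_{i,k}=1+O(N^{-1})$ follows immediately. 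The delicate off-diagonal case $m\neq k$ (both $\neq i$) requires the improved bound $O(N^{-2})$; naive estimates only yield $O(N^{-1})$, and the extra factor comes from isolating the difference $\partial_k H_m(0,\hat{\bo y}_i)-\partial_k H_m(y_i,\hat{\bo\Phi}_i)\,\partial_k\Phi_{i,k}$ in the implicit equation and writing it via
\[
(\partial_k H_m)(y_i,\hat{\bo\Phi}_i)-(\partial_k H_m)(0,\hat{\bo y}_i)=\int_0^{y_i}\Bigl[\partial_i\partial_k H_m+\sum_{\ell\neq i}\partial_\ell\partial_k H_m\cdot\partial_i\Phi_{i,\ell}\Bigr]dt,
\]
whose right-hand side is $O(EN^{-2})$ by Assumption \ref{Ass:SecondDerivH} combined with the bound $|\partial_i\Phi_{i,\ell}|=O(N^{-1})$; a short bootstrap on the remaining sum $\sum_{\ell\neq k,m,i}\partial_\ell H_m\,\partial_k\Phi_{i,\ell}$ then closes the estimate.

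The second-derivative bounds \eqref{Eq:DerPhi2ordcont1}--\eqref{Eq:SecDerivChart} arise from differentiating these first-order identities once more, substituting the first-derivative bounds just obtained together with the second-derivative scalings of $\bo H$ from Assumption \ref{Ass:SecondDerivH}. In each sub-case the exponent of $N^{-1}$ is essentially determined by the number of ``cheap'' (off-diagonal) indices appearing in the product, with further cancellations entirely analogous to the one used at first order needed to reach the sharpest $O(N^{-2})$ and $O(N^{-3})$ scalings. The main obstacle, and the reason these proofs are relegated to Appendix \ref{Sec:AppDHinv}, is precisely this exhaustive bookkeeping: the combinatorial pressure from summing over $N-O(1)$ off-diagonal indices demands identifying every cancellation, and throughout the anchor $\hat{\bo\Phi}_i(0,\hat{\bo y}_i)=\hat{\bo y}_i$ plays the crucial role of ensuring that the non-trivial parts of the derivatives are built entirely from the $O(N^{-1})$ off-diagonal structure of $\bo H$.
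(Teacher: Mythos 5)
Your construction is essentially the same as the paper's: both derive the matrix identity $\hat D_i\hat{\bo\Phi}_i=(\hat D_i\hat{\bo H}_i)^{-1}|_{\bo\Phi_i(y_i,\hat{\bo y}_i)}\,\hat D_i\hat{\bo H}_i|_{(0,\hat{\bo y}_i)}$ from the implicit relation and both exploit the anchoring $\bo\Phi_i(0,\hat{\bo y}_i)=(0,\hat{\bo y}_i)$ to isolate the deviation from the identity. Your fundamental-theorem-of-calculus estimate for $(\partial_k H_m)(y_i,\hat{\bo\Phi}_i)-(\partial_k H_m)(0,\hat{\bo y}_i)$ is exactly the mean-value estimate the paper uses for $\hat D_i\hat{\bo H}_i|_{\bo\Phi_i(0,\hat{\bo y}_i)}-\hat D_i\hat{\bo H}_i|_{\bo\Phi_i(y_i,\hat{\bo y}_i)}$, with the same case split driven by Assumption \ref{Ass:SecondDerivH}, and your bootstrap on $\sum_{\ell\neq k,m,i}\partial_\ell H_m\,\partial_k\Phi_{i,\ell}$ closes because the contraction factor is $\kappa^{-1}E<1$.

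The one place where you are terser than the paper and slightly hand-wavy is the global assertion \eqref{Eq:FoliationRelation}. You identify $\mc Y_{\hat{\bo x}_i}$ with the fiber of the covering $\hat{\bo H}_i(0,\cdot):\T^{N-1}\to\T^{N-1}$ (equivalent to the paper's choice, since this fiber is exactly $\bo H^{-1}(\T_{\hat{\bo x}_i})\cap\{x_i=0\}$), but the claim that the lift over $y_i\in[0,1]$ closes up by ``trivial monodromy'' needs the following quantitative ingredient, which the paper supplies with an explicit mean-value contradiction: the endpoint $\hat{\bo\Phi}_i(1,\hat{\bo y}_i)$ is a fiber point at sup-distance $O(N^{-1})$ from $\hat{\bo y}_i$, and one must check that $\hat{\bo H}_i(0,\cdot)$ is injective on that $O(N^{-1})$-box (which follows for large $N$ from the diagonal dominance $\kappa-E>1$), so the endpoint must coincide with $\hat{\bo y}_i$. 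With that added, your argument is complete, and it replaces the paper's integral-curve construction (integrating $\mc X_i=\bo H^*e_i$) with the covering-map picture -- a cosmetic difference, since the key estimate is the same in both. The second-derivative bookkeeping is only sketched in your proposal, but you identify the right mechanism (each extra off-diagonal index buys a factor $N^{-1}$, and one more cancellation off the anchor is needed for the $N^{-3}$ tier), which is precisely how the paper's appendix computation proceeds.
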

The proof of this proposition is given in Section \ref{Sec:ProofLem:Preimagefoliation} of the Appendix. 

The main implication of the above proposition is that, under the assumptions and for $N$ large, one can find a global change of coordinates where the pullback foliation is made of  straight vertical circles. Crucially, in this new coordinates, the dynamics is given by a skew-product map as shown below.

\begin{definition}\label{Def:DefinitionofG}
Given $\bo H:\T^N\rightarrow \T^N$ satisfying the assumptions of  Proposition \ref{Lem:Preimagefoliation} and  given $i\in[1,N]$, define the map $\bo G:\T\times \T^{N-1}\rightarrow \T\times\T^{N-1}$\footnote{Here we did not express explicitly the dependence of the map $\bo G$ from $i$.} as $\bo G:=(G_i;\,\hat{\bo G}_i)$ with
\[
G_i(y_i;\,\hat{\bo y}_i)=H_i(\bo\Phi_i(y_i;\,\hat{\bo y}_i))\quad\quad \hat{\bo G}_i(y_i;\,\hat{\bo y}_i)=\hat{\bo H}_i(0;\,\hat{\bo y}_i).
\]
\end{definition}
Notice that with the definition above,
\begin{equation}\label{Eq:DecofDyn}
\bo H = \bo G\circ  \bo \Phi_i^{-1}
\end{equation}
and 
\begin{lemma}
For any $\mu\in \mc M(\T^N)$ absolutely continuous with respect to Lebesgue 
\begin{equation}\label{Eq:DecompEvolutionOfMeasure}
\bo H_*\mu= (\Id_\T;\,\hat{\bo G}_i)_*\,(G_i;\,\bo \Id_{\T^{N-1}})_* \,\bo \Phi_{i*}^{-1} \mu.
\end{equation}
\end{lemma}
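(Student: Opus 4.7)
The plan is to reduce the lemma to a purely algebraic manipulation of compositions, using the decomposition $\bo H = \bo G \circ \bo \Phi_i^{-1}$ in \eqref{Eq:DecofDyn} together with the fact that $\bo G$ is a skew-product. No analytic content beyond \eqref{Eq:DecofDyn} is needed: the absolute continuity hypothesis on $\mu$ plays no role in the identity itself, although it is the regime in which the statement will be applied later.

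First I would record the key structural observation: by Definition \ref{Def:DefinitionofG}, the last $N-1$ components of $\bo G$,
\[
\hat{\bo G}_i(y_i;\,\hat{\bo y}_i)=\hat{\bo H}_i(0;\,\hat{\bo y}_i),
\]
do \emph{not} depend on $y_i$. This is what makes $\bo G$ a skew-product over the base map $\hat{\bo y}_i \mapsto \hat{\bo H}_i(0;\,\hat{\bo y}_i)$, and it is precisely the feature that allows one to split $\bo G$ into two maps, one acting only in the fiber coordinate and one acting only in the base coordinate.

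Next, I would verify the map-level factorisation
\[
\bo G \;=\; (\Id_\T;\,\hat{\bo G}_i)\circ(G_i;\,\bo \Id_{\T^{N-1}})
\]
by evaluating both sides at a generic point $(y_i;\,\hat{\bo y}_i)$: the right-hand side sends $(y_i;\,\hat{\bo y}_i)$ to $(G_i(y_i;\,\hat{\bo y}_i);\,\hat{\bo y}_i)$ and then to $(G_i(y_i;\,\hat{\bo y}_i);\,\hat{\bo G}_i(\hat{\bo y}_i))$, which coincides with $\bo G(y_i;\,\hat{\bo y}_i)$ precisely because $\hat{\bo G}_i$ is independent of its first argument. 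Composing with $\bo\Phi_i^{-1}$ on the right and invoking \eqref{Eq:DecofDyn} then gives
\[
\bo H \;=\; (\Id_\T;\,\hat{\bo G}_i)\circ(G_i;\,\bo \Id_{\T^{N-1}})\circ \bo\Phi_i^{-1}.
\]

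Finally, functoriality of the push-forward under composition, $(f\circ g)_* = f_* \circ g_*$, applied to the above identity yields \eqref{Eq:DecompEvolutionOfMeasure}. Here $\bo\Phi_{i*}^{-1}$ is read as $(\bo\Phi_i^{-1})_* = (\bo\Phi_{i*})^{-1}$, the two being equal because $\bo\Phi_i$ is a diffeomorphism by Proposition \ref{Lem:Preimagefoliation}. There is no real obstacle here; the only thing to be careful about is making sure the factorisation $\bo G = (\Id_\T;\,\hat{\bo G}_i)\circ(G_i;\,\bo\Id_{\T^{N-1}})$ is read in the correct order, since the skew-product structure forces the fibre dynamics $G_i$ to be applied \emph{before} the base dynamics $\hat{\bo G}_i$, even though $\hat{\bo G}_i$ itself is oblivious to the fibre coordinate.
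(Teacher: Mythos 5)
Your proof is correct and takes exactly the route the paper takes (the paper's proof is the one line ``Immediately follows from the skew-product structure of $\bo G$''); you have simply written out the pointwise verification of the factorisation $\bo G = (\Id_\T;\,\hat{\bo G}_i)\circ(G_i;\,\bo\Id_{\T^{N-1}})$ and the functoriality of push-forward, which is precisely what is meant. Your remark that absolute continuity of $\mu$ plays no role in the identity itself is also accurate.
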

\begin{proof} Immediately follows from the skew-product structure of $\bo G$.
\end{proof}

Thus, changing coordinates through $\bo \Phi_i$ allowed us to express the evolution of the disintegration along the $i$-th coordinate as the contribution of two effects: evolution on each $\T_{\hat{\bo y}_i}$ fiber, which is given by the composition of the push-forward of  $G_i(\cdot;\hat{\bo y}_i):\T\rightarrow \T$, and the interactions among the other coordinates, given by the push-forward   $\hat{\bo G}_i:\T^{N-1}\rightarrow \T^{N-1}$. 

The following lemma shows that  when $N$ is large, for all $\hat{\bo x}_i\in \T^{N-1}$, $H_i(\cdot;\,\hat{\bo x}_i)$ and $G_i(\cdot;\,\hat{\bo x}_i)$ are close in $C^2(\T,\, \T)$ which suggests that, if the maps  are uniformly expanding, statistical stability results imply that $(G_i;\,\bo\Id_{\T^{N-1}})_*$ and $(H_i;\,\bo\Id_{\T^{N-1}})_*$ share similar spectral properties. 
\begin{lemma}\label{Lem:GiHiclose} Let $\bo H$ satisfy Assumption \ref{Ass:CondH} and Assumption  \ref{Ass:SecondDerivH}. For $N$ sufficiently large and  any $i\in [1,N]$, let $\bo G$ be as in Definition \ref{Def:DefinitionofG}. Then
\[
d_{C^2}\left(\,G_i(\cdot;\, \hat{\bo x}_i),\,H_i(\cdot;\, \hat{\bo x}_i)\,\right)\le \mc K_\# N^{-1}
\]
for all $\hat{\bo x}_i\in \T^{N-1}$.
\end{lemma}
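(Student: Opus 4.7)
The plan is a Taylor expansion argument. Since $G_i(y_i;\hat{\bo x}_i) = H_i(\bo\Phi_i(y_i;\hat{\bo x}_i))$ and $\Phi_{i,i}(y_i;\hat{\bo x}_i) = y_i$, the two functions $G_i(\cdot;\hat{\bo x}_i)$ and $H_i(\cdot;\hat{\bo x}_i)$ differ only through the evaluation of $H_i$ at two points that agree in the $i$-th coordinate but possibly disagree in the transverse coordinates. The key preliminary observation is that $\bo\Phi_i(0;\hat{\bo x}_i) = (0;\hat{\bo x}_i)$, since this point trivially satisfies the implicit relations \eqref{Eq:ImplicitForm} and uniqueness forces the two to coincide. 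Using the fundamental theorem of calculus and the bound $|\partial_i\Phi_{i,m}|_\infty \le \mc K_\# N^{-1}$ from \eqref{Eq:DerivPhi} for $m\neq i$, one obtains
\[
|\Phi_{i,m}(y_i;\hat{\bo x}_i) - x_m| \le \mc K_\# N^{-1}\quad\forall m\neq i,
\]
which is the transverse displacement at the heart of everything that follows.

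For the $C^0$ estimate, Taylor-expand $H_i$ between $(y_i;\hat{\bo x}_i)$ and $\bo\Phi_i(y_i;\hat{\bo x}_i)$; this yields a sum over $m\neq i$ of $\partial_m H_i$ times the transverse displacement. The bound $|\partial_m H_i|_\infty\le EN^{-1}$ from Assumption \ref{Ass:CondH} combined with the displacement estimate above produces $\mc K_\# N^{-1}$ after summing $N-1$ terms. For the $C^1$ estimate I compute $\partial_{y_i}G_i$ by the chain rule and compare with $\partial_{y_i}H_i=\partial_i H_i$; the difference splits into (a) the variation $\partial_i H_i(\bo\Phi_i) - \partial_i H_i(y_i;\hat{\bo x}_i)$, bounded via $|\partial_m\partial_i H_i|\le EN^{-1}$ from \eqref{Eq:CondHSecondDer}, and (b) the correction sum $\sum_{m\neq i}\partial_m H_i(\bo\Phi_i)\,\partial_i\Phi_{i,m}$, in which each factor is already $O(N^{-1})$; both contributions are $\mc K_\# N^{-1}$.

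For the $C^2$ estimate I expand
\[
\partial_{y_i}^2 G_i \;=\; \sum_{m,\ell} \partial_m\partial_\ell H_i(\bo\Phi_i)\,\partial_i\Phi_{i,m}\,\partial_i\Phi_{i,\ell} \;+\; \sum_m \partial_m H_i(\bo\Phi_i)\,\partial_i^2\Phi_{i,m},
\]
isolate the diagonal contribution $m=\ell=i$, which equals $\partial_i^2 H_i(\bo\Phi_i)$ and is compared to $\partial_i^2 H_i(y_i;\hat{\bo x}_i)$ via the third-derivative bound $|\partial_k\partial_i^2 H_i|\le EN^{-1}$ coming from Assumption \ref{Ass:SecondDerivH}, and treat the remaining summands in turn. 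Every other term carries at least one factor of $N^{-1}$ coming from $\partial_i\Phi_{i,m}$ with $m\neq i$, from $\partial_i^2\Phi_{i,m}$, or from an off-diagonal derivative of $H_i$; combined with the scalings in \eqref{Eq:CondHSecondDer}, \eqref{Eq:SecDerivChart} and Proposition \ref{Lem:Preimagefoliation}, each partial sum collapses to at most $\mc K_\# N^{-1}$.

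The main obstacle is not conceptual but combinatorial: in the double sum appearing in $\partial_{y_i}^2 G_i$ one has to match, term by term, the dimension-dependent scalings coming from Proposition \ref{Lem:Preimagefoliation} with those of Assumption \ref{Ass:SecondDerivH}, and verify that the number of summands, which may be as large as $N^2$, never overwhelms the per-term smallness. This is precisely where the sharper $N^{-2}$ and $N^{-3}$ bounds in \eqref{Eq:SecDerivChart} and the dedicated third-derivative bound $|\partial_\ell\partial_j^2 H_j|\le EN^{-1}$ in Assumption \ref{Ass:SecondDerivH} are used, and it is the only reason those refined estimates need to be included among the hypotheses.
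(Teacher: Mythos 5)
Your proof is correct and takes essentially the same route as the paper's: both identify $G_i(\cdot;\hat{\bo x}_i)$ and $H_i(\cdot;\hat{\bo x}_i)$ as $H_i$ evaluated at points that agree in the $i$-th coordinate and differ transversally by $O(N^{-1})$ (using $\bo\Phi_i(0;\hat{\bo x}_i)=(0;\hat{\bo x}_i)$), and then combine the chain rule, the mean-value theorem, the derivative bounds of Proposition \ref{Lem:Preimagefoliation}, and Assumption \ref{Ass:SecondDerivH}. The only difference is that the paper stops after the $C^1$ estimate with ``and analogous estimates for the second derivative,'' whereas you carry out the $C^2$ bookkeeping explicitly and correctly point out where the refined $N^{-2}$ bounds are actually needed.
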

\begin{proof}
Denoting $\bo \Phi_i=(\Phi_{i,i};\,\hat{\bo \Phi}_i)$, the definitions imply
\begin{align*}
G_i(x_i;\, \hat{\bo x}_i)&=H_i(x_i;\,\hat{\bo \Phi}_i(x_i;\,\hat{\bo x}_i))\\
H_i(x_i;\, \hat{\bo x}_i)&=H_{i}(x_i;\,\hat{\bo \Phi}_i(0;\,\hat{\bo x}_i))
\end{align*}
therefore, the mean-value theorem together with the estimates from  assumptions \ref{Ass:CondH} and  \ref{Ass:SecondDerivH}, and from Proposition \ref{Lem:Preimagefoliation} give
\begin{align*}
|G_i(x_i;\, \hat{\bo x}_i)-H_i(x_i;\, \hat{\bo x}_i)|&\le \sum_{\ell\neq i}|\partial_\ell H_i|_\infty |\partial_i\Phi_{i,\ell}|_\infty \le \mc K_\#N^{-1}\\
|\partial_iG_i(x_i;\, \hat{\bo x}_i)-\partial_iH_i(x_i;\, \hat{\bo x}_i)|&\le \sum_{\ell\neq i}|\partial_\ell \partial_iH_i|_\infty |\partial_i\Phi_{i,\ell}|_\infty +\sum_{\ell\neq i} |\partial_\ell H_i|_\infty|\partial_i\Phi_{i,\ell}|_\infty\\
&\le \mc K_\#N^{-1}
\end{align*}
and analogous estimates for the second derivative.
\end{proof}

\subsection{Lipschitz  disintegration along coordinates}
Recall the class of measures $\mc M_{a,b,L}$ from Definition \ref{Def:Mabl}. 
In this section we use the information obtained above to prove that under certain hypotheses -- boiling down to the strength of the expansion ``beating'' the strength of the interactions -- there is a set $\mc M_{a,b,L}$ that is invariant under application of $\bo H_*$, with $L$ of order $N^{-1}$.

The strategy to control the Lipschitz constant $L$ is to use expression \eqref{Eq:DecompEvolutionOfMeasure} and look at the density of $\bo H_*\mu$ as the result of the composition of the following actions on  $\mu$:\smallskip

\emph{Step 1.} apply the global change of charts $\bo \Phi_i^{-1}$, i.e. change to  coordinates where the leaves of  $\{\bo H^{-1}(\T_{\bo{\hat x}_i})\}_{\hat{\bo x}_i\in \T^{N-1}}$ are straight;

\emph{Step 2.} apply $(G_i;\bo \Id_{\T^{N-1}})$, which gives the dynamics \emph{on} the leaves;

\emph{Step 3.}  apply $(\Id_\T;\,\hat{\bo G}_i)$, which gives the dynamics \emph{of} the leaves;\smallskip

\noindent
and  keep track of how the Lipschitz constant is modified at each one of the steps above. We will prove that, with respect to a suitable projective Hilbert metric: 

\smallskip

{\it Step 1.}  the  Lipschitz constant is multiplied by a factor possibly greater than one, depending on the strength of the interactions, and  a $O(N^{-1})$ term is added, thus reducing the regularity; 

{\it Step 2.}  under suitable assumptions on $\bo H$,  the Lipschitz constant is multiplied by a  factor less than one,  so there is a  regularizing effect due to  the uniform expansion, and  $O(N^{-1})$ is added; 

 {\it Step 3.} multiplies the constant by a factor, possibly greater than one, that depends on a measure of distortion for $\hat{\bo G}_i$.

The main result of this section can be stated in an informal way as: 
\begin{claim}  Under assumptions \ref{Ass:CondH} and  \ref{Ass:SecondDerivH}, and suitable assumptions on $\bo H$ ensuring that the regularizing effect of the expanding dynamics  is stronger than the effect of the interactions, there are $a\ge0$, $b\ge 0$,  $c\ge 0$, and $\alpha\ge 0$ all independent of $N$, such that  
\[
\bo H_*\left(\mc M_{a,b,cN^{-1}}\cap \mc C^2_\alpha\right)\subset \mc M_{a,b,cN^{-1}}\cap \mc C^2_\alpha.
\]
\end{claim}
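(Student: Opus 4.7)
\emph{Strategy.} The plan is to fix an index $i\in[1,N]$ and study the disintegration of $\bo H_*\mu$ along the $i$-th coordinate via the three-step decomposition \eqref{Eq:DecompEvolutionOfMeasure}, tracking at each stage how the cone membership, the Hilbert--Lipschitz constant $L$, and the second-derivative bound $\alpha$ are modified. Each step should produce an estimate of the form $L'\le\lambda_{\mathrm{step}}L+\mc K_\# N^{-1}$, so composing the three gives an affine recursion whose fixed point is at scale $N^{-1}$, pinning down the constant $c$ in the claim --- provided the product $\bar\lambda$ of the three multiplicative factors can be made strictly less than one, which is where the assumption that the expansion dominates the interaction will enter, through the smallness of $\mc K_\#$.

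\medskip

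\emph{Steps 1 and 2.} For Step 1, Proposition \ref{Lem:Preimagefoliation} makes $\bo\Phi_i$ a $\mc K_\# N^{-1}$-small $C^2$-perturbation of the identity in every off-diagonal direction. The conditional density of $\bo\Phi_{i*}^{-1}\mu$ along $\T\times\{\hat{\bo y}_i\}$ equals, up to normalization, the $\mu$-conditional density on the curved pullback leaf precomposed with $\bo\Phi_i$ restricted to that leaf; its log-derivative thus gains at most $\mc K_\# N^{-1}$, placing it in a slightly enlarged cone $\mc V_b$ with $b>a$, while its $\theta_b$-Lipschitz constant in $\hat{\bo y}_i$ gets amplified by a factor $1+\mc K_\# N^{-1}$ plus an additive $\mc K_\# N^{-1}$ from the twisting of the leaves. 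For Step 2, Lemma \ref{Lem:GiHiclose} shows that $G_i(\cdot;\hat{\bo y}_i)$ inherits the $C^2$ uniform expansion $\ge\kappa-E-\mc K_\# N^{-1}$ and the bounded distortion of $H_i(\cdot;\hat{\bo x}_i)$, so by Theorem \ref{Thm:ContCones} its transfer operator maps $\mc V_b$ strictly inside $\mc V_a$ and contracts $\theta_b$ by a factor $\lambda<1$ depending only on $(\kappa,E,K)$; the Lipschitz dependence on $\hat{\bo y}_i$ is rescaled by $\lambda$, with an additional $\mc K_\# N^{-1}$ term coming from the $O(N^{-1})$ $C^2$-sensitivity of $G_i$ to the transverse variables.

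\medskip

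\emph{Step 3 and closing.} The map $(\Id_\T;\hat{\bo G}_i)$ leaves fibers unchanged and merely re-labels the base by $\hat{\bo G}_i^{-1}$. Since $D\hat{\bo G}_i=\hat D_i\hat{\bo H}_i(0;\cdot)$ has diagonal entries $>\kappa$ and off-diagonals of size $EN^{-1}$, Neumann inversion gives $\ell^1$ column sums for $D\hat{\bo G}_i^{-1}$ of order $\kappa^{-1}+O(\kappa^{-2}E)$, which is small when $\kappa$ is large; the Lipschitz constant of the disintegration is multiplied by exactly this quantity. Composing the three contributions yields $L'\le\bar\lambda L+\mc K_\# N^{-1}$ with $\bar\lambda<1$ as soon as $E_0$ and $\kappa_0^{-1}$ are small enough, and choosing $c\ge\mc K_\#/(1-\bar\lambda)$ closes invariance of the bound $L\le cN^{-1}$ in each transverse direction $j\ne i$; repeating for every $i\in[1,N]$ yields invariance of $\mc M_{a,b,cN^{-1}}$.

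\medskip

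\emph{Second derivatives and main obstacle.} Invariance of $\mc C^2_\alpha$ is handled analogously but more directly: differentiating the push-forward formula $\rho\circ\bo H^{-1}/|\det D\bo H\circ\bo H^{-1}|$ twice produces a sum of $\partial^2\rho/\rho$ terms (bounded by $\alpha$), products of $\log$-derivatives (bounded via $\rho\in\mc V_a$ on each leaf together with the $\mc M_{a,b,cN^{-1}}$ bound to handle transverse factors), and up to two derivatives of $\bo H^{-1}$, all of which are uniformly bounded in $N$ by the estimates of Appendix \ref{Sec:AppDHinv}; the resulting recursion on $\alpha$ is contractive for the same ``expansion over interaction'' reason, yielding an $N$-independent $\alpha$. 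The principal obstacle lies in Step 1: $\bo\Phi_i$ is not contractive, so all regularization has to come from Step 2, and the gap $b-a$ between the cones must be chosen wide enough to absorb Step 1's perturbation yet narrow enough that Step 2 still contracts $\theta_b$ strictly into $\mc V_a$. The companion delicacy is discriminating among the mixed partials $\partial_k\partial_\ell\Phi_{i,m}$ those of order $N^{-1}$, $N^{-2}$, and $N^{-3}$: it is precisely because the collective effect of $N-1$ transverse coordinates on a single leaf remains bounded while the sensitivity to any single coordinate is only $N^{-1}$ that the Lipschitz constant of the disintegration can survive at scale $N^{-1}$ rather than collapsing to $O(1)$.
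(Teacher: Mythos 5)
Your strategy — decompose $\bo H_*$ via \eqref{Eq:DecompEvolutionOfMeasure}, track the cone parameters, the Hilbert--Lipschitz constant, and the $\mc C^2_\alpha$ bound through the three steps, and close an affine recursion $L'\le\bar\lambda L + \mc K_\#N^{-1}$ by choosing $c \ge \mc K_\#/(1-\bar\lambda)$ — is exactly the paper's (Propositions \ref{Prop:RegPhieta}, \ref{Prop:RegUnderFibMapPlusNoise}, \ref{Prop:Evetamarginal} feeding into the proof of Proposition \ref{Prop:LipschitzReg2}), and your remarks on the role of the gap $b-a$ and on the hierarchy $N^{-1}/N^{-2}/N^{-3}$ among the mixed partials $\partial_k\partial_\ell\Phi_{i,m}$ are on point. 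Your Steps 1 and 2 match Propositions \ref{Prop:RegPhieta} and \ref{Prop:RegUnderFibMapPlusNoise} in spirit.

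However, your Step 3 contains a genuine gap. You assert that $(\Id_\T;\hat{\bo G}_i)$ ``merely re-labels the base by $\hat{\bo G}_i^{-1}$,'' and conclude that the Lipschitz constant is multiplied by the $\ell^1$ column sum of $D\hat{\bo G}_i^{-1}$, of order $\kappa^{-1}$. This is not what happens: $\hat{\bo G}_i:\T^{N-1}\to\T^{N-1}$ is a covering map of high degree (each $H_j$ is expanding), so $(\Id_\T;\hat{\bo G}_i)_*$ does not re-label conditional measures, it \emph{mixes} them. The conditional density of the push-forward over a given $\hat{\bo x}_i$ is a Jacobian-weighted convex combination of the conditional densities over \emph{all} preimage leaves $\{\hat{\bo G}_{i,\ell}^{-1}(\hat{\bo x}_i)\}_\ell$. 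Estimating the $\theta_b$-Lipschitz constant of this combination in a transverse direction $j\neq i$ therefore has three contributions, and you account for only the first: (a) the contraction of the preimage points, giving your $\kappa^{-1}$; (b) the change in each preimage conditional density as its base point moves, which enters through the $\log$-Lipschitz bound $a$ on the leaf densities and produces an additional $a\kappa^{-1}$; and (c) the change in the Jacobian weights $p_\ell$ of the convex combination, which is controlled by the distortion $\mc D$ of $\hat{\bo G}_i$ and requires the simplex/Hilbert-metric machinery of Proposition \ref{Prop:ConvCombDiffCoeff} and Lemma \ref{Lem:ApplicationDiffeoComponent}. The correct multiplicative factor in Proposition \ref{Prop:Evetamarginal} is therefore $\mc L'' = (1+a)\kappa^{-1} + \mc D K_{L'} + \mc K_\#N^{-1}$, not $\kappa^{-1}+O(\kappa^{-2}E)$, and the contraction condition \eqref{Eq:CondTwo} in Proposition \ref{Prop:LipschitzReg2} carries the $(1+a_0)$ and $\mc D$ factors precisely because of (b) and (c). Omitting them does not invalidate the qualitative existence claim (since $a_0\kappa^{-1}\to 0$ and $\mc D\to 0$ as $\kappa\to\infty$ with $K$ fixed), but it leaves your Step 3 without a mechanism for the averaging over branches, which is the technically nontrivial content of that step and of the change of charts $\bo \Psi_{i,j}$ together with the skew map $\bo S$ that the paper introduces in the proof of Proposition \ref{Prop:Evetamarginal}.

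A smaller imprecision: the push-forward density is a sum over inverse branches, $\sum_{\bo H(\bo x)=\bo y}\rho(\bo x)/|\det D\bo H(\bo x)|$, not the single-branch expression you wrote; for the $\mc C^2_\alpha$ estimate the paper likewise works through the skew-product decomposition rather than differentiating $\rho\circ\bo H^{-1}$ directly, precisely so the branch sum and the varying Jacobians stay under control.
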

A formal statement is given in Proposition \ref{Prop:LipschitzReg2} in subsection \ref{Subsec:RegOfFoliations}. Before that we give precise statements on how the regularity changes under application of  each one of the steps above.

\subsubsection{Step 1. Straightening of the foliation along the $i$-th coordinate. }\label{Subsec:PushForwardReg}
Since $\bo \Phi_i$ is a diffeomorphism, if  $\mu\in \mc M_1(\T^N)$ has density $\eta:\T^N\rightarrow \R^+$, denoting by $|D\bo \Phi_i|$ the determinant of $D\bo \Phi_i$,  the density of $\bo \Phi_{i*}^{-1}\mu$ is given by
\[
\bo \Phi_{i*}^{-1}\eta\,(x_i;\,\hat{\bo x}_i):=|D\bo\Phi_i|(x_i;\,\hat{\bo x}_i) \cdot\eta\circ\bo\Phi_i(x_i;\,\hat{\bo x}_i)
\]
which is the Perron-Frobenius operator associated to ${\bo \Phi}_i^{-1}$ applied to $\eta$. To study the regularity of the expression above, we first study the regularity of the two factors $|D\bo\Phi_i|$ and $\eta\circ\bo\Phi_i$ separately. 
The results are summarized in the lemmas below whose proofs are postponed to Appendix \ref{App:PushForwardReg}. 

The first lemma addresses the composition by $\bo \Phi_i$ and determines the changes in regularity for disintegrations along the $i$-th coordinate, and for the marginal on $\hat{\bo \pi}_i(\T^N)$ (which will be needed in later steps). It turns out that this composition produces a substantial change on disintegrations along the $i$-th coordinate, while changes along other coordinates are $O(N^{-1})$ and thus negligible when $N\rightarrow \infty$.
\begin{lemma}\label{Lem:CompByPhi}
Under assumptions \ref{Ass:CondH} and  \ref{Ass:SecondDerivH} there are constants $\mc L$, $\mc K=O(1)$, and $\hat{\mc L}$, $\hat{\mc K}=1+O(N^{-1})$ -- depending on $E$, $\kappa$, $K$ -- such that  for $N$ sufficiently large: If $\eta\in\mc M_{a,b,L}\cap\mc C^2_\alpha$, then for every $i\in [1,N]$ 
\begin{itemize}
\item[i)] for any $k\neq i$ 
\[
\eta\circ\bo\Phi_i\in \mc M^{(i,k)}_{\mc Ka,\,\mc Kb,\,\mc L'\cdot L}\]
with $\mc L':=\mc L+\frac{a+\alpha}{\mc Kb-\mc Ka}\mc K_\#N^{-1}$;
\item[ii)] for any $j\neq i$ and $k\neq j,i$
\[
\eta\circ\bo\Phi_i\in \mc M^{(j,k)}_{\hat{\mc K}a,\,\hat{\mc K}b,\,\hat{\mc L}'\cdot L}
\]
with $\hat{\mc L}':=\hat{\mc L}+\frac{a+\alpha}{\hat{\mc K}b-\hat{\mc K}a}\mc K_\#N^{-1}$.
\end{itemize}
\end{lemma}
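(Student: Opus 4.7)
The plan is to exploit the identity $\bo\Phi_i(y_i;\hat{\bo y}_i)=(y_i;\hat{\bo\Phi}_i(y_i;\hat{\bo y}_i))$, which follows from the implicit defining condition $\Phi_{i,i}(y_i,\hat{\bo y}_i)=y_i$ in Proposition~\ref{Lem:Preimagefoliation}, so that $\eta\circ\bo\Phi_i(y_i;\hat{\bo y}_i)=\eta(y_i;\hat{\bo\Phi}_i(y_i;\hat{\bo y}_i))$. The conditional density of $\eta\circ\bo\Phi_i$ along the $i$-th coordinate at fiber $\hat{\bo y}_i$ is then, up to normalisation, the function $y_i\mapsto\eta(y_i;\hat{\bo\Phi}_i(y_i;\hat{\bo y}_i))$, while along the $j$-th coordinate with $j\neq i$ one fixes $y_i$ together with the other components and varies only $y_j$. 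The proof then splits into two parallel computations: a log-Lipschitz bound on each such conditional density, and a Hilbert-metric Lipschitz bound on the map from fiber to conditional density.

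For the log-Lipschitz bound I use the chain rule together with the observation that $|\partial_m\log\eta|\le a$ for every $m$ (valid because $\eta\in\mc M_{a,b,L}^{(m)}$ and the log-derivative of a conditional density along its free variable coincides with that of $\eta$). In case (i),
\[
\partial_i\log[\eta\circ\bo\Phi_i]=\partial_i\log\eta+\sum_{m\neq i}\partial_m\log\eta\cdot\partial_i\Phi_{i,m},
\]
and summing the $N-1$ summands of size $a\,\mc K_\#N^{-1}$ supplied by \eqref{Eq:DerivPhi} yields a bound $\mc K a$ with $\mc K=1+\mc K_\#=O(1)$. In case (ii) the dominant contribution to $\partial_j\log[\eta\circ\bo\Phi_i]$ is $\partial_j\log\eta\cdot\partial_j\Phi_{i,j}=\partial_j\log\eta\cdot(1+O(N^{-1}))$, while the remaining $N-2$ terms contribute only $O(\mc K_\#N^{-1}a)$, giving $\hat{\mc K}=1+O(N^{-1})$.

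For the Hilbert-metric Lipschitz step, fix two fibers differing only in the $k$-th coordinate by $\Delta$ and let $\psi,\psi'$ denote the corresponding conditional densities of $\eta\circ\bo\Phi_i$. Interpolating the fiber change coordinate by coordinate and invoking the Lipschitz assumption $\eta\in\mc M_{a,b,L}$ at each single-coordinate step (together with the inclusion $\mc V_a\subset\mc V_{\mc K a}$, which implies $\theta_{\mc K b}\le\theta_b$ on $\mc V_a$-densities) gives a dominant contribution $L(1+O(N^{-1}))\Delta$ plus $N-2$ secondary contributions of size $L\,\mc K_\#N^{-2}\,\Delta$, producing the multiplicative factor $\mc L=O(1)$ in case (i) and $\hat{\mc L}=1+O(N^{-1})$ in case (ii). The additive correction $\tfrac{a+\alpha}{\mc Kb-\mc Ka}\mc K_\#N^{-1}\cdot L$ arises because the density $y_i\mapsto\eta(y_i;\hat{\bo\Phi}_i(y_i;\hat{\bo y}_i))$ depends on $y_i$ through \emph{both} arguments of $\eta$, so it is not literally a conditional density of $\eta$ at a frozen fiber; bounding the $\theta_{\mc K b}$-discrepancy between the two requires $C^1$-control of the log-ratio of two $\mc V_{\mc K a}$-densities, and the standard cone comparison from Appendix~\ref{Sec:AppCones} converts such a first-order bound into Hilbert distance at the cost of the denominator $\mc Kb-\mc Ka$, while the numerator $a+\alpha$ comes from combining the log-Lipschitz bound on $\eta$ with the second-derivative bound $|\partial_m\partial_n\rho/\rho|\le\alpha$ from $\mc C^2_\alpha$ that is needed to differentiate that log-ratio.

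The main obstacle is precisely this correction step in case (i): the fiber-dependent straightening curve $y_i\mapsto\hat{\bo\Phi}_i(y_i;\hat{\bo y}_i)$ introduces a $y_i$-variation that is absent from any genuine conditional of $\eta$, so the Lipschitz hypothesis on the disintegration of $\eta$ must be supplemented by both the first- and second-derivative control encoded in $\mc M_{a,b,L}\cap\mc C^2_\alpha$, together with the second-derivative estimates \eqref{Eq:DerPhi2ordcont1}--\eqref{Eq:SecDerivChart} on $\bo\Phi_i$, to ensure that the $y_i$-derivative of the deviation $\hat{\bo\Phi}_i(y_i;\cdot)-\hat{\bo\Phi}_i(0;\cdot)$ is itself of order $N^{-1}$. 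Carrying out this book-keeping with the precise prefactor $(a+\alpha)/(\mc Kb-\mc Ka)\cdot\mc K_\#$ is the main technical content and justifies deferring the full calculation to the appendix.
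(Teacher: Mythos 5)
Your proposal correctly identifies the structural skeleton of the argument — a log-Lipschitz chain-rule bound giving the new cone parameter $\mc K a$, a Lipschitz bound for the disintegration, and a correction term accounting for the fact that the conditional density $y_i\mapsto\eta(y_i;\,\hat{\bo\Phi}_i(y_i;\,\hat{\bo y}_i))$ is not a conditional density of $\eta$ at a fixed fiber because $\hat{\bo\Phi}_i$ moves with $y_i$. The first part (the $\mc K a$ bound) is essentially the same computation the paper performs. But the route you take for the Lipschitz bound is genuinely different from the paper's, and it contains a real gap.

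The paper does not reason about Hilbert distances between conditional densities directly. Instead it first establishes an infinitesimal characterization of $\mc M^{(i,k)}_{a,b,L}$ (Proposition~\ref{Prop:ABDef}): membership is equivalent to a pointwise bound $A^{(i,k)}_{\eta,b}-B^{(i,k)}_{\eta,b}\le L$, where $A,B$ are suprema/infima of three explicit ratios built from $\eta$, $\partial_k\eta$, $\partial_i\eta$ and $\partial_k\partial_i\eta$. Proposition~\ref{Prop:ABforetacompPhi} then computes $A^{(j,k)}_{\eta\circ\bo\Phi_i,b_j}-B^{(j,k)}_{\eta\circ\bo\Phi_i,b_j}$ by applying the chain rule inside these ratios. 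The advantage is that the twisting is handled automatically: the extra terms produced by $\partial_i\Phi_{i,m}\neq 0$ and $\partial_k\partial_j\Phi_{i,m}\neq 0$ appear as explicit summands (the paper's \eqref{Eq:SecTermLip1}--\eqref{Eq:SecTermLip2}), and the bound $\frac{a+\alpha}{b_j-a_j}\mc K_\#N^{-1}$ drops out of estimating those summands with $|\partial_\ell\log\eta|\le a$, $|\partial_m\partial_\ell\eta/\eta|\le\alpha$, and the second-derivative estimates of Proposition~\ref{Lem:Preimagefoliation}.

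Your proposed route — interpolate the image fiber $\hat{\bo\Phi}_i(\cdot;\,\hat{\bo y}_i)\rightsquigarrow\hat{\bo\Phi}_i(\cdot;\,\hat{\bo y}_i')$ coordinate by coordinate and invoke the Lipschitz hypothesis on $\eta$ at each step — does not actually produce the bound you claim, for two reasons. First, the interpolation step compares conditional densities of $\eta$ at \emph{frozen} fibers $\hat{\bo x}_i$, but the object you need to bound is the Hilbert distance between the \emph{twisted} curves $y_i\mapsto\eta(y_i;\,\hat{\bo\Phi}_i(y_i;\,\hat{\bo y}_i))$ and $y_i\mapsto\eta(y_i;\,\hat{\bo\Phi}_i(y_i;\,\hat{\bo y}_i'))$, whose comparison fiber itself depends on $y_i$. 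Turning an interpolation estimate for frozen fibers into an estimate for the twisted curves is precisely the content of the result; naming a ``correction'' that fixes it is not a proof. Second, your prediction that the main multiplicative factor is $\mc L=1+O(N^{-1})$ in case (i) underestimates the constant. The paper's explicit lower bound (see the remark after the lemma) is
\[
\mc L>|\partial_i\Phi_{i,i}^{-1}|_{\infty}\sum_{m}\sum_{\ell\neq m}|\partial_i\Phi_{i,\ell}|_\infty|\partial_k\Phi_{i,m}|_\infty ,
\]
and the dominant contribution ($m=k$) picks up the $O(1)$ factor $\sum_{\ell\neq k}|\partial_i\Phi_{i,\ell}|_\infty\approx 1+\mc K_\#=\mc K$, which is a genuinely $O(1)$ quantity and not a $1+O(N^{-1})$ perturbation. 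This factor arises from the passage to the weaker cone $\mc V_{\mc Kb}$ in the denominators of the $A,B$ ratios, a mechanism your inequality $\theta_{\mc Kb}\le\theta_b$ on $\mc V_a$-densities treats as a one-way help rather than a two-sided rescaling of the constants. In short, the chain-rule--on--$(A,B)$ machinery is not a mere calculation deferred to the appendix; it is the device that both resolves the twisting and produces the correct $\mc K$ factor, and your sketch replaces it with a plan that is not yet a proof.
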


\begin{remark} Following the proofs one can see that the constant $\mc L$, $\mc K$, $\hat{\mc L}$, and $\hat{\mc K}$ should be chosen to satisfy
\begin{equation}
\mc K>\max\left\{\sum_{\ell=1}^N |\partial_i\Phi_{i,\ell}|_{\infty}:\; i\in[1,N]\right\}
\end{equation}
\begin{equation}
\mc L>\max\left\{|\partial_i\Phi_{i,i}^{-1}|_{\infty}\sum_{m =1}^N\sum_{\ell\neq m}|\partial_i\Phi_{i,\ell} |_\infty|\partial_k\Phi_{i,m}|_{\infty}:\;i\in[1,N],\, k\neq i\right\} 
\end{equation} 
\begin{equation}
\hat{\mc K}>\max\left\{\sum_{\ell=1}^N |\partial_k\Phi_{i,\ell}|_{\infty}:\; i\in[1,N], \,k\neq i\right\}
\end{equation}
\begin{equation}
\hat{\mc L}>\max\left\{|\partial_j\Phi_{i,j}^{-1}|_{\infty}\sum_{m =1}^N\sum_{\ell\neq m}|\partial_j\Phi_{i,\ell} |_\infty|\partial_k\Phi_{i,m}|_{\infty}:\;i\in[1,N],\,j\neq i,\, k\neq i,j\right\}. 
\end{equation} 
and, comparing with Proposition \ref{Lem:Preimagefoliation}, one can check that the order of magnitudes claimed at the beginning of the lemma are achieved.
\end{remark}

\begin{remark}
This lemma is the first instance where it is crucial for our estimates that the distance between the conditional measures on the leaves is measured with respect to a  Hilbert metric ``weaker" than the Hilbert metric of the cone to which the densities belong. 
\end{remark}

The second lemma  characterizes the regularity of $|D\bo\Phi_i|$. 

\begin{lemma}\label{Lem:RegDphi}
Under assumptions \ref{Ass:CondH} and  \ref{Ass:SecondDerivH}, there is $a_{\bo \Phi}(\kappa,E,K)\ge0$ -- independent of $N$ -- such that for any $i\in[1,N]$ and any $b_{\bo\Phi}>a_{\bo \Phi}$
\[
|D\bo \Phi_i|\in \mc M^{(i)}_{a_{\bo \Phi},b_{\bo\Phi},L_{\bo \Phi}} \quad\mbox{where}\quad L_{\bo \Phi}:=\frac{b_{\bo \Phi}+1}{b_{\bo \Phi}-a_{\bo \Phi}}\mc K_\#N^{-1}.
\]
\end{lemma}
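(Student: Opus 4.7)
The plan is to use Jacobi's formula
$$\partial_k\log|D\bo\Phi_i|=\Tr\bigl((D\bo\Phi_i)^{-1}\partial_k D\bo\Phi_i\bigr)$$
together with the detailed block structure of $D\bo\Phi_i$ provided by Proposition \ref{Lem:Preimagefoliation} to control both the log-derivative of $|D\bo\Phi_i|$ along the $i$-th coordinate (which will give $a_{\bo\Phi}$) and its sensitivity to the transverse coordinates (which will give $L_{\bo\Phi}$). The key geometric input is that $D\bo\Phi_i$ is a very particular perturbation of the identity: row $i$ is exactly $e_i^{\top}$ because $\Phi_{i,i}(y_i,\hat{\bo y}_i)=y_i$, the diagonal entries outside position $i$ are $1+O(\mc K_\# N^{-1})$, the $i$-th column off the diagonal has entries of size $O(\mc K_\# N^{-1})$, and the remaining off-diagonal entries are $O(\mc K_\# N^{-2})$. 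A Neumann-series argument then shows that $(D\bo\Phi_i)^{-1}$ inherits the same order-of-magnitude structure and that $|D\bo\Phi_i|$ is bounded above and below by constants depending only on $\kappa,E,K$.

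For the log-Lipschitz estimate along the $i$-th coordinate I would apply Jacobi's formula with $k=i$. By \eqref{Eq:DerPhi2ordcont1} the entries of $\partial_i D\bo\Phi_i$ vanish whenever the output index equals $i$, are at most $\mc K_\# N^{-1}$ on the diagonal and along column $i$ (for $m\neq i$), and at most $\mc K_\# N^{-2}$ otherwise. Combined with the structure of $(D\bo\Phi_i)^{-1}$, the dominant part of $\Tr\bigl((D\bo\Phi_i)^{-1}\partial_i D\bo\Phi_i\bigr)$ is the sum of its $N-1$ diagonal entries indexed by $m\neq i$, each of size $O(\mc K_\# N^{-1})$, plus strictly lower-order cross terms. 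This yields $\bigl|\partial_i\log|D\bo\Phi_i|\bigr|\le a_{\bo\Phi}$ for some $a_{\bo\Phi}=O(\mc K_\#)$ independent of $N$, placing $|D\bo\Phi_i|(\cdot\,;\hat{\bo x}_i)\in \mc V_{a_{\bo\Phi}}$ for every $\hat{\bo x}_i$.

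For the Lipschitz-in-parameter bound I would run the same Jacobi computation with $\partial_k$ for $k\neq i$, and also estimate the mixed derivative $\partial_k\partial_i\log|D\bo\Phi_i|$ by differentiating once more and inserting the second-order bounds \eqref{Eq:SecDerivChart}. Both quantities come out $O(\mc K_\# N^{-1})$: any term that could contribute at order one involves a derivative of $\Phi_{i,i}\equiv y_i$ and so vanishes. Given $\hat{\bo x}_i,\hat{\bo x}_i'\in \T^{N-1}$ differing only in the $k$-th coordinate, the mean-value theorem then bounds both the $C^0$ and the $C^1$ norms in $x_i$ of the difference $\log|D\bo\Phi_i|(\cdot\,;\hat{\bo x}_i)-\log|D\bo\Phi_i|(\cdot\,;\hat{\bo x}_i')$ by $\mc K_\# N^{-1}|x_k-x_k'|$. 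A standard Hilbert-metric estimate from Appendix \ref{Sec:AppCones}, of the form $\theta_b(\psi_1,\psi_2)\le \frac{b+1}{b-a}\bigl(\|\log\psi_1-\log\psi_2\|_{C^0}+\|\partial\log\psi_1-\partial\log\psi_2\|_{C^0}\bigr)$ for $\psi_j\in\mc V_a$, then converts this $C^1$ control into the desired Hilbert bound and produces exactly the advertised factor $\frac{b_{\bo\Phi}+1}{b_{\bo\Phi}-a_{\bo\Phi}}\mc K_\# N^{-1}$.

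The main obstacle is combinatorial bookkeeping rather than new ideas: a naive sum over the $N^2$ entries of the Jacobi trace contains many potentially $O(1)$ contributions, and the estimate hinges on exploiting that all derivatives of $\Phi_{i,i}$ vanish (so row $i$ of $D\bo\Phi_i$ is rigid) and that derivatives of $\Phi_{i,m}$ with $m\neq i$ fall off by one further power of $N$ as soon as one differentiates in a direction other than $i$ or $m$. The reason one cannot work directly in $\mc V_{a_{\bo\Phi}}$ for the Lipschitz bound but must pass to the larger cone $\mc V_{b_{\bo\Phi}}$ is that the Hilbert-metric estimate degenerates as $b\to a$; the gap $b_{\bo\Phi}-a_{\bo\Phi}$ is precisely what supplies the denominator in $L_{\bo\Phi}$.
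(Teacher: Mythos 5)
Your overall strategy is the same as the paper's: note that the first row of $D\bo\Phi_i$ is rigid (because $\Phi_{i,i}(y_i;\hat{\bo y}_i)=y_i$, so $|D\bo\Phi_i|=|\hat D_i\hat{\bo\Phi}_i|$), apply Jacobi's formula $\partial_k\log|D\bo\Phi_i|=\Tr\bigl((D\bo\Phi_i)^{-1}\partial_k D\bo\Phi_i\bigr)$, and feed in the entry-wise estimates of Proposition~\ref{Lem:Preimagefoliation}. The bound $\bigl|\partial_i\log|D\bo\Phi_i|\bigr|\le\mc K_\#$ giving $a_{\bo\Phi}$, and the $O(\mc K_\# N^{-1})$ bound on $\partial_k\log|D\bo\Phi_i|$ for $k\ne i$, are both correctly sketched. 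Your route for converting $C^1$-control of the logarithms into a Hilbert bound differs slightly from the paper, which instead verifies the criterion of Proposition~\ref{Prop:ABDef} via the explicit quantities $A^{(i,k)}_{\cdot,b}$ and $B^{(i,k)}_{\cdot,b}$; the two routes are essentially equivalent and both produce the factor $\frac{b_{\bo\Phi}+1}{b_{\bo\Phi}-a_{\bo\Phi}}$.

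However, there is a genuine gap in the step estimating the mixed derivative $\partial_k\partial_i\log|D\bo\Phi_i|$. By the product rule,
\begin{equation*}
\partial_k\partial_i\log|D\bo\Phi_i|
= \Tr\bigl(\partial_k[(D\bo\Phi_i)^{-1}]\,\partial_i D\bo\Phi_i\bigr)
+ \Tr\bigl((D\bo\Phi_i)^{-1}\,\partial_k\partial_i D\bo\Phi_i\bigr),
\end{equation*}
and the second trace involves the matrix $\partial_k\partial_i D\bo\Phi_i$, whose entries are \emph{third-order} partial derivatives $\partial_k\partial_i\partial_\ell\Phi_{i,m}$. Proposition~\ref{Lem:Preimagefoliation} only supplies first- and second-order estimates, so these terms cannot be bounded by ``inserting the second-order bounds~\eqref{Eq:SecDerivChart}'' as you claim. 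You would need to establish a separate estimate showing that $[\partial_k\partial_i\hat D_i\hat{\bo\Phi}_i]_{\ell m}=O(\mc K_\# N^{-1})$ when $\ell=m=k$ and $O(\mc K_\# N^{-2})$ otherwise; this is exactly what the paper does via a dedicated sub-lemma inside the proof (differentiating the identity $\hat D_i\hat{\bo\Phi}_i=(\hat D_i\hat{\bo H}_i\circ\bo\Phi_i)^{-1}\hat D_i\hat{\bo H}_i(0;\cdot)$ twice and invoking the third-derivative bounds of Assumption~\ref{Ass:SecondDerivH}). Without this additional piece the $O(\mc K_\# N^{-1})$ bound on the mixed derivative, and hence the Lipschitz constant $L_{\bo\Phi}$, is not justified.
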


Combining  the  lemmas above, we obtain
\begin{proposition}\label{Prop:RegPhieta}
Under assumptions \ref{Ass:CondH},  \ref{Ass:SecondDerivH}  and for $N$ sufficiently large, letting  $\mc L$, $\mc K$, $\hat{\mc L}$, $\hat{\mc K}$, and $a_{\bo \Phi}$ be as in Lemma \ref{Lem:CompByPhi} and Lemma \ref{Lem:RegDphi}, for any $i\in[1,N]$,   
\begin{itemize}
\item[i)] 
\[
\bo \Phi_{i*}^{-1}(\mc M^{}_{a,b,L} \cap \mc C^2_\alpha)\subset \mc M^{(i)}_{a',b', L'}\cap \mc C_{\alpha'}^2
\]   with
\begin{align}\label{Eq:EParametersPhi}
a'&:=\mc K a+a_{\bo \Phi}
\\ 
 L'&:=\left(\mc L+\frac{a+\alpha}{\mc Kb-\mc Ka}\mc K_\#N^{-1}\right)\cdot L+\frac{b_{\bo \Phi}+1}{b_{\bo \Phi}-a_{\bo \Phi}}\mc K_\#N^{-1}\\
 \alpha'&:=
\mc K^2\alpha +\mc K_\#
\end{align}
for any $b_{\bo\Phi}>a_{\bo \Phi}$ and $ b'\ge \mc K b+b_{ \bo \Phi}$;

\item[ii)]  if   $j\neq i$ and $k\neq j,i$
\[
\hat{\bo \Pi}_i\bo \Phi_{i*}^{-1}\left(\mc M_{a,b,L}(\T^N)\cap \mc C^2_\alpha\right)\subset \mc M^{(j,k)}_{\hat a',\hat b', \hat L'}(\T^{N-1})
\]
with
\begin{align*}
\hat a '&:=\hat{\mc K}a+a_{\bo \Phi}\\
\hat b'&\ge \hat{\mc K}b+b_{\bo \Phi}\\
\hat L'&:= \left(\hat{\mc L}+\frac{a+\alpha}{\mc Kb-\mc Ka}\mc K_\#N^{-1}\right)\cdot L+\frac{b_{\bo \Phi}+1}{b_{\bo \Phi}-a_{\bo \Phi}}\mc K_\#N^{-1}
\end{align*}
\end{itemize}

\end{proposition}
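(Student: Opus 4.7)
The plan is to decompose the density of the pushed-forward measure as the pointwise product
\[
\bo\Phi_{i*}^{-1}\eta=|D\bo\Phi_{i}|\cdot(\eta\circ\bo\Phi_{i})
\]
and then combine the two regularity statements, one for each factor, coming from Lemma \ref{Lem:CompByPhi} and Lemma \ref{Lem:RegDphi}. Both the cone $\mc V_a$ and the projective Hilbert metric behave well with respect to multiplication: log-Lipschitz constants add, so the product of a function in $\mc V_{\mc K a}$ and one in $\mc V_{a_{\bo\Phi}}$ lies in $\mc V_{a'}$ with $a'=\mc Ka+a_{\bo\Phi}$; and projective invariance of the Hilbert metric under multiplication by a common positive factor will let me split the transverse variation of the two factors via the triangle inequality.

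For part (i), the normalization in \eqref{Eq:ConditionalMeasure} is immaterial in $\theta_{b'}$, so the conditional density of $\bo\Phi_{i*}^{-1}\eta$ along the $i$-th coordinate is projectively the pointwise product of the conditionals of $|D\bo\Phi_{i}|$ and $\eta\circ\bo\Phi_{i}$. Cone membership in $\mc V_{a'}$ is then immediate from additivity of log-Lipschitz constants. Picking any $b'\ge \mc Kb+b_{\bo\Phi}$, I would apply the triangle inequality
\[
\theta_{b'}\bigl((\bo\Phi_{i*}^{-1}\eta)_{\hat{\bo x}_i},(\bo\Phi_{i*}^{-1}\eta)_{\hat{\bo x}_i'}\bigr)\le \theta_{b'}\bigl((\eta\circ\bo\Phi_{i})_{\hat{\bo x}_i},(\eta\circ\bo\Phi_{i})_{\hat{\bo x}_i'}\bigr)+\theta_{b'}\bigl(|D\bo\Phi_{i}|_{\hat{\bo x}_i},|D\bo\Phi_{i}|_{\hat{\bo x}_i'}\bigr),
\]
using an intermediate term that keeps one of the two factors fixed, so that the projective invariance collapses each piece to a distance between conditionals of a single factor. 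The two resulting distances are controlled by Lemma \ref{Lem:CompByPhi}(i) and Lemma \ref{Lem:RegDphi} respectively, together with the monotonicity of the Hilbert metric in nested cones from Appendix \ref{Sec:AppCones}, and summing the bounds $\mc L'\cdot L$ and $L_{\bo\Phi}$ gives exactly the stated $L'$.

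The $\mc C^2_{\alpha'}$ claim is computationally the heaviest step. I would expand $\partial_k\partial_\ell(\bo\Phi_{i*}^{-1}\eta)/(\bo\Phi_{i*}^{-1}\eta)$ via Leibniz and the chain rule, getting terms involving $(\partial_m\eta)\circ\bo\Phi_{i}/\eta\circ\bo\Phi_{i}$, $(\partial_m\partial_n\eta)\circ\bo\Phi_{i}/\eta\circ\bo\Phi_{i}$, first and second derivatives of $\bo\Phi_{i}$, and analogous pieces coming from $\log|D\bo\Phi_{i}|$. The off-diagonal derivative estimates in Proposition \ref{Lem:Preimagefoliation} carry factors of $N^{-1}$ or $N^{-2}$, so the double sum over $k,\ell\in[1,N]$ stays $O(1)$; combined with the hypothesis $\eta\in\mc C^2_\alpha$ and the cone bound $a$ this should reproduce $\alpha'=\mc K^2\alpha+\mc K_\#$.

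Part (ii) is a variation on the same theme: applying Lemma \ref{Lem:CompByPhi}(ii) instead of (i) to the factor $\eta\circ\bo\Phi_{i}$ gives the analogous product estimate for disintegrations along the $j$-th coordinate ($j\neq i$), and the marginalization $\hat{\bo\Pi}_i$ (integration over $x_i$) does not worsen the Lipschitz dependence of the conditionals because averaging is a contraction for the Hilbert metric, yielding $\hat a'$, $\hat b'$, $\hat L'$ as stated. The main obstacle I foresee is not any single conceptual step but the bookkeeping for the $\mc C^2_{\alpha'}$ estimate in (i): each of the many chain-rule terms must be matched with the correct $N^{-p}$ scaling from Proposition \ref{Lem:Preimagefoliation} to guarantee that the sums over coordinates stay uniformly bounded in $N$.
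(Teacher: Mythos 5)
Your plan for part (i) tracks the paper's proof closely: factor $\bo\Phi_{i*}^{-1}\eta=|D\bo\Phi_i|\cdot(\eta\circ\bo\Phi_i)$, insert an intermediate term in which one factor is held fixed, and control each piece separately. The only imprecision is the phrase ``projective invariance collapses each piece to a distance between conditionals of a single factor.'' The Hilbert metric is not invariant under multiplication by a non-constant positive function: $\theta_{b'}(\phi\psi_1,\phi\psi_2)$ is not equal to $\theta_{b'}(\psi_1,\psi_2)$. What is true, and what the paper uses, is the contraction estimate of Lemma~\ref{Lem:ProductAction} (multiplication by $\phi\in\mc V_a$ is a linear map $\mc V_{b'-a}\to\mc V_{b'}$ and contracts the Hilbert metric via Theorem~\ref{Thm:ContCones}), packaged in Proposition~\ref{Prop:DistProd}. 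Your appeal to ``monotonicity of the Hilbert metric in nested cones'' points in the right direction, so this is terminological rather than structural; you should explicitly cite Proposition~\ref{Prop:DistProd}. The $\mc C^2_{\alpha'}$ bookkeeping is, as you say, a matter of matching the chain/Leibniz terms with the $N^{-p}$ estimates of Proposition~\ref{Lem:Preimagefoliation}, and the paper does exactly that.

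Part (ii) has a genuine gap. You justify preservation of the Lipschitz constant under $\hat{\bo\Pi}_i$ by saying ``averaging is a contraction for the Hilbert metric.'' That is Proposition~\ref{Prop:HilbConvCombDist}(ii), which requires the two convex combinations being compared to have the \emph{same} weights and to combine \emph{probability} densities. After marginalizing out $x_i$, the $j$-conditional of $\hat{\bo\Pi}_i\bo\Phi_{i*}^{-1}\mu$ at the leaf over $\hat{\bo x}_{i,j}$ is, projectively, $\int_\T w(x_i;\hat{\bo x}_{i,j})\,\tilde\rho_{(x_i;\hat{\bo x}_{i,j})}\,dx_i$ where $w(x_i;\hat{\bo x}_{i,j})=\int_\T\tilde\eta(x_i,y_j;\hat{\bo x}_{i,j})\,dy_j$ normalizes the fiber density. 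When you move from $\hat{\bo x}_{i,j}$ to $\hat{\bo x}_{i,j}'$ the weights $w(\cdot;\hat{\bo x}_{i,j})$ change, so you are comparing two convex combinations with different weights, and Proposition~\ref{Prop:HilbConvCombDist}(ii) does not apply; to go that route you would need Proposition~\ref{Prop:ConvCombDiffCoeff} and an additional estimate on how much the weights vary. The paper sidesteps this entirely (Section~\ref{Sec:proofOfMarginalReg}) by using the pointwise differential characterization of $\mc M^{(j,k)}_{a,b,L}$ from Definition~\ref{Def:AB} and Proposition~\ref{Prop:ABDef}: the inequalities $\partial_k\tilde\eta\le A^{(j,k)}_{\tilde\eta,b_j}\tilde\eta$ and $b_j\partial_k\tilde\eta\pm\partial_k\partial_j\tilde\eta\le A^{(j,k)}_{\tilde\eta,b_j}(b_j\tilde\eta\pm\partial_j\tilde\eta)$ are \emph{linear} in $\tilde\eta$ and therefore pass through the integral $\int_\T dx_i$ unchanged, which yields $A^{(j,k)}_{\hat{\bo\Pi}_i\tilde\eta,b}\le A^{(j,k)}_{\tilde\eta,b_j}$ and $B^{(j,k)}_{\hat{\bo\Pi}_i\tilde\eta,b}\ge B^{(j,k)}_{\tilde\eta,b_j}$ directly and hence the Lipschitz bound $\hat L'$. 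You should replace the ``averaging is a contraction'' step by this $A/B$ argument.
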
 
For a proof see Section \ref{Sec:proofOfMarginalReg}.

\subsubsection{Step 2. Evolution on the fibers.}
This is given by application of $(G_i;\,\bo{\Id}_{\T^{N-1}})_*$. Here there are two effects. On one hand the uniform expansion of $G_i(\cdot;\,\hat{\bo x}_i)$ and the  contraction properties of its transfer operator, decrease the Lipschitz constant by a factor less than one. On the other hand, since  a different map, $G_i(\cdot;\,\hat{\bo x}_i)$, is applied on each fiber,  a term is added to the Lipschitz constant that depends on how $G_i(\cdot;\,\hat{\bo x}_i)$ varies with $\hat{\bo x}_i$ and, given the assumptions, this term is expected to be $O(N^{-1})$.
\begin{proposition}\label{Prop:RegUnderFibMapPlusNoise}
Let $\bo H$ satisfy Assumption \ref{Ass:CondH} and Assumption \ref{Ass:SecondDerivH} with datum $(\kappa,K,E)$, with $N$  sufficiently large. For $i\in [1,N]$,  let $\bo G$ be  as in Definition \ref{Def:DefinitionofG}. Suppose that $a>0$ satisfies
 \begin{equation}\label{Eq:Contractionparameterscones}
 \kappa^{-1}(a+\mc D)< a.
 \end{equation}
Then for any $b> a$  
\[(G_i;\,\bo\Id_{\T^{N-1}})_*\left(\mc M_{a,b,L}^{(i)}\cap\mc C^2_\alpha\right)\subset  \mc M^{(i)}_{  a', b',L'}\]  
where 
\begin{align*}
a'&:=\kappa^{-1}(a+\mc D)\\
 b'&>\kappa^{-1}(b+\mc D)\\
L'&:=\Lambda L+\mc K_\#\frac{1+\alpha}{b-a}N^{-1}
\end{align*} and  where $\Lambda:=1-e^{\diam(b',\kappa^{-1}( b+\mc D))}$, with $\diam(b',\kappa^{-1}( b+\mc D))$ the diameter of $\mc V_{\kappa^{-1}( b+\mc D)}$ in $\mc V_{b'}$.
\end{proposition}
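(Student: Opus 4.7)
The plan is to exploit the skew-product structure of $(G_i;\bo\Id_{\T^{N-1}})$: because the last $N-1$ coordinates are frozen, the marginal on $\hat{\bo\pi}_i(\T^N)$ is preserved, and the conditional density of the image measure on the fiber over $\hat{\bo x}_i$ is exactly
\[
\tilde \rho_{\hat{\bo x}_i}\;=\;\mc L_{G_i(\cdot\,;\,\hat{\bo x}_i)}\rho_{\hat{\bo x}_i},
\]
where $\mc L_G$ denotes the Perron-Frobenius operator of $G$. The proof then breaks into (a) checking that $\tilde\rho_{\hat{\bo x}_i}\in \mc V_{a'}$ for every $\hat{\bo x}_i$, and (b) estimating the Lipschitz constant of $\hat{\bo x}_i\mapsto\tilde\rho_{\hat{\bo x}_i}$ in $\theta_{b'}$.

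For (a), Lemma \ref{Lem:GiHiclose} ensures that, up to terms of order $N^{-1}$ absorbable into the constants, the fibre maps $G_i(\cdot\,;\,\hat{\bo x}_i)$ are still $C^2$ uniformly expanding with expansion $\ge\kappa$ and distortion $\le\mc D$. A classical Lasota--Yorke type estimate for the logarithmic derivative of $\mc L_G\rho$ then gives $\mc L_G\mc V_a\subset \mc V_{\kappa^{-1}(a+\mc D)}=\mc V_{a'}$, which is a strict subcone of $\mc V_a$ under hypothesis \eqref{Eq:Contractionparameterscones}. The same argument yields $\mc L_G\mc V_b\subset \mc V_{\kappa^{-1}(b+\mc D)}\subset \mc V_{b'}$.

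For (b), fix $\hat{\bo x}_i,\hat{\bo x}_i'\in\T^{N-1}$ differing only in their $k$-th entry and split
\[
\theta_{b'}(\tilde\rho_{\hat{\bo x}_i},\tilde\rho_{\hat{\bo x}_i'})\le
\underbrace{\theta_{b'}\!\bigl(\mc L_{G_i(\cdot;\,\hat{\bo x}_i)}\rho_{\hat{\bo x}_i},\;\mc L_{G_i(\cdot;\,\hat{\bo x}_i)}\rho_{\hat{\bo x}_i'}\bigr)}_{\text{(I): same map}}
+\underbrace{\theta_{b'}\!\bigl(\mc L_{G_i(\cdot;\,\hat{\bo x}_i)}\rho_{\hat{\bo x}_i'},\;\mc L_{G_i(\cdot;\,\hat{\bo x}_i')}\rho_{\hat{\bo x}_i'}\bigr)}_{\text{(II): same density}}.
\]
Term (I) is handled by Birkhoff's contraction: since $\mc L_{G_i(\cdot;\,\hat{\bo x}_i)}$ maps $\mc V_b$ into the strict subcone $\mc V_{\kappa^{-1}(b+\mc D)}$ of $\mc V_{b'}$, Theorem \ref{Thm:ContCones} gives a contraction factor $\Lambda=1-e^{-\diam(b',\kappa^{-1}(b+\mc D))}<1$, so
\[
(\text{I})\le \Lambda\,\theta_b(\rho_{\hat{\bo x}_i},\rho_{\hat{\bo x}_i'})\le \Lambda L\,|x_k-x_k'|.
\]
Term (II) is a statistical stability estimate in the Hilbert metric. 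Using the chain rule $\partial_k G_i=\sum_m \partial_m H_i\cdot\partial_k\Phi_{i,m}$, the entry estimates in Assumption \ref{Ass:CondH} and Proposition \ref{Lem:Preimagefoliation} yield $|\partial_k G_i|_\infty\le \mc K_\# N^{-1}$; combined with the analogous bounds on $\partial_k\partial_i G_i$ and $\partial_k\partial_i^2 G_i$ obtained from Assumption \ref{Ass:SecondDerivH} and the second-derivative estimates in Proposition \ref{Lem:Preimagefoliation}, one obtains
\[
d_{C^2}\bigl(G_i(\cdot;\,\hat{\bo x}_i),\,G_i(\cdot;\,\hat{\bo x}_i')\bigr)\le \mc K_\#\,N^{-1}|x_k-x_k'|.
\]
Then a statistical stability lemma gives (II)$\,\le\,\mc K_\#\,\frac{1+\alpha}{b-a}\cdot d_{C^2}(G_i(\cdot;\,\hat{\bo x}_i),G_i(\cdot;\,\hat{\bo x}_i'))$, so that (I)+(II)$\,\le\,L'|x_k-x_k'|$ with the claimed $L'$.

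The main obstacle is term (II): one must control $\theta_{b'}(\mc L_G\rho,\mc L_{\tilde G}\rho)$ for two $C^2$-close expanding maps $G,\tilde G$ acting on the same density $\rho$. The strategy is to estimate the log-ratio $\log(\mc L_G\rho/\mc L_{\tilde G}\rho)$ and its derivative pointwise. After expanding both transfer operators via the inverse branches of $G$ and $\tilde G$ and applying the mean value theorem, the leading contribution is driven by $\rho'/\rho$ (bounded by $a$) and $\rho''/\rho$ (bounded by $\alpha$ via the $\mc C^2_\alpha$ assumption), multiplied by derivatives of $G-\tilde G$ up to order two; the denominator $b-a$ arises when converting the resulting uniform bound on the log-ratio into a $\theta_{b'}$ estimate using the gap between the cone parameters (exactly the mechanism used already in Lemma \ref{Lem:CompByPhi}). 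This is where the use of a ``weaker'' metric $\theta_{b'}$, with $b'>a'$, is essential: it allows a bounded Hilbert perturbation from a small $C^2$ perturbation of the map, which would fail if we insisted on measuring distances in $\theta_{a'}$.
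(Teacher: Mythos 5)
Your argument is correct and follows the same route as the paper: triangle inequality splitting into a Birkhoff contraction term (same map, different densities) and a statistical stability term (same density, $C^2$-close fibre maps), with the latter handled via Proposition \ref{Prop:DistanceOpHilbMetric}-type estimates. In fact you state two points more carefully than the paper's own proof does: the factor $|x_k-x_k'|$ in the $d_{C^2}$ bound on the fibre maps (which the paper writes as just $EN^{-1}$ but then implicitly uses in the form $\mc K_\#N^{-1}|x_j-x_j'|$), and the origin of the $\frac{1+\alpha}{b-a}$ factor in $L'$ (from the $\mc C^2_\alpha$ bound on $\rho''/\rho$ and the gap between cone parameters in the Hilbert-metric comparison).
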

The proof of this proposition is given in Section \ref{Sec:ProofLemEvonfibers}.

\subsubsection{Step 3. Evolution of the fibers.}
Recall that the evolution of the fibers is prescribed by $(\Id_\T;\,\hat{\bo G}_{i})_*$.

\begin{proposition}\label{Prop:Evetamarginal}
Under Assumption \ref{Ass:CondH} and Assumption \ref{Ass:SecondDerivH} with datum $(\kappa, K,E)$, and for $N$ sufficiently large, given $\mu\in\mc M^{(i)}_{a',b',L'}(\T^N)$ with density $\eta$ and such that $\hat{\bo \Pi}_i\mu\in \mc M_{a,b,L}(\T^{N-1})$, then 
\[
(\Id_\T;\,\hat{\bo G}_{i})_*\mu\in \mc M^{(i)}_{a',\,b',\, \mc L''\cdot L'}
\]
with \[
\mc L'':=(1+a)\kappa^{-1}+\mc D K_{L'}+\mc K_\#N^{-1}
\] 
and $K_{L'}\rightarrow 1$ as $L'\rightarrow 0$.
\end{proposition}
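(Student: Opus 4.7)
The plan is to compute $\nu:=(\Id_\T;\hat{\bo G}_i)_*\mu$ explicitly, identify its disintegration along the $i$-th coordinate as a weighted sum over inverse branches of $\hat{\bo G}_i$, and bound its $\theta_{b'}$-Lipschitz modulus by separating the variation of the weights from the variation of the conditional densities.

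Since $\hat{\bo G}_i(y_i;\hat{\bo y}_i)=\hat{\bo H}_i(0;\hat{\bo y}_i)$ does not depend on $y_i$, the map $(\Id_\T;\hat{\bo G}_i)$ is the identity on the $i$-th coordinate and acts by $\hat{\bo G}_i$ on $\T^{N-1}$. Writing $\eta=\hat\eta\cdot\eta_{\hat{\bo y}_i}$ (marginal times conditional) and using a standard change of variables, I obtain
\[
\nu_{\hat{\bo x}_i}(x_i)\;\propto\;\sum_\beta C_\beta(\hat{\bo x}_i)\,\eta_{\hat{\bo y}_i^\beta(\hat{\bo x}_i)}(x_i),\qquad C_\beta(\hat{\bo x}_i):=\frac{\hat\eta(\hat{\bo y}_i^\beta)}{|\det D\hat{\bo G}_i(\hat{\bo y}_i^\beta)|},
\]
where $\{\hat{\bo y}_i^\beta\}_\beta=\hat{\bo G}_i^{-1}(\hat{\bo x}_i)$ enumerates the inverse branches. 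The inclusion $\nu_{\hat{\bo x}_i}\in\mc V_{a'}$ is immediate from the convex-cone property of $\mc V_{a'}$ and the hypothesis $\eta_{\hat{\bo y}_i^\beta}\in\mc V_{a'}$.

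For $\hat{\bo x}_i,\hat{\bo x}_i'$ differing only in $x_k$ by $\delta$, a Neumann-series argument applied to $D\hat{\bo G}_i$ (diagonal $\ge\kappa$ and off-diagonal $\le EN^{-1}$ by Assumption \ref{Ass:CondH}) shows that $\hat{\bo y}_i^\beta$ moves by $\approx\delta/\kappa$ in its $k$-th coordinate and by $O(\delta/N)$ in every other coordinate. I then interpolate via $\tilde\nu:=\sum_\beta C_\beta(\hat{\bo x}_i')\,\eta_{\hat{\bo y}_i^\beta(\hat{\bo x}_i)}$ and split $\theta_{b'}(\nu_{\hat{\bo x}_i},\nu_{\hat{\bo x}_i'})$ via the triangle inequality into a \emph{function-change} part $\theta_{b'}(\nu_{\hat{\bo x}_i},\tilde\nu)$ and a \emph{weight-change} part $\theta_{b'}(\tilde\nu,\nu_{\hat{\bo x}_i'})$. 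The function-change part is bounded --- using the standard contractivity of the Hilbert metric under convex combinations with common positive weights --- by $\max_\beta\theta_{b'}(\eta_{\hat{\bo y}_i^\beta(\hat{\bo x}_i)},\eta_{\hat{\bo y}_i^\beta(\hat{\bo x}_i')})$, and invoking the Lipschitz hypothesis on $\mu$ along each coordinate move of $\hat{\bo y}_i^\beta$ together with the log-Lipschitz bound $|\partial_k\log\hat\eta|\le a$ (inherited from $\hat{\bo\Pi}_i\mu\in\mc M_{a,b,L}$) governing the drift of the conditional normalization produces the $(1+a)\kappa^{-1}$ piece of $\mc L''$ up to a $\mc K_\# N^{-1}$ remainder coming from the off-diagonal moves.

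The weight-change part $\theta_{b'}(\tilde\nu,\nu_{\hat{\bo x}_i'})$ is the delicate core of the argument and the main obstacle. If all $\eta_{\hat{\bo y}_i^\beta}$ coincided, both sums would be scalar multiples of a common function and the $\theta_{b'}$-distance would vanish identically; the estimate must therefore extract a factor proportional to the spread of the $\eta_{\hat{\bo y}_i^\beta}$, which is of order $L'$ by the Lipschitz hypothesis. Differentiating gives $\log(C_\beta(\hat{\bo x}_i')/C_\beta(\hat{\bo x}_i))=[\partial_{x_k}\log\hat\eta-\partial_{x_k}\log|\det D\hat{\bo G}_i|]\,\delta+O(\delta^2)$, and the distortion term $\partial_{x_k}\log|\det D\hat{\bo G}_i|$ is bounded by $\mc D+O(N^{-1})$ after combining Assumption \ref{Ass:SecondDerivH} with the preimage-movement estimates. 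A first-order expansion of the Hilbert metric around the ``all-equal'' configuration then produces a contribution $\mc D K_{L'}\cdot L'\delta$ with $K_{L'}\to 1$ as $L'\to 0$, and assembling all terms yields $\theta_{b'}(\nu_{\hat{\bo x}_i},\nu_{\hat{\bo x}_i'})\le\mc L''\,L'\,\delta$ as claimed. The hard step is isolating the factor $L'$ cleanly from this weight-change term: two positive linear combinations of the same family with slightly different weights can be shown close in $\theta_{b'}$ only to the extent that the underlying family itself disperses, which is precisely the projective-invariance argument that gives rise to the constant $K_{L'}$.
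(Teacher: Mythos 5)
Your architecture --- expressing the pushed-forward disintegration $\nu_{\hat{\bo x}_i}$ as a positive combination over inverse branches, interpolating with a common-weights sum $\tilde\nu$, and bounding the two legs of the triangle inequality via Hilbert-metric contractivity of convex combinations (Proposition~\ref{Prop:HilbConvCombDist}) and the projective-invariance estimate (Proposition~\ref{Prop:ConvCombDiffCoeff}) --- is exactly the mechanism inside the paper's Lemma~\ref{Lem:ApplicationDiffeoComponent}, and you correctly identify that extracting the factor $L'$ from the weight-change leg is the delicate point. However, applying this split directly to the $(N-1)$-dimensional inverse branches $\hat{\bo y}_i^\beta$ of $\hat{\bo G}_i$ loses a power of $N$ compared with the statement you want to prove, and this is a genuine gap. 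The problem is in the claim of a ``$\mc K_\#N^{-1}$ remainder coming from the off-diagonal moves'': when $\hat{\bo x}_i$ is displaced by $\delta$ along a single coordinate $k$, the preimage $\hat{\bo y}_i^\beta$ moves by $\approx\kappa^{-1}\delta$ in coordinate $k$ and by $(D\hat{\bo G}_i^{-1})_{mk}\delta$ in each other coordinate $m$, with $|(D\hat{\bo G}_i^{-1})_{mk}|\le\mc K_\#N^{-1}$ by Proposition~\ref{Prop:DHInvEstimates}; since the Lipschitz hypothesis $\mu\in\mc M^{(i)}_{a',b',L'}$ is additive across coordinate displacements, the total off-diagonal contribution to $\theta_{b'}(\eta_{\hat{\bo y}_i^\beta(\hat{\bo x}_i)},\eta_{\hat{\bo y}_i^\beta(\hat{\bo x}_i')})$ is $L'\sum_{m\neq k}|(D\hat{\bo G}_i^{-1})_{mk}|\delta\approx\mc K_\#L'\delta$, an \emph{order-one} remainder in $N$ rather than $\mc K_\#N^{-1}L'\delta$. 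The same accumulation contaminates the weight-change leg through $\partial_{x_k}\log C_\beta=\sum_m\bigl[\partial_m\log\hat\eta-\partial_m\log|\det D\hat{\bo G}_i|\bigr](D\hat{\bo G}_i^{-1})_{mk}$. Your argument therefore delivers at best $\mc L''=(1+a)\kappa^{-1}+\mc DK_{L'}+\mc K_\#$, not the claimed $(1+a)\kappa^{-1}+\mc DK_{L'}+\mc K_\#N^{-1}$.

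The paper sidesteps this accumulation by refusing to work with $(D\hat{\bo G}_i)^{-1}$ directly: for each $j\neq i$ it first applies Proposition~\ref{Lem:Preimagefoliation} \emph{to $\hat{\bo G}_i$ itself} to obtain a straightening chart $\bo\Psi_{i,j}$ and a skew-product factorization $\hat{\bo G}_i=\bo S\circ\bo\Psi_{i,j}^{-1}$, then runs the one-coordinate transfer through Lemma~\ref{Lem:ApplicationDiffeoComponent}. Two things change. First, the fibre map $S_j$ acts only on the $j$-th coordinate, so the function-change leg incurs \emph{no} off-diagonal drift from the transfer step itself. Second, the drift incurred by $\bo\Psi_{i,j}$ is governed by the off-diagonal bounds in Proposition~\ref{Lem:Preimagefoliation}, which for $k\neq j$ are $O(N^{-2})$ --- one full power of $N$ better than the off-diagonal entries of $(D\hat{\bo G}_i)^{-1}$ --- so that summing over the remaining $N-2$ coordinates still gives $O(N^{-1})$. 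This gain of a power of $N$ on the off-diagonals of a straightening chart versus a raw inverse Jacobian is precisely what your proposal is missing; recovering the $\mc K_\#N^{-1}$ remainder requires inserting that intermediate coordinate change rather than attacking $\hat{\bo G}_i$ head-on.
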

\begin{remark}
Recall that in the above $\mc D\rightarrow 0$ as $K,E\rightarrow 0$.
\end{remark}
The proof of this proposition is given in Appendix \ref{App:Prop:Evetamarginal}.

\subsubsection{Invariance of quasi-product measures}\label{Subsec:RegOfFoliations}
\begin{proposition}\label{Prop:LipschitzReg2}
Assume that $\bo H:\T^N\rightarrow \T^N$ is a local diffeomorphism satisfying Assumption \ref{Ass:CondH} and Assumption \ref{Ass:SecondDerivH}. Furthermore, assume that: 
\begin{itemize}
\item[i)] for $\mc K$ as in Lemma \ref{Lem:CompByPhi}
\begin{equation}\label{Prop:CondOne}
\max\{\kappa^{-1}\mc K,\,\kappa^{-1}\mc K^2\}<1
\end{equation}
\item[ii)]   there are $a_0>0$ satisfying\footnote{The existence of such an $a_0$ is always guaranteed provided that $\kappa^{-1}\mc K<1$}
\[
\kappa^{-1}(\mc Ka_0+a_\Phi+\mc D)<a_0
\] 
and  $b_0>\mc Ka_0+a_\Phi$ such that, calling $\Lambda:=1-e^{\diam(b_0,\kappa^{-1}(b_0+\mc D))}$ where $\diam(b_0,\kappa^{-1}(b_0+\mc D))$ is the diameter of $\mc V_{b_0,\kappa^{-1}(b_0+\mc D)}$ in $\mc V_{b_0}$,  
\begin{equation}\label{Eq:CondTwo}
[(1+a_0)\kappa^{-1} +\mc D]\Lambda\mc L <1.
\end{equation}
\end{itemize}
Then there are $C\ge 0$, $\alpha_0\ge 0$ independent of $N$, such that for all $N$ sufficiently large, 
 \[
 \bo H_*(\mc M_{a_0,b_0,CN^{-1}}\cap \mc C^2_{\alpha_0})\subset \mc M_{a_0,b_0,CN^{-1}}\cap\mc C^2_{\alpha_0}.
 \]
\end{proposition}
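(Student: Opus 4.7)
The plan is to chain Propositions \ref{Prop:RegPhieta}, \ref{Prop:RegUnderFibMapPlusNoise}, and \ref{Prop:Evetamarginal} along the decomposition \eqref{Eq:DecompEvolutionOfMeasure} $\bo H_* = (\Id_\T;\hat{\bo G}_i)_*\circ(G_i;\bo\Id_{\T^{N-1}})_*\circ\bo\Phi_{i*}^{-1}$, separately for each $i\in[1,N]$, and to verify that conditions (i) and (ii) make the parameter triple $(a,b,L)$ close up on itself once $C$ and $\alpha_0$ are taken large and $N$ is sufficiently large.

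Fix $i$ and $\mu\in\mc M_{a_0,b_0,CN^{-1}}\cap\mc C^2_{\alpha_0}$. Proposition \ref{Prop:RegPhieta}(i) gives $\bo\Phi_{i*}^{-1}\mu\in\mc M^{(i)}_{a_1,b_1,L_1}\cap\mc C^2_{\alpha_1}$ with $a_1=\mc Ka_0+a_\Phi$, $\alpha_1=\mc K^2\alpha_0+\mc K_\#$, $L_1=(\mc L+O(N^{-1}))CN^{-1}+O(N^{-1})$, and any $b_1\ge\mc Kb_0+b_\Phi$; part (ii) yields that the $\hat{\bo\Pi}_i$-marginal lies in $\mc M_{\hat a_1,\hat b_1,O(N^{-1})}(\T^{N-1})$ with $\hat a_1=\hat{\mc K}a_0+a_\Phi$. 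Condition (ii) gives $\kappa^{-1}(a_1+\mc D)<a_0$, so Proposition \ref{Prop:RegUnderFibMapPlusNoise} applies and returns $(G_i;\bo\Id_{\T^{N-1}})_*\bo\Phi_{i*}^{-1}\mu\in\mc M^{(i)}_{a_2,b_2,L_2}$ with $a_2=\kappa^{-1}(a_1+\mc D)<a_0$ and $L_2=\Lambda L_1+\mc K_\#(1+\alpha_1)(b_1-a_1)^{-1}N^{-1}$; condition (i), together with a large enough choice of $b_0$, gives $\mc Kb_0+b_\Phi<\kappa b_0-\mc D$, which allows the choice $\kappa^{-1}(b_1+\mc D)<b_2\le b_0$. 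Since $(G_i;\bo\Id)_*$ preserves the $\hat{\bo\Pi}_i$-marginal, Proposition \ref{Prop:Evetamarginal} produces $\bo H_*\mu\in\mc M^{(i)}_{a_2,b_2,\mc L''L_2}$ with $\mc L''=(1+\hat a_1)\kappa^{-1}+\mc D\,K_{L_2}+\mc K_\#N^{-1}$. As $N\to\infty$, $L_2\to 0$, hence $K_{L_2}\to 1$ and $\mc L''\to(1+a_0)\kappa^{-1}+\mc D$, so condition (ii) forces $\mc L''\Lambda\mc L\le 1-\epsilon$ for some $\epsilon>0$ and all $N$ large; then $L_3=\mc L''L_2=\mc L''\Lambda\mc L\,CN^{-1}+O(N^{-1})\le CN^{-1}$ provided $C\ge C'/\epsilon$ for the implicit constant $C'$. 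Varying $i$ gives $\bo H_*\mu\in\mc M_{a_0,b_0,CN^{-1}}$. The $\mc C^2_{\alpha_0}$ invariance is handled in parallel: $\alpha$ is multiplied by $\mc K^2$ in Step 1 and then contracted by a factor of order $\kappa^{-2}$ by the expanding transfer operator of $G_i$ in Step 2, so condition (i) ($\mc K^2\kappa^{-2}<1$) ensures that any $\alpha_0\ge \mc K_\#/(1-\mc K^2\kappa^{-2})$ is preserved.

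The main obstacle is the closure of the Lipschitz constant $L$ at order $N^{-1}$: the three steps contribute respective multipliers $\mc L$ (possibly $>1$, from the straightening change of charts), $\Lambda<1$ (pure Hilbert-metric contraction of the expanding operator on the leaves), and $\mc L''\approx(1+a_0)\kappa^{-1}+\mc D$ (from the motion of the foliation), together with additive $O(N^{-1})$ errors, and the iteration closes up only if the combined factor $\mc L''\Lambda\mc L$ is strictly less than $1$ --- this is exactly condition (ii), which expresses quantitatively that the expansion of $\bo H$ beats the combined inflating effect of the straightening $\bo\Phi_i^{-1}$ and of $\hat{\bo G}_i$. A structural point that must not be lost in the bookkeeping is that $L$ is measured in the Hilbert metric of the \emph{wider} cone $\mc V_{b_0}\supset\mc V_{a_0}$: this is what allows the one-step contraction factor $\Lambda<1$ to arise from a single application of the expanding transfer operator on the fibers, avoiding the need for a spectral-gap argument across iterations, which would be incompatible with the $N^{-1}$ additive errors present at each step.
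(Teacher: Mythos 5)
Your chaining of Propositions \ref{Prop:RegPhieta}, \ref{Prop:RegUnderFibMapPlusNoise}, and \ref{Prop:Evetamarginal} along the skew-product decomposition \eqref{Eq:DecompEvolutionOfMeasure} is exactly the paper's argument for the invariance of the Lipschitz parameter $L$, and your bookkeeping there (including the observation about measuring $L$ in the weaker cone $\mc V_{b_0}$ so that the one-step contraction $\Lambda<1$ can be harvested against the additive $O(N^{-1})$ errors) is correct and faithful.

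However, your one-sentence treatment of the $\mc C^2_{\alpha_0}$ invariance has a real gap. You claim that $\alpha$ gets multiplied by $\mc K^2$ under $\bo\Phi_{i*}^{-1}$ and then contracted by $\kappa^{-2}$ under the fiber dynamics, so that $\mc K^2\kappa^{-2}<1$ (i.e.\ $\kappa^{-1}\mc K<1$) closes the loop. That only handles the \emph{diagonal} second derivatives $\partial_i^2\rho/\rho$. The definition of $\mc C^2_{\alpha}$ requires a bound on every $\partial_j\partial_i\rho/\rho$, and for $j\neq i$ the single straightening $\bo\Phi_i$ is not enough: one must also straighten the $j$-th coordinate inside $\T^{N-1}$, i.e.\ pass through the further change of charts $\bo\Psi_{i,j}$ built from $\hat{\bo G}_i$ (as in Proposition \ref{Prop:Evetamarginal}), so that $\bo H_*$ decomposes as $(\Id_\T;\Id_\T;\hat{\bo S}_j)_*(\Id_\T;S_j;\bo\Id)_*(G_i;\bo\Id)_*\bo\Psi_{i,j*}^{-1}\bo\Phi_{i*}^{-1}$. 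Applying Proposition \ref{Prop:RegPhieta}(i) \emph{twice} inflates $\alpha$ by $\mc K^4$, and the two one-dimensional expanding pushforwards $G_i$ and $S_j$ then contract by $\kappa^{-2}$, so the condition that actually closes the loop is $\kappa^{-2}\mc K^4<1$, i.e.\ $\kappa^{-1}\mc K^2<1$, which is strictly stronger than the $\kappa^{-1}\mc K<1$ you wrote. This is precisely why condition (i) of the statement is $\max\{\kappa^{-1}\mc K,\kappa^{-1}\mc K^2\}<1$ rather than $\kappa^{-1}\mc K<1$ alone; without addressing the mixed derivatives your proof does not establish invariance of $\mc C^2_{\alpha_0}$.
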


\section{Concentration inequalities for quasi-product  measures}\label{Sec:Quasi-Product}
The space $\mc M_{a,b,O(N^{-1})}(\T^N)$, contains measures whose disintegrations with respect to coordinate foliations have $\log$-Lipschitz densities and  Lipschitz dependence on the leaves with constant of order $N^{-1}$. For these measures the dependence between the coordinates tends to zero when $N\rightarrow \infty$, from which the denomination \emph{quasi-product measures}. It turns out that measures in $\mc M_{a,b,O(N^{-1})}$ satisfy concentration estimates, similar to those of product measures.

\begin{definition}
Given a function $\psi:\T^k\rightarrow \R$, define its oscillation with respect to the $j$-th,  coordinate as 
\[
\Osc_j\psi:=\sup\left\{|\psi(x_j;\,\hat{\bo x}_j)-\psi(x_j';\,\hat{\bo x}_j)|:\,x_j,x_j'\in\T,\,\hat{\bo x}_j\in \T^{k}\right\}.
\]
For $k\le N$ and $\alpha\ge 0$, define 
\[
\mc O_{\alpha}(\T^k):=\{\psi:\T^{k}\rightarrow \R:\, \forall j\in[1,k]\,\,\Osc_j(\psi)\le \alpha,\, \psi\in L^1(\T^k,\,\Leb)\}.
\]
\end{definition}

We are going to prove the following theorem:
\begin{theorem}\label{Thm:ConcMeasure}
Fix $a,b>0$.  Then, there is a constant $K>0$ such that  for every $N\in \N$, $\mu\in\mc M_{a,b,N^{-1}}$, and $g\in \mc O_{N^{-1}}(\T^{N})$ 
\[
\mu\left(\left|g-\mb E_\mu[g]\right|\ge\epsilon\right)\le 2\exp\left[-\frac{\epsilon^2N}{1+O(N^{-1})}\right]
\]
\end{theorem}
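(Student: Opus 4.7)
Adopt the classical McDiarmid--Doob martingale method, adapted to the weak dependence of quasi-product measures. Fix $\mu\in \mc M_{a,b,N^{-1}}$ and $g\in \mc O_{N^{-1}}(\T^N)$. With the filtration $\mc F_k := \sigma(x_1,\ldots,x_k)$, define the Doob martingale $S_k := \mb E_\mu[g\mid \mc F_k]$, so $S_0 = \mb E_\mu g$, $S_N = g$ $\mu$-a.s., and $D_k := S_k - S_{k-1}$ is a bounded martingale difference. Once a uniform pointwise bound $\lvert D_k\rvert \le c_N$ with $c_N = (1+O(N^{-1}))/N$ is in hand, Hoeffding's lemma applied to the conditional distribution of $D_k$ given $\mc F_{k-1}$, together with the tower property and Markov--Chernoff optimisation, produce
\[
\mu\bigl(\lvert g - \mb E_\mu g\rvert \ge \epsilon\bigr) \le 2 \exp\!\left(-\frac{\epsilon^2}{2 N c_N^2}\right) = 2 \exp\!\left(-\frac{\epsilon^2 N}{1+O(N^{-1})}\right),
\]
which is precisely the asserted bound.

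The entire task therefore collapses to proving $\Osc_{x_k}S_k \le (1+O(N^{-1}))/N$. Representing $S_k(x_1,\ldots,x_k) = \int g\, d\nu_{x_1,\ldots,x_k}$, where $\nu_{x_1,\ldots,x_k}$ is the regular conditional of $(x_{k+1},\ldots,x_N)$ under $\mu$, and comparing values at $x_k$ and $x_k'$ splits the difference cleanly:
\[
S_k(x_k) - S_k(x_k') = \int \bigl[g(x_k,\cdot)-g(x_k',\cdot)\bigr]\,d\nu_{x_1,\ldots,x_k} + \int g(x_k',\cdot)\,\bigl[d\nu_{x_1,\ldots,x_k} - d\nu_{x_1,\ldots,x_{k-1},x_k'}\bigr].
\]
The first integral is bounded by $\Osc_k g \le N^{-1}$, furnishing the leading $1/N$. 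For the second, observe first that disintegrating $\mu$ along any coordinate $j>k$ and then conditioning on $x_1,\ldots,x_k$ leaves the disintegrations $\rho_{\hat{\bo x}_j}$ unchanged, so $\nu' := \nu_{x_1,\ldots,x_{k-1},x_k'}$ itself belongs to a class of type $\mc M_{a,b,N^{-1}}$ on $\T^{N-k}$. Next compute the Radon--Nikodym derivative $d\nu_{x_1,\ldots,x_k}/d\nu' = r(y)/\mb E_{\nu'}[r]$ with $r(y) = \rho_{\hat{\bo x}_k(y)}(x_k)/\rho_{\hat{\bo x}_k(y)}(x_k')$; the defining Hilbert--Lipschitz bound $\theta_b(\rho_{\hat{\bo x}_k},\rho_{\hat{\bo x}_k'}) \le N^{-1}\lvert y_j - y_j'\rvert$, converted to pointwise density ratios by the standard comparison between $\theta_b$ and the uniform logarithmic distance, yields $\Osc_{y_j}r \le \mc K_\# N^{-1}$ for every $j > k$. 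After normalisation, the second integral is the covariance $\Cov_{\nu'}(g(x_k',\cdot),r)/\mb E_{\nu'}[r]$.

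The main obstacle is consequently a covariance estimate for quasi-product measures: for every $\nu' \in \mc M_{a,b,N^{-1}}(\T^M)$ with $M \le N$ and every pair $f,h \in \mc O_{\mc K_\# N^{-1}}(\T^M)$, show $\lvert \Cov_{\nu'}(f,h) \rvert \le \mc K_\# N^{-1}$. In the product case this is classical: writing $f - \mb E f = \sum_j D_j f$ via the coordinate Doob decomposition for $\nu'$, the cross terms $\mb E[D_i f\, D_j h]$ vanish for $i \ne j$ and the diagonal ones are bounded by $\Osc_j f \cdot \Osc_j h \le N^{-2}$, summing to $M N^{-2} \le N^{-1}$. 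For quasi-product $\nu'$ the cross terms no longer vanish, but each picks up only an $O(N^{-1})$ factor from the Hilbert--Lipschitz distortion of the relevant disintegration, making them of size $O(N^{-3})$; the at most $M^2\le N^2$ such terms then contribute $O(N^{-1})$ in total. Combining the $\Osc_k g$ contribution with this covariance bound gives $\Osc_{x_k}S_k \le N^{-1}(1+O(N^{-1}))$, completing the pointwise estimate on $\lvert D_k\rvert$ and hence the theorem via the Azuma--Hoeffding step in the first paragraph.
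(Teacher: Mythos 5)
Your overall architecture — Doob martingale, pointwise bound on differences of order $(1+O(N^{-1}))/N$, then Azuma--Hoeffding — is the same as the paper's, and the first half of your split of $S_k(x_k)-S_k(x_k')$ (the $\Osc_k g$ term plus the Radon--Nikodym rewrite of the remainder as $\Cov_{\nu'}(g(x_k',\cdot),r)/\mb E_{\nu'}[r]$, with $\Osc_{y_j} r=O(N^{-1})$ extracted from $\theta_b$) is correct and aligns with what the paper's quantity $B(k)$ is secretly measuring.

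The gap is in your justification of the covariance lemma. You claim that in a product measure the cross terms $\mb E[D_i f\, D_j h]$ of the coordinate Doob decomposition vanish, while for a quasi-product measure they become nonzero but pick up an extra $O(N^{-1})$ factor. That is not where the difficulty lives: martingale increments are orthogonal for \emph{every} underlying measure, so $\mb E[D_i f\, D_j h]=0$ for $i\neq j$ regardless of whether $\nu'$ is a product. The loss for a quasi-product measure is entirely in the \emph{diagonal} terms. Writing $D_j f=\mb E[f\mid\mc F_j]-\mb E[f\mid\mc F_{j-1}]$, one has $|D_j f|\le\Osc_{y_j}\bigl(\mb E[f\mid\mc F_j]\bigr)$, and for a product measure conditional expectation commutes with changing $y_j$, giving $|D_j f|\le\Osc_j f$ and hence $\Cov\le\sum_j\Osc_jf\,\Osc_jh=O(N^{-1})$. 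For $\nu'\in\mc M_{a,b,N^{-1}}$, though, changing $y_j$ also changes the conditional law you average against, and the extra contribution to $\Osc_{y_j}\mb E[f\mid\mc F_j]$ is exactly a quantity of the form $\sup\int\psi\,d[\nu'_{\dots y_j\dots}-\nu'_{\dots y_j'\dots}]$ tested against $\psi$ with bounded coordinate oscillation — the $B(k)$ of the paper. You cannot bound it by $\Osc_j f$ without already knowing that this total-variation-type quantity is $O(N^{-1})$, which is the content of the theorem you are trying to prove. So the covariance lemma as stated is plausible, but your argument for it is circular in exactly the way the paper's $B(k)$ recursion is designed to break: the paper proves $B(N-1)=O(N^{-1})$ directly from the $\mc M_{a,b,N^{-1}}$ definition, then propagates an $A/N$ ansatz backward through the nonlinear recursion $B(k-1)\le B(k)[1+3B(N-1)]+2B(N-1)B(k)^2$. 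Something of that recursive flavour (or an honest coupling/interpolation argument for $\Cov_{\nu'}(f,h)$) is still needed in your proof, and is currently missing.
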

To prove this concentration result we are going to use the martingale approach (Azuma-Hoeffding) as in the classical proofs of results such as McDirmind's inequality \cite{mcdiarmid1989method,hoeffding1994probability,azuma1967weighted} (for a general treatment to the concentration of measure phenomenon see \cite{talagrand1995concentration,ledoux2001concentration}). This approach relies on the following theorem 
\begin{theorem}[Azuma-Hoeffding]\label{Thm:MartingaleMethod}
Let $\{X_n\}_{n\ge 0}$ be a martingale with respect to the increasing sequence of $\sigma-$algebras $\{\mc F_n\}_{n\ge 0}$ on $(\Omega,\mc F,\mb P)$ such that $X_0=0$. Let ${M_i}:=X_{i+1}-X_i$, if $|{M_i}|\le \sigma_i$, then, for every $\epsilon>0$ and $n\in \N$
\begin{equation}
\mb P(X_n\ge \alpha)\le \exp\left[-\frac{\alpha^2}{\sum_{i=0}^{n-1}\sigma_i^2}\right].
\end{equation}
\end{theorem}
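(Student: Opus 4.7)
The plan is to apply the martingale (Azuma--Hoeffding) method of Theorem~\ref{Thm:MartingaleMethod} to the Doob martingale $X_k := \mb E_\mu[g \mid \mc F_k] - \mb E_\mu[g]$ with $\mc F_k := \sigma(\pi_1,\ldots,\pi_k)$, so that $X_0=0$, $X_N = g-\mb E_\mu[g]$ and the increments are $M_k := X_k-X_{k-1}$. Writing $\bar g_k(x_1,\ldots,x_k) := \mb E_\mu[g\mid\mc F_k]$, the standard reduction gives
\[
|M_k(\bo x)| \le \sup_{x_k,x_k'\in\T}\bigl|\bar g_k(x_1,\ldots,x_{k-1},x_k)-\bar g_k(x_1,\ldots,x_{k-1},x_k')\bigr|,
\]
so the task reduces to bounding this oscillation by some $\sigma_k$ with $\sum_{k=1}^N\sigma_k^2 \le (1+O(N^{-1}))/N$; the conclusion then follows by applying Theorem~\ref{Thm:MartingaleMethod} to $\pm(g-\mb E_\mu[g])$.

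To bound the oscillation I would split
\begin{align*}
\bar g_k(x_1^{k-1},x_k)-\bar g_k(x_1^{k-1},x_k') &= \int\!\bigl[g(x_1^{k-1},x_k,\bo z)-g(x_1^{k-1},x_k',\bo z)\bigr]\,d\mu(\bo z\mid x_1^{k-1},x_k) \\
&\quad + \int\! g(x_1^{k-1},x_k',\bo z)\,d\bigl[\mu(\bo z\mid x_1^{k-1},x_k)-\mu(\bo z\mid x_1^{k-1},x_k')\bigr],
\end{align*}
calling these terms (A) and (B). Term (A) is immediately bounded by $\Osc_k g\le N^{-1}$ from the hypothesis $g\in\mc O_{N^{-1}}$. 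For (B), the $1$-coord disintegration along the $k$-th coordinate writes $\rho(\bo x)=\rho_{\hat{\bo x}_k}(x_k)\,\hat\rho_k(\hat{\bo x}_k)$, showing that the conditional law of $\bo z=(x_{k+1},\ldots,x_N)$ given $(x_1^{k-1},x_k)$ is obtained from the measure proportional to $\hat\rho_k$ (which does not involve $x_k$) by reweighting with the factor $\rho_{(x_1^{k-1},\bo z)}(x_k)$, whose variation in $\bo z$ is controlled coordinate-by-coordinate at rate $N^{-1}$ in the Hilbert metric $\theta_b$ by the quasi-product hypothesis $\mu\in\mc M_{a,b,N^{-1}}$.

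The main obstacle is turning this Lipschitz control into a sufficiently small bound on (B). My strategy is to decompose the conditional law sequentially as
\[
d\mu(x_{k+1},\ldots,x_N\mid x_1^k)=\prod_{j=k+1}^N q_j(x_j\mid x_1^k,x_{k+1}^{j-1})\,dx_j
\]
and telescope the difference between the two product densities (at $x_k$ and $x_k'$) into $N-k$ single-factor differences. Each sequential conditional $q_j$ inherits Lipschitz dependence on $x_k$ with constant $O(N^{-1})$ from Definition~\ref{Def:Mabl} by averaging the $1$-coord disintegration over the still-to-be-integrated coordinates, and the $\mc V_a$ log-Lipschitz cone structure converts each $\theta_b$-bound into an $L^1$-bound. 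The crucial cancellation is that since $q_j(\cdot\mid x_k)-q_j(\cdot\mid x_k')$ integrates to zero in $x_j$, in each term of the telescoped sum I can subtract from $g$ its $x_j$-average before applying $L^1$ duality, replacing the uncontrolled factor $|g|_\infty$ by $\Osc_j g\le N^{-1}$; this makes each of the $N-k$ summands of order $N^{-2}$.

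Combining (A) and (B) gives $|M_k|\le\sigma_k$ with $\sigma_k=N^{-1}+O((N-k)N^{-2})$, and inserting this into $\sum\sigma_k^2$ yields $(1+O(N^{-1}))/N$ after careful tracking of the constants from the $\theta_b$-to-$L^1$ comparison; Azuma--Hoeffding then closes the argument. Beyond the cancellation trick above, the main technical subtleties are (i) transferring the Lipschitz regularity from the $1$-coord disintegration of $\mu$ in $\theta_b$ to $L^1$-Lipschitz control on the sequential conditionals $q_j$, which rests on checking that Bayesian-type averaging of positive weighted densities does not inflate the $\theta_b$-Lipschitz constant, and (ii) ensuring that the constants coming from the log-Lipschitz comparison depend only on $a,b$ and not on $N$.
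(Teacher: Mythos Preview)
Your proposal does not address the stated theorem. Theorem~\ref{Thm:MartingaleMethod} is the classical Azuma--Hoeffding inequality, which the paper quotes from the literature and does not prove; what you have written is instead a sketch of a proof of Theorem~\ref{Thm:ConcMeasure}, the concentration inequality for quasi-product measures, which \emph{uses} Azuma--Hoeffding as a black box.

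Treating your text as an attempt at Theorem~\ref{Thm:ConcMeasure}, your overall architecture (Doob martingale, the split into (A) and (B)) coincides with the paper's. The difference is in how (B) is controlled. The paper introduces the quantities $B(k)$ of~\eqref{Eq:Defb(k)} and proves \emph{recursively} (Proposition~\ref{Prop:BoundB(k)}, via Lemmas~\ref{Lem:BoundMarginak+1toN} and~\ref{Lem:EstBk2}) that $B(k)\le A/N$ uniformly in $k$; this yields $|g_{k+1}-g_k|\le N^{-1}+O(N^{-2})$ with the $O(N^{-2})$ independent of $k$. You instead propose to telescope the product of sequential conditionals $q_j$ and bound each of the $N-k$ factors directly. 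There are two issues. First, the step you flag as ``subtlety~(i)'' is the whole difficulty: when $x_k$ changes, the sequential conditional $q_j(\,\cdot\mid x_1^{j-1})$ changes not only through the integrand $\mu_{(x_1^{j-1},x_{j+1}^N)}$ but also through the averaging measure over $x_{j+1}^N$, which itself depends on $x_k$. Controlling this coupled variation is exactly what forces the recursion in $B(k)$; the convexity property of the Hilbert metric (Proposition~\ref{Prop:HilbConvCombDist}) alone does not suffice, and you have not supplied an argument here. Second, even granting your claimed bound $\sigma_k=N^{-1}+O((N-k)N^{-2})$, the arithmetic does not give what you assert: $\sum_{k=1}^N\sigma_k^2 = N^{-1}+O(N^{-3})\sum_k(N-k)+O(N^{-4})\sum_k(N-k)^2 = (1+O(1))N^{-1}$, not $(1+O(N^{-1}))N^{-1}$, because the cross and square terms contribute at order $N^{-1}$. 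The paper avoids this precisely by getting a \emph{uniform} $O(N^{-2})$ correction to each martingale increment.
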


We now construct a martingale that will allow us to prove Theorem \ref{Thm:ConcMeasure} using Theorem \ref{Thm:MartingaleMethod}. 
\begin{definition} \label{Def:Martgk}For any measure $\mu$ and observable $g:\T^N\rightarrow \R$  define: $g_N:=g$; and for $k\in[1,N]$, $g_{N-k}:\T^N\rightarrow \R$
\begin{equation}\label{Eq:Defg_k}
g_{N-k}(\bo x):=\int_{\T^{N-k}} g(x_1,...,x_{k};\,x_{k+1}',...,x_N')
\,\,d\mu_{(x_1,...,x_k)}(x_{k+1}',...,x_N')\end{equation}
where $\bo x=(x_1,...,x_N)$, and $\mu_{(x_1,...,x_k)}$ is the conditional   of $\mu$ on $\{(x_1,..,x_k)\}\times \T^{N-k}$. 
\end{definition}

Notice that the values of $g_{N-k}$ depend only on the first $k$ variables $(x_1,...,x_k)$. This implies the following proposition whose proof is omitted.
\begin{proposition}
For every $k\in[0,N]$, the random variable $g_k:\T^N\rightarrow \R$ is measurable w.r.t. $\mc B_k:=\mc B(\T^{N-k})\times \{\emptyset,\T^{k}\}\subset \mc B(\T^N)$ and the sequence of random variables $\{g_{k}\}_{k=0}^N$ is a martingale with respect to the decreasing sequence of $\sigma-$algebras $\{\mc B_k\}_{k=0}^N$.
\end{proposition}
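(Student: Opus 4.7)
The statement reduces to two elementary facts about the conditional-expectation structure built into Definition \ref{Def:Martgk}. The plan is to check measurability directly from the defining formula and then verify the one-step martingale identity using the chain rule (tower property) for the conditional measures $\mu_{(x_1,\dots,x_j)}$.

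For measurability, it is convenient to rewrite \eqref{Eq:Defg_k} in terms of the index $k$ of the proposition: setting $m=N-k$ in the definition gives
\[
g_k(\bo x) = \int_{\T^{k}} g(x_1,\dots,x_{N-k};\, x_{N-k+1}',\dots,x_N') \, d\mu_{(x_1,\dots,x_{N-k})}(x_{N-k+1}',\dots,x_N').
\]
The right-hand side depends on $\bo x$ only through its first $N-k$ coordinates, and joint measurability in $(x_1,\dots,x_{N-k})$ follows from the absolute continuity of $\mu$ together with the explicit quotient formula for conditional densities in \eqref{Eq:ConditionalMeasure}. Hence $g_k$ factors through the projection $\T^N\to\T^{N-k}$ and is $\mc B_k$-measurable. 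The inclusion $\mc B_{k+1}\subset\mc B_k$ is immediate, so $\{\mc B_k\}$ is indeed decreasing.

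For the martingale property, since the filtration is decreasing it suffices to verify the one-step identity $\mb E_\mu[g_{k-1}\mid\mc B_k]=g_k$ for each $k\in[1,N]$; reindexing by $X_n:=g_{N-n}$ and $\mc F_n:=\mc B_{N-n}$ then turns $\{g_k\}$ into a standard forward martingale $\{X_n\}$ on the increasing filtration $\{\mc F_n\}$, which is the form needed to invoke Theorem \ref{Thm:MartingaleMethod}. The key input is the chain rule for disintegrations: for $j<\ell$, the conditional measure $\mu_{(x_1,\dots,x_j)}$ on $\T^{N-j}$ factors as
\[
d\mu_{(x_1,\dots,x_j)}(x_{j+1},\dots,x_N) = d\mu_{(x_1,\dots,x_\ell)}(x_{\ell+1},\dots,x_N)\cdot d\nu_{j,\ell}(x_{j+1},\dots,x_\ell),
\]
where $\nu_{j,\ell}$ is the marginal of $\mu_{(x_1,\dots,x_j)}$ on the coordinates $(x_{j+1},\dots,x_\ell)$; at the density level this is just the elementary factorization of the normalized densities given by \eqref{Eq:ConditionalMeasure}. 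Applying this with $j=N-k$ and $\ell=N-k+1$ and substituting into the definition of $g_{k-1}$ lets one integrate out $x_{N-k+1}$ against $\nu_{N-k,N-k+1}$ and collapse the two nested integrals into the single integral defining $g_k$, by Fubini's theorem.

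There is no genuine obstacle here: the content is the tower property for nested conditional expectations, and the computation is a one-line application of Fubini once the disintegration chain rule is in hand. The only small point worth being careful about is that the family $\{\mu_{(x_1,\dots,x_j)}\}_{j=0}^{N}$ should be chosen coherently across the different conditioning levels, but this is automatic because $\mu$ has a continuous density and the canonical choice prescribed by \eqref{Eq:ConditionalMeasure} satisfies the chain rule pointwise.
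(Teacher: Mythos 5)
Your core argument is the right one: after reindexing \eqref{Eq:Defg_k}, measurability with respect to $\mc B_k$ is read off from the integral expression, and the one-step identity $\mb E_\mu[g_{k-1}\mid\mc B_k]=g_k$ follows from Fubini plus the pointwise chain rule for the disintegrations $\mu_{(x_1,...,x_j)}$, which holds here because $\mu$ has a continuous density and \eqref{Eq:ConditionalMeasure} gives a canonical, coherent choice of conditionals. That much is correct and is, as you say, essentially the tower property.

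There is, however, a gap at the endpoint $k=N$, and it is worth flagging because it traces an internal inconsistency in the source rather than a slip in your reasoning. Definition \ref{Def:Martgk} sets $g_N:=g$, and $g$ is not constant, so $g_N$ is \emph{not} measurable with respect to the trivial $\sigma$-algebra $\mc B_N=\mc B(\T^0)\times\{\emptyset,\T^N\}$; as literally stated, the proposition is false at $k=N$. Your reindexed formula, extended to $k=N$, would instead give $g_N=\mb E_\mu[g]$, which is constant and does satisfy the claim — but that silently overrides the clause $g_N:=g$. Symmetrically, \eqref{Eq:Defg_k} at $k=N$ forces $g_0=g$, whereas the proof of Theorem \ref{Thm:ConcMeasure} explicitly invokes $g_0=\mb E_\mu[g]$ to get $X_0=0$ in Azuma--Hoeffding. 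So \eqref{Eq:Defg_k}, the clause $g_N:=g$, and the $\sigma$-algebras $\mc B_k$ as written cannot all be taken at face value. The consistent reading (matching $g_N=g$, $g_0=\mb E_\mu[g]$, and the downstream use in Theorem \ref{Thm:ConcMeasure}) is that \eqref{Eq:Defg_k} should define $g_k$ rather than $g_{N-k}$, so $g_k$ depends on the first $k$ coordinates, $\mc B_k:=\mc B(\T^k)\times\{\emptyset,\T^{N-k}\}$ is \emph{increasing}, and $\{g_k\}$ is a standard forward martingale — in which case your Fubini/chain-rule computation goes through verbatim with the indices relabelled and the extra reindexing step $X_n:=g_{N-n}$ becomes unnecessary. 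Either way, the $k=N$ case and the choice of indexing convention need to be addressed explicitly; your proof as written does neither, and proves a (correct) variant of the statement in which $g_N$ has quietly been redefined.
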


In  Section \ref{Sec:ProofPropMartBound} below we are going to prove that 
\begin{proposition}\label{Prop:EstMartDifferences} Consider $\mu\in \mc M_{a,b,CN^{-1}}$, $g\in \mc O_{N^{-1}}(\T^N)$, and for every $k\in[0,N]$ let $g_k:\T^{N}\rightarrow \R$ be  as in Definition \ref{Def:Martgk}. 

Then, for $N$ sufficiently large,
\begin{equation}\label{Eq:BndMartDiff}
|g_{k+1}-g_{k}|\le N^{-1} +O (N^{-2})
\end{equation}
for every $k\in[1,N]$.
\end{proposition}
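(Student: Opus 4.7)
The plan is to reduce the bound on the martingale difference to an oscillation estimate on $g_k$ and then handle the change in conditional measures using the Lipschitz disintegration of $\mu$. By the tower property, for $j := N - k$,
\[
g_{k+1}(x_1, \ldots, x_{j-1}) = \int_\T g_k(x_1, \ldots, x_{j-1}, z)\, d\nu_j(z),
\]
where $\nu_j$ is the conditional distribution of the $j$-th coordinate under $\mu$ given $(x_1, \ldots, x_{j-1})$. This immediately gives $|g_{k+1}(\bo x) - g_k(\bo x)| \le \Osc_j(g_k)$, so it suffices to show $\Osc_j(g_k) \le N^{-1} + O(N^{-2})$.

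For arbitrary $x_j, x_j' \in \T$, with $\bo w := (x_{j+1}, \ldots, x_N) \in \T^{N-j}$, I would split
\begin{align*}
g_k(x_1, \ldots, x_j) - g_k(x_1, \ldots, x_j') &= \underbrace{\int\! \bigl[g(x_1,\ldots, x_j, \bo w) - g(x_1, \ldots, x_j', \bo w)\bigr] d\mu_{(x_1, \ldots, x_j)}(\bo w)}_{(A)} \\
&\quad + \underbrace{\int\! g(x_1,\ldots, x_j', \bo w)\, \bigl[d\mu_{(x_1, \ldots, x_j)} - d\mu_{(x_1, \ldots, x_j')}\bigr](\bo w)}_{(B)},
\end{align*}
keeping the conditional measure fixed in (A) and the $j$-th argument of $g$ fixed in (B). Since $g \in \mc O_{N^{-1}}(\T^N)$, term (A) is bounded by $\Osc_j(g) \le N^{-1}$. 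The indirect term (B) vanishes when $\mu$ is a product measure, and the remaining work is to show $|(B)| = O(N^{-2})$ under the quasi-product hypothesis $\mu \in \mc M_{a,b,CN^{-1}}$.

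To control (B) I would exploit the disintegration of $\mu$ along the $j$-th coordinate. By Bayes' rule the Radon-Nikodym density $r(\bo w) := d\mu_{(x_1, \ldots, x_j)}/d\mu_{(x_1, \ldots, x_j')}(\bo w)$ is, up to a multiplicative constant, the pointwise ratio $\rho_{\hat{\bo x}_j(\bo w)}(x_j)/\rho_{\hat{\bo x}_j(\bo w)}(x_j')$, where $\hat{\bo x}_j(\bo w) := (x_1, \ldots, x_{j-1}, \bo w) \in \T^{N-1}$. Since the Hilbert metric on $\mc V_b$ controls the range of log-ratios of densities on $\T$, the Lipschitz condition defining $\mc M_{a,b,CN^{-1}}$ translates into $\Osc_{w_\ell}(\log r) \le CN^{-1}$ for each $\ell \in \{j+1,\ldots,N\}$, and hence $\Osc_{w_\ell}(r) = O(N^{-1})$. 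Writing (B) as the normalised covariance $\Cov_{\mu_{(x_1, \ldots, x_j')}}(g(x_1,\ldots, x_j', \cdot), r)/\E[r]$ and combining this coordinate-wise oscillation control of $r$ with the analogous $\Osc_{w_\ell}(g) \le N^{-1}$, via a coordinate-wise martingale-difference decomposition under the quasi-product measure $\mu_{(x_1, \ldots, x_j')}$ (which itself inherits the class $\mc M_{a,b,O(N^{-1})}$ on $\T^{N-j}$), should yield the desired $|(B)| = O(N^{-2})$.

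The main obstacle is this final covariance estimate: a direct Cauchy-Schwarz only gives $|(B)| \le \sqrt{\Var(g) \cdot \Var(r)} = O(N^{-1})$, one factor of $N^{-1}$ short of what is needed. Achieving the sharp bound requires resolving both $g - \E g$ and $r - 1$ into coordinate-wise martingale increments of individual sup-norm $O(N^{-1})$ under $\mu_{(x_1, \ldots, x_j')}$ and using that the cross-terms between increments at different coordinates are controlled by the quasi-product dependence at a higher order; throughout, one must carefully track the $N^{-1}$ scalings built into the definitions of $\mc M_{a, b, CN^{-1}}$ and $\mc O_{N^{-1}}$ to ensure the leading $N^{-1}$ coefficient in the final bound is exactly $1$ up to an $O(N^{-1})$ correction.
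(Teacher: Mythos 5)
Your first two steps exactly reproduce the paper's proof: the reduction $|g_{k+1}-g_k|\le\Osc_j(g_k)$ and the split of $\Osc_j(g_k)$ into the term (A) (change of argument of $g$, bounded by $\Osc_j g\le N^{-1}$) and the term (B) (change of conditional measure) are identical to the paper's decomposition, and your Bayes'-rule identification of $r$ and the Hilbert-metric bound $\Osc_{w_\ell}(\log r)\le CN^{-1}$ are correct observations. The place where the approaches diverge is the bound on (B). The paper introduces the auxiliary quantities $B(k)$ (equation \eqref{Eq:Defb(k)}) and proves $B(k)=O(N^{-1})$ by a \emph{separate recursive induction} (Proposition \ref{Prop:BoundB(k)}, reducing $B(k-1)$ to $B(k)$ and $B(N-1)$ via repeated conditioning), and then simply writes $|(B)|\le B(k+1)\Osc(g)$ where $\Osc(g)$ is the per-coordinate oscillation $\le N^{-1}$. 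Your proposal replaces this with a direct covariance argument.

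The gap is in that final step, and your own diagnosis of it points in the wrong direction. Writing $(B)=\Cov_{\mu'}(g,r)$ (note $\E_{\mu'}[r]=1$, so there is no normalizing denominator) and resolving into coordinate-wise martingale increments gives
\[
\Cov_{\mu'}(g,r)=\sum_{\ell}\E\bigl[\Delta_\ell g\cdot\Delta_\ell r\bigr],
\]
in which there are \emph{no} cross-terms to control: increments at different indices are orthogonal by construction of the Doob decomposition. So there is nothing that "the quasi-product dependence at a higher order" can be applied to. What remains is the diagonal sum, and since $\|\Delta_\ell g\|_\infty\lesssim N^{-1}$ and $\|\Delta_\ell r\|_\infty\lesssim N^{-1}$ while there are on the order of $N-j$ indices $\ell$, this sum is $O(N^{-1})$, not $O(N^{-2})$. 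Your covariance route therefore proves $|g_{k+1}-g_k|=O(N^{-1})$ but does not reach the sharp $N^{-1}+O(N^{-2})$ in the statement. To get the displayed bound one must instead follow the paper's route: isolate the integral against the signed measure $\mu_{(x_1,\ldots,x_j)}-\mu_{(x_1,\ldots,x_j')}$ as a quantity $B(k+1)$ in its own right, and establish $B(k+1)=O(N^{-1})$ by the recursion of Proposition \ref{Prop:BoundB(k)}, which peels off one coordinate at a time and iterates down from the one-coordinate case $B(N-1)\le C_b N^{-1}$; once that is in hand, $(B)\le B(k+1)\cdot\Osc(g)$ gives the $O(N^{-2})$ directly. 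That inductive control of $B(k)$ is the key ingredient that your proposal is missing.
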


For the moment, let's  show how Theorem \ref{Thm:ConcMeasure} follows from Proposition \ref{Prop:EstMartDifferences} .
\begin{proof}[Proof of Theorem \ref{Thm:ConcMeasure}]
With reference to \eqref{Eq:Defg_k}, notice that $g_0=\mb E_{\mu}[g]$. On the probability space $(\T^N,\mc B(\T^N),\mu)$, define the martingale $X_k:=g_k-\mb E_\mu[g]$. By Proposition \ref{Prop:EstMartDifferences}, $|X_k-X_{k-1}|\le CN^{-1}$ with $C:=1+O(N^{-1})$. Therefore, $\{X_k\}$ satisfies the assumptions of Theorem \ref{Thm:MartingaleMethod}, and 
\[
\mu\left(g_n-\mb E_\mu[g]>\epsilon\right)\le \exp\left[-\frac{\epsilon^2N}{C^2}\right].
\]

Repeating the same argument for the martingale $Y_k:= -g_k+g_0$, one gets
\[
\mu\left(-g_n+\mb E_\mu[g]>\epsilon\right)\le \exp\left[-\frac{\epsilon^2N}{C^2}\right].
\]
and therefore
\[
\mu\left(|g_n-\mb E_\mu[g]|>\epsilon\right)\le 2\exp\left[-\frac{\epsilon^2N}{C^2}\right].
\]
\end{proof}

The next subsection is dedicated to the proof of Proposition \ref{Prop:EstMartDifferences}.
 
\subsection{Proof of Proposition \ref{Prop:EstMartDifferences}} \label{Sec:ProofPropMartBound}

Fix a measure $\mu\in \mc M_{a,b,CN^{-1}}$ on $\T^N$. For every $k\in[1,N]$, define 
\begin{align}
B(k)&:= \sup\left\{ \int_{\T^{N-k}}\psi  \,\,d\left[\mu_{(x_{i_1},...x_{i_j},...,x_{i_k})}-\mu_{(x_{i_1},...,x'_{i_j},...,x_{i_k})}\right]: \, j\in[1,k]; \,  \psi\in\mc O_{1}(\T^{N-k}); \right.\nonumber\\
&\left.\quad\phantom{\int_{\T^{N-k}}}\, i_1,...,i_j\in[1,N] \mbox{ distinct; }x_{i_1},...,x_{i_k}, x_{i_j}'\in\T \right\}\label{Eq:Defb(k)}
\end{align}
These quantities are going to play a crucial role in obtaining the bound in equation \eqref{Eq:BndMartDiff}.

The lemma below follows immediately.
\begin{lemma} For any $k\in [1,N]$,  $j\in[1,k]$, $i_1,...,i_j\in[1,N] \mbox{ distinct,  }x_{i_1},...,x_{i_k}, x_{i_j}'\in\T$, and $\psi:\T^{N-k}\rightarrow \R$ with $\Osc \psi<\infty$, 
\[
 \int_{\T^{N-k}}\psi  \,\,d\left[\mu_{(x_{i_1},...x_{i_j},...,x_{i_k})}-\mu_{(x_{i_1},...,x'_{i_j},...,x_{i_k})}\right] \le B(k) \Osc\psi.
\]
\end{lemma}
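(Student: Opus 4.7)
The plan is to reduce the bound to the defining supremum of $B(k)$ by a homogeneous rescaling in the test function $\psi$. The lemma is really just a convenience: it drops the normalization $\psi \in \mc O_1(\T^{N-k})$ built into \eqref{Eq:Defb(k)} at the cost of a factor $\Osc\psi$ on the right-hand side.

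First I would dispose of the degenerate case $\Osc\psi = 0$. In that case $\psi$ is constant on $\T^{N-k}$, and since both $\mu_{(x_{i_1},\dots,x_{i_j},\dots,x_{i_k})}$ and $\mu_{(x_{i_1},\dots,x_{i_j}',\dots,x_{i_k})}$ are probability measures on $\T^{N-k}$, their difference integrates any constant to $0$, so the inequality holds trivially.

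Assume now $c := \Osc\psi > 0$ and set $\tilde\psi := c^{-1}\psi$. The coordinatewise oscillation is positively homogeneous, so $\Osc_j \tilde\psi = c^{-1}\Osc_j\psi \le 1$ for every $j \in [1,N-k]$ (using $\Osc_j\psi \le \Osc\psi$), hence $\tilde\psi \in \mc O_1(\T^{N-k})$. The admissible tuple $(k,j,i_1,\dots,i_k,x_{i_1},\dots,x_{i_k},x_{i_j}',\tilde\psi)$ appears in the supremum \eqref{Eq:Defb(k)} defining $B(k)$, so
\[
\int_{\T^{N-k}} \tilde\psi \, d\bigl[\mu_{(x_{i_1},\dots,x_{i_j},\dots,x_{i_k})} - \mu_{(x_{i_1},\dots,x_{i_j}',\dots,x_{i_k})}\bigr] \le B(k).
\]
Multiplying through by $c = \Osc\psi$ and using linearity of the integral in $\psi$ yields the claim.

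There is no genuine obstacle here; the statement is a one-line scaling consequence of the definition of $B(k)$, recorded separately only because it is the form in which $B(k)$ will be invoked in the martingale-difference estimates of Proposition \ref{Prop:EstMartDifferences}. The real work will be to bound $B(k)$ itself, using the Lipschitz control of the conditional densities guaranteed by $\mu \in \mc M_{a,b,CN^{-1}}$.
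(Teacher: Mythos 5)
Your proposal is correct and is exactly the "immediate" argument the paper has in mind (the paper gives no proof, stating only that the lemma follows immediately from the definition of $B(k)$). The rescaling $\tilde\psi = \psi/\Osc\psi$, together with $\Osc_j\psi \le \Osc\psi$ and the observation that the difference of two probability measures annihilates constants, is precisely what is needed.
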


\begin{proposition}\label{Prop:BoundB(k)}
Consider $\mu\in \mc M_{a,b,KN^{-1}}$ and define $B(k)$ as in \eqref{Eq:Defb(k)}. Then, for $N$ sufficiently large there is a constant $A>0$ depending on $b$ and $K$ only such that 
\begin{equation}
B(k)\le \frac{A}{N}
\end{equation}
for all $k\in[1,N-1]$.
\end{proposition}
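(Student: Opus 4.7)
The plan is to fix $k\in[1,N-1]$, indices $i_1,\ldots,i_k$, values $\bo x,\bo x'\in\T^k$ differing only in the $j$-th entry, and $\psi\in\mc O_1(\T^{N-k})$, and to bound the integral $\int\psi\,d[\mu_{\bo x}-\mu_{\bo x'}]$ uniformly. Using the disintegration of $\mu$ along the perturbed coordinate $i_j$, and letting $\hat{\bo x}$ be $\bo x$ with its $j$-th entry removed and $\hat\mu_{\hat{\bo x}}$ the conditional on $\T^{N-k}$ of the $\hat{\bo\pi}_{i_j}$-marginal of $\mu$ given $\hat{\bo x}$, one can write $d\mu_{\bo x}=u\,d\hat\mu_{\hat{\bo x}}/\mb E_{\hat\mu_{\hat{\bo x}}}[u]$ with $u(\bo z):=\rho_{(\hat{\bo x},\bo z)}(x_{i_j})$, and analogously $u'$ with $x'_{i_j}$ in place of $x_{i_j}$. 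Setting $r:=u/u'$, a short manipulation yields the covariance representation
\[
\int\psi\,d[\mu_{\bo x}-\mu_{\bo x'}]=\frac{\Cov_{\mu_{\bo x'}}(\psi,r)}{\mb E_{\mu_{\bo x'}}[r]},
\]
with $r\in[e^{-a},e^{a}]$ by the log-Lipschitz condition defining $\mc V_a$.

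The second key input is that $r$ has per-coordinate oscillation of order $N^{-1}$. Writing $\log r(\bo z)-\log r(\bo z')$ as the difference of two logarithms of $\rho_{(\hat{\bo x},\bo z)}$ evaluated at $x_{i_j}$ and $x'_{i_j}$, the Lipschitz hypothesis $\theta_b(\rho_{(\hat{\bo x},\bo z)},\rho_{(\hat{\bo x},\bo z')})\le KN^{-1}|z_\ell-z_\ell'|$ together with the standard inequality $\Osc(\log(f/g))\le\theta_b(f,g)$ on $\mc V_b$ yields $\Osc_\ell(\log r)\le CKN^{-1}$, hence $\Osc_\ell(r)\le CKe^{a}N^{-1}$ for every free coordinate $z_\ell$. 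Thus $r$ is a bounded positive function on $\T^{N-k}$ whose per-coordinate oscillations are all of order $N^{-1}$.

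I would then conclude by reverse induction on $k$, descending from $k=N-1$ down to $k=1$. The base case is immediate: $\mu_{\bo x},\mu_{\bo x'}$ are then densities on $\T$ whose $\theta_b$-distance is at most $KN^{-1}$ by hypothesis, so their total variation is $O(N^{-1})$ and the desired bound follows at once. For the inductive step, pick a free coordinate $z_\ell$ and decompose $\int\psi\,d\mu_{\bo x}$ as the marginal average over $z_\ell$ of the inner integrals $\int\psi(z_\ell,\cdot)\,d\mu_{\bo x\mid z_\ell}$; the difference with the analogous expression for $\mu_{\bo x'}$ splits into an inner term, bounded by $B(k+1)$ applied to the conditionals on $k+1$ coordinates, and an outer term involving only the 1D marginals on $z_\ell$, which by the base-case argument gives a contribution of order $N^{-1}$ once the Lipschitz property has been propagated through the marginalisation.

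The most delicate step I expect is this propagation through the marginalisation. Showing that the 1D marginals $\mu_{\bo x}^{(\ell)}$ on $\T$ are themselves $O(N^{-1})$-Lipschitz in $\bo x$ with respect to $\theta_b$ requires expressing each such marginal as a weighted average of disintegrations $\rho_{(\hat{\bo x}_\ell,\cdot)}$ of $\mu$ and combining the monotonicity of the Hilbert projective metric under convex combinations with the uniform control $e^{-a}\le r\le e^{a}$ on the normalising weights. Once this marginal Lipschitz estimate is in place, and one notes that $\Osc(\tilde\psi')\le\Osc(\psi)+B(k+1)\Osc(\psi)=O(1)$ for the averaged observable $\tilde\psi'(z_\ell):=\int\psi(z_\ell,\cdot)\,d\mu_{\bo x'\mid z_\ell}$, the inner and outer contributions combine to yield $B(k)\le A/N$ with $A$ depending only on $a$, $b$ and $K$, closing the induction.
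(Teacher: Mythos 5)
Your covariance representation $\int\psi\,d[\mu_{\bo x}-\mu_{\bo x'}]=\Cov_{\mu_{\bo x'}}(\psi,r)/\mb E_{\mu_{\bo x'}}[r]$ with $r(\bo z)=\rho_{(\hat{\bo x},\bo z)}(x_{i_j})/\rho_{(\hat{\bo x},\bo z)}(x'_{i_j})$ is a correct and useful reformulation, and the observations that $r\in[e^{-a},e^{a}]$ and $\Osc_\ell r=O(N^{-1})$ are both right. The reverse induction with the inner/outer split over one free coordinate is also, in essence, the paper's decomposition. The first genuine gap is exactly where you flag it: the claim that the one-dimensional marginal of $\mu_{\bo x}$ over the chosen free coordinate is $O(N^{-1})$-Lipschitz in $\bo x$. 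Convex-combination monotonicity of the Hilbert metric alone does not deliver this, because when $\bo x\to\bo x'$ the mixture weights change as well as the leaves being averaged, and the uniform bound $e^{-a}\le r\le e^{a}$ only controls the weight ratios up to $O(1)$, not up to $1+O(N^{-1})$. The paper's Lemmas \ref{Lem:BoundMarginak+1toN} and \ref{Lem:EstBk2} establish the needed marginal control via a second Fubini decomposition, expressing it in terms of the quantities $B(N-1)$ and $B(k)$ rather than a free Lipschitz constant.

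The second problem is more structural and your proposal does not touch it. Even granting an $O(N^{-1})$ total-variation bound on that one-dimensional marginal difference, the recursion you would obtain, $B(k)\le B(k+1)+O(N^{-1})\cdot(1+B(k+1))$, does not propagate the claimed rate: you pick up an additive $O(N^{-1})$ error at each of the $O(N)$ steps of the reverse induction and these accumulate to $O(1)$, so $B(k)\le A/N$ would not follow. This is why Lemma \ref{Lem:EstBk2} is formulated so as to bound the outer term by $B(N-1)[1+2B(k)]\Osc(\cdot)$, coupling the outer contribution to the quantity being estimated; the recursion the paper records then has its $O(N^{-1})$ coefficients multiplying $B(k)$ and $B(k)^2$, so the ansatz $B(k)\le A/N$ can be fed through it. A proof along your lines must produce a comparable coupling, or some other cancellation that eliminates the bare additive $O(N^{-1})$ term, before the induction can close.
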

Before proceeding with the proof of the proposition above, let's show how it implies Proposition \ref{Prop:EstMartDifferences}.
\begin{proof}[Proof of Proposition \ref{Prop:EstMartDifferences}]
Recalling Definition \ref{Def:Martgk}
\begin{align*}
|g_{N-k}&(x_1,...,x_k)-g_{N-k-1}(x_1,...,x_k,x_{k+1})|=\\
&= \left|\int_\T d\Pi_{k+1}\mu_{(x_1,...,x_k)}(x_{k+1}')\left[ \int_{\T^{N-k-1}} g(x_1,...,x_{k};x_{k+1}',\bo y)\,d\mu_{(x_1,...,x_k,x_{k+1}')}(\bo y)-\right. \right.\\
&\quad\quad\left.\left.-\int_{\T^{N-k-1}} g(x_1,...,x_{k+1};\bo y)\,d\mu_{(x_1,...,x_{k+1})}(\bo y)\right]\right|
\end{align*}
Now 
\begin{align*}
&\left| \left[ \int_{\T^{N-k-1}} g(x_1,...,x_{k};x_{k+1}',\bo y)\,d\mu_{(x_1,...,x_k,x_{k+1}')}(\bo y)-\right. \right.\left.\left.\int_{\T^{N-k-1}} g(x_1,...,x_{k+1};\bo y)\,d\mu_{(x_1,...,x_{k+1})}(\bo y)\right]\right|\le\\
&\le\left|\int_{\T^{N-k-1}} g(x_1,...,x_{k};x'_{k+1},\bo y) \,d[\mu_{(x_1,...,x_k,x_{k+1}')}-\mu_{(x_1,...,x_{k+1})}](\bo y)\right|+\\
&\quad\quad+\left|\int_{\T^{N-k-1}} [g(x_1,...,x_{k};x'_{k+1},\bo y)-g(x_1,...,x_{k};x_{k+1},\bo y)]\,d\mu_{(x_1,...,x_{k+1}')}(\bo y) \right|\\
&\le B(k+1)\Osc(g) + \Osc_{k+1}(g)\\
&\le AN^{-2}+N^{-1}.
\end{align*}
\end{proof}

To prove  Proposition \ref{Prop:BoundB(k)} we are going to need the two lemmas below.

If $\nu$ is a measure in $\T^{N}$, ${i_{k+1}},...,{i_N}\in[1,N]$ are distinct, denote by $\bo \Pi_{[i_{k+1},...,i_{N}]}\nu$  the marginal of $\nu$ on the torus $\T^{N-k}$ relative to the coordinates ${i_{k+1}},...,{i_N}$ coordinates of $\T^N$.\begin{lemma}\label{Lem:BoundMarginak+1toN}
Let $\mu\in\mc M_{a,b,KN^{-1}}$ and $B(k)$ be defined as above. For  every permutation $(i_1,...,i_{N})$ of the indices in $ [1,N]$,  $k\in[1,N]$, $j\in[1,k-1]$,  $x_{i_1},...,x_{i_{k-1}}, x_{i_j}'\in \T$, and  $\psi\in L^1(\T^{N-k},\Leb)$ with $\Osc\psi<\infty$ 
\begin{align*}
\int_{\T^{N-k}}\psi\,\, d\bo\Pi_{[i_{k+1},...,i_{N}]}[\mu_{(x_{i_1},...x_{i_j},...,x_{i_{k-1}})}-\mu_{(x_{i_1},...,x'_{i_j},...,x_{i_{k-1}})}]\le 2B(k)\Osc\psi.
\end{align*}
\end{lemma}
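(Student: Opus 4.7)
The plan is to disintegrate the $(N-k+1)$-dimensional conditional measure $\mu_{(x_{i_1},\dots,x_{i_{k-1}})}$ one step further along the $i_k$-th coordinate. This yields the identity
\[
\bo\Pi_{[i_{k+1},\dots,i_N]}\mu_{(x_{i_1},\dots,x_{i_{k-1}})}(\bo y) \;=\; \int_\T \mu_{(x_{i_1},\dots,x_{i_{k-1}},x_{i_k})}(\bo y)\;d\Pi_{i_k}\mu_{(x_{i_1},\dots,x_{i_{k-1}})}(x_{i_k}),
\]
which converts the marginal difference on the left-hand side of the lemma into a double integral. Then I add and subtract the mixed term $\int\!\int \psi\,d\mu_{(x_{i_1},\dots,x'_{i_j},\dots,x_{i_{k-1}},x_{i_k})}\,d\Pi_{i_k}\mu_{(x_{i_1},\dots,x_{i_j},\dots,x_{i_{k-1}})}(x_{i_k})$ to split the quantity to bound into two pieces $I+II$, each of which is controlled separately by $B(k)\Osc\psi$.

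For the piece $I$, the integrand with respect to $d\Pi_{i_k}\mu_{(x_{i_1},\dots,x_{i_j},\dots,x_{i_{k-1}})}(x_{i_k})$ is, for each fixed $x_{i_k}$, a difference of $k$-conditional measures that differ only in the $i_j$-th conditioning variable, tested against $\psi$. Rescaling $\psi$ by $M:=\max_m\Osc_m\psi$ so that $\psi/M\in\mc O_1(\T^{N-k})$ and applying the very definition of $B(k)$ gives a pointwise bound of $B(k)\cdot M\le B(k)\Osc\psi$ on the integrand; integrating against the probability measure $\Pi_{i_k}\mu_{(x_{i_1},\dots,x_{i_j},\dots,x_{i_{k-1}})}$ yields $|I|\le B(k)\Osc\psi$.

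For the piece $II$, set $F(x_{i_k}):=\int_{\T^{N-k}}\psi\,d\mu_{(x_{i_1},\dots,x'_{i_j},\dots,x_{i_{k-1}},x_{i_k})}$, so that $II=\int_\T F\,d\nu$ with $\nu=\Pi_{i_k}\big[\mu_{(x_{i_1},\dots,x_{i_j},\dots,x_{i_{k-1}})}-\mu_{(x_{i_1},\dots,x'_{i_j},\dots,x_{i_{k-1}})}\big]$. Since $\nu$ is the difference of two probability measures on $\T$, subtracting the midpoint constant $c=(\sup F+\inf F)/2$ inside the integral gives $|II|\le\Osc F$. To bound $\Osc F$ I use again the definition of $B(k)$, this time with the varying conditioning index set to $i_k$ (i.e.\ $j=k$ in the definition's notation): $|F(x_{i_k})-F(x'_{i_k})|=\big|\int\psi\,d[\mu_{(\dots,x'_{i_j},\dots,x_{i_k})}-\mu_{(\dots,x'_{i_j},\dots,x'_{i_k})}]\big|\le B(k)\Osc\psi$. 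Putting the two estimates together proves the claimed inequality $|I|+|II|\le 2B(k)\Osc\psi$.

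The main technical care in carrying this out is purely bookkeeping: keeping straight which of the two coordinates ($i_j$ or $i_k$) is being varied in each application of $B(k)$, and handling the mismatch between the supremum definition of $B(k)$ (taken over $\mc O_1$) and the statement's use of the total oscillation $\Osc\psi$, which is achieved by the simple rescaling $\max_m\Osc_m\psi\le\Osc\psi$. No inductive argument in $k$ is needed; the two pieces fit into the definition of $B(k)$ after a single disintegration step.
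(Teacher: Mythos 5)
Your argument is correct and follows essentially the same route as the paper: both proofs disintegrate $\mu_{(x_{i_1},\dots,x_{i_{k-1}})}$ one step further along $x_{i_k}$ (giving the same intermediate identity), and both then apply the definition of $B(k)$ twice, once to vary $x_{i_j}$ and once to vary the new variable $x_{i_k}$. The only difference is organizational: the paper bounds the \emph{oscillation} of the kernel $h(x_{i_j},x_{i_k}')=\int\psi\,d\mu_{(\dots,x_{i_j},\dots,x_{i_k}')}$ under a joint change of both arguments by $2B(k)\Osc\psi$ (triangle inequality through the intermediate point $(x_{i_j},x_{i_k}'')$), and concludes via the double-integral representation over the product of the two projection measures, whereas you split the quantity explicitly into $I+II$ by adding and subtracting the mixed term, bound $I$ pointwise, and bound $II$ using the zero-mass midpoint trick together with the $j=k$ case of the definition of $B(k)$. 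These are isomorphic bookkeeping choices, not a genuinely different method.
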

\begin{proof}
Modulo renaming the coordinates, one can assume that the permutation $(i_1,...,i_N)$ equals $(1,...,N)$.  
\begin{align}
&\int_{\T^{N-k}} \psi\,\,d\bo\Pi_{[{k+1},...,{N}]}\mu_{(x_1,...,x_j,...,x_{k-1})}=\nonumber\\
&\quad\quad=\int_\T d\Pi_{k}\mu_{(x_1,...,x_j,...,x_{k-1})}(x_k')\int_{\T^{N-k}} \psi(\bo y)\,d\mu_{(x_1,...,x_j,...,x_{k-1},x_k')}(\bo y). \label{Eq:INtConvComb1}
\end{align}

For every $x_j,x_j',x_k',x_k''\in \T$
\begin{align*}
&\left|\int_{\T^{N-k}} \psi(\bo y)d\mu_{(x_1,...,x_j,...,x_{k-1},x_k')}(\bo y)-\int_{\T^{N-k}} \psi(\bo y)d\mu_{(x_1,...,x_j',x_{k-1},x_k'')}(\bo y)\right| \le\\
&\quad\le \left|\int_{\T^{N-k}} \psi(\bo y)d\mu_{(x_1,...,x_j,...,x_{k-1},x_k')}(\bo y)-\int_{\T^{N-k}} \psi(\bo y)d\mu_{(x_1,...,x_j,x_{k-1},x_k'')}(\bo y)\right|\\
&\quad\quad+ \left|\int_{\T^{N-k}} \psi(\bo y)d\mu_{(x_1,...,x_j,...,x_{k-1},x_k'')}(\bo y)-\int_{\T^{N-k}} \psi(\bo y)d\mu_{(x_1,...,x_j',x_{k-1},x_k'')}(\bo y)\right|\\
&\quad\le 2B(k)\Osc \psi
\end{align*}
by definition of $B(k)$, and this implies the lemma.
\end{proof}
\begin{lemma}\label{Lem:EstBk2} Let $\mu\in \mc M_{a,b,KN^{-1}}$.
For every $k\in[1,N]$, any distinct indices $i_1,...,i_{k-1},i_k\in[1,N]$, any $j\in[1,k-1]$, and $\psi:\T\rightarrow \R$ with $\Osc\psi<\infty$
\begin{equation}
\int_{\T}\psi\,\, d\Pi_{i_k}\left[\mu_{(x_{i_1},...,x_{i_j},...,x_{i_{k-1}})}-\mu_{(x_{i_1},...,x'_{i_j},...,x_{i_{k-1}})}\right]\le B(N-1)\,\,[1+2B(k)]\,\Osc\psi.
\end{equation}
\end{lemma}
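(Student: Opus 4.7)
The plan is to reduce the left-hand side, which is an integral against a one-dimensional marginal of a difference of conditionals, to two ingredients that are already tailored to be controlled: differences of conditionals on $\T$ with $N-1$ coordinates fixed, which by definition of $B(N-1)$ contribute a factor $B(N-1)\Osc\psi$; and differences of $(N-k)$-dimensional marginals of conditionals with $k-1$ coordinates fixed, which via Lemma \ref{Lem:BoundMarginak+1toN} contribute a factor $2B(k)$. After relabelling so that $(i_1,\ldots,i_N)=(1,\ldots,N)$, which is permissible, I will decompose the difference of marginals into these two contributions.

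The first step is to disintegrate $\mu_{(x_1,\ldots,x_j,\ldots,x_{k-1})}$, which lives on $\T^{N-k+1}$, with respect to its last $N-k$ coordinates $\hat{\bo y}=(x_{k+1},\ldots,x_N)$: introduce
\[
\tilde\psi_z(\hat{\bo y}):=\int_\T \psi(x_k)\, d\mu_{(x_1,\ldots,z,\ldots,x_{k-1},\hat{\bo y})}(x_k)
\]
with $z\in\{x_j,x_j'\}$, so that by the tower property
\[
\int_\T \psi\, d\Pi_k\mu_{(x_1,\ldots,x_j,\ldots,x_{k-1})}=\int_{\T^{N-k}}\tilde\psi_{x_j}\,d\bo\Pi_{[k+1,N]}\mu_{(x_1,\ldots,x_j,\ldots,x_{k-1})},
\]
and analogously for $x_j'$. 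Subtracting these two identities and inserting the hybrid $\int\tilde\psi_{x_j'}\,d\bo\Pi_{[k+1,N]}\mu_{(x_1,\ldots,x_j,\ldots,x_{k-1})}$ produces the decomposition into
\[
\mathrm{(I)}:=\int_{\T^{N-k}}(\tilde\psi_{x_j}-\tilde\psi_{x_j'})\,d\bo\Pi_{[k+1,N]}\mu_{(x_1,\ldots,x_j,\ldots,x_{k-1})}
\]
and
\[
\mathrm{(II)}:=\int_{\T^{N-k}}\tilde\psi_{x_j'}\,d\bo\Pi_{[k+1,N]}\bigl[\mu_{(x_1,\ldots,x_j,\ldots,x_{k-1})}-\mu_{(x_1,\ldots,x_j',\ldots,x_{k-1})}\bigr].
\]

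Term (I) is bounded pointwise: $\tilde\psi_{x_j}(\hat{\bo y})-\tilde\psi_{x_j'}(\hat{\bo y})$ is the integral of $\psi$ against a difference of two conditionals on $\T$ with $N-1$ coordinates fixed (differing only in the $j$-th entry), and the definition of $B(N-1)$ bounds this uniformly in $\hat{\bo y}$ by $B(N-1)\Osc\psi$; hence $|\mathrm{(I)}|\le B(N-1)\Osc\psi$. For (II), I first estimate the per-coordinate oscillation of $\tilde\psi_{x_j'}$: varying $\hat{\bo y}$ in a single coordinate $\ell\in[k+1,N]$ changes exactly one of the $N-1$ fixed entries of the inner conditional, so $\Osc_\ell\tilde\psi_{x_j'}\le B(N-1)\Osc\psi$ and $\tilde\psi_{x_j'}\in\mc O_{B(N-1)\Osc\psi}(\T^{N-k})$. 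Lemma \ref{Lem:BoundMarginak+1toN} applied with parameter $k$ then gives $|\mathrm{(II)}|\le 2B(k)\cdot B(N-1)\Osc\psi$, and summing the two bounds yields $B(N-1)[1+2B(k)]\Osc\psi$, as claimed.

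I do not anticipate any serious obstacle here: the argument is essentially bookkeeping about which coordinate is perturbed at each step. The one subtlety worth highlighting is that the inner conditional difference is genuinely one-dimensional, so that the relevant constant is $B(N-1)$ rather than $B(k-1)$, while the outer perturbation is of an $(N-k)$-dimensional marginal of a conditional with only $k-1$ fixed coordinates, so that Lemma \ref{Lem:BoundMarginak+1toN} is applied with its parameter equal to $k$, producing the factor $2B(k)$.
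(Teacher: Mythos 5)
Your proof is correct and follows essentially the same two-step strategy as the paper: disintegrate via the tower property, insert a hybrid middle term, bound the term involving the difference of one-dimensional conditionals (with $N-1$ coordinates fixed) by $B(N-1)\Osc\psi$, and bound the term involving the difference of $(N-k)$-dimensional marginals via Lemma \ref{Lem:BoundMarginak+1toN} after observing that the auxiliary function lies in $\mc O_{B(N-1)\Osc\psi}(\T^{N-k})$. The only (inessential) difference is which hybrid you insert: you fix the marginal at $x_j$ and the inner conditional at $x_j'$, whereas the paper fixes the marginal at $x_j'$ and the inner conditional at $x_j$.
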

\begin{proof}
Let's assume without loss of generality that $(i_1,...,i_k)=(1,...,k)$ and $j=1$. By Fubini
\begin{align*}
&\int_{\T}\psi\,\,d\Pi_{k}\mu_{(x_{1},...,x_{{k-1}})}= \\
&\quad= \int_{\T^{N-k}}d\bo\Pi_{[k+1,...,N]}\mu_{(x_{1},...,x_{{k-1}})}(x_{k+1}',...,x_N') \int_{\T}\psi(x_{k}')d\mu_{(x_{1},...,x_{{k-1}},x_{k+1}',...,x_N')}(x_{k}')
\end{align*}
therefore
\begin{align*}
&\int_{\T}\psi\,\,d\Pi_{k}[\mu_{(x_{1},...,x_{{k-1}})}-\mu_{(x_{1}',...,x_{{k-1}})}]=\\
&\quad=\int_{\T^{N-k}}d\bo\Pi_{[k+1,...,N]}\left[\mu_{(x_{1},...,x_{{k-1}})}-\mu_{(x_{1}',...,x_{{k-1}})}\right] (\bo y)\int_{\T}\psi(x_{k}')\,d\mu_{(x_{1},...,x_{{k-1}}; \bo y)}(x_{k}')+\\
&\quad\quad+\int_{\T^{N-k}}d\bo\Pi_{[k+1,...,N]}\mu_{(x_{1}',...,x_{{k-1}})}(\bo y)\int_{\T}\psi(x_{k}')\, d\left[\mu_{(x_1,..., x_k;\bo y)}-\mu_{(x_1',...,x_k;\bo y)}(x_{k}')\right].
\end{align*}
The second term can be bounded by $B(N-1)\Osc \psi$. For the first term, having fixed $x_1, x_1', x_2,...,x_{k-1}$, 
\[
\bar\psi:= \int_{\T}\psi(x_{k}')\,d\mu_{(x_{1},...,x_{j},...,x_{{k-1}};\, \bo y)}(x_k')
\]
is a function $\bar{\psi}:\T^{N-k}\rightarrow \R$  of $\bo y$ only. Applying Lemma \ref{Lem:BoundMarginak+1toN}   we get 
\[
\int_{\T^{N-k}}d\bo \Pi_{[k+1,...,N]}\left[\mu_{(x_{1},...,x_{{k-1}})}-\mu_{(x_{1}',...,x_{{k-1}})}\right] (\bo y)\bar\psi (\bo y)\le 2B(k) \Osc \bar \psi
\]
and by definition of $B(N-1)$,
\begin{align*}
\Osc \bar \psi\le B(N-1) \Osc\psi.
\end{align*}

Putting these bounds together the lemma is proved.
\end{proof}
 
\begin{proof}[Proof of Proposition \ref{Prop:BoundB(k)}]
For every, $j\in[1,k-1]$, any $x_1,...,x_{k-1},x_j'\in\T$ and any $\psi\in\mc O_{1}(\T^{N-k+1})$, we want to estimate
\begin{align*}
\int_{\T^{N-k+1}}\psi\,\,d[\mu_{(x_1,...,x_j,...,x_{k-1})}-\mu_{(x_1,...,x_j',...,x_{k-1})}].
\end{align*}
Without loss of generality let's put $j=1$.
By definition of disintegration
\begin{align*}
&\int_{\T^{N-k+1}}\psi d\mu_{(x_1,...,x_{k-1})}=\\
&\quad\quad =\int_\T d\Pi_{k}\mu_{(x_1,...,x_{k-1})}(x_k')\int_{\T^{N-k}} \psi(x_k';\,\bo y)\,\,d\mu_{(x_1,...,x_{k-1},x_k')}(\bo y)
\end{align*}
From which it follows that 
\begin{align}
&\int_{\T^{N-k+1}}\psi(\bo y)\,\,d[\mu_{(x_1,...,x_{k-1})}-\mu_{(x_1',...,x_{k-1})}] (\bo y)=\nonumber\\
&=\int_\T d\Pi_{k}\left[\mu_{(x_1,...,x_{k-1})}-\mu_{(x_1',...,x_{k-1})}\right](x_k')\int_{\T^{N-k}} \psi(x_k';\,\bo y)\,\, d\mu_{(x_1,...,x_{k-1},x_k')}(\bo y)\nonumber\\
&\quad+\int_\T d\Pi_{k}\mu_{(x_1',...,x_{k-1})}(x_k')\int_{\T^{N-k}}\psi(x_k';\,\bo y)
\,\, d\left[\mu_{(x_1,...,x_{k-1},x_k')}-\mu_{(x_1',...,x_{k-1},x_k')}\right](\bo y)\nonumber\\
&\le B(N-1)[1+2B(k)] \Osc\left(\int_{\T^{N-k}} \psi(x_k;\,\bo y)\,\,d\mu_{(x_1,...,x_{k-1},x_k)}(\bo y)\right)+B(k)\label{Ineq:EstB1}\\
&\le B(N-1)[1+2B(k)] \left[B(k)+\Osc\psi \right]+B(k)\label{Ineq:EstB2}\\
&\le B(k)[1+3B(N-1)]+2B(N-1)B(k)^2 \nonumber
\end{align}
where the first equality is obtained by adding and subtracting the same term; inequality \eqref{Ineq:EstB1} follows by application of Lemma \ref{Lem:EstBk2} and the definition of $B(k)$; inequality \eqref{Ineq:EstB2}
follows from
\begin{align*}
&\left|\int_{\T^{N-k}} \psi(x_k;\,\bo y)\,\,d\mu_{(x_1,...,x_{k-1},x_k)}(\bo y) -\int_{\T^{N-k}} \psi(x_k';\,\bo y)\,\,d\mu_{(x_1,...,x_{k-1},x_k')}(\bo y)\right|\le\\
&\quad\le \int_{\T^{N-k}}\left| \psi(x_k;\,\bo y)-\psi(x_k';\,\bo y)\right|\,\,d\mu_{(x_1,...,x_{k-1},x_k)}(\bo y) \,+\\
&\quad\quad+\left|\int_{\T^{N-k}} \psi(x_k';\,\bo y)\,\,d[\mu_{(x_1,...,x_{k-1},x_k)}(\bo y) -d\mu_{(x_1,...,x_{k-1},x_k')}](\bo y)\right|\\
&\quad\le \Osc \psi+B(k).
\end{align*}
The same bound holds starting with any permutation $(i_1,...,i_N)$ of $(1,...,N)$ and any $j\in \{i_1,...,i_k\}$, therefore
\[
B(k-1)\le  B(k)[1+3B(N-1)]+2B(N-1)B(k)^2.
\]
Since $\mu\in \mc M_{a,b,CN^{-1}}$, a computation gives 
\[
B(N-1)\le C_{b}N^{-1},
\]
which implies that there are $C'>0$ and $C''>0$, depending on $b$ and $C$ only such that 
\begin{align*}
B(k-1)&\le B(k)[1+C'N^{-1}]+C'N^{-1}B(k)^2\\
&\le (1+C'N^{-1})^{N-1-k}B(N-1)+C'N^{-1}\sum_{i=N-1}^{k}(1+C'N^{-1})^{2(i-k)}B(i)^2\\
&\le C''N^{-1}+C''N^{-1}\sum_{i=N-1}^kB(i)^2
\end{align*}
We guess\footnote{We don't expect this estimate to be sharp for large $k$. However, $ O(N^{-1})$ is the best we can expect for most of the $B(k)$, and more careful estimates are not going to bring improvements to the bounds on $|g_k-g_{k-1}|$.}
\begin{align}\label{Eq:EstimateforB}
B(k)\le \frac{A}{N}, \quad\quad\forall k\in[1,N-1]
\end{align}
for some constant $A>0$. Assuming the ansatz holds for $B(i)$ with $i\in[k,N-1]$ we get 
\begin{align*}
B(k-1)&\le  C''N^{-1}+C''N^{-1}\sum_{i=N-1}^k\frac{A^2}{N^2}\\
&\le C''N^{-1}+ C''\frac{A^2}{N^2}
\end{align*} 
and, picking $A$ such that\footnote{Such a choice can always be made for $N$ sufficiently large.}
\[
A\ge C''+\frac{C''A^2}{N^2} 
\]
\eqref{Eq:EstimateforB} is verified for $k-1$.
\end{proof}

\section{Proofs of the main results}\label{Sec:ProofMainResBig}

\subsection{The good set} We're going to use the concentration of measure result in Theorem \ref{Thm:ConcMeasure} to show that given a map $\bo F:\T^N\rightarrow \T^N$ satisfying Assumption \ref{Ass:CondF}, and a measure $\mu\in \mc M_{a,b,CN^{-1}}$, and $\epsilon>0$, there is a set $\mc G_\epsilon\subset \T^N$ whose complement has measure $\mu$ exponentially small in $N$, such that on $\mc G_\epsilon$, $F_i(\cdot;\,\hat{\bo x}_i)$ is $C^2$-$\epsilon$-close to the mean-field approximation $F_{\mu, i}$.

\begin{proposition}\label{Prop:MeasureGoodSet} Consider a map $\bo F:\T^N\rightarrow \T^N$ as in \eqref{eq:defF} satisfying Assumption \ref{Ass:CondF} with datum $(\kappa, K, E)$. Then for  every  $C>0$, there is $K_\#>0$ depending on $C$ and the datum --  in particular independent of $N$ -- such that for every measure $\mu \in \mc M_{a,b,CN^{-1}}$, $\epsilon>0$, and $i\in[1,N]$, there is a set $\mc G_{\mu,\epsilon,i}\subset\T^{N}$ such that 
\[ d_{C_2}\left(\,F_i(\cdot;\, \hat{\bo x}_i),\; F_{\mu,i}(\cdot)\,\right)\le \epsilon\quad\quad\forall (x_i;\,\hat {\bo x}_i)\in \mc G_{\mu,\epsilon,i}
\]
with 
\[
\mu\left(\mc G_{\mu,\epsilon,i}\right)>1-\epsilon^{-1}\exp\left[-K_\#N\epsilon^2\right]. 
\]
\end{proposition}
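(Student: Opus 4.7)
The plan is to write the deviation $F_i(x_i;\hat{\bo x}_i)-F_{\mu,i}(x_i)$ and each of its $x_i$-derivatives up to order two as an $N^{-1}$-scaled sum of bounded zero-mean perturbations in the $\hat{\bo x}_i$ variables, apply Theorem \ref{Thm:ConcMeasure} pointwise in $x_i$, and then upgrade to a uniform-in-$x_i$ bound by a net argument that uses the $C^3$ control on $h_{ij}$.

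From the definition of $F_{\mu,i}$, commuting $\partial_{x_i}^k$ with the $\hat{\bo\Pi}_i\mu$-integral yields, for $k\in\{0,1,2\}$,
\[
\partial_{x_i}^k\left[F_i(\cdot;\hat{\bo x}_i)-F_{\mu,i}(\cdot)\right](x_i)=\frac{1}{N}\sum_{j\neq i}\left[\partial_1^k h_{ij}(x_i,x_j)-\int_\T\partial_1^k h_{ij}(x_i,y)\,d\Pi_j\mu(y)\right],
\]
the diagonal $j=i$ contribution in the convention of \eqref{eq:defF}, if present, depending on $x_i$ only and cancelling. For each fixed $x_i$ and $k$, the right-hand side is a function of $\hat{\bo x}_i$ whose oscillation in each coordinate is bounded by $2EN^{-1}$ and whose integral against $\hat{\bo\Pi}_i\mu$ vanishes. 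Applying a rescaled version of Theorem \ref{Thm:ConcMeasure} to $\hat{\bo\Pi}_i\mu\in\mc M_{a,b,CN^{-1}}(\T^{N-1})$ gives the pointwise-in-$x_i$ estimate
\[
\hat{\bo\Pi}_i\mu\left(\left|\partial_{x_i}^k\left[F_i(\cdot;\hat{\bo x}_i)-F_{\mu,i}(\cdot)\right](x_i)\right|>\epsilon\right)\le 2\exp\left[-K_\#\,N\epsilon^2\right],
\]
with $K_\#$ depending only on $C$ and the datum $(\kappa,K,E)$.

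To make the three estimates simultaneous and uniform in $x_i$, I would fix an $\epsilon$-net $\{x_i^{(\ell)}\}_{\ell}\subset\T$ of cardinality $\lceil 1/\epsilon\rceil$ and union-bound over $\ell$ and $k\in\{0,1,2\}$. This produces a measurable set $A\subset\T^{N-1}$ on which all pointwise bounds hold simultaneously, satisfying $\hat{\bo\Pi}_i\mu(A^c)\le 6\epsilon^{-1}\exp[-K_\#\,N\epsilon^2]$. For arbitrary $x_i\in\T$, comparison with the nearest net point costs at most $2E\epsilon$ per derivative order, because the Lipschitz constant in $x_i$ of each $\partial_{x_i}^k(F_i-F_{\mu,i})(\cdot;\hat{\bo x}_i)$ is bounded by $2E$ uniformly in $\hat{\bo x}_i$ thanks to $\|h_{ij}\|_{C^{k+1}}\le E$. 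Taking $\mc G_{\mu,\epsilon,i}:=\hat{\bo\pi}_i^{-1}(A)$ and readjusting $\epsilon$ by a constant factor depending on $E$ yields the claimed inclusion and measure bound.

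The only delicate part is the bookkeeping: one must confirm that $K_\#$ depends on $C$ and the datum $(\kappa,K,E)$ but not on $N$. This amounts to tracking the effect of replacing the oscillation scale $N^{-1}$ in Theorem \ref{Thm:ConcMeasure} by $2EN^{-1}$ for the summands, and of replacing $\mc M_{a,b,N^{-1}}$ by $\mc M_{a,b,CN^{-1}}$ (which simply rescales the constants in Proposition \ref{Prop:BoundB(k)} by a factor of $C$). Both adjustments absorb cleanly into an overall multiplicative constant in the exponent.
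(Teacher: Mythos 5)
Your overall template matches the paper's: approximate $\T$ by a finite net, apply the quasi-product concentration result to bound the deviation at the net points, union-bound, and propagate to all of $\T$ via a Lipschitz comparison, then rescale $\epsilon$. The one genuine gap is that you apply the concentration theorem to $\hat{\bo\Pi}_i\mu$ on $\T^{N-1}$ and assert, without proof, that $\hat{\bo\Pi}_i\mu\in\mc M_{a,b,CN^{-1}}(\T^{N-1})$. This is not a definition-chase: it requires showing that taking the marginal on $\hat{\bo\pi}_i(\T^N)$ preserves the Lipschitz-disintegration structure (which is the content of a separate argument, in the spirit of Proposition \ref{Prop:RegPhieta} point ii), and is not stated in the paper as an automatic fact). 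The paper circumvents this entirely by a small but decisive trick: it applies Theorem \ref{Thm:ConcMeasure} to $\mu$ itself on $\T^N$, treating the observables $f_{k,\ell}(\bo x)=\partial_i^k F_i(z_\ell;\hat{\bo x}_i)$ as functions on $\T^N$ that happen not to depend on $x_i$. Such a function has $\Osc_i f_{k,\ell}=0\le N^{-1}$, so it trivially lies in $\mc O_{EN^{-1}}(\T^N)$ and the hypotheses of the concentration theorem hold with no marginal lemma needed. You should either adopt this observation or supply the missing lemma that $\hat{\bo\Pi}_i$ preserves $\mc M_{a,b,CN^{-1}}$.

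Two smaller remarks. First, your observation that the uncoupled part $f_i$ cancels in $F_i-F_{\mu,i}$, so that all the relevant Lipschitz constants are controlled by $E$ alone rather than $K+E$, is correct and a mild simplification over the paper (which works with $F_i$ directly and takes the mesh proportional to $(K+E)^{-1}\epsilon$). Second, your plain Lipschitz propagation at every order $k\in\{0,1,2\}$ replaces the paper's Taylor-to-second-order step at $k\in\{0,1\}$; both give $O(\epsilon)$ costs, so this is cosmetic.
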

\begin{proof}
It follows from the assumptions that for every $i\in[1,N]$
\[
|\partial_i^3F_i|\le K+E
\]
i.e. $K':=K+E$ is an upper bound on the third derivative of $F_i(\cdot;\,\hat{\bo x}_i)$.

Fix any $i\in[1,N]$. With an $n>1+K'\epsilon^{-1}$, consider $\{z_1,...,z_n\}$ points on  $\T$ at distance at most $K^{-1}\epsilon$.  For every $\ell=1,...,n$ define  $f_{k,\ell}:\T^{N}\rightarrow \R$
\begin{align*}
f_{2,\ell}(x_i;\,\hat{\bo x}_i)&:= \partial_i^2F_i(z_\ell,\,\hat{\bo x}_i)\\
f_{1,\ell}(x_i;\,\hat{\bo x}_i)&:=\partial_iF_i(z_\ell;\,\hat{\bo x}_i)\\
f_{0,\ell}(x_i;\,\hat{\bo x}_i)&:= F_i(z_\ell,\,\hat{\bo x}_i).
\end{align*}
 Notice that the dependence of $f_{j,\ell}$ on $x_i$ is mute. From the assumptions on $\bo F$, it follows that $f_{j,\ell}\in\mc O_{E N^{-1}}(\T^{N})$. Applying Theorem \ref{Thm:ConcMeasure},
\[
\mu\left(\left| f_{k,\ell}-\mb E_\mu[f_{k,\ell}] \right|\ge  \epsilon\right)\le 2\cdot \exp\left[-\frac{\epsilon^2}{C^2}N\right]
\]
and the set
\[
\mc G_{\mu,\epsilon,i}:=\bigcap_{k=0}^2\bigcap_{\ell=1}^n\,\{\left| f_{1,\ell}-\mb E_\mu[f_{1,\ell}] \right|\le \epsilon\}
\]
has measure
\[
\mu(\mc G_{\mu,\epsilon,i})\ge 1- 6(1+\epsilon^{-1}K')\cdot\exp\left[-\frac{\epsilon^2}{C^2}N\right]
\]
Since $f_{k,\ell}$ is independent of $x_i$, if $(x_i;\,\hat{\bo x}_i)\in \mc G_{\mu,\epsilon,i}$ then the whole  fiber $\T_{\hat{\bo x}_i}$ is contained in this set. For every $\hat{\bo x}_i$ such that $\T_{\hat{\bo x}_i}\subset \mc G_{\mu,\epsilon,i}$ and every $z\in \T$, pick $\ell$ with  $|z_\ell-z|\le K'\epsilon^{-1}$. Then
\begin{align*}
|\partial_i^2F_i(z,\,\hat{\bo x}_i)-F_{\mu,i}''(z)| &\le |\partial_i^2F_i(z,\,\hat{\bo x}_i)-\partial_i^2F_i(z_\ell,\,\hat{\bo x}_i)]|+|\partial_i^2F_i(z_\ell,\,\hat{\bo x}_i)-\mb E_\mu[f_{2,\ell}]|+\\
&\quad\quad\quad+|\mb E_\mu[f_{2,\ell}]-F_{\mu,i}''(z)|\\
&\le |\partial_i^3F_i|_\infty|z-z_\ell|+\epsilon+\\
&\quad\quad+\int_{\T^{N-1}}|\partial_i^2F_i(z_\ell;\,\hat{\bo x}_i)-\partial_i^2F_i(z;\,\hat{\bo x}_i)| \;d\hat{\bo \Pi}_i\mu(\hat{\bo x}_i)\\
&\le 3\epsilon.
\end{align*}
By Taylor's theorem
\begin{align*}
\partial_iF_i(z;\,\hat{\bo x}_i)&=\partial_iF_i(z_\ell;\,\hat{\bo x}_i)+\partial_i^2F_i(z_\ell;\,\hat{\bo x}_i)\cdot (z-z_\ell)+\frac{1}{2}\partial_i^3F_i(\tilde z;\,\hat{\bo x}_i)\cdot (z-z_\ell)^2\\
F_{\mu,i}'(z)&=F_{\mu,i}'(z_\ell)+F_{\mu,i}''(z_\ell)\cdot (z-z_\ell)+\frac{1}{2}F_{\mu,i}'''(\tilde z')\cdot (z-z_\ell)^2
\end{align*}
for some points $\tilde z,\,\tilde z'\in \T$.
Then
\begin{align*}
|\partial_iF_i(z;\,\hat{\bo x}_i)-F_{\mu,i}'(z)|&\le |\partial_iF_i(z_\ell;\,\hat{\bo x}_i)-F_{\mu,i}'(z_\ell)|+|\partial_i^2F_i(z_\ell;\,\hat{\bo x}_i)-F_{\mu,i}''(z_\ell)|\cdot\frac{1}{2}+\\
&\quad\quad+2\cdot\frac{1}{2}|\partial_i^3F_i|_{\infty}({K'}^{-1}\epsilon)\frac{1}{2} \\
&\le \epsilon+\frac{1}{2}\epsilon+\frac{1}{2}\epsilon\\
&\le 2\epsilon.
\end{align*}
where in the second inequality we upper bounded $(z-z_\ell)^2\le {K'}^{-1}\epsilon\cdot \frac{1}{2}$\footnote{One factor is upper bounded by the diameter of $\T$.}. 

Analogously
\begin{align*}
F_i(z;\,\hat{\bo x}_i)&=F_i(z_\ell;\,\hat{\bo x}_i)+\partial_iF_i(z_\ell;\,\hat{\bo x}_i)\cdot (z-z_\ell)+\frac{1}{2}\partial_i^2F_i(\tilde z;\,\hat{\bo x}_i)\cdot (z-z_\ell)^2\\
F_{\mu,i}(z)&=F_{\mu,i}(z_\ell)+F_{\mu,i}'(z_\ell)\cdot (z-z_\ell)+\frac{1}{2}F_{\mu,i}''(\tilde z')\cdot (z-z_\ell)^2
\end{align*}
for some points $\tilde z,\,\tilde z'\in \T$, and similar computations to the ones above yield
\[
|F_i(z;\,\hat{\bo x}_i)-F_{\mu,i}(z)|\le 2\epsilon.
\]
Rescaling $\epsilon>0$, the proof follows.
\end{proof}
\begin{definition}
With the $\mc G_{\mu,\epsilon,i}$ provided by the theorem above, we define
\[
\mc G_{\mu,\epsilon}:=\bigcap_{i\in [1,N]}\mc G_{\mu,\epsilon,i}.
\]
\end{definition}
Given $\bo F$ and $\mu\in \mc M_{a,b,CN^{-1}}$, this is the \emph{good} portion of phase space where $\bo F$ is close to the mean-field approximation $\bo F_\mu$. Notice that 
\[
\mu(\mc G_{\mu,\epsilon})>1-\epsilon^{-1}N\exp\left[-K_\#\epsilon^2N\right].
\]

\subsection{Proof of the main results}\label{Sec:ProofMainRes}

\begin{proof} [Proof of Theorem \ref{Thm:Main}]
Let's start by noticing that if $\bo F$ satisfies Assumption \ref{Ass:CondF} with datum $(\kappa,K,E)$, then it satisfies assumptions \ref{Ass:CondH} and \ref{Ass:SecondDerivH} with the same datum. Notice that fixed $K$, increasing $\kappa$ and decreasing $E$, conditions \eqref{Prop:CondOne} and \eqref{Eq:CondTwo} can be always ensured. For fixed $K$, there are $\kappa_0>0$ and $E_0>0$ such that for all $\kappa\ge \kappa_0$ and $E\le E_0$, $\bo F$ satisfies the assumptions of Proposition \ref{Prop:LipschitzReg2} and there are $a_0$, $b_0$, $c\ge 0$, and $\alpha_0\ge0$ independent of $N$ such that 
\[
\bo F_*(\mc M_{a_0,b_0,cN^{-1}}\cap \mc C^2_{\alpha_0})\subset \mc M_{a_0,b_0,cN^{-1}}\cap \mc C^2_{\alpha_0}
\]
which proves the first point of the theorem.

Let $\mu\in \mc M_{a,b,cN^{-1}}$ and $\bo G$ as in Definition \ref{Def:DefinitionofG} with $\bo H=\bo F$.
Recalling \eqref{Eq:DecompEvolutionOfMeasure}, since $(\Id_\T;\,\hat{\bo G}_i)_*$ acts on all coordinates but the $i$-th one, we have 
\begin{equation}\label{Eq:MarginalsExpression}
\Pi_i\bo F_*\mu=\Pi_i(G_i;\,\bo\Id_{\T^{N-1}})_*\bo \Phi_{i*}^{-1}\mu.
\end{equation}
Define $\nu:=(G_i;\,\bo\Id_{\T^{N-1}})_*\bo \Phi_{i*}^{-1}\mu$.  Denoting $g_{\hat{\bo x}_i}(\cdot):= G_i(\cdot;\,\hat{\bo x}_i)$
\[
\nu_{\hat{\bo x}_i}=g_{\hat{\bo x}_i*}[\bo \Phi_{i*}^{-1}\mu]_{\hat{\bo x}_i}
\]
and therefore by \eqref{Eq:MarginalsExpression}
\begin{align*}
\Pi_i\bo F_*\mu=\int_{\T^{N-1}}  \nu_{\hat{\bo x}_i}\,\,d\hat{\bo \Pi}_i\nu(\hat{\bo x}_i). 
\end{align*}
Since by definition of $\bo \Phi_i$, $\Pi_i\bo \Phi_{i*}^{-1}\mu=\Pi_i\mu$, we have
\begin{align*}
(F_{\mu,i})_*\Pi_i\mu&=(F_{\mu,i})_*\Pi_i\bo \Phi_{i*}^{-1}\mu\\
&=(F_{\mu,i})_*\int_{\T^{N-1}}  [\bo \Phi_{i*}^{-1}\mu]_{\hat{\bo x}_i}\,\, d\hat{\bo \Pi}_i\bo \Phi_{i*}^{-1}\mu(\hat{\bo x}_i)\\
&=\int_{\T^{N-1}} \,\,(F_{\mu,i})_*[\bo \Phi_{i*}^{-1}\mu]_{\hat{\bo x}_i}\,\,  d\hat{\bo \Pi}_i\nu(\hat{\bo x}_i).
\end{align*}
Therefore
\begin{align}
&\theta_a(\Pi_i\bo F_*\mu, {}(F_{\mu,i})_*\,\Pi_i\mu)=\nonumber \\
&\quad = \theta_a\left(\int_{\T^{N-1}} {}g_{\hat{\bo x}_i*}[\bo \Phi_{i*}^{-1}\mu]_{\hat{\bo x}_i} \,\,d\hat{\bo \Pi}_i\nu(\hat{\bo x}_i) ,\, \int_{\T^{N-1}} {} (F_{\mu,i})_*[\bo \Phi_{i*}^{-1}\mu]_{\hat{\bo x}_i} \,\,d\hat{\bo \Pi}_i\nu(\hat{\bo x}_i) \right).\label{Eq:IneqFirstentry}
\end{align}
 By Proposition \ref{Prop:RegPhieta} point ii), $\hat{\bo \Pi}_i\nu\in \mc M_{a',b',C'N^{-1}}$, fixed any $\epsilon>0$, applying Proposition \ref{Prop:MeasureGoodSet} and Proposition \ref{Prop:DistanceOpHilbMetric},  we can find $\mc G$ such that 
 \begin{equation}\label{Eq:DistnaceonG}
 \sup_{\hat{\bo x}_i\in \mc G} \theta_a\left({}f_{\hat{\bo x}_i*}[\bo \Phi_{i*}^{-1}\mu]_{\hat{\bo x}_i},{}(F_{\mu,i})_*[\bo \Phi_{i*}^{-1}\mu]_{\hat{\bo x}_i}\right) \le \epsilon
 \end{equation}
 and $\hat{\bo \Pi}_i\nu(\mc G^c)\le \epsilon^{-1}\exp[-K_\#\epsilon^2N]$. 
  Then, rewriting the first entry of $\theta_a$ in \eqref{Eq:IneqFirstentry} as the convex  combination 
\begin{align*}
\int_{\T^{N-1}} {}g_{\hat{\bo x}_i*}[\bo \Phi_{i*}^{-1}\mu]_{\hat{\bo x}_i} \,\,d\hat{\bo \Pi}_i\nu(\hat{\bo x}_i)&=\hat{\bo \Pi}_i\nu\left(\mc G^c\right) \int_{\mc G^c}  {}g_{\hat{\bo x}_i*}[\bo \Phi_{i*}^{-1}\mu]_{\hat{\bo x}_i} \,\,{d\hat{\bo \Pi}_i\nu_{\mc G^c}(\hat{\bo x}_i)}+\\
&+[1-\hat{\bo \Pi}_i\nu\left(\mc G^c\right)]\int_{\mc G}  {}g_{\hat{\bo x}_i*}[\bo \Phi_{i*}^{-1}\mu]_{\hat{\bo x}_i} \,\,{d\hat{\bo \Pi}_i\nu_{\mc G}(\hat{\bo x}_i)}
\end{align*}
with $\hat{\bo \Pi}_i\nu_{\mc G^c}$ and $\hat{\bo \Pi}_i\nu_{\mc G}$  the (probability measures) restrictions of $\hat{\bo \Pi}_i\nu$ to $\mc G^c$ and $\mc G$ respectively, and a similar convex combination for the second entry; applying Proposition \ref{Prop:HilbConvCombDist} point iii) we get
\begin{align*}
&\theta_a(\Pi_i\bo F_*\mu, {}(F_{\mu,i})_*\,\Pi_i\mu)\le \\
 &\le C_a\left[
 \hat{\bo \Pi}_i\nu\left(\mc G^c\right)\cdot \theta_a\left( \int_{\mc G^c}  {}g_{\hat{\bo x}_i*}[\bo \Phi_{i*}^{-1}\mu]_{\hat{\bo x}_i} \,\,{d\hat{\bo \Pi}_i\nu_{\mc G^c}(\hat{\bo x}_i)},\, \int_{\mc G^c}  {} (F_{\mu,i})_*[\bo \Phi_{i*}^{-1}\mu]_{\hat{\bo x}_i} \,\,{d\hat{\bo \Pi}_i\nu_{\mc G^c}(\hat{\bo x}_i)}\right)+\right.\\
&\left.+(1-\hat{\bo \Pi}_i\nu\left(\mc G^c\right))\cdot \theta_a\left( \int_{\mc G}  {}g_{\hat{\bo x}_i*}[\bo \Phi_{i*}^{-1}\mu]_{\hat{\bo x}_i} \,\,{d\hat{\bo \Pi}_i\nu_{\mc G}(\hat{\bo x}_i)},\, \int_{\mc G}  {} (F_{\mu,i})_*[\bo \Phi_{i*}^{-1}\mu]_{\hat{\bo x}_i}  d\hat{\bo \Pi}_i\nu_{\mc G}(\hat{\bo x}_i)\right)\right] \\
&\le C_a\left[\hat{\bo \Pi}_i\nu\left(\mc G^c\right) \cdot D_a+\hat{\bo \Pi}_i\nu\left(\mc G\right)\cdot  \sup_{\hat{\bo x}_i\in \mc G} \theta_a\left({}g_{\hat{\bo x}_i*}[\bo \Phi_{i*}^{-1}\mu]_{\hat{\bo x}_i},{}(F_{\mu,i})_*[\bo \Phi_{i*}^{-1}\mu]_{\hat{\bo x}_i}\right)\right]
 \end{align*}
 where we upper bounded the distance in the first term with $D_a$, the diameter of the cone. By triangle inequality  
 \begin{align*}
& \theta_a\left({}g_{\hat{\bo x}_i*}[\bo \Phi_{i*}^{-1}\mu]_{\hat{\bo x}_i},{}(F_{\mu,i})_*[\bo \Phi_{i*}^{-1}\mu]_{\hat{\bo x}_i}\right)\le \\
 &\le  \theta_a\left({}g_{\hat{\bo x}_i*}[\bo \Phi_{i*}^{-1}\mu]_{\hat{\bo x}_i},{}f_{\hat{\bo x}_i*}[\bo \Phi_{i*}^{-1}\mu]_{\hat{\bo x}_i}\right)+\theta_a\left({}f_{\hat{\bo x}_i*}[\bo \Phi_{i*}^{-1}\mu]_{\hat{\bo x}_i},{}(F_{\mu,i})_*[\bo \Phi_{i*}^{-1}\mu]_{\hat{\bo x}_i}\right)\\
 & \le O(N^{-1})+\epsilon
 \end{align*}
 where we used that  Lemma \ref{Lem:GiHiclose} and Proposition \ref{Prop:DistanceOpHilbMetric} imply
 \[
 \theta_a\left({}g_{\hat{\bo x}_i*}[\bo \Phi_{i*}^{-1}\mu]_{\hat{\bo x}_i},{}f_{\hat{\bo x}_i*}[\bo \Phi_{i*}^{-1}\mu]_{\hat{\bo x}_i}\right) \le K_\#d_{C^2}(g_{\hat{\bo x}_i},\, f_{\hat{\bo x}_i})\le O(N^{-1}),
 \] 
 and \eqref{Eq:DistnaceonG}.
  Putting all the estimates together 
  \[
  \theta_a(\Pi_i\bo F_*\mu, {}(F_{\mu,i})_*\,\Pi_i\mu)\le K_\#\left[\epsilon^{-1}\exp(-K_\#\epsilon^2N)+\epsilon+N^{-1}\right]
  \]
 and picking $\epsilon=N^{-\gamma}$ for any $\gamma<1/2$
 \begin{align*}
  \theta_a(\Pi_i\bo F_*\mu, {}(F_{\mu,i})_*\,\Pi_i\mu)&\le N^\gamma \exp\left(-K_\#N^{1-2\gamma}\right)+N^{-\gamma}+N^{-1}\\
  &\le C_\gamma N^{-\gamma}
 \end{align*}
 for some $C_\gamma>0$ that can be chosen uniformly in $N$. Since $\bo F_\mu$ is an uncoupled map, 
 \[
 (F_{\mu,i})_*\,\Pi_i\mu=\Pi_i(\bo F_\mu)_*\mu=\Pi_i\bo{\mc F}\mu
 \]
 and  point 2. follows. 
 \end{proof}

\begin{proof}[Proof of Corollary \ref{Cor:PropagationofChaos}] Let's start noticing that Proposition \ref{Prop:DistanceOpHilbMetric} implies that $\bo{\mc F}$ restricted to $\mc M_{a_0,b_0, cN^{-1}}$  is Lipschitz, and in particular, there is $C>0$ such that, for any $\mu,\,\nu\in \mc M_{a_0,b_0, cN^{-1}}$ and $i\in[1,N]$  
\begin{align}
\theta_{b_0}\left(\Pi_i\bo{\mc F}\mu,\Pi_i\bo{\mc F}\nu\right)&=\theta_{b_0}\left(\,\Pi_i(\bo F_{\mu})_*\mu,\,\Pi_i(\bo F_{\nu})_*\nu\,\right)\nonumber\\
&\le \theta_{b_0}(\,\Pi_i(\bo F_{\mu})_*\mu,\,\Pi_i(\bo F_{\mu})_*\nu\,)+\theta_{b_0}(\,\Pi_i(\bo F_{\mu})_*\nu, \,\Pi_i(\bo F_{\nu})_*\nu\,)\nonumber\\
&=\theta_{b_0}(\,(F_{\mu,i})_*\Pi_i\mu,\,( F_{\mu,i})_*\Pi_i\nu\,)+\theta_{b_0}(\,( F_{\mu,i})_*\Pi_i\nu, \,( F_{\nu,i})_*\Pi_i\nu\,)\label{Eq:TRiangIneq}\\
&\le C\sup_{i\in[1,N]}\theta_{b_0}\left( \Pi_i\mu,\Pi_i\nu\right)\nonumber
\end{align}
where to bound the second term in \eqref{Eq:TRiangIneq} we used Proposition \ref{Prop:DistanceOpHilbMetric} and that 
\[
d_{C^2}(F_{\mu,i},\, F_{\nu,i})\le C' \sup_{i\in[1,N]}\theta_{b_0}\left( \mu_i,\nu_i\right)
\]
for some $C'>0$.

 It follows from point 2. of Theorem \ref{Thm:Main} that
\begin{align*}
\theta_{b_0}\left( \Pi_i\bo F^t_*\mu,\Pi_i\bo{\mc F}^t\mu\right)&\le \theta_{b_0}\left(\Pi_i\bo F_*(\bo F^{t-1}_*\mu), \Pi_i\bo {\mc F}(\bo F_*^{t-1}\mu) \right)\\
&\quad\quad+ \theta_{b_0}\left(\Pi_i\bo {\mc F}\bo {F}_*^{t-1}\mu,\Pi_i\bo{\mc F}\bo {\mc F}^{t-1}\mu\right)\\
&\le C_\gamma N^{-\gamma}+ C\sup_{i\in[1,N]}\theta_{b_0}\left(\Pi_i\bo F^{t-1}_*\mu, \Pi_i\bo {\mc F}^{t-1}\mu\right)
\end{align*}
and by induction
\begin{equation}\label{Eq:ContMarg}
\sup_{i\in[1,N]}\theta_{b_0}\left( \Pi_i\bo F^t_*\mu,\Pi_i\bo{\mc F}^t\mu\right)\le \left(\sum_{r=0}^{t-1}C^r\right) C_{\gamma}N^{-\gamma}
\end{equation}
that for fixed $t>0$ goes to zero as $N\rightarrow \infty$.

If $\mu\in\mc M_{a_0,b_0,cN^{-1}}(\T^N)$ and
\[
\bo\Pi_{[1,k]}\mu=\int_{\T^{N-k}}\mu_{(x_{k+1},...,x_{x_N})} \;d\bo\Pi_{[k+1,N]}\mu(x_{k+1},...,x_{x_N}),
\]
then $\bo\Pi_{[1,k]}\mu\in \mc M_{a_0,b_0,cN^{-1}}(\T^k)$. This, together with \eqref{Eq:ContMarg} implies that the density of $\bo\Pi_{[1,k]}\bo F_*^t\mu$ converges to $\bo\Pi_{[1,k]}\bo{\mc F}^t\mu$ as shown below:
First of all, since $\bo\Pi_{[1,k]}\bo F_*^t\mu\in \mc M_{a_0,b_0,cN^{-1}}(\T^k)$, \
\begin{equation}\label{Eq:ProjectionkCoordinates}
\theta_{b_0}((\bo\Pi_{[1,k]}\bo F_*^t\mu)_{\bo {\hat x}_1}, \, \Pi_1{\bo F_*^t\mu})=O(N^{-1})
\end{equation}
for all $\bo{\hat x}_1=(x_2,...,x_k)\in\T^{k-1}$ -- $(\bo\Pi_{[1,k]}\bo F_*^t\mu)_{\bo {\hat x}_1}$ is the conditional of $\bo\Pi_{[1,k]}\bo F_*^t\mu$ on $\T\times\{\hat{\bo x}_1\}$ -- and therefore
\begin{align*}
&\|\bo\Pi_{[1,k]}\bo F_*^t\mu- (\Pi_1\bo{\mc F}^t\mu) \otimes\bo\Pi_{[2,k]}\bo F_*^t\mu\|_{TV} \le \\
&\le \|\bo\Pi_{[1,k]}\bo F_*^t\mu- (\Pi_1{\bo F_*^t\mu}) \otimes\bo\Pi_{[2,k]}\bo F_*^t\mu\|_{TV} +\\
&\quad\quad+ \|(\Pi_1{\bo F_*^t\mu}) \otimes\bo\Pi_{[2,k]}\bo F_*^t\mu- (\Pi_1\bo{\mc F}^t\mu) \otimes\bo\Pi_{[2,k]}\bo F_*^t\mu\|_{TV}\\
&=\left\|\int_{\T^{k-1}} d \bo\Pi_{[2,k]}\bo F_*^t\mu(\hat{\bo x}_1) \left[ (\bo\Pi_{[1,k]}\bo F_*^t\mu)_{\hat{\bo x}_1}-\Pi_1{\bo F_*^t\mu} \right] \right\|_{TV}+ \\
&\quad\quad +\|(\Pi_1{\bo F_*^t\mu})- (\Pi_1\bo{\mc F}^t\mu)\|_{TV}\\
&\le O(N^{-1})+O(N^{-\gamma})
\end{align*}
where to bound the first term we used  \eqref{Eq:ProjectionkCoordinates}, and the second term is bounded with \eqref{Eq:ContMarg}\footnote{To go from the Hilbert metric $\theta_{b_0}$ to the Total Variation norm $\|\cdot \|_{TV}$ we are using that the measures we compare are all probability measures with continuous density, and that a bound on $\theta_{b_0}$, implies a bound in $C^0$, which gives a bound with respect to the Total Variation norm.}. 
Repeating the argument inductively ($k$ times), one obtains
\[
\|\bo\Pi_{[1,k]}\bo F_*^t\mu- (\Pi_1\bo{\mc F}^t\mu) \otimes...\otimes(\Pi_k\bo{\mc F}^t\mu) \|_{TV} \le O(N^{-\gamma}).
\]
\end{proof}

\begin{proof}[Proof of Corollary \ref{Cor:FixedPointofOperators}] 
Consider $\mu\in B_\delta(\bar\mu)$. From point 2. of Theorem \ref{Thm:Main} follows that 
  \begin{align*}
 \theta_a(\Pi_i\bo F_*\mu,\Pi_i\bar \mu)&\le \theta_a(\Pi_i\bo F_*\mu,\Pi_i\bo{\mc F}\mu)+\theta_a(\Pi_i\bo{\mc F}\mu, \Pi_i\bo{\mc F}\bar \mu)\\
 &\le C_\gamma N^{-\gamma}+\lambda \sup_{i\in[1,N]}\theta_a(\Pi_i\mu, \Pi_i\bar \mu)
 \end{align*}
 which implies that the neighborhood $B_{C_\gamma' N^{-\gamma}}(\bar \mu)$ with $C_\gamma':=C_\gamma/(1-\lambda)$ is invariant under $\bo F_*$ (provided that $N$ is sufficiently large so that $C_\gamma' N^{-\gamma}<\delta$).
\end{proof}

\appendix  

\section{Cones of Differentiable $\log-$Lipschitz Functions }
\label{Sec:AppCones}
A convex cone $\mc V$ is a subset of a linear vector space $V$ such that: $t_1v_1+t_2v_2\in \mc V$ for every $v_1,\,v_2\in \mc V$ and $t_1,\,t_2>0$; and $\bar{\mc V}\cap \mc V =\{0\}$  where 
\[
\bar{\mc V}:=\left\{w:\; \exists v\in \mc V\mbox{ and }t_n\rightarrow 0 \mbox{ s.t. }w+t_nv\in \mc V \right\}. 
\]
It is known that on convex cones  is defined a Hilbert projective\footnote{Is zero when evaluated on proportional vectors, but always nonzero otherwise.} metric $\theta:\mc V\times\mc V\rightarrow \R^+_0$ in the following way:
\begin{align*}
\theta(v_1,\,v_2):=\log\beta(v_1,\,v_2)+\log\beta(v_2,\,v_1)
\end{align*}
where
\begin{equation}\label{Eq:Defbeta}
\beta(v,w):=\inf\left\{t>0:\; tw-v\in\mc V\right\}.
\end{equation}
The following theorem states that  linear maps are contractions with respect to the the Hilbert metric.
\begin{theorem}[\cite{liverani1995decay, viana1997stochastic}]\label{Thm:ContCones}
Assume $\mc V\subset V$ and $\mc V'\subset V'$ are two convex cones with Hilbert metrics $\theta$ and $\theta'$ respectively. If $\mc P: V\rightarrow  V'$ is a linear transformation, then
\[
\theta'(\mc P\psi_1,\mc P\psi_2)\le [1-e^{-\diam(\mc P(\mc V),\mc V')}] \theta(\psi_1,\psi_2)
\]
for all $\psi_1,\psi_2\in\mc V$, where $\diam(\mc P(\mc V),\mc V)$ is the diameter of $\mc P( \mc V)$ in $\mc V'$.
\end{theorem}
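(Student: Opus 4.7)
The plan is to prove this by the classical Birkhoff-type argument for cone contractions, which I would adapt from \cite{liverani1995decay,viana1997stochastic}. If $d:=\diam(\mc P(\mc V),\mc V')=\infty$ then the contraction factor $1-e^{-d}=1$ and there is nothing to prove, so I would assume $d<\infty$ from the start.

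First, I would recall the characterization of the Hilbert metric through extremal ratios: for $\psi_1,\psi_2\in\mc V$ with $\theta(\psi_1,\psi_2)<\infty$, set $\alpha:=\sup\{s>0:\psi_1-s\psi_2\in\bar{\mc V}\}$ and $\beta:=\inf\{t>0:t\psi_2-\psi_1\in\bar{\mc V}\}=\beta(\psi_1,\psi_2)$, so that $\theta(\psi_1,\psi_2)=\log(\beta/\alpha)$. The vectors $u:=\psi_1-\alpha\psi_2$ and $w:=\beta\psi_2-\psi_1$ both lie in $\bar{\mc V}$, and by linearity of $\mc P$ their images $\mc Pu,\mc Pw$ belong to $\bar{\mc V'}$.

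Next, since $\mc Pu,\mc Pw$ lie in (the closure of) $\mc P(\mc V)\subset\mc V'$, which has $\theta'$-diameter at most $d$, I can select positive numbers $\mu,\lambda$ with $\log(\lambda/\mu)\le d$ such that both $\mc Pw-\mu\mc Pu$ and $\lambda\mc Pu-\mc Pw$ belong to $\bar{\mc V'}$. Unwinding these inclusions using $u=\psi_1-\alpha\psi_2$ and $w=\beta\psi_2-\psi_1$ and the linearity of $\mc P$, one obtains
\[
(\mu\alpha+\beta)\mc P\psi_2-(\mu+1)\mc P\psi_1\in\bar{\mc V'},\qquad (\lambda+1)\mc P\psi_1-(\lambda\alpha+\beta)\mc P\psi_2\in\bar{\mc V'},
\]
which, by the definition of the extremal ratios in $\mc V'$, translate to
\[
\beta(\mc P\psi_1,\mc P\psi_2)\le\frac{\mu\alpha+\beta}{\mu+1},\qquad \sup\{s>0:\mc P\psi_1-s\mc P\psi_2\in\bar{\mc V'}\}\ge\frac{\lambda\alpha+\beta}{\lambda+1}.
\]
Setting $r:=\beta/\alpha=e^{\theta(\psi_1,\psi_2)}$, these estimates combine into
\[
\theta'(\mc P\psi_1,\mc P\psi_2)\le\log\frac{(\mu+r)(\lambda+1)}{(\mu+1)(\lambda+r)}.
\]

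Finally, I would optimize this rational expression in $\mu$ and $\lambda$ subject to the constraint $\lambda/\mu\le e^d$. An elementary computation (writing $\lambda=\mu e^d$ and differentiating in $\mu$) shows that the maximum in $r\ge 1$ is bounded by $[1-e^{-d}]\log r$, producing the advertised contraction factor. The main obstacle in the argument is precisely this last elementary but slightly delicate algebraic optimization; the rest is just careful bookkeeping with the cone definitions and with linearity of $\mc P$. If the sharper Birkhoff constant $\tanh(d/4)$ were sought one would need to solve the same optimization exactly, but for the statement above the coarser bound is enough.
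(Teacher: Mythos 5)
Your argument is the classical Birkhoff cone-contraction proof, which is precisely what the paper's cited references \cite{liverani1995decay, viana1997stochastic} use (the paper itself does not reprove the theorem but defers to those sources), so the approach matches. Two points worth tightening before you call it done: (i) the vectors $u=\psi_1-\alpha\psi_2$ and $w=\beta\psi_2-\psi_1$ lie only in $\bar{\mc V}$, not in $\mc V$, so the bound $\theta'(\mc Pu,\mc Pw)\le d=\diam(\mc P(\mc V),\mc V')$ requires a small limiting argument (e.g.\ replace $\alpha,\beta$ by $\alpha-\epsilon,\beta+\epsilon$ to push $u,w$ strictly into $\mc V$, and let $\epsilon\to 0$); and (ii) the final optimization is the real content — setting $\lambda=\mu e^d$ and solving $\partial_\mu=0$ gives $\mu^*=\sqrt{r e^{-d}}$, at which the expression reduces to $2\log\frac{1+\sqrt{r e^{d}}\,e^{-d/2}}{\sqrt{r}+e^{d/2}}$, whose supremum over $r\ge 1$ (attained as $r\to 1^+$ by L'H\^opital) is $\tanh(d/4)\log r$; you then need the elementary inequality $\tanh(d/4)\le 1-e^{-d}$, which follows from $(2e^{d/2}+1)(e^{d/2}-1)\ge 0$. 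The claim that the maximum ``is bounded by $[1-e^{-d}]\log r$'' directly by differentiating in $\mu$ is therefore correct but should be carried through as above.
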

For a recent treatment of convex cones and the Hilbert metric see \cite{lemmens2012nonlinear, lemmens2013birkhoff}, for applications to dynamical systems see, among others, \cite{liverani1995decay, viana1997stochastic}.

\subsection{$\log$-Lipschitz functions} Consider $\mc V_a$, the cone of functions defined in \eqref{Eq:ConeFunclogLip}. The result below gives an explicit expression for $\beta_a$ defined as in \eqref{Eq:Defbeta}. 
\begin{proposition}\label{Prop:HilbertDistCharacter} Consider $\psi_1,\psi_2\in\mc V_a$, then
\begin{equation}\label{Eq:ExpExplicitbetaDiff}
\beta_a(\psi_1,\psi_2)= \sup_{x\in \T}\left\{\frac{\psi_2}{\psi_1}(x),\, \frac{a\psi_2-\psi_2'}{a\psi_1-\psi_1'} (x),\, \frac{a\psi_2+\psi_2'}{a\psi_1+\psi_1'}(x)\right\}
\end{equation}
\end{proposition}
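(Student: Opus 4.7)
The plan is to unfold the membership condition $t\psi_2-\psi_1\in\mc V_a$ (or $t\psi_1-\psi_2\in\mc V_a$, depending on which ordering convention one uses in \eqref{Eq:Defbeta}) into three elementary pointwise inequalities on $\T$, and then simply take the supremum. Everything reduces to elementary algebra and compactness of $\T$.

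By definition of $\mc V_a$, saying that $t\psi_2-\psi_1\in\mc V_a$ is the conjunction of positivity, $t\psi_2(x)-\psi_1(x)>0$ for all $x$, and the log-Lipschitz bound, $|t\psi_2'(x)-\psi_1'(x)|<a\bigl(t\psi_2(x)-\psi_1(x)\bigr)$ for all $x$. Positivity is equivalent to $t>\psi_1(x)/\psi_2(x)$ pointwise. Splitting the absolute value in the log-Lipschitz bound into its two one-sided inequalities and rearranging gives the equivalent pair
\[
t\bigl(a\psi_2-\psi_2'\bigr)(x)\,>\,\bigl(a\psi_1-\psi_1'\bigr)(x),\qquad
t\bigl(a\psi_2+\psi_2'\bigr)(x)\,>\,\bigl(a\psi_1+\psi_1'\bigr)(x).
\]
The key observation I would flag is that the assumption $\psi_i\in\mc V_a$ is precisely what makes the quantities $a\psi_i\pm\psi_i'$ strictly positive everywhere on $\T$, so these inequalities can be divided through safely, yielding the three pointwise lower bounds $t>\psi_1/\psi_2$, $t>(a\psi_1-\psi_1')/(a\psi_2-\psi_2')$, $t>(a\psi_1+\psi_1')/(a\psi_2+\psi_2')$.

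Let $t^{\star}$ denote the supremum over $x\in\T$ of the maximum of these three quotients; by compactness of $\T$ and continuity of numerators and (strictly positive) denominators this supremum is finite and attained. The proof then has two directions. First, for every $t>t^{\star}$, the three strict inequalities hold uniformly on $\T$, so $t\psi_2-\psi_1\in\mc V_a$ and hence $\beta_a(\psi_1,\psi_2)\le t^{\star}$. Conversely, if $t<t^{\star}$, at least one of the three inequalities is violated at some point of $\T$: either $t\psi_2-\psi_1$ fails positivity, or (assuming it is positive) the log-Lipschitz bound fails at that point, so $t\psi_2-\psi_1\notin\mc V_a$; this yields $\beta_a(\psi_1,\psi_2)\ge t^{\star}$. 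Putting the two bounds together gives exactly \eqref{Eq:ExpExplicitbetaDiff}.

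I do not expect any genuine obstacle: the argument is a pointwise computation. The only subtle point worth double-checking is the passage between strict and non-strict inequalities at the extremal point $t=t^{\star}$: because $\mc V_a$ is defined by strict inequalities, the admissible set of $t$'s is the open interval $(t^{\star},\infty)$, but of course its infimum is still $t^{\star}$, which is all that is required.
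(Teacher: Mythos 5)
Your proof is the same argument as the paper's: unfold the condition $\psi_t\in\mc V_a$ into positivity plus the two one-sided log-Lipschitz inequalities, observe that $\psi_i\in\mc V_a$ makes $a\psi_i\pm\psi_i'$ strictly positive so that you may divide, and then pass to the supremum. The algebraic rearrangement of the two inequalities is correct, and the two-directions argument (using that $\mc V_a$ is open, so the admissible set of $t$'s is exactly the open ray above $t^\star$) is sound.

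There is, however, one slip at the very end, though it is triggered by an inconsistency in the paper itself. Reading \eqref{Eq:Defbeta} literally gives $\beta_a(\psi_1,\psi_2)=\inf\{t>0:\,t\psi_2-\psi_1\in\mc V_a\}$, which is the convention you adopt. With that choice your own rearrangement yields the lower bounds $t>\psi_1/\psi_2$ and $t>(a\psi_1\pm\psi_1')/(a\psi_2\pm\psi_2')$, so the supremum you compute has $\psi_1$ in the numerators; this is \emph{not} the right-hand side of \eqref{Eq:ExpExplicitbetaDiff}, which has $\psi_2$ in the numerators. The paper's proof silently works with the opposite ordering, setting $\psi_t:=t\psi_1-\psi_2$, and that is what produces the displayed formula. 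So your final sentence claiming the result is \emph{exactly} \eqref{Eq:ExpExplicitbetaDiff} is false with the convention you picked. The fix is either to switch to $\psi_t:=t\psi_1-\psi_2$ at the start, or to note explicitly that \eqref{Eq:Defbeta} and \eqref{Eq:ExpExplicitbetaDiff} are using opposite orderings of the two arguments. This is harmless for everything downstream, since $\theta_a(\psi_1,\psi_2)=\log\beta_a(\psi_1,\psi_2)+\log\beta_a(\psi_2,\psi_1)$ is symmetric, but having flagged the ambiguity at the outset you should not have papered over it at the end.
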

\begin{proof}
For $t>0$, let $\psi_t:=t\psi_1-\psi_2$. Notice that $\psi_t\in C^2$ for every $t$. Therefore  $\psi_t\in \mc V_a$ is equivalent to: $\psi_t>0$ and
\[
-a< \frac{d}{dx}\log\psi_t< a.
\] 
One can check that $\psi_t>0$ if and only if
\[
t>\sup_{x\in \T}\frac{\psi_2(x)}{\psi_1(x)}
\]
which is the first condition we impose on $t$. Moreover

\[
\frac{d}{dx}\log\psi_t=\frac{t\psi'_1-\psi_2'}{t\psi_1-\psi_2}
\]
 Therefore
\[
\frac{t\psi'_1-\psi_2'}{t\psi_1-\psi_2}< a \quad \mbox{iff} \quad  t> \frac{a\psi_2-\psi_2'}{a\psi_1-\psi_1'} 
\]
and 
\[
\frac{t\psi'_1-\psi_2'}{t\psi_1-\psi_2}> -a \quad \mbox{iff} \quad t> \frac{a\psi_2+\psi_2'}{a\psi_1+\psi_1'} 
\]
where we used that $a\psi_1-\psi_1',\,a\psi_1+\psi_1'\neq 0$. Taking the infimum of all $t$ that satisfy all the conditions above we get the claim.
\end{proof}

If we drop the differentiability requirement in the definition of $\mc V_a$, we obtain the more ``traditional" cone of $\log$-Lipschitz functions
\[
\mc V_a':=\left\{\psi:\T\rightarrow \R^+:\,\, \frac{\psi(x)}{\psi(y)}\le e^{a|x-y|}\right\}.
\] 
It is well known that for $\psi_1,\psi_2\in \mc V_a'$, then
\begin{equation}\label{Eq:betaa}
\beta_a'(\psi_1,\,\psi_2)=\sup_{x\in \T}\left\{\frac{\psi_2(x)}{\psi_1(x)},\,\frac{\psi_2(x)e^{a|x-y|}-\psi_2(y)}{\psi_1(x)e^{a|x-y|}-\psi_1(y)}\right\}.
\end{equation}
Since for any $\psi_1,\psi_2\in \mc V_a$, $\psi_t:=t\psi_1-\psi_2\in \mc V_a$ if and only if  $\psi_t:=t\psi_1-\psi_2\in \mc V_a'$, then 
\[
\beta_a(\psi_1,\psi_2)=\beta_a'(\psi_1,\psi_2).
\]
In other words, \eqref{Eq:betaa} gives an alternative expression for $\beta_a$.

\subsection{Distance between Convex Combinations of Densities}
\begin{proposition}\label{Prop:HilbConvCombDist}
Assume that $\{w_i\}_{i=1}^n$ are real numbers $w_i\ge 0$ such that $\sum_iw_i=1$, and $\{\rho_{1,i}\}_{i=1}^n$, $\{\rho_{2,i}\}_{i=1}^n$ are two collections of probability densities on $\T$ from $\mc V_a$.

Then:
\begin{itemize} 
\item[i)] 
\[
\bar \rho_1:=\sum_iw_i\rho_{1,i},\,\bar \rho_2:=\sum_iw_i\rho_{2,i}\in\mc V_a
\]
\item[ii)]   
\[
\theta_a\left(\bar \rho_1,\bar\rho_2\right)\le \max_{i=1,...,n}\theta_a\left(\rho_{1,i},\,\rho_{2,i}\right)
\]
\item[iii)] There is a constant $C_{a}>0$ depending on $a$  only such that 
\[
\theta_a\left(\bar \rho_1,\bar\rho_2\right)\le C_{a}\sum_{i=1}^n\alpha_i\theta_a\left(\rho_{1,i},\,\rho_{2,i}\right).
\]

\end{itemize}
\end{proposition}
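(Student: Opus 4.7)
The plan is to handle the three items in order, using throughout the explicit characterisation of $\beta_a$ from Proposition \ref{Prop:HilbertDistCharacter}.

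Item (i) is a short direct check. Since each $\rho_{j,i}\in\mc V_a$ satisfies the strict pointwise bound $|\rho_{j,i}'|<a\rho_{j,i}$, taking non-negative weighted sums preserves it: $|\bar\rho_j'(x)|\le\sum_iw_i|\rho_{j,i}'(x)|<a\bar\rho_j(x)$. Since $\bar\rho_j$ is evidently $C^2$ and strictly positive, $\bar\rho_j\in\mc V_a$.

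For (ii), the key observation is that each of the three ratios appearing in the formula for $\beta_a(\bar\rho_1,\bar\rho_2)$ can be written, at every $x\in\T$, as a convex combination of the analogous ratios for the pairs $(\rho_{1,i},\rho_{2,i})$, with non-negative $x$-dependent weights summing to $1$. For example,
\[
\frac{\bar\rho_2(x)}{\bar\rho_1(x)}=\sum_i\frac{w_i\rho_{1,i}(x)}{\bar\rho_1(x)}\cdot\frac{\rho_{2,i}(x)}{\rho_{1,i}(x)},
\]
and similar decompositions hold for $(a\bar\rho_2\pm\bar\rho_2')/(a\bar\rho_1\pm\bar\rho_1')$ once one notes that $a\rho_{1,i}\pm\rho_{1,i}'>0$ because $\rho_{1,i}\in\mc V_a$. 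Bounding each pointwise convex combination by its pointwise maximum and taking $\sup_x$ yields $\beta_a(\bar\rho_1,\bar\rho_2)\le\max_i\beta_a(\rho_{1,i},\rho_{2,i})$, and symmetrically $\beta_a(\bar\rho_2,\bar\rho_1)\le\max_i\beta_a(\rho_{2,i},\rho_{1,i})$. The main obstacle is that the indices realising these two maxima may differ; a priori this only gives $\theta_a(\bar\rho_1,\bar\rho_2)\le\log\max_i\beta_a(\rho_{1,i},\rho_{2,i})+\log\max_j\beta_a(\rho_{2,j},\rho_{1,j})$. To collapse this to $\max_i\theta_a(\rho_{1,i},\rho_{2,i})$ I would exploit the probability-density constraint (which, via $\int\rho_{j,i}=1$, forces $\beta_a(\rho_{1,i},\rho_{2,i}),\beta_a(\rho_{2,i},\rho_{1,i})\ge 1$) to show that the two maxima can effectively be taken at a common index.

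For (iii), I would refine the estimate of (ii) into a linear bound, picking up the constant $C_a$ from the $\mc V_a$-constraint. Probability densities in $\mc V_a(\T)$ lie in $[e^{-a},e^a]$ pointwise, so the $x$-dependent weights $\tilde w_i(x)=w_i\rho_{1,i}(x)/\bar\rho_1(x)$ appearing in the decomposition above are bounded uniformly by $e^{2a}w_i$, and analogous bounds hold for the weights in the two derivative-augmented ratios. Writing $r_i(x):=\rho_{2,i}(x)/\rho_{1,i}(x)$ and similarly for the other ratios, one then obtains
\[
\Bigl|\tfrac{\bar\rho_2}{\bar\rho_1}(x)-1\Bigr|\le e^{2a}\sum_iw_i|r_i(x)-1|,
\]
and similar pointwise linear estimates for the other two ratios. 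Combining these with a symmetric estimate for $\beta_a(\bar\rho_2,\bar\rho_1)$ and controlling $|r_i-1|$ by $\theta_a(\rho_{1,i},\rho_{2,i})$ up to a constant depending only on $a$ gives the claimed inequality. The most delicate step I foresee is establishing this last equivalence uniformly across the cone, since the naive bound $|r_i-1|\le e^{\theta_a}-1$ degrades for large $\theta_a$; a short case-split into small and large $\theta_a$ regimes, with $C_a$ absorbing both, should suffice.
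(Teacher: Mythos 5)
Your item (i) is fine, and your convex-combination decomposition in item (ii) does correctly establish $\beta_a(\bar\rho_1,\bar\rho_2)\le\max_i\beta_a(\rho_{1,i},\rho_{2,i})$ and symmetrically for the other factor. But the obstacle you yourself flag --- that the two maxima may be attained at different indices --- is fatal for this route, and the fix you sketch (use $\beta_a\ge1$ to pass to a common index) cannot close it. Nothing in the $\beta_a\ge1$ constraint couples the two asymmetries across indices: with $n=2$ one can have $\beta_a(\rho_{1,1},\rho_{2,1})$ and $\beta_a(\rho_{2,2},\rho_{1,2})$ both large while $\beta_a(\rho_{2,1},\rho_{1,1})$ and $\beta_a(\rho_{1,2},\rho_{2,2})$ stay close to $1$; then $\max_i\theta_a(\rho_{1,i},\rho_{2,i})$ is roughly a single large log, while your separate-max bound returns twice that. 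The paper's proof avoids splitting the supremum altogether. Working with the log-Lipschitz expression \eqref{Eq:betaa}, with $\alpha$ the scalar weight for $n=2$, the ratio $\ell(\alpha,x,y):=\frac{\bar\rho_1(x)e^{a|x-y|}-\bar\rho_1(y)}{\bar\rho_2(x)e^{a|x-y|}-\bar\rho_2(y)}$ is for each fixed $(x,y)$ a M\"obius function of $\alpha$, hence monotone (equation \eqref{Eq:Montonicity}). The key move is then to notice that $\beta_a(\bar\rho_1,\bar\rho_2)\beta_a(\bar\rho_2,\bar\rho_1)=\sup_{x\neq y}\sup_{x'\neq y'}\ell(\alpha,x,y)\,\ell(\alpha,x',y')^{-1}$, and that for each fixed $(x,y,x',y')$ this \emph{quotient} is itself a monotone M\"obius function of the single shared parameter $\alpha$, so its extreme values occur at $\alpha\in\{0,1\}$, where it reduces to $e^{\theta_a(\rho_{1,1},\rho_{2,1})}$ or $e^{\theta_a(\rho_{1,2},\rho_{2,2})}$. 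This coupling of the two suprema through the common weight $\alpha$ is the missing idea; without it the bound degrades to exactly the additive one you noticed, and no amount of post-processing with $\beta_a\ge1$ recovers the loss.

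On (iii) there is a secondary slip. The pointwise weights $w_i\rho_{1,i}(x)/\bar\rho_1(x)$ for the first ratio are indeed bounded by $e^{2a}w_i$, since probability densities in $\mc V_a$ take values in $[e^{-a},e^a]$. But the ``analogous bounds'' you assert for the derivative-augmented weights $w_i(a\rho_{1,i}\pm\rho_{1,i}')(x)/(a\bar\rho_1\pm\bar\rho_1')(x)$ do not hold: the quantities $a\rho_{1,i}\pm\rho_{1,i}'$ are merely positive and can be arbitrarily small relative to one another, so those weights need not be $O(w_i)$. The paper's proof of (iii) sidesteps this by staying with expression \eqref{Eq:betaa} (no derivatives appear) and controlling the $x$-dependent weight by $\max_{i,j}e^{\theta_a(\rho_{2,i},\rho_{2,j})}$, i.e.\ by a diameter-type quantity rather than an absolute numerical constant; your case-split idea for $|r_i-1|$ versus $\theta_a$ is sound, but you would still need to fix the weight estimate before it applies.
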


\begin{proof}
i) Is immediate from the definition of $\mc V_a$.

ii)We are going to prove the statement when $n=2$. The case $n>2$ can be worked out by induction. Furthermore, we are going to use the expression for $\beta_a$ given in \eqref{Eq:betaa}.

{\it Step 1} Since $\bar \rho_1$ and $\bar \rho_2$ are continuous probability densities on $\T$, there is $x_0\in\T$ such that $\bar \rho_1(x_0)=\bar\rho_2(x_0)$.\footnote{If $\bar\rho_1>\bar\rho_2$ or $\rho_2>\rho_1$, then they cannot both have integral equal to one.}
Assume $\bar\rho_1\neq\bar\rho_2$,and  $x\in \T$ realizes the maximum of $\frac{\bar\rho_1}{\bar\rho_2}(y)$. Then
\begin{align*}
\frac{\bar\rho_1(x)}{\bar\rho_2(x)}=\frac{\frac{\bar\rho_1(x)}{\bar\rho_1(x_0)}e^{a|x-x_0|}}{\frac{\bar\rho_2(x)}{\bar\rho_2(x_0)}e^{a|x-x_0|}}\le \frac{\frac{\bar\rho_1(x)}{\bar\rho_1(x_0)}e^{a|x-x_0|}-1}{\frac{\bar\rho_2(x)}{\bar\rho_2(x_0)}e^{a|x-x_0|}-1}=\frac{\bar\rho_1(x)e^{a|x-x_0|}-\bar\rho_1(x_0)}{\bar\rho_2(x)e^{a|x-x_0|}-\bar\rho_2(x_0)}
\end{align*}
 where the  inequality holds because 
 \[
 \frac{\bar\rho_1(x)}{\bar\rho_1(x_0)}e^{a|x-x_0|}\ge \frac{\bar\rho_2(x)}{\bar\rho_2(x_0)}e^{a|x-x_0|}>0.
 \]
 From this follow that to bound $\beta_a(\bar\rho_1,\bar\rho_2)$ it is sufficient to obtain a bound on
 \[
 \frac{\bar\rho_1(x)e^{a|x-y|}-\bar\rho_1(y)}{\bar\rho_2(x)e^{a|x-y|}-\bar\rho_2(y)}
 \]
 for every $x\neq y$.
 
 \medskip
{\it Step 2}
For every $x\neq y$:
 \begin{align*}
 \frac{\bar\rho_1(x)e^{a|x-y|}-\bar\rho_1(y)}{\bar\rho_2(x)e^{a|x-y|}-\bar\rho_2(y)} & = \frac{\alpha_1[\rho_{1,1}(x)e^{a|x-y|}-\rho_{1,1}(y)]+\alpha_2[\rho_{1,2}(x)e^{a|x-y|}-\rho_{1,2}(y)]}{\alpha_1[\rho_{2,1}(x)e^{a|x-y|}-\rho_{2,1}(y)]+\alpha_2[\rho_{2,2}(x)e^{a|x-y|}-\rho_{2,2}(y)]}\\
 &=:\frac{\alpha_1A_{11}+\alpha_2A_{12}}{\alpha_1A_{21}+\alpha_2A_{22}}\\
 &=\frac{\alpha(A_{11}-A_{12})+A_{12}}{\alpha(A_{21}-A_{22})+A_{22}}=:\ell(\alpha,x,y)
 \end{align*}
where $\alpha=\alpha_1$ and $\alpha_2=1-\alpha$.
We claim that fixed $x\neq y$, the function $\ell(\alpha)=\ell(\alpha,x,y)$ is monotonic as a function of $\alpha$, in fact its derivative is 
\begin{equation}\label{Eq:Montonicity}
\frac{(A_{11}-A_{12})A_{22}-(A_{21}-A_{22})A_{12}}{[\alpha(A_{21}-A_{22})+A_{22}]^2}=\frac{A_{11}A_{22}-A_{21}A_{12}}{[\alpha(A_{21}-A_{22})+A_{22}]^2}.
\end{equation}
and the above expression does not change sign changing $\alpha$. Notice that, by definition of $\beta_a$, $\ell(0)=\frac{A_{11}}{A_{21}}$, $\ell(1)=\frac{A_{12}}{A_{22}}$ and
\[
\beta_a(\rho_{1,1},\rho_{2,1})^{-1}\le \frac{A_{11}}{A_{21}}\le \beta_a(\rho_{1,1},\rho_{2,1})\quad\mbox{and} \quad\beta_a(\rho_{1,2},\rho_{2,2})^{-1}\le \frac{A_{12}}{A_{22}}\le \beta_a(\rho_{1,2},\rho_{2,2}).
\]
Call
\[
a_1:=\beta_a(\rho_{1,1},\rho_{2,1})^{-1},\quad a_2:=\beta_a(\rho_{1,1},\rho_{2,1}),\quad b_1:=\beta_a(\rho_{1,2},\rho_{2,2})^{-1},\quad b_2:=\beta_a(\rho_{1,2},\rho_{2,2}).
\]
By monotonicity of $\ell$, we have that for every $x,y\in \T$ 
\[
a_1+(b_1-a_1)\alpha\le \ell(\alpha,x,y)\le a_2+(b_2-a_2)\alpha.
\] 

\medskip
{\it Step 3}
Notice that
\begin{align*}
\beta_a(\bar\rho_1,\bar\rho_2)\beta_a(\bar\rho_2,\bar\rho_1)&=\sup_{x\neq y}\sup_{x'\neq y'} \frac{\bar\rho_1(x)e^{a|x-y|}-\bar\rho_1(y)}{\bar\rho_2(x)e^{a|x-y|}-\bar\rho_2(y)}  \frac{\bar\rho_2(x')e^{a|x'-y'|}-\bar\rho_2(y')}{\bar\rho_1(x')e^{a|x'-y'|}-\bar\rho_1(y')} \\
&=\sup_{x\neq y}\sup_{x'\neq y'}\ell(\alpha,x,y)[\ell(\alpha,x',y')]^{-1}\\
&\le \frac{ a_2+(b_2-a_2)\alpha}{a_1+(b_1-a_1)\alpha}.
\end{align*}
Arguing as for $\ell(\alpha)$, $\frac{ a_2+(b_2-a_2)\alpha}{a_1+(b_1-a_1)\alpha}$ is monotonic with respect to $\alpha$ meaning that it takes its maximum and minimum at the extrema. This implies that 
\begin{align*}
\beta_a(\bar\rho_1,\bar\rho_2)\beta_a(\bar\rho_2,\bar\rho_1)&\le \max\left(\frac{a_2}{a_1},\frac{b_2}{b_1}\right)\\
&=\max\left(\beta_a(\rho_{1,1},\rho_{2,1})\beta_a(\rho_{1,1},\rho_{2,1}),\beta_a(\rho_{1,2},\rho_{2,2})\beta_a(\rho_{1,2},\rho_{2,2})\right)
\end{align*}
and from this follows that 
\[
\theta_a(\bar\rho_1,\bar\rho_2)\le \max \left(\theta_a(\rho_{1,1},\rho_{2,1}), \theta_a(\rho_{1,2},\rho_{2,2})\right).
\]

\bigskip 
iii) It follows from \eqref{Eq:betaa} that
\begin{align*}
\beta_a\left(\sum_i\alpha_i\rho_{1,i},\,\sum_i\alpha_i\rho_{2,i}\right)=\sup_{x,y\in \T}\left\{\frac{\sum\alpha_i\rho_{1,i}(x)}{\sum\alpha_i\rho_{2,i}(x)},\,\frac{\sum\alpha_i[\rho_{1,i}(x)e^{a|x-y|}-\rho_{1,i}(y)]}{\sum\alpha_i[\rho_{2,i}(x)e^{a|x-y|}-\rho_{2,i}(y)]}\right\}
\end{align*}
and since
\begin{align*}
\rho_{1,i}(x)&\le \rho_{2,i}(x)\cdot \beta_a(\rho_{1,i},\,\rho_{2,i})\\
\rho_{1,i}(x)e^{a|x-y|}-\rho_{1,i}(y)&\le [\rho_{2,i}(x)e^{a|x-y|}-\rho_{2,i}(y)]\cdot \beta_a(\rho_{1,i},\,\rho_{2,i})
\end{align*}
we have
\begin{align*}
\beta_a\left(\sum_i\alpha_i\rho_{1,i},\,\sum_i\alpha_i\rho_{2,i}\right)&=\sup_{x,y\in \T}\left\{\sum_{i=1}^n\frac{\alpha_i\rho_{2,i}(x)}{\sum\alpha_j\rho_{2,j}(x)}\beta_a(\rho_{1,i},\,\rho_{2,i}),\right.\\
&\left.\quad\quad\quad\quad \sum_{i=1}^n\frac{\alpha_i[\rho_{2,i}(x)e^{a|x-y|}-\rho_{2,i}(y)]}{\sum\alpha_j[\rho_{2,j}(x)e^{a|x-y|}-\rho_{2,j}(y)]}\beta_a(\rho_{1,i},\,\rho_{2,i})\right\}.
\end{align*}
Now
\begin{align*}
\frac{\alpha_i\rho_{2,i}(x)}{\sum\alpha_j\rho_{2,j}(x)}\le \alpha_i\sup_{x,j}\frac{\rho_{2,i}(x)}{\rho_{2,j}(x)}\le \alpha_i\max_j\beta_a\left(\rho_{2,i},\,\rho_{2,j}\right)\le \alpha_i \max_je^{\theta_a\left(\rho_{2,i},\,\rho_{2,j}\right)}
\end{align*}
and analogously
\[
\alpha_i':=\frac{\alpha_i[\rho_{2,i}(x)e^{a|x-y|}-\rho_{2,i}(y)]}{{\sum\alpha_j[\rho_{2,j}(x)e^{a|x-y|}-\rho_{2,j}(y)]}}\le \alpha_i \max_{i,j}e^{\theta_a\left(\rho_{2,i},\,\rho_{2,j}\right)}
\]
which imply
\[
\beta_a\left(\sum_i\alpha_i\rho_{1,i},\,\sum_i\alpha_i\rho_{2,i}\right)\le  \sum_{i=1}^n\alpha_i'\beta_a(\rho_{1,i},\,\rho_{2,i}).
\]
Now since for every $x_{max}>0$ there is $C(x_{max})>0$ such that when $0\le x \le x_{max}$
\[
C^{-1}x\le\log(1+x)\le x
\]
with $\lim_{x_{max}\rightarrow 0}C(x_{max})=1$, there is $C>0$ such that
\[
\log\left[1+\sum_i\alpha_i'\beta_a(\rho_{1,i},\,\rho_{2,i})-1\right] \le \sum_i\alpha_i'[\beta_a(\rho_{1,i},\,\rho_{2,i})-1] \le \sum_i\alpha_i' C \log \beta_a(\rho_{1,i},\,\rho_{2,i})
\] 
with $C\rightarrow 1$ as $\max_i\beta_a(\rho_{1,i},\,\rho_{2,i})\rightarrow 0$.
Wlog, we can assume $\theta_a(\bar\rho_1,\,\bar\rho_2)\le 2\log \beta_a(\bar\rho_1,\,\bar\rho_2)$, and therefore 
\begin{align*}
\theta_a(\bar\rho_1,\,\bar\rho_2)&\le 2C\sum_i\alpha_i'  \log \beta_a(\rho_{1,i},\,\rho_{2,i})\\
&\le 2C\sum_i\alpha_i'  \theta_a(\rho_{1,i},\,\rho_{2,i})\\
&\le 2Ce^{\diam(\mc V_a)} \sum_i\alpha_i \theta_a(\rho_{1,i},\,\rho_{2,i}).
\end{align*}
where $\diam(\mc V_a)$ is the diameter of $\mc V_a$.

\end{proof}

\begin{proposition}\label{Prop:ConvCombDiffCoeff}
Let $\rho_1,...,\rho_n\in \mc V_a$,  $\beta_1,...,\beta_n> 0$,  $\beta_1',...,\beta_n'> 0$. Then
\begin{align*}
&\theta_a\left(\sum_i\beta_i\rho_i,\sum_i\beta_i'\rho_i\right)\le \left[1-\exp(2\min_j \max_{i}\theta_a(\rho_j,\rho_i))\right]\cdot\log\left( \max_{i,j} \frac{\beta_i}{\beta_i'}\cdot\frac{\beta_j'}{\beta_j}\right).
\end{align*}
\end{proposition}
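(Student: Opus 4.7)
The plan is to recognise the right-hand side as a Birkhoff contraction bound coming from the linear map $T:\R^n_+\to \mc V_a$, $T(\beta_1,\dots,\beta_n):=\sum_i \beta_i\rho_i$. The source cone $\R^n_+$ carries its standard Hilbert projective metric, given explicitly by
\[
\theta_{\R^n_+}(\beta,\beta')=\log\max_{i,j}\frac{\beta_i \beta_j'}{\beta_i'\beta_j},
\]
which is exactly the logarithmic factor appearing on the right-hand side of the proposition. I would then apply Theorem \ref{Thm:ContCones} to $T$ to obtain
\[
\theta_a(T\beta,T\beta')\le \bigl[1-e^{-D}\bigr]\,\theta_{\R^n_+}(\beta,\beta'),\qquad D:=\diam\bigl(T(\R^n_+)\bigr),
\]
with the diameter measured in $\theta_a$, so that the whole task reduces to bounding $D$.

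To control $D$, I would combine the triangle inequality with Proposition \ref{Prop:HilbConvCombDist}(ii). For a fixed index $k$ and positive weights $\beta$, I would first use projective invariance of $\theta_a$ to normalise $w_i:=\beta_i/\sum_j\beta_j$, and then apply Proposition \ref{Prop:HilbConvCombDist}(ii) with $\rho_{1,i}=\rho_i$ and the constant-in-$i$ choice $\rho_{2,i}=\rho_k$. This yields
\[
\theta_a\!\left(\sum_i\beta_i\rho_i,\,\rho_k\right)=\theta_a\!\left(\sum_i w_i\rho_i,\,\rho_k\right)\le \max_i\theta_a(\rho_i,\rho_k).
\]
Chaining two such bounds and optimising the base point $k$ then produces $D\le 2\min_k\max_i\theta_a(\rho_i,\rho_k)$.

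Plugging this diameter estimate into Birkhoff's contraction inequality would give the claim, with the caveat that the statement as written appears to have a sign typo: the correct contraction factor should read $1-\exp(-2\min_j\max_i\theta_a(\rho_j,\rho_i))$, since otherwise the right-hand side is negative whenever the $\rho_i$ are not all proportional. The only minor subtlety is that Proposition \ref{Prop:HilbConvCombDist}(ii) is stated for probability densities, but this is immaterial because $\theta_a$ is projective and $\mc V_a$ is closed under positive scaling. There is no real analytic obstacle; the main conceptual step — rather than any technical one — is identifying the correct source cone $\R^n_+$ so that Birkhoff's theorem produces precisely the two factors in the target inequality.
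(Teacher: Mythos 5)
Your proof is correct and follows essentially the same route as the paper's: bound the $\theta_a$-diameter of the cone generated by the $\rho_i$ using Proposition \ref{Prop:HilbConvCombDist}(ii) together with the triangle inequality, then apply the Birkhoff contraction (Theorem \ref{Thm:ContCones}) with the standard Hilbert metric on the positive orthant supplying the logarithmic factor. The only structural difference is cosmetic: you apply Theorem \ref{Thm:ContCones} once, directly to the linear map $T:\R^n_+\to\mc V_a$, whereas the paper factors through the intermediate cone $\mc V_{\{\rho_i\}}$ and invokes the theorem twice (once for the inclusion $\iota:\mc V_{\{\rho_i\}}\hookrightarrow\mc V_a$, once for the coefficient map $\mc L:\Delta_n\to\mc V_{\{\rho_i\}}$), which yields the same bound. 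You are also right that the statement as printed has a sign slip in the exponent: the paper's own proof produces the contraction factor $1-\exp\left(-2\min_j\max_i\theta_a(\rho_j,\rho_i)\right)$, as it must for the right-hand side to be nonnegative.
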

\begin{proof} Consider  the simplex
\[
\mc V_{\{\rho_i\}}=\left\{\sum_i \beta_i\rho_i:\,\beta_i>0\right\}.
\]
It follows from the definition that this set is a cone with $\mc V_{\{\rho_i\}_i}\subset \mc V_a$, and we can estimate the diameter of $\mc V_{\{\rho_i\}}$ in $\mc V_a$: For all probability weights $\beta_1,...,\beta_n$ and any $j=1,...,n$ 
\begin{align*}
\theta_a(\rho_j,\,\sum_i\beta_i\rho_i)=\theta_a(\sum_i\beta_i\rho_j,\,\sum_i\beta_i\rho_i)\le \max_{i}\theta_a(\rho_j,\rho_i)
\end{align*}
where the last inequality follows by Proposition \ref{Prop:HilbConvCombDist}.  Therefore, by triangle inequality
\[
\theta_a\left(\sum_i\beta_i\rho_i,\sum_i\beta_i'\rho_i\right)\le 2\min_j \max_{i}\theta_a(\rho_j,\rho_i). 
\]
for any $\beta_1,...,\beta_n, \beta_1',...,\beta_n'>0$. Setting $D:=2\min_j \max_{i}\theta_a(\rho_j,\rho_i)$ and calling $\tilde\theta$ the projective Hilbert metric on $\mc V_{\{\rho_i\}}$, by Theorem \ref{Thm:ContCones} applied to the inclusion map $\iota:\mc V_{\{\rho_i\}}\rightarrow \mc V_a$
\begin{equation}\label{Eq:IneqProjmetrics}
\theta_a\left(\sum_i\beta_i\rho_i,\,\sum_i\beta_i'\rho_i \right)\le (1-e^{-D})\,\tilde \theta\left(\sum_i\beta_i\rho_i,\,\sum_i\beta_i'\rho_i \right).
\end{equation}

Now consider $\Delta_n:=\{(\beta_1,...,\beta_n):\,\beta_i>0\,\,\forall i=1,...,n\}$, and the map $\mc L:\Delta_n\rightarrow \mc V_{\{\rho_i\}}$
\[
\mc L(\beta_1,..,\beta_n)=\sum_i\beta_i\rho_i.
\]
By Theorem \ref{Thm:ContCones} applied to $\mc L$, for any $\bo \beta=(\beta_1,...,\beta_n)$ and $\bo \beta'=(\beta_1',...,\beta_n')$ in $\Delta_n$
\[
\tilde\theta\left(\sum_i\beta_i\rho_i,\,\sum_i\beta_i'\rho_i \right)\le \theta_{\Delta_n}\left(\bo \beta,\bo\beta'\right)
\]
where we denoted by $\theta_{\Delta_n}$ the projective Hilbert metric on $\Delta_n$. Combining this last equation with \eqref{Eq:IneqProjmetrics} we get
\[
\theta_a\left(\sum_i\beta_i\rho_i,\,\sum_i\beta_i'\rho_i \right)\le (1-e^{-D}) \theta_{\Delta_n}\left(\bo \beta,\bo\beta'\right).
\]

One can easily find an expression for $\theta_{\Delta_n}$:
\[
\theta_{\Delta_n}\left(\bo \beta,\bo\beta'\right)=\log \max\left\{\frac{\beta_i}{\beta_i'}\cdot\frac{\beta_j'}{\beta_j}: \,\, i,j=1,...,n\right\}
\] 
and this concludes the proof.
\end{proof}

\subsection{Distance between Products of Densities}

\begin{lemma}\label{Lem:ProductAction}
Given $\phi\in \mc V_{a}$ and $b>0$, consider the linear transformation $L_\phi:\mc V_b\rightarrow \mc V_{a+b}$ defined as $L_\phi(\psi):=\phi\psi$. Then
\[
\theta_{a+b}(L_{\phi}\psi_1,L_{\phi}\psi_2)\le (1-e^{-\diam(\mc V_{a+b})})\theta_{b}(\psi_1,\psi_2)
\]
for all $\psi_1,\psi_2\in \mc V_b$, where $\diam(\mc V_{a+b}$ is the diameter of $\mc V_{a+b}$.
\end{lemma}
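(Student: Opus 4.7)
The plan is to derive this statement as a direct corollary of Theorem \ref{Thm:ContCones}. The only substantive content beyond that theorem is verifying that $L_\phi$ is a well-defined linear map between the two cones and identifying the effective diameter appearing in the contraction factor.

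First I would check well-definedness. For $\psi\in\mc V_b$ the product $\phi\psi$ is a positive $C^2$ function on $\T$, and the product rule for logarithmic derivatives gives
\[
\left|\frac{d}{dx}\log(\phi\psi)\right| = \left|\frac{\phi'}{\phi}+\frac{\psi'}{\psi}\right| \le \left|\frac{\phi'}{\phi}\right| + \left|\frac{\psi'}{\psi}\right| \le a + b,
\]
so $L_\phi\psi\in\mc V_{a+b}$. Since multiplication by the fixed function $\phi$ is linear in $\psi$, $L_\phi$ extends to a linear map between the ambient vector spaces and sends $\mc V_b$ into $\mc V_{a+b}$.

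Next, I would apply Theorem \ref{Thm:ContCones} to $L_\phi:\mc V_b\rightarrow\mc V_{a+b}$ to obtain
\[
\theta_{a+b}(L_\phi\psi_1, L_\phi\psi_2) \le \bigl[1 - e^{-\diam(L_\phi(\mc V_b),\,\mc V_{a+b})}\bigr]\,\theta_b(\psi_1,\psi_2)
\]
for all $\psi_1,\psi_2\in\mc V_b$. Since $L_\phi(\mc V_b)\subset \mc V_{a+b}$, its diameter inside $\mc V_{a+b}$ is bounded by $\diam(\mc V_{a+b})$ as used in the lemma, and the inequality follows. For completeness I would also record the explicit computation making the image diameter finite: for $\psi_1,\psi_2\in\mc V_b$ and any $t>\beta_b(\psi_1,\psi_2)$ the combination $t\psi_1-\psi_2$ lies in $\mc V_b$, and then $\phi(t\psi_1-\psi_2)\in \mc V_{a+b}$ by the same product rule, so $\beta_{a+b}(L_\phi\psi_1,L_\phi\psi_2)\le \beta_b(\psi_1,\psi_2)$. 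Symmetrically bounding $\beta_{a+b}(L_\phi\psi_2,L_\phi\psi_1)$, one sees that the image has Hilbert diameter in $\mc V_{a+b}$ dominated by the diameter of $\mc V_b$ inside $\mc V_{a+b}$, which is finite whenever $a>0$: the strict bound $|\psi'/\psi|\le b<a+b$ keeps the denominators $(a+b)\psi\pm\psi'$ in Proposition \ref{Prop:HilbertDistCharacter} uniformly bounded away from zero.

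No step poses a serious obstacle. The lemma is essentially a bookkeeping exercise combining the product rule for logarithmic derivatives with the contraction principle on cones; the only point deserving attention is matching the notational convention for $\diam(\mc V_{a+b})$ so that its appearance on the right-hand side is indeed an upper bound for $\diam(L_\phi(\mc V_b),\mc V_{a+b})$.
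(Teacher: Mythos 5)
Your core argument is exactly the paper's: check that $L_\phi$ maps $\mc V_b$ into $\mc V_{a+b}$ via the product rule for logarithmic derivatives (so it is a linear map between the cones), then invoke Theorem \ref{Thm:ContCones} and bound $\diam(L_\phi(\mc V_b),\mc V_{a+b})$ by $\diam(\mc V_{a+b})$ since $L_\phi(\mc V_b)\subset\mc V_{a+b}$. That is the whole content of the paper's one-line proof, and it is correct.

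Your closing ``for completeness'' paragraph, however, contains a slip. The inequality $\beta_{a+b}(L_\phi\psi_1,L_\phi\psi_2)\le\beta_b(\psi_1,\psi_2)$ (which is correct and is nothing but the contraction $\theta_{a+b}(L_\phi\psi_1,L_\phi\psi_2)\le\theta_b(\psi_1,\psi_2)$ with factor $1$) bounds the image diameter by $\diam(\mc V_b,\mc V_b)$, i.e.\ the diameter of $\mc V_b$ measured with the \emph{finer} metric $\theta_b$, which is infinite, not by $\diam(\mc V_b,\mc V_{a+b})$. Your conclusion that the image has finite Hilbert diameter in $\mc V_{a+b}$ does not follow from what you wrote. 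If you actually want finiteness for a fixed $\phi$, the correct route is different: since $\phi\in\mc V_a$ is $C^2$ on the compact $\T$, its $\log$-derivative attains its supremum at some $a'<a$, hence $L_\phi(\mc V_b)\subset\mc V_{a'+b}$ with $a'+b<a+b$, and $\diam(\mc V_{a'+b},\mc V_{a+b})$ is finite by the argument you sketch with the denominators $(a+b)\psi\pm\psi'$. None of this is needed for the lemma as stated and used in the paper: if $\diam(\mc V_{a+b})$ (read as the diameter of $\mc V_{a+b}$ in itself) is infinite, the contraction factor degenerates to $1$, and the lemma reduces to the non-expansion estimate $\theta_{a+b}(L_\phi\psi_1,L_\phi\psi_2)\le\theta_b(\psi_1,\psi_2)$, which is precisely what is invoked in Proposition \ref{Prop:DistProd}.
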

\begin{proof} Multiplication by $\psi$ is a linear application mapping $ \mc V_b$ to $\mc V_{a+b}$, and the lemma follows by   Theorem \ref{Thm:ContCones}. 
\end{proof}
The following is a corollary to the lemma.
\begin{proposition}\label{Prop:DistProd}
Consider functions $\{\phi_{ij}\}_{i,j}$ with $i=1,2$ and $j=1,...,M$ such that $\phi_{ij}\in\mc V_{a_j}$ for some $a_j>0$.

Then,  calling $a:=\sum_{j=1}^Ma_j$,  for any $b>a$
\[
\theta_{b}\left(\prod_{j=1}^M\phi_{1,j},\,\prod_{j=1}^M \phi_{2,j}\right)\le\sum_{j=1}^M\theta_{b_j}(\phi_{1,j},\,\phi_{2,j})
\]
where $b_j:=b-\sum_{k\neq j}a_k$.
\end{proposition}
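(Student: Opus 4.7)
The natural strategy is a telescoping argument that changes one factor at a time, using Lemma \ref{Lem:ProductAction} (or equivalently Theorem \ref{Thm:ContCones}) to handle each single-factor change.

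First I would introduce the intermediate products
\[
\Psi_k := \phi_{2,1}\cdots\phi_{2,k}\,\phi_{1,k+1}\cdots\phi_{1,M}, \qquad k=0,1,\dots,M,
\]
so that $\Psi_0 = \prod_j \phi_{1,j}$ and $\Psi_M = \prod_j \phi_{2,j}$. Each $\Psi_k$ lies in $\mc V_a \subset \mc V_b$ since the log-derivative of a product is the sum of the log-derivatives, and $a = \sum_j a_j < b$. By the triangle inequality for $\theta_b$,
\[
\theta_b\!\left(\prod_j\phi_{1,j},\prod_j\phi_{2,j}\right) \;\le\; \sum_{k=1}^{M}\theta_b(\Psi_{k-1},\Psi_k).
\]

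Next I would estimate each term $\theta_b(\Psi_{k-1},\Psi_k)$. Both members share the common factor
\[
\chi_k := \phi_{2,1}\cdots\phi_{2,k-1}\,\phi_{1,k+1}\cdots\phi_{1,M} \;\in\; \mc V_{a-a_k},
\]
so $\Psi_{k-1} = \chi_k\,\phi_{1,k}$ and $\Psi_k = \chi_k\,\phi_{2,k}$. Since $b > a$, we have $b_k := b - (a-a_k) \ge a_k$, whence $\phi_{1,k},\phi_{2,k}\in \mc V_{a_k}\subset \mc V_{b_k}$. Moreover multiplication by $\chi_k$ is a linear map $L_{\chi_k}:\mc V_{b_k}\to \mc V_{b_k + (a-a_k)}=\mc V_b$. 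Applying Theorem \ref{Thm:ContCones} (or directly Lemma \ref{Lem:ProductAction}) to this linear map gives the non-expansive bound
\[
\theta_b(\Psi_{k-1},\Psi_k) \;=\; \theta_b(L_{\chi_k}\phi_{1,k},\,L_{\chi_k}\phi_{2,k}) \;\le\; \theta_{b_k}(\phi_{1,k},\phi_{2,k}).
\]

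Summing over $k$ yields the claimed inequality. The only thing to verify carefully is that the cone inclusions line up as stated, i.e. that each $\chi_k\in\mc V_{a-a_k}$ and that $b_k\ge a_k$ so that $\phi_{i,k}$ truly belongs to the source cone $\mc V_{b_k}$ of the multiplication map; both follow directly from the hypothesis $b>a=\sum_j a_j$. No nontrivial obstacle arises: the proof is essentially a telescoping together with the functorial contraction property of linear maps between Hilbert cones, which is why the bound is additive (rather than multiplicative) in the $\theta_{b_j}$ distances.
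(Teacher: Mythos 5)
Your proof is correct and follows essentially the same route as the paper's: the paper establishes the $M=2$ case by swapping one factor at a time and applying Lemma \ref{Lem:ProductAction}, then appeals to induction, which is exactly the telescoping argument you have unrolled explicitly for general $M$.
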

\begin{proof}
It is enough to prove it for $M=2$. The general case follows by induction.

Notice that $\phi_{i_11}\phi_{i_22}\in\mc V_{a}$ for every $i_1,i_2$, therefore 
\begin{align*}
\theta_{b}(\phi_{11}\phi_{12},\phi_{21}\phi_{22})&\le \theta_{b}(\phi_{11}\phi_{12},\phi_{11}\phi_{22})+\theta_{b}(\phi_{11}\phi_{22},\phi_{21}\phi_{22})\\
&\le \theta_{b-a_1}(\phi_{12},\phi_{22})+\theta_{b-a_2}(\phi_{11},\phi_{21})
\end{align*}
where the last inequality follows by Lemma \ref{Lem:ProductAction} (e.g. multiplication by $\phi_{11}\in \mc V_{a_1}$ sends functions of $\phi_{12},\phi_{22}\in \mc V_{a_2}\subset\mc V_{b-a_1}$ inside $\mc V_{b}$).
\end{proof}
\subsection{Distance of Operators in the Hilbert metric}
\begin{proposition}\label{Prop:DistanceOpHilbMetric}
Consider  local diffeomorphisms $f,g\in C^3(\T,\T)$  such that  $\exists a,\epsilon\ge0$ and $\lambda\in[0,1)$ such that
\[
d_{C^2}\left(\, f,\, g\,\right)<\epsilon
\]
and $f_{\ell*}(\mc V_a)$, $g_{\ell*}(\mc V_a)\subset \mc V_{\lambda a}$  where  $\{f_\ell^{-1}\}_{\ell \in \mc I_f}$ and $\{g_\ell^{-1}\}_{\ell \in \mc I_g}$ are the inverse branches of $f$ and $g$.
Then
\[
\theta_a\left(f_*\psi,\,g_*\psi\right)\le C \epsilon
\]
where $C$ depends on $\|f\|_{C^3}$, $\|g\|_{C^3}$, $\inf|f'|$, $\inf|g'|$,  and $\|\psi\|_{C^2}$.
\end{proposition}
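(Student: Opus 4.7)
The plan is to exploit the explicit formula for $\beta_a$ from Proposition \ref{Prop:HilbertDistCharacter} together with the fact that both $f_*\psi$ and $g_*\psi$ lie in the strict subcone $\mc V_{\lambda a}\subset \mc V_a$, which guarantees that the relevant denominators are uniformly bounded away from zero.

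First, I would write the two push-forward densities as
\[
f_*\psi(x)=\sum_{\ell\in\mc I_f}\frac{\psi(f_\ell^{-1}(x))}{|f'(f_\ell^{-1}(x))|},\qquad g_*\psi(x)=\sum_{\ell\in\mc I_g}\frac{\psi(g_\ell^{-1}(x))}{|g'(g_\ell^{-1}(x))|},
\]
and use that if $d_{C^2}(f,g)<\epsilon$ then, by the implicit function theorem and standard distortion estimates, the branches can be paired so that $\|f_\ell^{-1}-g_\ell^{-1}\|_{C^1}\le C_1\epsilon$, with $C_1$ depending on $\inf|f'|,\inf|g'|,\|f\|_{C^2},\|g\|_{C^2}$. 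Combining this with $\psi\in C^2$ yields
\[
\|h\|_{C^1}\le C_2\epsilon,\qquad h:=f_*\psi-g_*\psi,
\]
where $C_2$ depends on $\|f\|_{C^3},\|g\|_{C^3},\inf|f'|,\inf|g'|,\|\psi\|_{C^2}$. This is the only place where analytic estimates are really needed; the rest is projective geometry.

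Next, I would apply Proposition \ref{Prop:HilbertDistCharacter} to $\phi_1:=f_*\psi$ and $\phi_2:=g_*\psi=\phi_1-h$, and examine the three ratios appearing in the formula for $\beta_a(\phi_1,\phi_2)$. Writing
\[
\frac{\phi_2}{\phi_1}=1-\frac{h}{\phi_1},\qquad \frac{a\phi_2\pm\phi_2'}{a\phi_1\pm\phi_1'}=1-\frac{a h\pm h'}{a\phi_1\pm\phi_1'},
\]
the key observation is that $\phi_1\in\mc V_{\lambda a}$ implies $|\phi_1'|\le \lambda a\,\phi_1$, so
\[
|a\phi_1\pm\phi_1'|\ge a(1-\lambda)\phi_1\ge a(1-\lambda)\inf_{\T}\phi_1,
\]
and $\inf_\T\phi_1\ge\inf_\T\psi\cdot\inf_\T|f'|^{-1}>0$ is a constant depending only on $\psi$ and $f$. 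Hence each ratio above deviates from $1$ by at most $C_3\epsilon$ where $C_3$ depends on $a,\lambda,\inf\psi,\inf|f'|$ and the constant $C_2$ from step one. The same argument with the roles of $\phi_1,\phi_2$ swapped (using that $\phi_2\in\mc V_{\lambda a}$ as well) gives an analogous bound for $\beta_a(\phi_2,\phi_1)$.

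Finally, provided $\epsilon$ is small enough that $C_3\epsilon<\tfrac12$, we get $\beta_a(\phi_1,\phi_2),\beta_a(\phi_2,\phi_1)\le 1+2C_3\epsilon$, hence
\[
\theta_a(f_*\psi,g_*\psi)=\log\beta_a(\phi_1,\phi_2)+\log\beta_a(\phi_2,\phi_1)\le 4C_3\epsilon,
\]
and for $\epsilon$ large we may absorb the constant into the cone diameter, which is finite on any fixed $\mc V_{\lambda a}\subset\mc V_a$. The main technical step is really the $C^1$-closeness of $f_*\psi$ and $g_*\psi$ under $C^2$-closeness of $f,g$; once this is in hand, the conclusion is a short, essentially algebraic manipulation of the formula in Proposition \ref{Prop:HilbertDistCharacter}, with the strict inclusion $\mc V_{\lambda a}\subset\mc V_a$ doing all the work of keeping the denominators nondegenerate.
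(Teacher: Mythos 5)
Your proof is correct but takes a genuinely different route from the paper's. The paper estimates $\beta_a(f_*\psi,g_*\psi)$ directly from the definition \eqref{Eq:Defbeta}: for each inverse branch $\ell$ it shows that $t\,g_{\ell*}\psi - f_{\ell*}\psi \in \mc V_a$ once $t = 1 + O(\epsilon)$, checking positivity and the $\log$-Lipschitz condition separately via the characterization \eqref{Eq:betaa}; summing over branches and using convexity of $\mc V_a$ yields the bound. All the estimates there are on \emph{ratios} such as $\frac{\psi\circ f_\ell^{-1}}{\psi\circ g_\ell^{-1}}$ and $\frac{|g'|\circ g_\ell^{-1}}{|f'|\circ f_\ell^{-1}}$, so the argument never needs a lower bound on $\psi$. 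Your proof instead bounds the $C^1$ norm of the global difference $h = f_*\psi - g_*\psi$ (this step is where $\|f\|_{C^3}$, $\|g\|_{C^3}$, $\|\psi\|_{C^2}$, $\inf|f'|$, $\inf|g'|$ genuinely enter) and then plugs into the derivative formula of Proposition \ref{Prop:HilbertDistCharacter}, using the strict inclusion $\mc V_{\lambda a}\subset\mc V_a$ to keep $|a\phi_1\pm\phi_1'|\ge (1-\lambda)a\,\phi_1$ nondegenerate. This makes the projective part of the argument a short algebraic epilogue to a single analytic estimate, which is arguably cleaner than the branch-by-branch verification. The one point worth flagging is that your bound on $h/\phi_1$ and $(ah\pm h')/(a\phi_1\pm\phi_1')$ needs a uniform positive lower bound on $\phi_1 = f_*\psi$, hence on $\inf\psi$, which is not among the constants the statement lists; since $\theta_a$ is projective you may normalize $\psi$ to a probability density, and then $\psi\in\mc V_a$ forces $\inf\psi\ge e^{-a/2}$, so the dependence is really on $a$ (which $\theta_a$ already carries). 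Make that normalization explicit and the argument is complete. By contrast, the paper's branch-by-branch, ratio-based approach sidesteps this issue entirely, at the cost of a somewhat more involved case analysis of the conditions \eqref{Ineq:betafgCond1}--\eqref{Ineq:betafgCond2}.
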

\begin{proof}
Assume that $\epsilon>0$ is sufficiently small and $f$ and $g$ have the same degree ($\mc I_f=\mc I_g=\mc I$).   Since
\[
f_*\psi(x)=\sum_{\ell\in \mc I}\frac{\psi\circ f_\ell^{-1}(x)}{|f'|\circ f_\ell^{-1}(x)}=\sum_{\ell\in \mc I} (f_\ell)_*\psi
\]
$f_*\psi$ belongs to $\mc V_a$ and analogously for $g_*\psi$.

Now we  estimate $\beta_a(f_*\psi(x),\,g_*\psi(x))$ using expression \eqref{Eq:Defbeta} for $\beta_a$, i.e.  we find a lower bound on $t>0$ ensuring that 
\[
tg_*\psi-f_*\psi=\sum_{\ell\in \mc I}t\frac{\psi\circ g_\ell^{-1}}{|g'|\circ g_\ell^{-1}}-\frac{\psi\circ f_\ell^{-1}}{|f'|\circ f_\ell^{-1}}\in \mc V_a.
\]
The above is  implied by the conditions
\begin{align}
&t\frac{\psi\circ g_\ell^{-1}}{|g'|\circ g_\ell^{-1}}-\frac{\psi\circ f_\ell^{-1}}{|f'|\circ f_\ell^{-1}}>0\label{Ineq:betafgCond1}\\
&\frac{t\frac{\psi\circ g_\ell^{-1}}{|g'|\circ g_\ell^{-1}}(x)-\frac{\psi\circ f_\ell^{-1}}{|f'|\circ f_\ell^{-1}}(x)}{t\frac{\psi\circ g_\ell^{-1}}{|g'|\circ g_\ell^{-1}}(y)-\frac{\psi\circ f_\ell^{-1}}{|f'|\circ f_\ell^{-1}}(y)}\le e^{a|x-y|}\label{Ineq:betafgCond2}
\end{align}
for every $\ell\in \mc I$. Below we fix an $\ell\in \mc I$, and denote for brevity, $f^{-1}=f^{-1}_\ell$ and $g^{-1}=g^{-1}_\ell$. 

Notice that 
\[
|f^{-1}(x)-g^{-1}(x)|\le (\inf |f'|)^{-1} |f(g^{-1}(x))-g(g^{-1}(x))|\le (\inf |f'|)^{-1}\epsilon.
\]
Call $C_1:=(\inf |f'|)^{-1}$.

\smallskip
\emph{Condition \eqref{Ineq:betafgCond1}.} 
\begin{align*}
t> \frac{ \frac{\psi\circ f^{-1}}{|f'|\circ f^{-1}}(x)}{ \frac{\psi\circ g^{-1}}{|g'|\circ g^{-1}}  (x)}=\frac{\psi\circ f^{-1}(x)}{\psi\circ g^{-1}(x)} \, \frac{|g'|\circ g^{-1}(x)}{|f'|\circ f^{-1 }(x)}
\end{align*}
Furthermore
\[
\frac{\psi\circ f^{-1}(x)}{\psi\circ g^{-1}(x)}\le e^{a|f^{-1}(x)-g^{-1}(x)|}\le e^{a C_1\epsilon } \le 1+aC_1\epsilon+O(\epsilon^2)
\]
therefore
\begin{align*}
\frac{|g'|\circ g^{-1}(x)}{|f'|\circ f^{-1 }(x)}&=1+\frac{|g'|\circ g^{-1}(x)-|f'|\circ f^{-1 }(x)}{|f'|\circ f^{-1 }(x)}\\
&\le 1 + \frac{|g'\circ g^{-1}(x)-f'\circ g^{-1 }(x)|}{|f'|\circ f^{-1 }(x)} + \frac{|f'\circ g^{-1 }(x)-f'\circ f^{-1 }(x)|}{|f'|\circ f^{-1 }(x)}\\
&\le 1+ \epsilon \kappa^{-1} + \kappa^{-1} |f''|_\infty |f^{-1}(x)-g^{-1}(x)|\\
&\le 1+ \epsilon \kappa^{-1} + \kappa^{-1} |f''|_\infty C_1\epsilon\\
\end{align*}
Putting together the above, condition \eqref{Ineq:betafgCond1}  is implied by $t>1+O(\epsilon)$.

\smallskip
\emph{Condition \eqref{Ineq:betafgCond2}.}  This condition can be rewritten as
\begin{equation}\label{Eq:Condont3}
\frac{t-\phi(x)}{t-\phi(y)}\le  \frac{ \frac{\psi\circ f^{-1}}{|f'|\circ f^{-1}}(y)}{ \frac{\psi\circ f^{-1}}{|f'|\circ f^{-1}}(x)} \cdot e^{a|x-y|}
\end{equation}
with 
\[
\phi(x):=\frac{ \frac{\psi\circ f^{-1}}{|f'|\circ f^{-1}}(x)}{ \frac{\psi\circ g^{-1}}{|g'|\circ g^{-1}}  (x)}.
\]
The assumptions imply that 
\[
\frac{ \frac{\psi\circ f^{-1}}{|f'|\circ f^{-1}}(y)}{ \frac{\psi\circ f^{-1}}{|f'|\circ f^{-1}}(x)} \ge e^{-\lambda a|x-y|}
\]
and \eqref{Eq:Condont3} is implied by
\[
\frac{t-\phi(x)}{t-\phi(y)}\le e^{(1-\lambda)a|x-y|}
\]
that after some computations becomes
\begin{equation}\label{Ew:Condtphixphiy}
t\ge \phi(x)\cdot\frac{\frac{\phi(y)}{\phi(x)}e^{(1-\lambda) a |x-y|} -1 }{e^{(1-\lambda) a |x-y|} -1 }.
\end{equation}
Let's estimate 
\begin{align*}
\frac{\phi(x)}{\phi(y)}=\frac{\psi\circ f^{-1}(x)}{\psi\circ g^{-1}(x)} \cdot\frac {\psi\circ g^{-1}(y)}{\psi\circ f^{-1}(y)} \cdot {  \frac{|g'|\circ g^{-1}(x)}{|f'|\circ f^{-1}(x)}  }\cdot{ \frac{|f'|\circ f^{-1}(y)} {|g'|\circ g^{-1}(y)}}.
\end{align*}
Now
\begin{align*}
\left|\frac{d}{dx} \left[\log g'\circ g^{-1}(x)-\log f'\circ f^{-1}(x)  \right]\right| &=\left|\frac{g''}{(g')^2}\circ g^{-1}(x)-\frac{f''}{(f')^2}\circ f^{-1}(x)\right|\\
&\le\left|\frac{g''}{(g')^2}\circ g^{-1}(x)-\frac{g''}{(g')^2}\circ f^{-1}(x) \right|\\
&\quad\quad+\left|\frac{g''}{(g')^2}-\frac{f''}{(f')^2}\right|\circ f^{-1}(x)\\
&\le [\inf |g'|]^{-2} \|g\|_{C^3}\left[|g^{-1}(x)-f^{-1}(x)|+\right.\\
&\quad\quad \left.+C_\#d_{C^2}\left(f,g\right)\right]\\
&\le  \|g\|_{C^3}C_\#\epsilon
\end{align*}
where $C_\#$ is a generic constant, and the above implies, by the mean-value theorem and exponentiation
\[
{  \frac{|g'|\circ g^{-1}(x)}{|f'|\circ f^{-1}(x)}  }\cdot{ \frac{|f'|\circ f^{-1}(y)} {|g'|\circ g^{-1}(y)}}\le \exp[C_\#\epsilon |x-y|].
\]
Analogously
\begin{align*}
\left|\frac{d}{dx} \left[ \log \psi\circ f^{-1}(x)-\log\psi\circ g^{-1}(x)\right]\right| \le C_\# d_{C^2}(f,g)
\end{align*}
where here $C_\#$ depends also on $\|\psi\|_{C^2}$, and therefore
\[
\frac{\psi\circ f^{-1}(x)}{\psi\circ g^{-1}(x)} \cdot\frac {\psi\circ g^{-1}(y)}{\psi\circ f^{-1}(y)}\le \exp\left[C_\#\epsilon|x-y|\right].
\]
The  condition on $t$ in \eqref{Ew:Condtphixphiy} is implied by
\[
t\ge [1+O(\epsilon)]\cdot\frac{e^{[K_\#\epsilon+(1-\lambda)a]|x-y|}-1}{e^{(1-\lambda)a|x-y|}-1}
\] 
and since
\[
\frac{e^{K_\#\epsilon|x-y|}e^{(1-\lambda)a|x-y|}-1}{e^{(1-\lambda)a|x-y|}-1}\le \frac{[K_\#\epsilon+(1-\lambda)a]|x-y|+O(\epsilon^2|x-y|^2)}{(1-\lambda)a|x-y|}= 1+O(\epsilon)
\]
the condition in \eqref{Ew:Condtphixphiy} is implied by
\[
t=1+O(\epsilon),
\]
and therefore
\[
\beta_a\left(f_*\psi,g_*\psi\right)\le 1+O(\epsilon).
\]
The claim of the lemma then follows by definition of $\theta_a$.
\end{proof}

\section{Some Estimates on Coupled Maps} \label{Sec:AppDHinv}

\subsection{Estimates on the entries of the inverse Jacobian matrix}Consider $\bo H:\T^N\rightarrow \T^N$ a differentiable map, and denote  by $D\bo H$ its differential. Throughout this section the main assumption we  impose on $\bo H$ is Assumption \ref{Ass:CondH}. In particular, recall the definitions of $E,\kappa>0$.

In the following proposition we give sufficient conditions for $\bo H$ to be a local diffeomorphism and such that the inverse of the Jacobian matrix has entries of order one on the diagonal while off  diagonal entries of order $N^{-1}$, as for $D\bo H$. These estimates will be crucial in the proof of Proposition \ref{Lem:Preimagefoliation}.
\begin{proposition}\label{Prop:DHInvEstimates}
Assume that $\bo H:\T^N\rightarrow \T^N$ satisfies Assumption  \ref{Ass:CondH}.

Then $\bo H$ is a local diffeomorphism, and $D\bo H^{-1}$ satisfies
\begin{equation}\label{Eq:InverseJacobianIneq}
|(D\bo H^{-1})_{ij}|<\kappa^{-1}\left[E+\frac{E^2}{\kappa-E}\right]N^{-1}, 
\end{equation}
\begin{equation}\label{Eq:EstEntryDHInv2}
 |(D\bo H^{-1})_{ii}|\le \kappa^{-1}\left[1+E\left(E+\frac{E^2}{\kappa-E}\right)N^{-1}\right]
\end{equation}
and
\begin{equation}
|(D\bo H^{-1})_{ii}|\ge|D\bo H_{ii}|^{-1}\left( 1-E\left(E+\frac{E^2}{\kappa-E}\right)N^{-1}  \right)
\end{equation}
for every $i\in[1,N]$ and $j\neq i$.
\end{proposition}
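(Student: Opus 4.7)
The plan is to combine the Gershgorin circle theorem for the local diffeomorphism claim with a Neumann series expansion of $D\bo H^{-1}$. To see that $\bo H$ is a local diffeomorphism, I would fix $\bo x\in\T^N$ and apply Gershgorin to $D\bo H(\bo x)$: by Assumption \ref{Ass:CondH}, the diagonal entry in row $i$ has modulus $>\kappa$ while the off-diagonal row sum is at most $(N-1)EN^{-1}<E$, so every eigenvalue of $D\bo H(\bo x)$ has modulus at least $\kappa-E>1$. Therefore $D\bo H$ is invertible everywhere, and the inverse function theorem yields the first claim.

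For the entrywise estimates, write $D\bo H=D(I+S)$ with $D:=\diag(\partial_1H_1,\dots,\partial_NH_N)$ and $S:=D^{-1}(D\bo H-D)$. Then $S_{ii}=0$ and $|S_{ij}|<\kappa^{-1}EN^{-1}$ for $i\neq j$. Since the maximal row sum of $|S|$ is bounded by $(N-1)\kappa^{-1}EN^{-1}<E/\kappa<1$ (using $\kappa>E$, a consequence of $\kappa-E>1$), the Neumann series converges absolutely in operator norm and
\begin{equation*}
(D\bo H^{-1})_{ij}=(D^{-1})_{jj}\Big[\delta_{ij}+\sum_{k\ge 1}(-1)^k(S^k)_{ij}\Big].
\end{equation*}

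The core combinatorial bound is
\begin{equation*}
|(S^k)_{ij}|\le (E/\kappa)^k\,N^{-1}\qquad\text{for every }k\ge 1.
\end{equation*}
Indeed, $(S^k)_{ij}=\sum_{\ell_1,\dots,\ell_{k-1}}S_{i\ell_1}S_{\ell_1\ell_2}\cdots S_{\ell_{k-1}j}$ is a sum of at most $N^{k-1}$ products, each of absolute value at most $(\kappa^{-1}EN^{-1})^k$; the mismatch between the $N^{k-1}$ summations and the $k$ factors of $N^{-1}$ leaves exactly one surplus factor $N^{-1}$. This is where the $N^{-1}$ scale of the off-diagonal entries imposed by Assumption \ref{Ass:CondH} is genuinely used, and it is the main technical point of the proof.

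With this estimate the rest is routine. For $i\neq j$ the $k=0$ term vanishes, and summing the geometric series gives
\begin{equation*}
|(D\bo H^{-1})_{ij}|\le\kappa^{-1}\sum_{k\ge 1}(E/\kappa)^k\,N^{-1}=\frac{E}{\kappa(\kappa-E)}\,N^{-1},
\end{equation*}
which, since $\kappa>1$ (from $\kappa-E>1$), is bounded above by $\kappa^{-1}[E+E^2/(\kappa-E)]N^{-1}$, proving \eqref{Eq:InverseJacobianIneq}. For $i=j$ the $k=1$ term vanishes because $S_{ii}=0$, so
\begin{equation*}
(D\bo H^{-1})_{ii}=(D^{-1})_{ii}\Big[1+\sum_{k\ge 2}(-1)^k(S^k)_{ii}\Big],
\end{equation*}
and $\big|\sum_{k\ge 2}(S^k)_{ii}\big|\le E^2/(\kappa(\kappa-E))\,N^{-1}\le E[E+E^2/(\kappa-E)]\,N^{-1}$ (again using $\kappa>1$). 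Combined with the identity $(D^{-1})_{ii}=((D\bo H)_{ii})^{-1}$ and the elementary inequality $1-|x|\le|1+x|\le 1+|x|$, this delivers both the upper bound \eqref{Eq:EstEntryDHInv2} and the stated lower bound on $|(D\bo H^{-1})_{ii}|$. The only delicate step is really the combinatorial bound on $(S^k)_{ij}$; once it is in place, the rest of the argument is a geometric summation followed by two applications of the triangle inequality.
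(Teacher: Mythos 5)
Your proposal is correct and, for the entrywise estimates, takes a genuinely different route from the paper. The paper establishes the off-diagonal bound \eqref{Eq:InverseJacobianIneq} by a geometric argument: it considers a perturbed point $\bo x'=\bo x+h e_j$, finds a point $\bo x''$ on the image of the same vertical fiber, connects $\bo x$ to $\bo x''$ by a straight segment $\bo\gamma$, and estimates how far the pulled-back curve $\tilde{\bo\gamma}=\bo H^{-1}\bo\gamma$ can drift in the $i$-th coordinate; the diagonal bounds then come from the scalar identity $\sum_k D\bo H_{ik}(D\bo H^{-1})_{ki}=1$. You instead factor $D\bo H=D(I+S)$ with $D$ the diagonal part, expand $(I+S)^{-1}$ as a Neumann series, and prove the combinatorial estimate $|(S^k)_{ij}|\le(E/\kappa)^kN^{-1}$ by counting the at most $N^{k-1}$ index paths of length $k$, each weighted by $(\kappa^{-1}EN^{-1})^k$. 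That bound is the genuine technical core, and it is the exact algebraic counterpart of the paper's geometric observation that the off-diagonal $N^{-1}$ scaling survives all orders. Your bounds are in fact slightly sharper than the paper's (e.g.\ you get $E/(\kappa(\kappa-E))$ instead of $E/(\kappa-E)$ on the off-diagonal, and you correctly note that $\kappa\ge 1$ upgrades yours to theirs). The paper's approach is more geometric and arguably closer in spirit to the later construction of the pullback foliation; your approach is more elementary, self-contained, and easy to audit for convergence (row-sum $<E/\kappa<1$). Both settle the local diffeomorphism claim in standard ways (the paper via the $\ell^1$ lower bound $|D\bo H\bo v|_1\ge\kappa-E$, you via Gershgorin, which the paper also mentions in a footnote).
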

\begin{proof}

Pick  $\bo v=(v_1,...,v_N)\in \R^N$ any  vector such that $|\bo v|_1=\sum_i|v_i|=1$. Now
\begin{align}
|D\bo H\bo v|_1&\ge \sum_i \left[|D\bo H_{ii}| |v_i|-\sum_{j\neq i}|D\bo H_{ij}||v_j|\right]\ge \kappa\sum_i|v_i|-E=\kappa -E>1\label{Eq:EstL1normDH}
\end{align}
which proves that $D\bo H$ is invertible and $\bo H$ is a local diffeomorphism.

To prove that  the off-diagonal terms of $D\bo H^{-1}$ are of order $N^{-1}$, we employ a geometrical argument. Consider $\bo x=(x_1,..,x_N)\in\T^N$ and $\bo y=(y_1,...,y_N)\in\T^N$ such that $\bo H(\bo y)=\bo x$ and restrict to a small neighborhood of $\bo y$  where $\bo H$ is invertible. Abusing notation, denote by 
\[
\bo H^{-1}=(H^{-1}_1,...,H^{-1}_N)=(H_1^{-1};\,\hat{\bo H}_1^{-1})
\] the inverse map restricted to the image of this neighbourhood. Without loss of generality let's estimate $(D\bo H^{-1})_{12}(\bo x)=\partial_1 H^{-1}_2 (\bo x)$. Estimates for $(D\bo H^{-1})_{ij}(\bo x)$ for any $i\neq j$ follow analogously. Define $\bo x':=(x_1+h,x_2,x_3,...,x_N)$. Our goal is to estimate $\bo H^{-1}_1(\bo x')-\bo H_1^{-1}(\bo x)$. Consider $\bo x'':=(x_1,x_2'',x_3''',...,x_N'')\in\T^N$ such that 
\[
\hat{\bo H}_1^{-1}(\bo x'')=\hat{\bo H}_1^{-1}(\bo x')
\]
which exists by continuity of the first derivative provided that $h$ is sufficiently small, and  that $\partial_1H_\ell$ is not identically zero for every $\ell\neq 1$, in which case, $\partial_1H_\ell^{-1}=0$ for every $\ell$.
 Notice that by definition of $\bo x'$ and $\bo x''$
\begin{equation}\label{Eq:H^-1atdifferentoints}
H_{2}^{-1}(\bo x')-H_{2}^{-1}(\bo x)=H_{2}^{-1}(\bo x'')-H_{2}^{-1}(\bo x).
\end{equation}

Considering now the curve $\bo \gamma:[0,1]\rightarrow \T^N$, $\bo \gamma(s)=(1-s)\bo x+s \bo x''$  $s\in[0,1]$, we have: $\gamma_2(s)=x_2$, $\gamma_i'$ is constant and
\begin{equation}\label{Eq:boundgammai}
|\gamma_i'|\le h E N^{-1}\quad\quad|\bo\gamma'|_1\le hE
\end{equation}
which follows from the fact that $\bo x'$ and $\bo x''$ are on the image of the same vertical fiber and mean value theorem. These inequalities reflect the fact that the image of a vertical fiber will be almost vertical.
Define $\tilde{\bo\gamma}:[0,1]\rightarrow \T^N$, $\tilde {\bo \gamma}=\bo H^{-1}{\bo \gamma}$.
We have that 
\[
\tilde{\bo\gamma}'(s)=D\bo H^{-1}_{\bo\gamma(s)}\bo\gamma'(s)
\]
therefore, by \eqref{Eq:boundgammai} and \eqref{Eq:EstL1normDH}
\[
|\tilde{\bo\gamma}'|_1\le \frac{hE}{\kappa-E}.
\]
Also 
\begin{align*}
\gamma'_2(s)&=\sum_kD\bo H_{2k}(\tilde{\bo \gamma}(s))\,\tilde{\bo \gamma}_k'(s)=D\bo H_{22}\tilde \gamma'_2+\sum_{k\neq 2}D\bo H_{2k}(\tilde{\bo \gamma}(s))\,\tilde\gamma_k'(s)
\end{align*}
so
\begin{align*}
|\tilde\gamma'_2|&\le\frac{1}{|D\bo H_{22}|}\left[hEN^{-1}+\max_{k\neq 2}|D\bo H_{2k}||\tilde{\bo \gamma}'|_1\right]\\
&\le\kappa^{-1}\left[hEN^{-1}+\frac{hE^2}{\kappa-E}N^{-1}\right]
\end{align*}
The above implies that 
\[
H_2^{-1}(\bo x'')-H_2^{-1}(\bo x)=\tilde\gamma_2(1)-\tilde\gamma_2(0)\le h\kappa^{-1}N^{-1}\left[E+\frac{E^2}{\kappa-E}\right]
\]
and recalling \eqref{Eq:H^-1atdifferentoints}, the  inequality in \eqref{Eq:InverseJacobianIneq} is proved.

To prove \eqref{Eq:EstEntryDHInv2}, notice that
\begin{align*}
1&=\sum_{j=1}^N D\bo H_{ij}(D\bo H^{-1})_{ji}\ge D\bo H_{ii}(D\bo H^{-1})_{ii}-\sum_{j\neq i} E\left(E+\frac{E^2}{\kappa-E}\right)N^{-2}
\end{align*}
from which
\[
|(D\bo H^{-1})_{ji}|\le \frac{1}{| D\bo H_{ii}|}\left[1+E\left(E+\frac{E^2}{\kappa-E}\right)N^{-1}\right] \le \kappa^{-1}\left[1+E\left(E+\frac{E^2}{\kappa-E}\right)N^{-1}\right].
\]

Furthermore
\begin{align*}
1&=\sum_{j=1}^N D\bo H_{ij}(D\bo H^{-1})_{ji}\le |D\bo H_{ii}||(D\bo H^{-1})_{ii}|+\sum_{j\neq i} E\left(E+\frac{E^2}{\kappa-E}\right)N^{-2}
\end{align*}
which implies
\begin{align*}
|(D\bo H^{-1})_{ii}| &\ge\frac{1}{|D\bo H_{ii}|}\left( 1-E\left(E+\frac{E^2}{\kappa-E}\right)N^{-1}  \right)
\end{align*}
so for $N$ sufficiently large, $|(D\bo H^{-1})_{ii}|$ is bounded away from zero.
\end{proof}

\subsection{Proof of Proposition \ref{Lem:Preimagefoliation}}\label{Sec:ProofLem:Preimagefoliation}
\begin{proof}[Proof of Proposition \ref{Lem:Preimagefoliation}]

\smallskip
\emph{Pull-back Foliation.} Let Assumption \ref{Ass:CondH} stand. We will first prove that for $N$ large, $\bo H^{-1}(\T_{\hat{\bo x}_i})$ can be written as disjoint union of circles, and then we will prove existence of $\bo\Phi_i$ so that \eqref{Eq:FoliationRelation} is satisfied.
Without loss of generality, we are going to assume throughout the proof that $i=1$. Fix $\hat{\bo x}_1\in \T^{N-1}$. The inverse function theorem readily implies that locally $\bo H^{-1}(\T_{\hat {\bo x}_1})$ is the graph of a function. More precisely,  pick any $(y_1;\,\hat{\bo y}_1)\in \bo H^{-1}(\T_{\hat {\bo x}_1})$ such that  $\bo H(y_1,\hat{\bo y}_1)=(0,\hat{\bo x}_1)$, then there is $U\subset \bo H^{-1}(\T_{\hat{\bo x}_1}) $  a neighborhood of $(y_1;\,\hat{\bo y}_1)$ on  $\bo H^{-1}(\T_{\hat{\bo x}_1})$ and  $\bo \phi:(y_1-\delta,y_1+\delta)\rightarrow \T^{N-1}$ such that $U\cap\{z_1\in (y_1-\delta,y_1+\delta)\}$ is given by the graph of $\bo \phi$  and
\begin{equation}\label{eq:CondImpFuncThm}
\hat{\bo H}_1(z_1;\bo \phi(z_1))=\hat{\bo x}_1.
\end{equation}
This implicit relation implies that the derivative of $\bo \phi$ is
\begin{align}
\bo \phi'(z)&=-(\hat D_1\hat{\bo H}_1)^{-1}|_{(z,\bo\phi(z))}\,\, \partial_{1} \hat{\bo H}_1|_{(z,\bo\phi(z))} \label{Eq:Firstderboldphi}
\end{align}
where $\hat D_1$ is the differential with respect to coordinates $(x_2,...,x_N)$.
It follows from Assumption \ref{Ass:CondH} and Proposition \ref{Prop:DHInvEstimates} applied to $\hat{\bo x}_1\mapsto \hat{\bo H}_1(x_1;\,\hat{\bo x}_1)$ that
\begin{equation}\label{Eq:EstINvhatboH1}
(\hat D_1\hat{\bo H}_1^{-1})_{ii}<(\kappa')^{-1},\quad\quad  (\hat D_1\hat{\bo H}_1^{-1})_{ij}<E'N^{-1}\quad\forall i \mbox{ and } \forall j\neq i
\end{equation}
with $E':=\kappa^{-1}\left[E+\frac{E^2}{\kappa-E}\right]$ and $(\kappa')^{-1}=\kappa^{-1}+O(N^{-1})$. Denoting $\bo \phi(z)=(\phi_2(z),...,\phi_N(z))$ we get 
\[
\phi_k'(z)=-\sum_{m= 2}^N (\hat D_1\hat{\bo H}_1^{-1})_{km}\,\partial_1 H_m
\]
so
\begin{align}
|\phi_k'(z)|&\le |(\hat D_1\hat{\bo H}_1^{-1})_{kk}|\,|\partial_1H_k| +  \sum_{m\neq 1, k} E'N^{-1}EN^{-1} \le (K'+E')E N^{-1}.\label{Eq:Derivphi}
\end{align}
 If $((\kappa')^{-1}+E')E N^{-1}$ is sufficiently small, i.e. the Euclidean norm of  $\bo \phi'(z)$  is sufficiently small, then the unit tangent vectors at $U$ have strictly nonzero component along the $x_1$-direction.  
 Let's pick (for the moment)
 \[
 \mc K_\#:=  ((\kappa')^{-1}+E')E. 
 \]
 Notice that $\mc K_\#$ can be made arbitrarily small letting either $\kappa^{-1}\rightarrow 0$ or $E\rightarrow 0$.

 Under this condition,  $\bo H^{-1}(\T_{\hat{\bo x}_1})$ is transverse to $\{x_1=0\}\subset\T^N$. Now, for some index set $\mc I$, the collection $\{\bo p_i\}_{i\in \mc I}$ of all the intersection points of $\bo H^{-1}(\T_{\hat{\bo  x}_1})$ with $\{x_1=0\}$. Recall that  $\bo H^{-1}(\T_{\hat{\bo x}_1})$ integrates $\mc X_1:= \bo H^*e_{1}$, and therefore $\mc X_1$ points along the same direction as $(1,\bo \phi'(z))$. Starting from any of the $\bo p_\ell(0)=\bo p_i$ integrate the vector field to obtain $\bo p_\ell(t)$ until $ \pi_1(\bo p_i(t_0))=1\sim 0$\footnote{Recall that $\pi_i:\T^N\rightarrow \T$ denotes the projection on the $i$-th coordinate.}. We claim that the collection of curves $\gamma_{\hat{\bo x}_1,\ell}:=\{\bo p_\ell(t):\,t\in [0,t_0)\}$ is a partition of $\bo H^{-1}(\T_{\hat{\bo x}_1})$ into disjoint circles. In fact, by contradiction, suppose that $\exists \bo p\in \bo H^{-1}(\T_{\hat{\bo x}_1})\backslash \cup_\ell\gamma_{\hat{\bo x}_1,\ell}$, then starting from $\bo p$ follow the vector field $\mc X_1$ either forward or backward and one reaches $\{x_1=0\}$, so you will meet $\bo p_\ell$ for some $\ell$ and  $\bo p\in \gamma_{\hat{\bo x}_1,\ell}$ (this is once again because  the first component of $\mc X_1$ is bounded away from zero).  Clearly  $ \gamma_{\hat{\bo x}_1,\ell}\cap  \gamma_{\hat{\bo x}_1,\ell'}=\emptyset$ for $\ell\neq \ell'$ as the segment integrates the vector field $\mc X_1$ and solutions are unique. To prove that $\gamma_{\hat{\bo x}_1,\ell}$  is  a circle, one has to show that $\bo p_\ell(0)=\bo p_\ell(t_0)$ which, by the definition of $\bo p_\ell(t)$,  also implies that $\gamma_{\hat{\bo x}_1.\ell}$ is homotopic to $\T\times{\bo p_\ell(0)}$. Arguing by contradiction, assume that the $j$-th component $y_j(0)$ and $y_j(t_0)$ of  $\bo p_\ell(0)$ and $\bo p_\ell(t_0)$ are different, and  $|y_j(0)-y_j(t_0)|\ge |y_k(0)-y_k(t_0)|$ for all $k\neq j$. Then, by the mean-value theorem for some $\bo p'\in \T^{N}$
 \begin{align*}
 H_j(\bo p_\ell(0))-H_j(\bo p_\ell(t_0))&=DH_j(\bo p')[\bo p_\ell(0)-\bo p_\ell(t_0)]\\
 &=\partial_jH_j(\bo p') (y_j(0)-y_j(t_0))+\sum_{k\neq j}\partial_kH_j(\bo p')(y_k(0)-y_k(t_0))
 \end{align*}
 and since \eqref{Eq:Derivphi} implies that $|y_k(0)-y_k(t_0)|=O(N^{-1})$
\begin{align*}
 |H_j(\bo p_\ell(0))-H_j(\bo p_\ell(t_0))|&\ge \kappa  |y_j(0)-y_j(t_0)|-\sum_{k\neq j}EN^{-1}|y_k(0)-y_k(t_0)|>0\\
  |H_j(\bo p_\ell(0))-H_j(\bo p_\ell(t_0))|&\le K'  |y_j(0)-y_j(t_0)|+\sum_{k\neq j}EN^{-1}|y_k(0)-y_k(t_0)|\\
  &\le O(N^{-1})
 \end{align*}
 so for $N$ large, $H_j(\bo p_\ell(0))$, $H_j(\bo p_\ell(t_0))$ cannot coincide on the circle, but by assumption $\hat{\bo H}_1(\bo p_\ell(0))= \hat{\bo H}_1(\bo p_\ell(t_0))$ and we get a contradiction.
 
With the above, one can define $\bo \Phi_1^{-1}:\T^{N}\rightarrow \T\times \T^{N-1}$ in the following way: suppose that $(y_1;\,\hat{\bo y}_1)\in\gamma_{\hat{\bo x}_1,\ell}$ and $\gamma_{\hat{\bo x}_1,\ell}\cap \{0\}\times \T^{N-1}= \{(0,\hat{\bo y}_1')\}$, then 
\[
\bo\Phi_1^{-1}(y_1;\,\hat{\bo y}_1)=(y_1,\hat{\bo y}_1').
\]
This defines a diffeomorphism $\bo \Phi_1$ satisfying the implicit relations \eqref{Eq:ImplicitForm}. To ease notation, we will call $\bo \Phi=(\Phi_1,...,\Phi_N):=\bo \Phi_1$.

\smallskip
\emph{First Derivatives of $\bo \Phi$.}   From the first equation in \eqref{Eq:ImplicitForm} follows that $\partial_1\Phi_1=1$ and $\partial_k\Phi_1=0$ while \eqref{Eq:Derivphi} implies  that for every $k\neq 1$,
\begin{equation}\label{Eq:Estpartial1phik}
|\partial_1\Phi_k|\le \mc K_\#N^{-1}.
\end{equation} From  the second equation in \eqref{Eq:ImplicitForm} follows that 
 \[
{ D \hat{\bo H}_1}_{\bo\Phi(y_1,\,\hat{\bo y}_1)} D\bo\Phi_{(y_1,\,\hat{\bo y}_1)}={D\hat{\bo H}_1}_{(0,\,\hat{\bo y}_1)}
 \]
 which is equivalent to
  \begin{equation}\label{Eq:RelationDerivativePhi}
{ D \hat{\bo H}_1}_{\bo\Phi(y_1,\,\hat{\bo y}_1)} \left(
\begin{array}{c}
{\begin{array}{cccc}
1&0&...&0
\end{array}}\\
{D\hat{\bo \Phi}_1}_{(y_1,\,\hat{\bo y}_1)}\end{array}\right)=\left(
\begin{array}{cc}
\begin{array}{c}0\\...\\0\end{array} &{ \hat D_1\hat{\bo H}_1}_{(0,\hat{\bo y}_1)}
\end{array}
\right)
 \end{equation}
where $\hat{\bo \Phi}_1:\T^N\rightarrow \T^{N}$ equals $\hat{\bo \pi}_1\circ\bo \Phi$ and, recalling that $\hat{D}_1$ denotes the differential with respect to coordinates $(x_2,...,x_N)$,  block multiplication implies: 
 %${(D \hat{\bo H}_1}_{\bo\Phi(y_1,\,\hat{\bo y}_1)})_{k\cdot}\cdot \frac{d}{dy_1}\bo \Phi=0 $  
 \[{ \hat D_1 \hat{\bo H}_1}_{\bo\Phi(y_1,\,\hat{\bo y}_1)}{\hat D_1\hat{\bo \Phi}_1}_{(y_1,\,\hat{\bo y}_1)}={ \hat D_1\hat{\bo H}_1}_{(0,\hat{\bo y}_1)}.\] This can be written as
 \begin{align}
 {\hat D_1\hat{\bo \Phi}_1}_{(y_1,\,\hat{\bo y}_1)}&=({ \hat D_1 \hat{\bo H}_1}_{\bo\Phi(y_1,\,\hat{\bo y}_1)})^{-1}{ \hat D_1\hat{\bo H}_1}_{(0,\hat{\bo y}_1)}\label{Eq:ExphatD1hatPhi11}\\
 &=\bo\Id_{\T^{N-1}}+({ \hat D_1 \hat{\bo H}_1}_{\bo\Phi(y_1,\,\hat{\bo y}_1)})^{-1}[{ \hat D_1\hat{\bo H}_1}_{\bo \Phi(0,\hat{\bo y}_1)}-{ \hat D_1 \hat{\bo H}_1}_{\bo\Phi(y_1,\,\hat{\bo y}_1)}]\label{Eq:ExphatD1hatPhi1}
 \end{align}
By the mean-value theorem, for $\ell,k\in[2,N]$,
\begin{align}
[{ \hat D_1\hat{\bo H}_1}_{\bo \Phi(0,\hat{\bo y}_1)}-{ \hat D_1 \hat{\bo H}_1}_{\bo\Phi(y_1,\,\hat{\bo y}_1)}]_{k\ell}&=\partial_{\ell}H_k(\bo \Phi(0,\hat{\bo y}_1))-\partial_{\ell}H_k(\bo \Phi(y_1,\hat{\bo y}_1))\nonumber\\
&= \sum_{m=1}^N\partial_m\partial_\ell H_k(\bo\Phi(z;\hat{\bo y}_1))\partial_1 \Phi_m(z;\hat{\bo y}_1)|y_1|\label{Eq:sumestfootnote}
\end{align}
and from  Assumption \ref{Ass:SecondDerivH} and equation \eqref{Eq:Estpartial1phik}, it follows that\footnote{Here we use that for $k=\ell$, the term with $m=1$ in the sum \eqref{Eq:sumestfootnote} is
\[
|\partial_1\partial_\ell H_k(\bo\Phi(z;\hat{\bo y}_1))|\le EN^{-1}
\] for any $m\neq 1$
\[
|\partial_m\partial_\ell H_k(\bo\Phi(z;\hat{\bo y}_1))|\le \left\{
\begin{array}{ll}
 K_2 & k=\ell=m\\
EN^{-1}& k=\ell\neq m
\end{array}
\right. 
\]
while for $\ell\neq k$
\[
|\partial_1\partial_\ell H_k(\bo\Phi(z;\hat{\bo y}_1))|\le EN^{-2}
\]
and
\[
|\partial_m\partial_\ell H_k(\bo\Phi(z;\hat{\bo y}_1))|\le \left\{
\begin{array}{ll}
EN^{-1}   & k=m\mbox{ or }\ell=m\\
EN^{-2}& k,\ell, m\mbox{ distinct}
\end{array}
\right. \]
also, for $m\neq 1$
\[
 \partial_1 \Phi_1(z;\hat{\bo y}_1)=1\quad\quad |\partial_1 \Phi_m(z;\hat{\bo y}_1)|\le \mc K_\# N^{-1}.
\]
\label{FootnoteCompDer}
} 
\begin{equation}\label{Eq:EstTermDiffDiff}
\left|[{ \hat D_1\hat{\bo H}_1}_{\bo \Phi(0,\hat{\bo y}_1)}-{ \hat D_1 \hat{\bo H}_1}_{\bo\Phi(y_1,\,\hat{\bo y}_1)}]_{k\ell}\right|\le\left\{
\begin{array}{ll}
\mc K_\#N^{-1} & k=\ell\\
\mc K_\#N^{-2} & k\neq \ell
\end{array}
\right.
\end{equation}
where we eventually update $\mc K_\#$ that now depends also on $K$, i.e. the bound on the second derivative. Notice that as long as $K $ is bounded, $\mc K_\#$ still goes to zero if either $E\rightarrow 0$ or $\kappa\rightarrow \infty$.

Finally, recalling Proposition \ref{Prop:DHInvEstimates} \footnote{${ \hat D_1 \hat{\bo H}_1}_{\bo\Phi(y_1,\,\hat{\bo y}_1)}$ has diagonal entries bounded by a constant $K'$, while constant off diagonal are bounded by $E'N^{-1}$ with $E'$ and $K'$ depending on $E$ and $\kappa$ only. The result follows by matrix multiplication.}
\[
\left|\left[({ \hat D_1 \hat{\bo H}_1}_{\bo\Phi(y_1,\,\hat{\bo y}_1)})^{-1}({ \hat D_1\hat{\bo H}_1}_{\bo \Phi(0,\hat{\bo y}_1)}-{ \hat D_1 \hat{\bo H}_1}_{\bo\Phi(y_1,\,\hat{\bo y}_1)})\right]_{k\ell}\right|\le \left\{
\begin{array}{ll}
\mc K_\#N^{-1} & k=\ell\\
\mc K_\#N^{-2} & k\neq \ell
\end{array}
\right.
\]
and \eqref{Eq:DerivPhi} is proved.

\smallskip
\emph{Second Order Partial Derivatives of $\bo \Phi$ Containing $\partial_1$.}
One immediately gets that $\partial_k\partial_1\Phi_1=0$ $\forall k\in[1,N]$. Looking at the first column of the matrix in equation \eqref{Eq:RelationDerivativePhi} we obtain
\begin{align*}
{D\hat{\bo H}_1}_{\bo \Phi(y_1;\,\hat{\bo y}_1)}\left(\begin{array}{c}
1\\
{\partial_{1}\hat{\bo \Phi}_1(\bo \Phi(y_1;\,\hat{\bo y_1}))}
\end{array}\right)&=\partial_1\hat{\bo H}_1(\bo \Phi(y_1;\,\hat{\bo y}_1))+{\hat D_1\hat{\bo H}_1}_{\bo \Phi(y_1;\,\hat{\bo y}_1)}{\partial_{1}\hat{\bo \Phi}_1(\bo \Phi(y_1;\,\hat{\bo y_1}))}
\end{align*}
and the above equals zero, therefore
\[
{\partial_{1}\hat{\bo \Phi}_1}(\bo \Phi(y_1;\,\hat{\bo y}_1))=-({\hat D_1\hat{\bo H}_1}_{\bo \Phi(y_1;\,\hat{\bo y}_1)})^{-1}\partial_1\hat{\bo H}_1(\bo \Phi(y_1;\,\hat{\bo y}_1))
\]
Taking the partial derivative of the above along the  $k$-th coordinate we get
\begin{align}
\partial_k\partial_1\hat{\bo \Phi}_1&=-({\hat D_1\hat{\bo H}_1}_{\bo \Phi(y_1;\,\hat{\bo y}_1)})^{-1}\partial_k(\partial_1\hat{\bo H}_1(\bo \Phi(y_1;\,\hat{\bo y}_1)))-\label{Eq:parkpar1hatphi1}\\
&\quad\quad-\partial_k({\hat D_1\hat{\bo H}_1}_{\bo \Phi(y_1;\,\hat{\bo y}_1)}^{-1})\partial_1\hat{\bo H}_1(\bo \Phi(y_1;\,\hat{\bo y}_1))\label{Eq:parkpar1hatphi2}.
\end{align}
For the RHS on  \eqref{Eq:parkpar1hatphi1}, for any $\ell\in [2,N]$, we have\footnote{See computations in footnote \ref{FootnoteCompDer}.} for $k=1$
\begin{equation}\label{Eq:EstEq:parkpar1hatphi1}
|[\partial_1(\partial_1\hat{\bo H}_1\circ \bo \Phi)]_\ell|\le \sum_{m=1}^N|\partial_m\partial_1 H_\ell \circ \bo \Phi|\,|\partial_1\Phi_m|\le \left\{
\begin{array}{ll}
 \mc K_\# N^{-1}& \ell=1 \\
 \mc K_\# N^{-2} & \ell\neq 1
\end{array}
\right.
\end{equation}
while for $k\neq 1$
\begin{equation}\label{Eq:EstEq:parkpar1hatphi1bis}
|[\partial_k(\partial_1\hat{\bo H}_1\circ \bo \Phi)]_\ell|\le \sum_{m=1}^N|\partial_m\partial_1 H_\ell \circ \bo \Phi|\,|\partial_k\Phi_m|\le \left\{
\begin{array}{ll}
 \mc K_\# N^{-1}& k=\ell \\
 \mc K_\# N^{-3} & k\neq \ell
\end{array}
\right.
\end{equation}
and from the estimates on the entries of $({\hat D_1\hat{\bo H}_1}_{\bo \Phi(y_1;\,\hat{\bo y}_1)})^{-1}$ 
\[
|[({\hat D_1\hat{\bo H}_1}_{\bo \Phi(y_1;\,\hat{\bo y}_1)})^{-1}\partial_k(\partial_1\hat{\bo H}_1(\bo \Phi(y_1;\,\hat{\bo y}_1)))]_\ell |\le \left\{
\begin{array}{ll}
 \mc K_\# N^{-1}& k=\ell \\
 \mc K_\# N^{-2} & k\neq \ell.
\end{array}
\right.
\]
For the term in \eqref{Eq:parkpar1hatphi2} let's start by estimating the entries of $\partial_k(\hat D_1\hat{\bo H}_1^{-1})$. Since  $\hat D_1\hat{\bo H}_1^{-1}\hat D_1\hat{\bo H}_1=\bo \Id_{\T^{N-1}}$, we have
\[
\partial_k(\hat D_1\hat{\bo H}_1^{-1})=-\hat D_1\hat{\bo H}_1^{-1}\partial_k(\hat D_1\hat{\bo H}_1)\hat D_1\hat{\bo H}_1^{-1}
\]
so using the estimates for the entries of the matrices above,
\begin{equation}\label{Eq:Eqpartial1D1H1}
|\partial_1(\hat D_1\hat{\bo H}_1^{-1})|_{\ell m}\le \left\{
\begin{array}{ll}
\mc K_\#N^{-1} & \ell=m\\
\mc K_\#N^{-2} &\ell \neq m
\end{array}
\right.
\end{equation}
and for $k\neq 1$
\begin{equation}\label{Eq:EqpartialkD1H1}
|\partial_k(\hat D_1\hat{\bo H}_1^{-1})|_{\ell m}\le \left\{
\begin{array}{ll}
K\kappa^{-2}+\mc K_\#N^{-1} & \ell=m=k\\
\mc K_\#N^{-1} &\ell = k\,\,\mbox{xor }  m=k\\
\mc K_\#N^{-2} & \ell,\,k,\,m\mbox{ distinct}.
\end{array}
\right.
\end{equation}
Now
\begin{align*}
[\partial_k({\hat D_1\hat{\bo H}_1}_{\bo \Phi(y_1;\,\hat{\bo y}_1)}^{-1})]_{\ell m} = \sum_{j=1}^N(\partial_j{\hat D_1\hat{\bo H}_1}_{\bo \Phi(y_1;\,\hat{\bo y}_1)}^{-1})_{\ell m}\,\partial_k\Phi_j(y_1;\,\hat{\bo y}_1)
\end{align*}
For $k\neq 1$, since $\partial_k\Phi_j=O(N^{-2})$ whenever $j\neq k$, and  $=1+O(N^{-1})$ otherwise, the entries of $\partial_k({\hat D_1\hat{\bo H}_1}_{\bo \Phi(y_1;\,\hat{\bo y}_1)}^{-1})$ are of the same order than those of $\partial_k(\hat D_1\hat{\bo H}_1^{-1})$. For $k=1$, since  $\partial_1\Phi_j=O(N^{-1})$ whenever $j\neq 1$ and  $=1$ otherwise, one can see that the entries on the diagonal are of $O(1)$ while those off the diagonal are of $O(N^{-2})$.
Since the entries of $\partial_1\hat{\bo H}_1$ are bounded in modulus by $\mc K_\#N^{-1}$
\begin{equation}\label{Eq:EstEq:parkpar1hatphi2}
[\partial_k({\hat D_1\hat{\bo H}_1}_{\bo \Phi(y_1;\,\hat{\bo y}_1)}^{-1})\partial_1\hat{\bo H}_1(\bo \Phi(y_1;\,\hat{\bo y}_1))]_{m}\le \left\{\begin{array}{ll}
\mc K_\#N^{-2} & k=1\\
\mc K_\#N^{-1}& k=m\neq 1\\
\mc K_\# N^{-2}& k\neq m
\end{array}\right.
\end{equation}
Combining \eqref{Eq:EstEq:parkpar1hatphi1} and \eqref{Eq:EstEq:parkpar1hatphi2} we obtain
\[
[\partial_k\partial_1\hat{\bo \Phi}_1]_{m}=\left\{\begin{array}{ll}
\mc K_\#N^{-1} & m=k\\
\mc K_\#N^{-2} & m\neq k
\end{array}\right.
\]

\smallskip
\emph{Second Order Partial Derivatives of $\bo \Phi$ Not Containing $\partial_1$.}
From now on we put $k\neq 1$. From equation \eqref{Eq:ExphatD1hatPhi1} we get
\begin{align}
\partial_k {\hat D_1\hat{\bo \Phi}_1}_{(y_1,\,\hat{\bo y}_1)}&=\partial_k[({ \hat D_1 \hat{\bo H}_1}_{\bo\Phi(y_1,\,\hat{\bo y}_1)})^{-1}][{ \hat D_1\hat{\bo H}_1}_{\bo \Phi(0,\hat{\bo y}_1)}-{ \hat D_1 \hat{\bo H}_1}_{\bo\Phi(y_1,\,\hat{\bo y}_1)}]+\label{Eq:MixedSecondPartEst1}\\
&\quad\quad+({ \hat D_1 \hat{\bo H}_1}_{\bo\Phi(y_1,\,\hat{\bo y}_1)})^{-1}\partial_k[{ \hat D_1\hat{\bo H}_1}_{\bo \Phi(0,\hat{\bo y}_1)}-{ \hat D_1 \hat{\bo H}_1}_{\bo\Phi(y_1,\,\hat{\bo y}_1)}]\label{Eq:MixedSecondPartEst2}
\end{align}
For the term in \eqref{Eq:MixedSecondPartEst2}, by repeated use of mean-value theorem
\begin{align*}
\partial_k[{ \hat D_1\hat{\bo H}_1}_{\bo \Phi(0,\hat{\bo y}_1)}-{ \hat D_1 \hat{\bo H}_1}_{\bo\Phi(y_1,\,\hat{\bo y}_1)}] &= \sum_{j=1}^N\partial_j{ \hat D_1\hat{\bo H}_1}_{\bo \Phi(0,\hat{\bo y}_1)}\partial_k\Phi_j(0;\,\hat{\bo y}_1)-\\
&\quad\quad -\sum_{j=1}^N\partial_j{ \hat D_1\hat{\bo H}_1}_{\bo \Phi(y_1,\hat{\bo y}_1)}\partial_k\Phi_j(y_1;\,\hat{\bo y}_1)\\
&=\sum_{j=1}^N[\partial_j{ \hat D_1\hat{\bo H}_1}_{\bo \Phi(0,\hat{\bo y}_1)}-\partial_j{ \hat D_1\hat{\bo H}_1}_{\bo \Phi(y_1,\hat{\bo y}_1)}]\partial_k\Phi_j(0;\,\hat{\bo y}_1)+\\
&\quad\quad+\partial_j{ \hat D_1\hat{\bo H}_1}_{\bo \Phi(y_1,\hat{\bo y}_1)}[\partial_k\Phi_j(0;\,\hat{\bo y}_1)-\partial_k\Phi_j(y_1;\,\hat{\bo y}_1)]\\
&=\sum_{j=1}^N\sum_{p=1}^N\partial_p\partial_j{ \hat D_1\hat{\bo H}_1}_{\bo \Phi(z';\,\hat{\bo y}_1)}\,\partial_1\Phi_p\partial_k\Phi_j\,\cdot y_1+ \\
&\quad\quad+\sum_{j=1}^N\partial_j{ \hat D_1\hat{\bo H}_1}_{\bo \Phi(y_1,\hat{\bo y}_1)}\partial_1\partial_k\Phi_j(z';\,\hat{\bo y}_1) \,\cdot y_1\end{align*} 
and this implies
\begin{align*}
\left|[\partial_k[{ \hat D_1\hat{\bo H}_1}_{\bo \Phi(0,\hat{\bo y}_1)}-{ \hat D_1 \hat{\bo H}_1}_{\bo\Phi(y_1,\,\hat{\bo y}_1)}]_{\ell m}\right| &\le \sum_{j=1}^N\sum_{p=1}^N|\partial_p\partial_j \partial_\ell H_m|\,|\partial_1\Phi_p||\partial_k\Phi_j|+ \\
&\quad\quad+\sum_{j=1}^N|\partial_j\partial_\ell H_m||\partial_1\partial_k\Phi_j|
\end{align*}
From the above
\footnote{
When $\ell=m=k$, $\left|[\partial_k[{ \hat D_1\hat{\bo H}_1}_{\bo \Phi(0,\hat{\bo y}_1)}-{ \hat D_1 \hat{\bo H}_1}_{\bo\Phi(y_1,\,\hat{\bo y}_1)}]_{\ell m}\right| $ can be estimated as
\begin{align*}
& \sum_{j=1}^N\sum_{p=1}^N|\partial_p\partial_j \partial_k H_k|\,|\partial_1\Phi_p||\partial_k\Phi_j|+\sum_{j=1}^N|\partial_j\partial_k H_k||\partial_1\partial_k\Phi_j|\le \\
 &\quad\le \sum_{j=k,p=k} \mc K_\# N^{-1} + \sum_{j,p\neq k} \mc K_\#N^{-3} + \sum_{j=k,p\neq k} \mc K_\# N^{-1} N^{-1} +\sum_{j\neq k,p= k} \mc K_\# N^{-1} N^{-1} N^{-1} \\
 &\quad\quad +\sum_{j=k}\mc K_\# N^{-1} + \sum_{j\neq k}\mc K_\# N^{-1} N^{-2}\\
 &\quad\le \mc K_\# N^{-1}
\end{align*}
and the other estimates follow from analogous computations.
}
\[
\left|\partial_k[{ \hat D_1\hat{\bo H}_1}_{\bo \Phi(0,\hat{\bo y}_1)}-{ \hat D_1 \hat{\bo H}_1}_{\bo\Phi(y_1,\,\hat{\bo y}_1)}]_{\ell m}\right|\le \left\{
\begin{array}{ll}
\mc K_\#N^{-1} & \ell=m=k\\
\mc K_\#N^{-2} & \ell=m\neq k\\
\mc K_\#N^{-2} & \ell=\ell=k\mbox{ xor }m=k\\
\mc K_\#N^{-3} & \mbox{owse}
\end{array}
\right.
\]
and matrix multiplication implies that the entries of the term in \eqref{Eq:parkpar1hatphi2} satisfy similar bounds. Estimates on entries for the RHS of \eqref{Eq:MixedSecondPartEst1} can be obtained combining \eqref{Eq:EstTermDiffDiff} and the estimates on $\partial_k[({ \hat D_1 \hat{\bo H}_1}_{\bo\Phi(y_1,\,\hat{\bo y}_1)})^{-1}]$ obtained above. Altogether they give \eqref{Eq:SecDerivChart}.

\end{proof}

\section{Proofs from Section \ref{Subsec:PushForwardReg}}\label{App:PushForwardReg}

\subsection{Characterization of $\mc M_{a,b,L}$}
First of all we present a criterion to characterize measures in $\mc M_{a,b,L}$ which will be used many times in the following.
\begin{definition}\label{Def:AB} Let $\eta:\T^N\rightarrow \R^+$ be a $C^2$ probability density such that for every $i\in[1,N]$ and $\hat{\bo x}_i\in \T^{N-1}$, $\eta(\cdot;\,\hat{\bo x}_i)\in \mc V_a$. 
For any $b\ge a$, every $i\in [1,N]$ and $k\in[1,N]$ with $k\neq i$, define
\[
A^{(i,k)}_{\eta,b}(\hat{\bo x}_i):=\sup_{x\in \T}\left\{\frac{\partial_k\eta(x;\hat{\bo x}_i)}{\eta(x;\,\hat{\bo x}_i)},\,\frac{b\partial_k\eta(x;\hat{\bo x}_i)-\partial_k\partial_i\eta(x;\hat{\bo x}_i)}{b\eta(x;\hat{\bo x}_i)-\partial_i\eta(x;\hat{\bo x}_i)},\,\frac{b\partial_k\eta(x;\hat{\bo x}_i)+\partial_k\partial_i\eta(x;\hat{\bo x}_i)}{b\eta(x;\hat{\bo x}_i)+\partial_i\eta(x;\hat{\bo x}_i)} \right\}
\]
and
\[
B^{(i,k)}_{\eta,b}(\hat{\bo x}_i):=\inf_{x\in \T}\left\{\frac{\partial_k\eta(x;\hat{\bo x}_i)}{\eta(x;\,\hat{\bo x}_i)},\,\frac{b\partial_k\eta(x;\hat{\bo x}_i)-\partial_k\partial_i\eta(x;\hat{\bo x}_i)}{b\eta(x;\hat{\bo x}_i)-\partial_i\eta(x;\hat{\bo x}_i)},\,\frac{b\partial_k\eta(x;\hat{\bo x}_i)+\partial_k\partial_i\eta(x;\hat{\bo x}_i)}{b\eta(x;\hat{\bo x}_i)+\partial_i\eta(x;\hat{\bo x}_i)} \right\}
\] 
\end{definition}
In the two results below, we use the above definition to give necessary and sufficient conditions for a measure to belong to $\mc M^{(i,k)}_{a,b,L}$.
\begin{proposition}\label{Prop:ABDef}
A $C^2$ density $\eta:\T^N\rightarrow \R^+$ belongs to $\mc M^{(i,k)}_{a,b,L}$ if and only if the following two conditions are satisfied

i) $\eta(\cdot;\,\hat{\bo x}_i)\in \mc V_a$ for all $ i\in[1,N]$ and $\hat{\bo x}_i\in \T^{N-1}$

ii) for every $ \hat{\bo x}_i\in\T^{N-1}$
\[
A^{(i,k)}_{\eta,b}(\hat{\bo x}_i)-B^{(i,k)}_{\eta,b}(\hat{\bo x}_i)\le L.
\] 

\end{proposition}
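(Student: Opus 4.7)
The plan is to exploit the projective invariance of $\theta_b$ and the explicit formula from Proposition~\ref{Prop:HilbertDistCharacter} in order to reduce the Lipschitz estimate on conditional densities to a pointwise estimate involving the quantities in Definition~\ref{Def:AB}. Two preliminary observations: because $\theta_b$ is projective, the normalization in \eqref{Eq:ConditionalMeasure} is irrelevant, so $\theta_b(\rho_{\hat{\bo x}_i},\rho_{\hat{\bo x}_i'})=\theta_b(\rho(\cdot;\hat{\bo x}_i),\rho(\cdot;\hat{\bo x}_i'))$; and condition (i) together with $b\ge a$ gives $\rho(\cdot;\hat{\bo x}_i)\in\mc V_b$, whence $b\rho\pm\partial_i\rho>0$, so that all three quotients in Definition~\ref{Def:AB} are well-defined and positive. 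Writing $r_1:=\rho$, $r_2:=b\rho-\partial_i\rho$, $r_3:=b\rho+\partial_i\rho$, Proposition~\ref{Prop:HilbertDistCharacter} becomes
\[
\log\beta_b\bigl(\rho(\cdot;\hat{\bo x}_i),\rho(\cdot;\hat{\bo x}_i')\bigr)=\max_{\ell=1,2,3}\sup_{x\in\T}\log\frac{r_\ell(x;\hat{\bo x}_i')}{r_\ell(x;\hat{\bo x}_i)},
\]
with an analogous expression (involving $-\min_\ell\inf_x$) for the swapped arguments.

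Assume without loss of generality that $\hat{\bo x}_i'=\hat{\bo x}_i+h e_k$ with $h>0$. The key identity driving both implications is
\[
\log\frac{r_\ell(x;\hat{\bo x}_i+h e_k)}{r_\ell(x;\hat{\bo x}_i)}=\int_0^h\frac{\partial_k r_\ell}{r_\ell}(x;\hat{\bo x}_i+s e_k)\,ds,
\]
whose integrands are exactly those whose $\sup_x,\max_\ell$ define $A^{(i,k)}_{\eta,b}$ and whose $\inf_x,\min_\ell$ define $B^{(i,k)}_{\eta,b}$. For the sufficiency direction, combining this identity with $\log\max=\max\log$ and the elementary bound $\sup\int\le\int\sup$ produces
\[
\log\beta_b\bigl(\rho(\cdot;\hat{\bo x}_i),\rho(\cdot;\hat{\bo x}_i+h e_k)\bigr)\le\int_0^h A^{(i,k)}_{\eta,b}(\hat{\bo x}_i+s e_k)\,ds,
\]
and, applied to the reciprocal ratios (which turns $\sup$ into $-\inf$ and $\max$ into $-\min$),
\[
\log\beta_b\bigl(\rho(\cdot;\hat{\bo x}_i+h e_k),\rho(\cdot;\hat{\bo x}_i)\bigr)\le -\int_0^h B^{(i,k)}_{\eta,b}(\hat{\bo x}_i+s e_k)\,ds.
\]
Summing and using (ii) yields $\theta_b\le\int_0^h(A-B)\,ds\le Lh=L|x_k-x_k'|$, which is the Lipschitz condition in Definition~\ref{Def:Mabl}.

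For the converse direction, Taylor-expanding the same identity as $h\to 0^+$ produces $\beta_b\bigl(\rho(\cdot;\hat{\bo x}_i),\rho(\cdot;\hat{\bo x}_i+h e_k)\bigr)=1+h A^{(i,k)}_{\eta,b}(\hat{\bo x}_i)+O(h^2)$ and the analogous expansion with $-B$ for the swapped arguments, so that $\theta_b/h\to A^{(i,k)}_{\eta,b}(\hat{\bo x}_i)-B^{(i,k)}_{\eta,b}(\hat{\bo x}_i)$. Dividing the Lipschitz hypothesis $\theta_b(\rho_{\hat{\bo x}_i},\rho_{\hat{\bo x}_i'})\le L|h|$ by $h$ and sending $h\to 0^+$ yields (ii); condition (i) is part of the definition of $\mc M^{(i,k)}_{a,b,L}$ and so is immediate. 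The only real technical point, which I expect to be the main (though minor) obstacle, is justifying that the interchanges of $\sup_x$ and $\max_\ell$ with the integral (for sufficiency) and with the $h\to 0$ limit (for necessity) preserve uniform $O(h^2)$ remainders; this follows from the $C^2$ regularity of $\rho$ on the compact torus $\T^N$ giving uniform bounds on all derivatives in play.
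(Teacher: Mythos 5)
Your proof is correct and takes essentially the same approach as the paper: both reduce the Lipschitz condition on conditional densities to a pointwise analysis of the three ratios from Proposition~\ref{Prop:HilbertDistCharacter}, and both obtain the necessity direction via a first-order expansion of $\beta_b$ in the increment $h$ and passage to the limit. For sufficiency the paper partitions $[x_k,x_k']$ into $m$ subintervals, Taylor-expands on each, invokes the triangle inequality, and sends $m\to\infty$, which is exactly a Riemann-sum rendering of your one-step integral identity $\log\frac{r_\ell(x;\hat{\bo x}_i+he_k)}{r_\ell(x;\hat{\bo x}_i)}=\int_0^h\frac{\partial_k r_\ell}{r_\ell}\,ds$; your formulation is slightly cleaner as it sidesteps the $o(m^{-1})$ bookkeeping. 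One small imprecision at the end: $C^2$ regularity of $\eta$ on the compact torus yields uniform $o(h)$ remainders, not $O(h^2)$ (the latter would require Lipschitz control on $\partial_k\partial_i\eta$, i.e.\ roughly $C^3$), but $o(h)$ is all that the necessity argument uses, so the conclusion is unaffected.
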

\begin{proof}
Fix $i\in[1,N]$, $\hat{\bo x}_i\in \T^{N-1}$, $\delta>0$, and $k\neq i$. Recall the expression for $\beta_b$ given in \eqref{Eq:ExpExplicitbetaDiff}. Applying it to estimate $\beta_b(\eta(\cdot,;\,\hat{\bo x}_i),\,\eta(\cdot,;\,\hat{\bo x}_i+\delta e_k))$, where $e_k$ denotes the $k$-th vector of the standard basis of $\R^N$,  the first term in the expression is
\[
\frac{\eta(x;\,\hat{\bo x}_i+\delta e_k)}{\eta(x;\,\hat{\bo x}_i)}=1+\delta\frac{\partial_k\eta(x;\hat{\bo x}_i)}{\eta(x;\,\hat{\bo x}_i)}+o(\delta)
\]
the second:
\begin{align*}
\frac{b\eta(x;\hat{\bo x}_i+\delta e_k)-\partial_i\eta(x;\hat{\bo x}_i+\delta e_k)}{b\eta(x;\hat{\bo x}_i)-\partial_i\eta(x;\hat{\bo x}_i)}=1+\delta \frac{b\partial_k\eta(x;\hat{\bo x}_i)-\partial_k\partial_i\eta(x;\hat{\bo x}_i)}{b\eta(x;\hat{\bo x}_i)-\partial_i\eta(x;\hat{\bo x}_i)} + o(\delta)
\end{align*}
and the third:
\begin{align*}
\frac{b\eta(x;\hat{\bo x}_i+\delta e_k)+\partial_i\eta(x;\hat{\bo x}_i+\delta e_k)}{b\eta(x;\hat{\bo x}_i)+\partial_i\eta(x;\hat{\bo x}_i)}=1+\delta \frac{b\partial_k\eta(x;\hat{\bo x}_i)+\partial_k\partial_i\eta(x;\hat{\bo x}_i)}{b\eta(x;\hat{\bo x}_i)+\partial_i\eta(x;\hat{\bo x}_i)} + o(\delta).
\end{align*}

Analogously
\[
\frac{\eta(x;\,\hat{\bo x}_i)}{\eta(x;\,\hat{\bo x}_i+\delta e_k)}=1-\delta\frac{\partial_k\eta(x;\hat{\bo x}_i)}{\eta(x;\,\hat{\bo x}_i)}+o(\delta)
\]
\begin{align*}
\frac{b\eta(x;\hat{\bo x}_i)-\partial_i\eta(x;\hat{\bo x}_i)}{b\eta(x;\hat{\bo x}_i+\delta e_k)-\partial_i\eta(x;\hat{\bo x}_i+\delta e_k)}=1-\delta \frac{b\partial_k\eta(x;\hat{\bo x}_i)-\partial_k\partial_i\eta(x;\hat{\bo x}_i)}{b\eta(x;\hat{\bo x}_i)-\partial_i\eta(x;\hat{\bo x}_i)} + o(\delta)
\end{align*}
\begin{align*}
\frac{b\eta(x;\hat{\bo x}_i)+\partial_i\eta(x;\hat{\bo x}_i)}{b\eta(x;\hat{\bo x}_i+\delta e_k)+\partial_i\eta(x;\hat{\bo x}_i+\delta e_k)}=1-\delta \frac{b\partial_k\eta(x;\hat{\bo x}_i)+\partial_k\partial_i\eta(x;\hat{\bo x}_i)}{b\eta(x;\hat{\bo x}_i)+\partial_i\eta(x;\hat{\bo x}_i)} + o(\delta).
\end{align*}
From the above follows that for $\delta>0$
\begin{align*}
&\beta_b\left(\eta(\cdot;\,\hat{\bo x}_i),\,\eta(\cdot;\,\hat{\bo x}_i+\delta e_k)\right)=1+\delta A^{(i,k)}_{\eta,b}(\hat{\bo x}_i)+o(\delta)\\
&\beta_b\left(\eta(\cdot;\,\hat{\bo x}_i+\delta e_k),\,\eta(\cdot;\,\hat{\bo x}_i)\right)=1-\delta B^{(i,k)}_{\eta,k,b}(\hat{\bo x}_i)+o(\delta)
\end{align*}
while for $\delta<0$
\begin{align*}
&\beta_b\left(\eta(\cdot;\,\hat{\bo x}_i),\,\eta(\cdot;\,\hat{\bo x}_i+\delta e_k)\right)=1-\delta B^{(i,k)}_{\eta,b}(\hat{\bo x}_i)+o(\delta)\\
&\beta_b\left(\eta(\cdot;\,\hat{\bo x}_i+\delta e_k),\,\eta(\cdot;\,\hat{\bo x}_i)\right)=1+\delta A^{(i,k)}_{\eta,b}(\hat{\bo x}_i)+o(\delta)
\end{align*}
which implies
\begin{equation}\label{Eq:LimitDertheta}
\lim_{\delta\rightarrow 0}\frac{1}{\delta}\theta_b(\eta(\cdot;\,\hat{\bo x}_i),\,\eta(\cdot;\,\hat{\bo x}_i+\delta e_k))=A^{(i,k)}_{\eta,b}(\hat{\bo x}_i)-B^{(i,k)}_{\eta,b}(\hat{\bo x}_i).
\end{equation}

Assuming $\eta\in \mc M^{(i,k)}_{a,b,L}$, for every $\hat{\bo x}_i\in \T^{N-1}$ and $\delta>0$
\[
\theta_a(\eta(\cdot;\,\hat{\bo x}_i),\,\eta(\cdot;\,\hat{\bo x}_i+\delta e_k))\le L|\delta|
\]
that with \eqref{Eq:LimitDertheta} implies
\[
A^{(i,k)}_{\eta,b}(\hat{\bo x}_i)-B^{(i,k)}_{\eta,b}(\hat{\bo x}_i) \le L.
\]

Conversely, assume that $A^{(i,k)}_{\eta,b}(\hat{\bo x}_i)-B^{(i,k)}_{\eta,b}(\hat{\bo x}_i)\le L$ for every $\hat{\bo x}_i\in\T^{N-1}$. For any $\hat{\bo x}_i$ and $\hat{\bo x}_i'$ differing only on their $k$-th coordinates $x_k,\,x_k'\in \T$, for $m\in\N$ consider
\[
x_k=:y_0 < y_1 < .... < y_m:= x_k'
\]
such that $|y_j-y_{j+1}|\le |x_k-x_k'|/m$. Then, calling $\hat{\bo x}_i^{(j)}\in\T^{N-1}$ having $k$-th coordinate equal to $y_j$, and all other coordinates equal to $\hat{\bo x}_i$
\[
\theta_b(\eta(\cdot;\,\hat{\bo x}_i^{(j)}),\,\eta(\cdot;\,\hat{\bo x}_i^{(j+1)})) \le \log\left( 1+\frac{|x_k-x_k'|}{m}(A^{(i,k)}_{\eta,b}(\hat{\bo x}_i^{(j)})-B^{(i,k)}_{\eta,b}(\hat{\bo x}_i^{(j)}))+o(m^{-1})\right).
\]
From this and triangle inequality follows that
\begin{align*}
\theta_b(\eta(\cdot;\,\hat{\bo x}_i),\,\eta(\cdot;\,\hat{\bo x}_i'))&\le\sum_{j=0}^{m-1}\theta_b(\eta(\cdot;\,\hat{\bo x}_i^{(j)}),\,\eta(\cdot;\,\hat{\bo x}_i^{(j+1)}))\\
&\le m\log\left( 1+\frac{|x_k-x_k'|}{m}L+o(m^{-1})\right)
\end{align*}
and since $m$ is arbitrary
\[
\theta_b(\eta(\cdot;\,\hat{\bo x}_i),\,\eta(\cdot;\,\hat{\bo x}_i'))\le L|x_k-x_k'|.
\]
\end{proof}
The following corollary is immediate
\begin{corollary}\label{Cor:EstimateLipConst}
A $C^2$ density $\eta:\T^N\rightarrow \R^+$ belongs to $\mc M^{(i)}_{a,b,L}$ if and only if the following two conditions are satisfied: 

i)  $\eta(\cdot;\,\hat{\bo x}_i)\in \mc V_a$ for all $ i\in[1,N]$ and $\hat{\bo x}_i\in \T^{N-1}$

ii) 
\[
\sup_{\hat{\bo x}_i\in \T^{N-1}}\sup_{k\neq i} \,A^{(i,k)}_{\eta,b}(\hat{\bo x}_i)-B^{(i,k)}_{\eta,b}(\hat{\bo x}_i)\, \le L.
\]
\end{corollary}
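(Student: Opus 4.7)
The plan is to derive this corollary as an essentially immediate consequence of Proposition \ref{Prop:ABDef}, exploiting the definition $\mc M^{(i)}_{a,b,L} := \bigcap_{k\neq i} \mc M^{(i,k)}_{a,b,L}$ given in Definition \ref{Def:Mabl}. By that definition, $\eta \in \mc M^{(i)}_{a,b,L}$ if and only if $\eta \in \mc M^{(i,k)}_{a,b,L}$ for every $k \neq i$. The strategy is therefore to apply Proposition \ref{Prop:ABDef} for each fixed $k \neq i$, and then observe that the pointwise Lipschitz bound $L$ common to all $k$ is equivalent to a single uniform supremum.

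More concretely, first I would fix $k \neq i$ and invoke Proposition \ref{Prop:ABDef}: $\eta \in \mc M^{(i,k)}_{a,b,L}$ is equivalent to condition (i) together with $A^{(i,k)}_{\eta,b}(\hat{\bo x}_i) - B^{(i,k)}_{\eta,b}(\hat{\bo x}_i) \le L$ for every $\hat{\bo x}_i \in \T^{N-1}$. Condition (i) of the corollary does not depend on $k$, so it is shared by all the $\mc M^{(i,k)}_{a,b,L}$ and survives intersecting over $k$. The quantitative condition, on the other hand, is indexed by $k$, and taking the intersection over $k \neq i$ amounts to requiring the bound simultaneously for all such $k$, which is precisely the statement
\[
\sup_{\hat{\bo x}_i\in \T^{N-1}}\sup_{k\neq i}\bigl[A^{(i,k)}_{\eta,b}(\hat{\bo x}_i)-B^{(i,k)}_{\eta,b}(\hat{\bo x}_i)\bigr] \le L.
\]

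Conversely, if this uniform supremum is bounded by $L$, then for each individual $k \neq i$ we recover the hypothesis of Proposition \ref{Prop:ABDef}, so $\eta \in \mc M^{(i,k)}_{a,b,L}$ and hence $\eta \in \mc M^{(i)}_{a,b,L}$ by intersecting over $k$. There is no real obstacle here; the only minor point is to be careful that the common quantifier on $\hat{\bo x}_i$ is compatible with taking the sup over both $\hat{\bo x}_i$ and $k$, which is clear because the sup of two non-negative quantities being at most $L$ is the same as each being at most $L$. Thus the corollary follows directly and requires no new argument beyond what has already been established in Proposition \ref{Prop:ABDef}.
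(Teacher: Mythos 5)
Your argument is correct and is precisely the intended one: the paper gives no separate proof, stating only that the corollary is immediate from Proposition \ref{Prop:ABDef} and the definition $\mc M^{(i)}_{a,b,L}=\bigcap_{k\neq i}\mc M^{(i,k)}_{a,b,L}$. Intersecting the equivalences of Proposition \ref{Prop:ABDef} over all $k\neq i$ and replacing the family of pointwise bounds by a single supremum is exactly the right (and the shortest) route.
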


\subsection{Proof of Lemma \ref{Lem:CompByPhi}}\label{App:ProofofPropComp}
Now we use the results in the previous subsection to compute the effect on the Lipschitz constant of composing by $\bo \Phi_i$.
\begin{proposition}\label{Prop:ABforetacompPhi}
Let $\mu\in \mc M_{a,b,L}\cap \mc C^2_\alpha$ with density $\eta$, and for $i\in[1,N]$ consider  $\bo \Phi_i$  as in Proposition \ref{Lem:Preimagefoliation}. Then letting $\tilde\eta:=\eta\circ \bo\Phi_i$, and defining $\mc K_j:=\sum_{\ell=1}^N |\partial_j\Phi_{i,\ell}|_{\infty}$, $a_j:=\mc K_ja$, and $b_{j}:=\mc K_jb$

i) for any $j\in [1,N]$ and $\hat{\bo x}_j\in \T^{N-1}$,
$
\tilde\eta(\cdot;\,\hat{\bo x}_j)\in\mc V_{a_j}
$

ii) for any $j\in[1,N]$, $k\neq j$, $\hat{\bo x}_j\in\T^{N-1}$ 
\begin{align*}
 &A^{(j,k)}_{\tilde \eta, b_j}(\hat{\bo x}_j)-B^{(j,k)}_{\tilde \eta, b_j}(\hat{\bo x}_j) \le \\
 &\quad\le |\partial_j\Phi_{i,j}^{-1}|_{\infty} \sum_{m=1}^N\sum_{\ell\neq m} |\partial_j\Phi_{i,\ell} |_\infty\left[\sup_{x_j\in \T}(\partial_k\Phi_{i,m} \;C^{(\ell,m)}_{k})(x_j;\,\hat{\bo x}_j)-\inf_{x_j\in \T}(\partial_k\Phi_{i,m} \;D^{(\ell,m)}_{k})(x_j;\,\hat{\bo x}_j)\right]+\\
 &\quad\quad+\frac{a+\alpha}{b_j-a_j} \mc K_\# N^{-1} \end{align*}
 where
\begin{align}
C^{(j,m)}_{k}(x; \hat{\bo x}_j)&:=\left\{
\begin{array}{ll}
A^{(j,m)}_{\tilde\eta,b_{j}}(\hat{\bo x}_j)& \mbox{if }\partial_k\Phi_{i,m}(x;\,\hat{\bo x}_j)\ge 0\\
B^{(j,m)}_{\tilde\eta,b_{j}}(\hat{\bo x}_j) & \mbox{if }\partial_k\Phi_{i,m}(x;\,\hat{\bo x}_j)< 0
\end{array}
\right.\label{Eq:DefC}\\
D^{(j,m)}_{k}(x; \hat{\bo x}_j)&:=\left\{
\begin{array}{ll}
A^{(j,m)}_{\tilde\eta,b_{j}}(\hat{\bo x}_j) & \mbox{if }\partial_k\Phi_{i,m}(x;\,\hat{\bo x}_j)<0\\
B^{(j,m)}_{\tilde\eta,b_{j}}(\hat{\bo x}_j) & \mbox{if }\partial_k\Phi_{i,m}(x;\,\hat{\bo x}_j)\ge 0
\end{array}
\right.\label{Eq:DefD}
\end{align}
\end{proposition}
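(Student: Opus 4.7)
The plan is to apply the chain rule to $\tilde\eta=\eta\circ\bo\Phi_i$ and then to bound each resulting term using the $\mc M_{a,b,L}\cap\mc C^2_\alpha$ membership of $\eta$ combined with the geometric estimates on $\bo\Phi_i$ from Proposition \ref{Lem:Preimagefoliation}. Part~(i) is immediate: the chain rule gives
\[
\partial_j\log\tilde\eta(\bo x)=\sum_{\ell=1}^N \partial_j\Phi_{i,\ell}(\bo x)\cdot(\partial_\ell\log\eta)(\bo\Phi_i(\bo x)),
\]
and since $\eta\in\mc M_{a,b,L}$ forces $\eta(\cdot;\hat{\bo y}_\ell)\in\mc V_a$ for every $\hat{\bo y}_\ell$, one has $|\partial_\ell\log\eta|\le a$ pointwise; hence $|\partial_j\log\tilde\eta|\le a\sum_\ell|\partial_j\Phi_{i,\ell}|_\infty=a\mc K_j=a_j$, i.e.\ $\tilde\eta(\cdot;\hat{\bo x}_j)\in\mc V_{a_j}$.

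For part~(ii), I would exploit the identification (already implicit in the proof of Proposition~\ref{Prop:ABDef}) that the three ratios inside the definitions of $A^{(j,k)}_{\tilde\eta,b_j}$ and $B^{(j,k)}_{\tilde\eta,b_j}$ are precisely the log-derivatives $\partial_k\log\tilde\eta$, $\partial_k\log\Psi_j$, $\partial_k\log\Omega_j$, with $\Psi_j:=b_j\tilde\eta-\partial_j\tilde\eta$ and $\Omega_j:=b_j\tilde\eta+\partial_j\tilde\eta$ both positive by~(i) and $b_j>a_j$. Applying the chain rule to each log-derivative produces a decomposition
\[
\partial_k\log\Psi_j=\sum_m \partial_k\Phi_{i,m}\cdot T^-_m(\bo x)\;-\;\frac{1}{\Psi_j}\sum_\ell\partial_k\partial_j\Phi_{i,\ell}\cdot(\partial_\ell\eta\circ\bo\Phi_i),
\]
where $T^-_m:=[b_j\partial_m\eta-\sum_\ell\partial_j\Phi_{i,\ell}\partial_m\partial_\ell\eta]/\Psi_j$ (with $\eta$-derivatives evaluated at $\bo\Phi_i(\bo x)$), and analogous formulas hold for $\partial_k\log\Omega_j$ and $\partial_k\log\tilde\eta$. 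The residue is controlled via \eqref{Eq:DerPhi2ordcont1}--\eqref{Eq:SecDerivChart} (which give $|\partial_k\partial_j\Phi_{i,\ell}|\le\mc K_\# N^{-1}$ for finitely many $\ell$ and $\mc K_\# N^{-2}$ for the rest, so the sum is $O(N^{-1})$), the pointwise bounds $|\partial_\ell\eta|\le a\eta$ and $|\partial_\ell\partial_p\eta|\le\alpha\eta$, and the lower bound $\Psi_j,\Omega_j\ge(b_j-a_j)\tilde\eta$; together these yield the advertised $\frac{a+\alpha}{b_j-a_j}\mc K_\# N^{-1}$ contribution.

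For the main terms I would rewrite $b_j=b\sum_\ell|\partial_j\Phi_{i,\ell}|_\infty$ to split both numerator and denominator of $T_m^\pm$ as sums over $\ell$. Once the Jacobian's near-diagonal structure (given by \eqref{Eq:DerivPhi}: $\partial_m\Phi_{i,p}=\delta_{mp}+O(N^{-1})$ for $m,p\ne i$) is used to relate the $\eta$-based ratios appearing in $T_m^\pm$ to the $\tilde\eta$-based ratios defining $A^{(j,m)}_{\tilde\eta,b_j}$ and $B^{(j,m)}_{\tilde\eta,b_j}$, each summand with index $\ell\ne m$ lies pointwise in the interval $[B^{(j,m)}_{\tilde\eta,b_j}(\hat{\bo x}_j),A^{(j,m)}_{\tilde\eta,b_j}(\hat{\bo x}_j)]$, while the $\ell=m$ contribution—involving $\partial_m^2\eta$ bounded by $\alpha\eta$—gets absorbed into the $O(N^{-1})$ error. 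Multiplying by $\partial_k\Phi_{i,m}$ and taking the pointwise maximum of the three ratios, then supremum over $x_j$ for $A^{(j,k)}_{\tilde\eta,b_j}$ (dually for $B^{(j,k)}_{\tilde\eta,b_j}$), the extremum is attained by selecting the interval endpoint according to the sign of $\partial_k\Phi_{i,m}$—which is exactly the content of the definitions \eqref{Eq:DefC}--\eqref{Eq:DefD} of $C^{(j,m)}_k$ and $D^{(j,m)}_k$. The prefactor $|\partial_j\Phi_{i,j}^{-1}|_\infty$ emerges from normalising the denominators $\Psi_j,\Omega_j$ after the $b_j$-splitting, and subtracting yields the inequality in the statement.

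The main obstacle is the identification in the previous paragraph: because the chain rule couples derivatives in all coordinates through $D\bo\Phi_i$, the ratios $T_m^\pm$ are \emph{not} literally of the form defining $A^{(j,m)}_{\tilde\eta,b_j}$. Realising them as such quantities—modulo controlled $O(N^{-1})$ corrections coming from the off-diagonal and second-order entries of $D\bo\Phi_i$—and tracking how signs of $\partial_k\Phi_{i,m}$ interact with the outer sup and inf is the delicate algebraic bookkeeping that produces the intricate bound. All remaining ingredients (chain rule, a priori bounds on the first two derivatives of $\eta$, estimates \eqref{Eq:DerivPhi}--\eqref{Eq:SecDerivChart} on $\bo\Phi_i$, and positivity of $\Psi_j,\Omega_j$) are routine.
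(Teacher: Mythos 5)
Your part~(i) is correct and is essentially the paper's argument verbatim: the chain rule plus $|\partial_\ell\log\eta|\le a$ gives $|\partial_j\log\tilde\eta|\le a\mc K_j$.

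For part~(ii), however, there is a genuine gap, and you in fact flag it yourself: the sentence ``the ratios $T_m^\pm$ are \emph{not} literally of the form defining $A^{(j,m)}_{\tilde\eta,b_j}$... Realising them as such quantities... is the delicate algebraic bookkeeping that produces the intricate bound'' is precisely the content of the proposition, and a proposal cannot declare the central identification an open obstacle and still count as a proof. Concretely, what you are missing is the device the paper uses to make that identification go through. The paper does not compare the chain-ruled ratios to $\tilde\eta$-based quantities at all; the $A$, $B$ entering $C^{(\ell,m)}_k$, $D^{(\ell,m)}_k$ are $A^{(\ell,m)}_{\eta,b}$, $B^{(\ell,m)}_{\eta,b}$ for the \emph{original} density $\eta$ (this is visible in the subsequent application in Lemma~\ref{Lem:CompByPhi}, where these quantities are bounded by $L$ via $\eta\in\mc M_{a,b,L}$). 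Your route of trying to convert the $T_m^\pm$ to $\tilde\eta$-ratios by the ``near-diagonal structure'' would give a self-referential bound of $A^{(j,k)}_{\tilde\eta,b_j}$ in terms of other $A^{(\ell,m)}_{\tilde\eta,b_j}$, which is not what the claimed inequality says and which cannot then be closed by $\eta\in\mc M_{a,b,L}$.

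The key technical step you omit is the replacement of $b_j$ by a \emph{restricted} quantity $b_j':=b\,\mc K_j'$ with $\mc K_j':=\sum_\ell |\partial_j\Phi_{i,\ell}|'_\infty$, where the sup is taken only over the $j$-th and $k$-th coordinates with $\hat{\bo x}_{j,k}$ frozen. Since $b_j'\le b_j$ one has $A^{(j,k)}_{\tilde\eta,b_j}-B^{(j,k)}_{\tilde\eta,b_j}\le A^{(j,k)}_{\tilde\eta,b_j'}-B^{(j,k)}_{\tilde\eta,b_j'}$, so it suffices to bound the latter. The payoff is twofold. First, writing $b_j'\,\eta\circ\bo\Phi-\partial_j(\eta\circ\bo\Phi)=\sum_\ell\bigl[\,|\partial_j\Phi_{i,\ell}|'_\infty\,b\,\eta\circ\bo\Phi-\partial_\ell\eta\circ\bo\Phi\,\partial_j\Phi_{i,\ell}\,\bigr]$ produces a per-$\ell$ pairing between the denominator and the chain-ruled numerator, so that each pair is (up to the sign of $\partial_j\Phi_{i,\ell}$) exactly one of the defining ratios of $A^{(\ell,m)}_{\eta,b}$ or $B^{(\ell,m)}_{\eta,b}$; the prefactor $|\partial_j\Phi_{i,j}^{-1}|_\infty$ arises precisely from normalising by the dominant $\ell=j$ summand. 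Second, and crucially, the discrepancy $\bigl||\partial_j\Phi_{i,\ell}|'_\infty-|\partial_j\Phi_{i,\ell}|\bigr|$ is controlled by $|\partial_j^2\Phi_{i,\ell}|_\infty+|\partial_k\partial_j\Phi_{i,\ell}|_\infty=O(N^{-1})$ from Proposition~\ref{Lem:Preimagefoliation} precisely because $|\cdot|'_\infty$ only varies the coordinates $x_j,x_k$; with the unrestricted $|\cdot|_\infty$ (which is what your ``rewrite $b_j=b\sum_\ell|\partial_j\Phi_{i,\ell}|_\infty$'' uses) this mean-value-theorem estimate is unavailable, and the leftover term would not be $O(N^{-1})$. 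This is exactly the spot in the argument that justifies the paper's remark that ``it is crucial for our estimates that the distance between the conditional measures on the leaves is measured with respect to a Hilbert metric `weaker' than the Hilbert metric of the cone to which the densities belong.'' Your proposal neither introduces $b_j'$ nor explains how, without it, the $\ell\ne m$ summands are kept between $B$ and $A$ while the residual is $O(N^{-1})$, so the heart of part~(ii) remains unproved.
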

Before proceeding with the proof, we show how to use this proposition to prove Lemma \ref{Lem:CompByPhi}.
\begin{proof}[Proof of Lemma \ref{Lem:CompByPhi}]
Fix $i\in [1,N]$ once and for all, call $\bo\Phi:=\bo\Phi_i$ to ease notation,  and pick any $j\in[1,N]$, $k\neq i$, and $\hat{\bo x}_j\in\T^{N-1}$.   Define 
\[
\mc I_1:=\{m\neq j:\,\exists x\in\T\mbox{ s.t. }\partial_k\Phi_{m}(x;\hat{\bo x}_j)=0\},\quad \mc I_2:=[1,N]\backslash \mc I_1.
\]  With this definitions:
 
 -- since by Proposition \ref{Lem:Preimagefoliation} $\partial_j\partial_k\Phi_{m}=O(N^{-2})$, for every $j\neq i$ or $m\in \mc I_1\backslash\{k\}$, and $\partial_i\partial_k\Phi_{k}=O(N^{-1})$ by mean-value theorem $|\partial_j\Phi_{m}(x;\,\hat{\bo x}_j)|=O(N^{-2})$ for every $j\neq i$ or $m\in \mc I_1\backslash\{k\}$ and $|\partial_k\Phi_{k}(x;\,\hat{\bo x}_j)|=O(N^{-1})$;
 
-- for every $m\in \mc I_2$, $\partial_k\Phi_{m}(x;\,\hat{\bo x}_j)\neq 0$ for every $x\in \T$, and by continuity has fixed sign for every $x\in\T$;
 
-- since for $N$ sufficiently large, it follows from Proposition \ref{Lem:Preimagefoliation} that $|\partial_j\Phi_{j}(x;\hat{\bo x}_j)|=1+O(N^{-1})\neq 0$,  $\partial_j\Phi_{j}(x;\hat{\bo x}_j)$ is different from zero and $j\in \mc I_2$. 

Crucially, for $m\in \mc I_2$, $C^{(j,m)}_k(x;\hat{\bo x}_j)$ and $D^{(j,m)}_k(x;\hat{\bo x}_j)$ defined as in \eqref{Eq:DefC} and \eqref{Eq:DefC} do not depend on $x$, and looking at all the possible cases,
\begin{align*}
&C^{(j,m)}_k(\hat{\bo x}_i)\sup_{x\in \T}\partial_k\Phi_m(x;\,\hat{\bo x}_j)-D^{(j,m)}_k(\hat{\bo x}_j)\inf_{x\in \T}\partial_k\Phi_m(x;\,\hat{\bo x}_j)\\
&\quad\quad\le[A^{(j,k)}_{\eta,b}(\hat{\bo x}_j)-B^{(j,k)}_{\eta,b}(\hat{\bo x}_j)]|\partial_k\Phi_m|_{\infty}\\
&\quad\quad\le L|\partial_k\Phi_m|_{\infty}.
\end{align*}
For $m\in \mc I_1$, instead, if $j\neq i$ or $m\neq k$
\[
\sup_{x\in \T}(C^{(j,m)}_k\partial_k\Phi_m)(x;\,\hat{\bo x}_i)-\inf_{x\in \T}(D^{(j,m)}_k\partial_k\Phi_m)(x;\,\hat{\bo x}_i)\le 2L|\partial_k\Phi_m|_{\infty}=L \,O(N^{-2})
\]
while if $j=i$ and $m=k$ the above quantity is bounded by $LO(N^{-1})$.
Therefore
\begin{align*}
&A^{(j,k)}_{\tilde \eta, b_j}(\hat{\bo x}_j)-B^{(j,k)}_{\tilde \eta, b_j}(\hat{\bo x}_j)+O(N^{-1})\le\\
&\quad \le  |\partial_j\Phi_j^{-1}|_{\infty}\sum_{m\in \mc I_1}\sum_{\ell\neq m} |\partial_j\Phi_\ell |_\infty \left[C^{(j,m)}_k(\hat{\bo x}_i)\sup_{x\in \T}\partial_k\Phi_m(x;\,\hat{\bo x}_i)-D^{(j,m)}_k(\hat{\bo x}_i)\inf_{x\in \T}\partial_k\Phi_m(x;\,\hat{\bo x}_i)\right]+ \\
&\quad\quad\quad + |\partial_j\Phi_j^{-1}|_{\infty}\sum_{m\in \mc I_2}\sum_{\ell\neq m} |\partial_j\Phi_\ell |_\infty \left[\sup_{x\in \T}(C^{(j,m)}_k\partial_k\Phi_m)(x;\,\hat{\bo x}_i)-\inf_{x\in \T}(D^{(j,m)}_k\partial_k\Phi_m)(x;\,\hat{\bo x}_i)\right]\\
&\quad\le LO(N^{-1}) + L\sum_{m\in \mc I_2}|\partial_j\Phi_m|_{\infty}\\
&\quad\le L\left[O(N^{-1}) +  |\partial_j\Phi_j^{-1}|_{\infty}\sum_{m =\ell}^N\sum_{\ell\neq m}|\partial_j\Phi_\ell |_\infty|\partial_k\Phi_m|_{\infty}\right].
\end{align*}
\end{proof}

\begin{proof}[Proof of Proposition \ref{Prop:ABforetacompPhi}]
Without loss of generality let's fix $i=1$ and let's call $\bo \Phi:=\bo \Phi_1$ to shorten notation.

For any $j\in[1,N]$ 
\begin{equation}\label{eq:belongstologlip}
\left| \frac{\partial_j(\eta\circ \bo\Phi)}{\eta\circ\bo \Phi} \right|\le \sum_{\ell=1}^N\left|\frac{\partial_\ell\eta\circ\bo \Phi }{\eta\circ\bo \Phi}\right| |\partial_j\Phi_\ell| \le a\sum_{\ell=1}^N |\partial_j\Phi_\ell|_{\infty}
\end{equation}
and the result follows and point i) follows.

For what concerns point ii), let's estimate the terms in the expressions for $A^{(j,k)}_{\tilde \eta, b}(\hat{\bo x}_j)$ and $B^{(j,k)}_{\tilde \eta, b}(\hat{\bo x}_j)$ given in Definition \ref{Def:AB}. From now on fix $\hat{\bo x}_{j,k}\in \T^{N-2}$, i.e. fix all coordinates bu the $j$-th and $k$-th one. Notice that for every $x_k\in \T$
\[
\tilde\eta (\cdot; x_k;\,\hat{\bo x}_{j,k})\in \mc V_{a_{\hat{\bo x}_{j,k}}}
\] 
where
\[
a_{\hat{\bo x}_{j,k}}:=a\sum_{\ell=1}^N\sup_{x_j,x_k\in \T}|\partial_j\Phi_\ell(x_j; \,x_k\,;\hat{\bo x}_{j,k})|.
\]
Denote $|\partial_j\Phi_\ell|_{\infty}':=\sup_{x_j,x_k\in \T}|\partial_j\Phi_\ell(x_j; \,x_k\,;\hat{\bo x}_{j,k})|$ and \[\mc K_j':=\sum_{\ell=1}^N|\partial_j\Phi_\ell|_{\infty}'.\]
Now, since $b_j':=\mc K_j'b\le \mc K_jb=:b_j$
\[
A^{(j,k)}_{\tilde \eta, b'_j}(\hat{\bo x}_j)-B^{(j,k)}_{\tilde \eta, b'_j}(\hat{\bo x}_j)\ge A^{(j,k)}_{\tilde \eta, b_j}(\hat{\bo x}_j)-B^{(j,k)}_{\tilde \eta, b_j}(\hat{\bo x}_j)
\]
for any $\hat{\bo x}_j=(x_k;\, \hat{\bo x}_{j,k})$. We proceed to estimate $A^{(j,k)}_{\tilde \eta, b'_j}(\hat{\bo x}_j)$ and $B^{(j,k)}_{\tilde \eta, b'_j}(\hat{\bo x}_j)$, by estimating the three terms appearing in their definition.

\smallskip
\emph{First Term:}
\begin{align*}
\frac{\partial_j(\eta\circ \bo \Phi)}{\eta\circ\bo\Phi}=\sum_{\ell=1}^N\frac{\partial_\ell \eta\circ\bo\Phi}{\eta\circ\bo\Phi}\partial_j\Phi_\ell
\end{align*}
from which
\[
\sum_{\ell=1}^ND^{(\ell,m)}_k(\bo x)\,\partial_j\Phi_\ell(\bo x)\le \frac{\partial_j(\eta\circ \bo \Phi)}{\eta\circ\bo\Phi}(\bo x) \le \sum_{\ell=1}^NC^{(\ell,m)}_k(\bo x)\,\partial_j\Phi_\ell(\bo x).
\]

\smallskip
\emph{Second Term:}
\begin{align}
&\frac{b_j'\partial_k(\eta\circ\bo\Phi) -\partial_k\partial_j(\eta\circ\bo\Phi)}{b_j'\eta\circ\bo\Phi-\partial_j(\eta\circ\bo\Phi)}=\frac{b_j'\sum_{m=1}^N\partial_m\eta\circ\bo\Phi\,\partial_k\Phi_m-\partial_k\left( \sum_{\ell=1}^N\partial_\ell\eta\circ\bo\Phi\,\partial_j\Phi_\ell\right)}{b_j'\eta\circ\bo\Phi-\partial_j(\eta\circ\bo\Phi)}\nonumber\\
&\quad =\frac{b_j'\sum_{m=1}^N\partial_m\eta\circ\bo\Phi\,\partial_k\Phi_m- \sum_{\ell=1}^N\left[\partial_\ell\eta\circ\bo\Phi\,\partial_k\partial_j\Phi_\ell+\sum_{m=1}^N\partial_m\partial_\ell\eta\circ\bo\Phi\,\partial_k\Phi_m\,\partial_j\Phi_\ell \right]}{b_j'\eta\circ\bo\Phi-\partial_j(\eta\circ\bo\Phi)}\nonumber\\
&\quad=\sum_{m=1}^N\frac{b_j'\partial_m\eta\circ\bo\Phi-\sum_{\ell\neq m}\partial_m\partial_\ell\eta\circ\bo\Phi\,\partial_j\Phi_\ell}{b_j'\eta\circ\bo\Phi-\partial_j(\eta\circ\bo\Phi)}\partial_k\Phi_m-\nonumber\\
&\quad\quad\quad - \sum_{\ell=1}^N\frac{ \partial_\ell\eta\circ\bo\Phi\,\partial_k\partial_j\Phi_\ell}{b_j'\eta\circ\bo\Phi-\partial_j(\eta\circ\bo\Phi)}-\sum_{\ell=1}^N\frac{\partial_\ell\partial_\ell\eta\circ\bo\Phi\,\partial_k\Phi_\ell\,\partial_j\Phi_\ell }{b_j'\eta\circ\bo\Phi-\partial_j(\eta\circ\bo\Phi)}\nonumber\\
&\quad=\sum_{m=1}^N\frac{\sum_{\ell\neq m} |\partial_j\Phi_\ell |_\infty'b\partial_m\eta\circ\bo\Phi-\sum_{\ell\neq m}\partial_m\partial_\ell\eta\circ\bo\Phi\,\partial_j\Phi_\ell}{b_j'\eta\circ\bo\Phi-\partial_j(\eta\circ\bo\Phi)}\partial_k\Phi_m\label{Eq:SecTermLip1}\\
&\quad\quad\quad+\sum_{m=1}^N\frac{|\partial_j\Phi_m |_\infty'\,\partial_m\eta\circ\bo\Phi\,\partial_k\Phi_m}{b_j'\eta\circ\bo\Phi-\partial_j(\eta\circ\bo\Phi)}-\label{Eq:SecTermLip2}\\
       &\quad\quad\quad - \sum_{\ell=1}^N\frac{ \partial_\ell\eta\circ\bo\Phi\,\partial_k\partial_j\Phi_\ell}{b_j'\eta\circ\bo\Phi-\partial_j(\eta\circ\bo\Phi)}-\sum_{\ell=1}^N\frac{\partial_\ell\partial_\ell\eta\circ\bo\Phi\,\partial_k\Phi_\ell\,\partial_j\Phi_\ell }{b_j'\eta\circ\bo\Phi-\partial_j(\eta\circ\bo\Phi)}.\label{Eq:SecTermLip2}
\end{align}
Now, for the  sum in \eqref{Eq:SecTermLip1},
\begin{align}
&\frac{\sum_{\ell\neq m} |\partial_j\Phi_\ell |_\infty'\partial_m\eta\circ\bo\Phi-\sum_{\ell\neq m}\partial_m\partial_\ell\eta\circ\bo\Phi\,\partial_j\Phi_\ell}{b_j'\eta\circ\bo\Phi-\partial_j(\eta\circ\bo\Phi)}=\nonumber\\
&\quad\quad= \sum_{\ell\neq m}|\partial_j\Phi_\ell |_\infty'\frac{b\partial_m\eta\circ \bo \Phi-s\partial_m\partial_\ell\eta\circ\bo \Phi}{b_j'\eta\circ\bo\Phi-\partial_j(\eta\circ\bo\Phi)}+ \sum_{\ell\neq m}\frac{\partial_m\partial_\ell\eta\circ\bo \Phi \left[s|\partial_j\Phi_\ell |_\infty'-\partial_j\Phi_\ell|\right]}{b_j'\eta\circ\bo\Phi-\partial_j(\eta\circ\bo\Phi)}\label{Eq:SumRatioPhi}
\end{align}
where $s$ is  the  sign of $\partial_j\Phi_\ell$, the first sum in \eqref{Eq:SumRatioPhi}

\[
|\partial_j\Phi_\ell |_\infty'|\partial_j\Phi_j|^{-1} B^{(\ell,m)}_{\eta,b}\le\,|\partial_j\Phi_\ell |_\infty' \frac{b\partial_m\eta\circ \bo \Phi-s\partial_m\partial_\ell\eta\circ\bo \Phi}{b_j'\eta\circ\bo\Phi-\partial_j(\eta\circ\bo\Phi)}\,\le|\partial_j\Phi_\ell |_\infty' |\partial_j\Phi_j|^{-1} A^{(\ell,m)}_{\eta,b}
\]
while since $||\partial_j\Phi_\ell |_\infty'-|\partial_j\Phi_\ell||\le |\partial_j^2\Phi_\ell|_\infty+|\partial_k\partial_j\Phi_\ell|_\infty $\footnote{Here plays a crucial role the fact that we considered distances with respect to $b_j'$ and not $b_j$.}, recalling the estimates in Proposition \ref{Lem:Preimagefoliation}, the second sum in \eqref{Eq:SumRatioPhi} can be estimated as
\[
\left|\sum_{\ell\neq m}\frac{\partial_m\partial_\ell\eta\circ\bo \Phi \left[s|\partial_j\Phi_\ell |_\infty'-\partial_j\Phi_\ell\right]}{b_j'\eta\circ\bo\Phi-\partial_j(\eta\circ\bo\Phi)}\right|\le (b_j'-a_j)^{-1}\alpha \mc K_\# N^{-1}.
\]
For the  sum in \eqref{Eq:SecTermLip2}
\begin{align*}
\left|\sum_{m=1}^N\frac{|\partial_j\Phi_m |_\infty'\,\partial_m\eta\circ\bo\Phi\,\partial_k\Phi_m}{b_j'\eta\circ\bo\Phi-\partial_j(\eta\circ\bo\Phi)}\right|&\le \sum_{m=1}^N|\partial_j\Phi_m |_\infty'|\partial_k\Phi_m|_\infty\frac{a}{b_j'-a_j}\\
&\le \frac{a}{b_j'-a_j}\mc K_\#N^{-1}
\end{align*}
where above we used that $\left|{\partial_\ell\eta\circ\bo \Phi}/{\eta\circ\bo \Phi}\right|\le a$,  and  the estimates on the derivatives of $\bo \Phi$ given in Proposition \ref{Lem:Preimagefoliation}. Analogously, for the terms in \eqref{Eq:SecTermLip2}, using that $|\partial_k\partial_\ell\eta\circ\bo\Phi/\eta\circ\bo\Phi|\le \alpha$
\[
\left|\sum_{\ell=1}^N\frac{ \partial_\ell\eta\circ\bo\Phi\,\partial_k\partial_j\Phi_\ell}{b_j'\eta\circ\bo\Phi-\partial_j(\eta\circ\bo\Phi)}-\sum_{\ell=1}^N\frac{\partial_\ell\partial_\ell\eta\circ\bo\Phi\,\partial_k\Phi_\ell\,\partial_j\Phi_\ell }{b_j'\eta\circ\bo\Phi-\partial_j(\eta\circ\bo\Phi)}\right|\le\frac{a+\alpha}{b_j'-a_j}\mc K_\#N^{-1}.
\]
Putting all the estimates together we get
\begin{align*}
 \frac{b_j'\partial_k(\eta\circ\bo\Phi) -\partial_k\partial_j(\eta\circ\bo\Phi)}{b_j'\eta\circ\bo\Phi-\partial_j(\eta\circ\bo\Phi)}&\le |\partial_j\Phi_j|^{-1} \sum_{m=1}^N\sum_{\ell\neq m} |\partial_j\Phi_\ell |_\infty\partial_k\Phi_m \;C^{(\ell,m)}_{k}+\frac{a+\alpha}{b_j'-a_j} \mc K_\# N^{-1}\\
\frac{b_j'\partial_k(\eta\circ\bo\Phi) -\partial_k\partial_j(\eta\circ\bo\Phi)}{b_j'\eta\circ\bo\Phi-\partial_j(\eta\circ\bo\Phi)}&\ge |\partial_j\Phi_j|^{-1} \sum_{m=1}^N\sum_{\ell\neq m} |\partial_j\Phi_\ell |_\infty\partial_k\Phi_m \;D^{(\ell,m)}_{k}-\frac{a+\alpha}{b_j'-a_j} \mc K_\# N^{-1}.
\end{align*}

\smallskip
\emph{Third Term:}
\[
\frac{b_j'\partial_k(\eta\circ\bo\Phi) +\partial_k\partial_j(\eta\circ\bo\Phi)}{b_j'\eta\circ\bo\Phi+\partial_j(\eta\circ\bo\Phi)},
\]
the estimates are analogous.

\end{proof}

\subsection{Proof of Lemma \ref{Lem:RegDphi}}
Below we are going to rely on Jacobi's formula for the derivative of a  determinant: If $t\mapsto \bo A(t)$ is a differentiable mapping where $\bo A(t)$ is an $n\times n$ square matrix, then 
\begin{equation}\label{Eq:JacobiIdentity}
\frac{d|\bo A(t)|}{dt}=|\bo A(t)|\, tr \left(\bo A(t)^{-1}\frac{d\bo A(t)}{dt}\right).
\end{equation}

\begin{proof}[Proof of Lemma \ref{Lem:RegDphi}]
Fix $i=1$, $k\neq 1$, and call $\bo \Phi:=\bo \Phi_1$. First of all notice that $|D\bo \Phi|=|\hat D_1\hat{\bo \Phi}_1|$.  By \eqref{Eq:JacobiIdentity}, 
\begin{align*}
a_{\bo \Phi}:=\sup\left|\frac{\partial_1|\hat D_1\hat{\bo \Phi}_1|}{|\hat D_1\hat{\bo \Phi}_1|}\right|=\sup \left|tr\left(\hat D_1\hat{\bo\Phi}_1^{-1} \partial_1 \hat D_1\hat{\bo \Phi}_1\right)\right|
\end{align*}
 and using the estimates in Proposition \ref{Lem:Preimagefoliation} and applying Proposition \ref{Prop:DHInvEstimates} to estimate the entries of $\hat D_1\hat{\bo\Phi}_1^{-1}$, one can see that the above is bounded by $\mc K_\#$. 
 
Fixing $b>a_{\bo \Phi}$, we proceed estimating $A^{(1,k)}_{|\hat D_1\hat{\bo \Phi}_1|,b}$ and  $B^{(1,k)}_{|\hat D_1\hat{\bo \Phi}_1|,b}$ and the terms in their definition. Similarly to the above, the first term can be estimated as 
\begin{equation}\label{Eq:Est||1}
\left|\frac{\partial_k|\hat D_1\hat{\bo\Phi}_1|}{|\hat D_1\hat{\bo\Phi}_1|}\right|=\left|tr\left(\hat D_1\hat{\bo\Phi}_1^{-1}\partial_k\hat D_1\hat{\bo\Phi}_1\right)\right|\le\mc K_\#N^{-1}
\end{equation}
where we used that $k\neq 1$. 

For the second term, consider
\begin{align*}
\partial_k\partial_1|\hat D_1\hat{\bo \Phi}_1|&=\partial_k\left[|\hat D_1\hat{\bo \Phi}_1|tr\left(\hat D_1\hat{\bo\Phi}_1^{-1}\partial_1\hat D_1\hat{\bo\Phi}_1 \right)\right]\\
&=|\hat D_1\hat{\bo\Phi}_1|tr\left(\hat D_1\hat{\bo\Phi}_1^{-1}\partial_k\hat D_1\hat{\bo\Phi}_1 \right)tr\left(\hat D_1\hat{\bo\Phi}_1^{-1}\partial_1\hat D_1\hat{\bo\Phi}_1 \right)+\\
&\quad\quad+|\hat D_1\hat{\bo\Phi}_1|tr\left(\partial_k(\hat D_1\hat{\bo\Phi}_1^{-1})\partial_1\hat D_1\hat{\bo\Phi}_1+\hat D_1\hat{\bo\Phi}_1^{-1}\partial_k\partial_1\hat D_1\hat{\bo\Phi}_1\right)
\end{align*}

It was estimated above that 
\[
\left|tr\left(\hat D_1\hat{\bo\Phi}_1^{-1}\partial_k\hat D_1\hat{\bo\Phi}_1 \right)\right|\le \mc K_\#N^{-1}
\] 
and
\[
\left|tr\left(\hat D_1\hat{\bo\Phi}_1^{-1}\partial_1\hat D_1\hat{\bo\Phi}_1 \right)\right|\le \mc K_\#.
\]
Also, since $\partial_k(\hat D_1\hat{\bo\Phi}_1)^{-1}=(\hat D_1\hat{\bo \Phi}_1)^{-1}(\partial_k\hat D_1\hat{\bo\Phi}_1)(\hat D_1\hat{\bo \Phi}_1)^{-1}$, 
\[
\left|tr\left(\partial_k(\hat D_1\hat{\bo\Phi}_1^{-1})\partial_1\hat D_1\hat{\bo\Phi}_1\right)\right|\le \mc K_\#N^{-2}.
\]
We now need to estimate the  derivatives $\partial_k\partial_1 \hat D_1\hat{\bo \Phi}_1$ which involve the third-order partial derivatives of $\hat{\bo \Phi}_1$ that we haven't computed yet.
\begin{lemma}
\begin{equation}
[\partial_k\partial_1 \hat D_1\hat{\bo \Phi}_1]_{\ell m} \le \left\{
\begin{array}{ll}
\mc K_\# N^{-1} & \ell=m=k\\
\mc K_\# N^{-2} & \mbox{owse}.
\end{array}
\right.
\end{equation}
\end{lemma}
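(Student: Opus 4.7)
The plan is to differentiate the identity \eqref{Eq:ExphatD1hatPhi11} twice. Write $M(y_1,\hat{\bo y}_1) := \hat D_1\hat{\bo H}_1 \circ \bo\Phi$ and $N(\hat{\bo y}_1) := \hat D_1\hat{\bo H}_1 \circ \bo\Phi|_{y_1=0}$, so that $\hat D_1\hat{\bo \Phi}_1 = M^{-1}N$. Since $\partial_1 N = 0$, Leibniz gives
\[
\partial_k\partial_1(M^{-1}N) = (\partial_k\partial_1 M^{-1})\,N + (\partial_1 M^{-1})(\partial_k N),
\]
and the matrix-inverse identity $\partial_1 M^{-1} = -M^{-1}(\partial_1 M)M^{-1}$, differentiated once more in $x_k$, expresses $\partial_k\partial_1 M^{-1}$ as a sum of products involving $M^{-1}$, $\partial_1 M$, $\partial_k M$ and $\partial_k\partial_1 M$. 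So the target quantity becomes a finite sum of matrix products whose only ``new'' ingredient is $\partial_k\partial_1 M$; all other factors have already been estimated earlier in the paper.

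The next step is to apply the chain rule to each of $\partial_1 M$, $\partial_k M$, $\partial_k\partial_1 M$, and $\partial_k N$, reducing them to finite sums of products of entries of $D\bo H$, $D^2\bo H$, $D^3\bo H$ (evaluated at $\bo\Phi$) with entries of $D\bo\Phi$ and $D^2\bo\Phi$. For each entry $(\ell,m)$ with $\ell,m\in[2,N]$ and fixed $k\neq 1$, one then substitutes the sparsity bounds: the entries of $M^{-1}$ are $O(1)$ on the diagonal and $\mc K_\# N^{-1}$ off-diagonal by Proposition \ref{Prop:DHInvEstimates}; the entries of $D\bo\Phi$, $D^2\bo\Phi$ obey \eqref{Eq:DerivPhi}--\eqref{Eq:SecDerivChart}; and the cross-derivatives $\partial_p\partial_q\partial_r H_s$ obey Assumption \ref{Ass:SecondDerivH}, being $O(1)$ only when $p=q=r=s$ and carrying extra factors of $N^{-1}$ or $N^{-2}$ as soon as any index is distinct.

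The dichotomy in the statement then emerges from combinatorial bookkeeping. For a generic $(\ell,m)$ the diagonal entry $M^{-1}_{\ell\ell}$ combined with the diagonal part of the second-derivative block already produces factors of at least $N^{-2}$, because either one of the two differentiations $\partial_k$, $\partial_1$ hits an index distinct from $\ell$, forcing an off-diagonal second derivative of $H_\ell$ (hence $\le EN^{-1}$), or it hits $\bo\Phi$ and picks up $\partial_k\Phi_p$ or $\partial_1\Phi_p$ with $p\ne k,1$, each contributing $N^{-1}$ or $N^{-2}$. The only configuration in which a single factor of $N^{-1}$ survives is $\ell=m=k$: there the term $M^{-1}_{kk}(\partial_k\partial_1 M)_{kk}M^{-1}_{kk}$ can feature $\partial_k^2 H_k\cdot \partial_1\Phi_k$, which is $O(1)\cdot\mc K_\# N^{-1}$.

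The main obstacle will be the sheer bookkeeping: the expansion of $\partial_k\partial_1(M^{-1}N)_{\ell m}$ generates many terms, each a product of six or seven indexed quantities, and each must be checked against the sparsity bounds. The check is routine but delicate, exactly parallel to the treatment already carried out in the proof of Proposition \ref{Lem:Preimagefoliation} for the second-order derivatives of $\bo\Phi$, so the argument reuses the same combinatorial analysis together with the estimates \eqref{Eq:Eqpartial1D1H1}--\eqref{Eq:EqpartialkD1H1} on the derivatives of $\hat D_1\hat{\bo H}_1^{-1}$ derived earlier.
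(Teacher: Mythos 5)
Your proposal is correct and follows essentially the same route as the paper: differentiate the identity $\hat D_1\hat{\bo\Phi}_1 = M^{-1}N$ exploiting $\partial_1 N = 0$, expand $\partial_k\partial_1 M^{-1}$ via the matrix-inverse identity together with the chain rule through $\bo\Phi$, and then track sparsity using Assumption \ref{Ass:SecondDerivH}, the bounds \eqref{Eq:DerivPhi}--\eqref{Eq:SecDerivChart}, and \eqref{Eq:Eqpartial1D1H1}--\eqref{Eq:EqpartialkD1H1}. The only quibble is cosmetic: the representative $O(N^{-1})$ survivor in the $\ell=m=k$ case is of the form $\partial_k^2H_k\cdot\partial_k\partial_1\Phi_k$ or $\partial_k^3H_k\cdot\partial_1\Phi_k\cdot\partial_k\Phi_k$, rather than the $\partial_k^2H_k\cdot\partial_1\Phi_k$ you wrote, but the scaling is the same and does not affect the argument.
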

\begin{proof} From equation \eqref{Eq:ExphatD1hatPhi11} follows that
\begin{align*}
\partial_k\partial_1{\hat D_1\hat{\bo \Phi}_1}_{(y_1,\,\hat{\bo y}_1)}&=\partial_k\partial_1[({ \hat D_1 \hat{\bo H}_1}_{\bo\Phi(y_1,\,\hat{\bo y}_1)})^{-1}]{ \hat D_1\hat{\bo H}_1}_{(0,\hat{\bo y}_1)} +\\
&\quad\quad +\partial_1[({ \hat D_1 \hat{\bo H}_1}_{\bo\Phi(y_1,\,\hat{\bo y}_1)})^{-1}]\partial_k[{ \hat D_1\hat{\bo H}_1}_{(0,\hat{\bo y}_1)}]  
\end{align*}

For the first term
\begin{align*}
\partial_k\partial_1[({ \hat D_1 \hat{\bo H}_1}_{\bo\Phi(y_1,\,\hat{\bo y}_1)})^{-1}]&=\sum_{\ell=1}^N\sum_{n=1}^N\partial_n\partial_m(\hat D_1\hat{\bo H}_1)^{-1}_{\bo \Phi(y_1;\,\hat{\bo y}_1)}\partial_k\Phi_n\partial_1\Phi_\ell\\
&\quad\quad+\sum_{\ell=1}^N\partial_\ell(\hat D_1\hat{\bo H}_1)^{-1}_{\bo \Phi(y_1;\,\hat{\bo y}_1)} \partial_1\partial_k\Phi_\ell
\end{align*}
and in the above, since $\partial_n \partial_\ell[(\hat D_1\hat{\bo H}_1)^{-1}(\hat D_1\hat{\bo H}_1)]=0$,
\begin{align*}
\partial_n\partial_\ell(\hat D_1\hat{\bo H}_1^{-1})&=-(\hat D_1\hat{\bo H}_1)^{-1}\left[\partial_n(\hat D_1\hat{\bo H}_1)^{-1}\partial_\ell(\hat D_1\hat{\bo H}_1)+\partial_\ell(\hat D_1\hat{\bo H}_1)^{-1}\partial_n(\hat D_1\hat{\bo H}_1)\right.\\
&\left. +\hat D_1\hat{\bo H}_1^{-1}\partial_n\partial_\ell \hat{D}_1\hat{\bo H}_1\right].
\end{align*}
Combining  \eqref{Eq:Eqpartial1D1H1} and \eqref{Eq:EqpartialkD1H1} 
\[
|[\partial_k\partial_1[({ \hat D_1 \hat{\bo H}_1}_{\bo\Phi(y_1,\,\hat{\bo y}_1)})^{-1}]{ \hat D_1\hat{\bo H}_1}_{(0,\hat{\bo y}_1)}]_{\ell m}|\le \left\{
\begin{array}{ll}
\mc K_\# N^{-1} & \ell=m=k\\
\mc K_\# N^{-2} & \mbox{owse}.
\end{array}
\right.
\]

For the second term
\begin{align*}
\partial_1[({ \hat D_1 \hat{\bo H}_1}_{\bo\Phi(y_1,\,\hat{\bo y}_1)})^{-1}]=\sum_{n=1}^N\partial_n [({ \hat D_1 \hat{\bo H}_1})^{-1}] \cdot \partial_1\Phi_n
\end{align*}
and again combining \eqref{Eq:Eqpartial1D1H1} and \eqref{Eq:EqpartialkD1H1}
\[
\left|\left[\,\partial_1[({ \hat D_1 \hat{\bo H}_1}_{\bo\Phi(y_1,\,\hat{\bo y}_1)})^{-1}]\partial_k[{ \hat D_1\hat{\bo H}_1}_{(0,\hat{\bo y}_1)}] \,\right]_{\ell m}\right|\le \left\{
\begin{array}{ll}
\mc K_\#N^{-1}&\ell=m=k\\
\mc K_\#N^{-2}& \mbox{owse}
\end{array}
\right.
\]
\end{proof}
\noindent The above lemma implies that 
\[
\left|tr\left(\hat D_1\hat{\bo\Phi}_1^{-1}\partial_k\partial_1\hat D_1\hat{\bo\Phi}_1\right)\right|\le\mc K_\#N^{-1}. 
\]

Combining all of the above we get that 
\begin{equation}\label{Eq:Est||2}
\left|\frac{\partial_k\partial_1|\hat D_1\hat{\bo \Phi}_1|}{|\hat D_1\hat{\bo \Phi}_1|}\right|\le \mc K_\#N^{-1}
\end{equation}

Putting together \eqref{Eq:Est||1} and \eqref{Eq:Est||2}
\begin{align*}
\frac{b|\partial_k|\hat D_1\hat{\bo \Phi}_1||+|\partial_k\partial_1|\hat D_1\hat{\bo \Phi}_1|}{b|\hat D_1\hat{\bo \Phi}_1|-\partial_1|\hat D_1\hat{\bo \Phi}_1|}&=\left(b-\frac{\partial_1|\hat D_1\hat{\bo \Phi}_1|}{b|\hat D_1\hat{\bo \Phi}_1|}\right)^{-1}\frac{|\partial_k|\hat D_1\hat{\bo \Phi}_1||+|\partial_k\partial_1|\hat D_1\hat{\bo \Phi}_1|}{|\hat D_1\hat{\bo \Phi}_1|}\\
&=\frac{1+b}{b-a_{\bo \Phi}} \mc K_\#N^{-1}.
\end{align*}
For any $b>a_{\bo \Phi}$
\[
A^{(i,k)}_{|\hat D_1\hat{\bo \Phi}_1|,b},\,\left|B^{(i,k)}_{|\hat D_1\hat{\bo \Phi}_1|,b}\right|\le \frac{1+b}{b-a_{\bo \Phi}} \mc K_\#N^{-1}
\]
from which the result follows.
\end{proof}
\subsection{Proof of Proposition \ref{Prop:RegPhieta}}\label{Sec:proofOfMarginalReg}
\begin{proof}[Proof of Proposition \ref{Prop:RegPhieta}] 
Without loss of generality assume $i=1$, and  denote $\bo\Phi:=\bo\Phi_1$. 

By Lemma \ref{Lem:CompByPhi} and Lemma \ref{Lem:RegDphi}, applying Lemma \ref{Lem:ProductAction} we get   $\bo \Phi^{-1}_*\eta(\cdot;\,\hat{\bo x}_1)\in \mc V_{a'}$, and for $j\neq 1$, $\bo \Phi^{-1}_*\eta(\cdot;\,\hat{\bo x}_j)\in \mc V_{\hat a'}$. 

Let $b_1:=\mc Kb+b_{\bo \Phi}$ and $\mc L_1:={\mc L}$ while for $j\neq 1$, $b_j:=\hat{\mc K}b+b_{\bo \Phi}$ and $\mc L_j:=\hat{\mc L}$. 

By Proposition \ref{Prop:DistProd} for $\hat{\bo x}_j,\hat{\bo x}_j'\in \T^{N-1}$ differing only for coordinates $k\neq 1$, $x_k, x_k'\in \T$ one has
\begin{align*}
\theta_{b_j}\left(\bo \Phi^{-1}_*\eta(\cdot;\,\hat{\bo x}_j),\bo \Phi^{-1}_*\eta(\cdot;\,\hat{\bo x}_j')\right)&\le \theta_{b_j-b_{\bo \Phi}}\left(\eta\circ\bo\Phi(\cdot ;\,\hat{\bo x}_j),\eta\circ\bo\Phi(\cdot ;\,\hat{\bo x}_j')\right)+\\
&\quad\quad+\theta_{b_{\bo\Phi}}\left(|D\bo\Phi|(\cdot ;\,\hat{\bo x}_j),|D\bo\Phi|(\cdot ;\,\hat{\bo x}_j')\right)\\
&\le \mc L_{j}\cdot L|x_k-x_k'|+\frac{a+\alpha}{\mc Kb-\mc Ka}\mc K_\#N^{-1}\cdot L|x_k-x_k'|\\
&\quad\quad+\frac{b_{\bo \Phi}+1}{b_{\bo \Phi}-a_{\bo \Phi}}\mc K_\#N^{-1}.
\end{align*} 
where in the last inequality we used Lemma \ref{Lem:CompByPhi} and Lemma \ref{Lem:RegDphi}.

Now we turn to prove that $\tilde \eta\in \mc C^2_{\alpha'}$. 
\begin{align*}
\partial_k(\eta\circ\bo \Phi_i\cdot |D\bo \Phi_i|) &= \partial_k(\eta\circ \bo \Phi_i)\cdot |D\bo \Phi_i| + \eta\circ\bo \Phi_i\cdot \partial_k|D\bo \Phi_i|
\end{align*}
\begin{align*}
\partial_j\partial_k(\eta\circ\bo \Phi_i\cdot |D\bo \Phi_i|)&=\partial_j\partial_k(\eta\circ \bo \Phi_i)\cdot |D\bo \Phi_i| + \partial_k(\eta\circ\bo \Phi_i)\cdot \partial_j|D\bo \Phi_i|+\\
&\quad\quad + \partial_j(\eta\circ\bo \Phi_i)\cdot \partial_k|D\bo \Phi_i|+\eta\circ \Phi_i\cdot\partial_j\partial_k|D\bo \Phi_i|
\end{align*}
and therefore
\begin{align*}
\frac{\partial_j\partial_k(\eta\circ\bo \Phi_i\cdot |D\bo \Phi_i|)}{\eta\circ\bo \Phi_i\cdot |D\bo \Phi_i|}&=\frac{\partial_j\partial_k(\eta\circ \bo \Phi_i)}{\eta\circ \bo\Phi_i}+\frac{\partial_k(\eta\circ\bo \Phi_i)}{\eta\circ \bo\Phi_i}\frac{\partial_j|D\bo \Phi_i|}{|D\bo \Phi_i|}\\
&\quad\quad+\frac{\partial_j(\eta\circ\bo \Phi_i)}{\eta\circ \bo\Phi_i}\frac{\partial_k|D\bo \Phi_i|}{|D\bo \Phi_i|}+\frac{\partial_j\partial_k|D\bo \Phi_i|}{|D\bo \Phi_i|}\\
&=\sum_{\ell,m=1}^N\frac{\partial_\ell\partial_m\eta\circ\bo \Phi_i \cdot \partial_k\Phi_{i,\ell}\cdot \partial_j\Phi_{i,m}}{\eta\circ\bo \Phi_i}+\\
&\quad\quad +\frac{\partial_k(\eta\circ\bo \Phi_i)}{\eta\circ \bo\Phi_i}\frac{\partial_j|D\bo \Phi_i|}{|D\bo \Phi_i|}+ \frac{\partial_j(\eta\circ\bo \Phi_i)}{\eta\circ \bo\Phi_i}\frac{\partial_k|D\bo \Phi_i|}{|D\bo \Phi_i|}+\\
&\quad\quad +\frac{\partial_j\partial_k|D\bo \Phi_i|}{|D\bo \Phi_i|}.
\end{align*}
The above is in modulus less than
\[
\alpha\cdot \sum_{\ell,m=1}^N|\partial_k\Phi_{i,\ell}| |\partial_j\Phi_{i,m}|+a_\Phi(a_k+a_j)+ K_\#N^{-1}
\]
where we used that $\tilde\eta\in\mc M_{a',b',L'}$, and the estimates from Lemma \ref{Lem:RegDphi}. Thus point i) is proven.

To prove point ii), recall that 
\[
\hat{\bo \Pi}_1\bo \Phi^{-1}_*\eta(\hat{\bo x}_1)=\int_{\T}ds_1\,|D\bo\Phi|(s_1 ;\,\hat{\bo x}_1) \cdot\eta\circ\bo\Phi(s_1 ;\,\hat{\bo x}_1).
\]
is the marginal on $\T^{N-1}$, where the first coordinate has been integrated out.

For $j\in [2,N]$ and any $\hat{\bo x}_{1,j}\in \T^{N-2}$ ,   \[\hat{\bo \Pi}_1\bo \Phi^{-1}_*\eta(\cdot;\,\hat{\bo x}_{1,j})\in \mc V_{a_j}\] 
which follows from
\begin{align*}
\partial_j \log \hat{\bo \Pi}_1\bo \Phi^{-1}_*\eta&=\frac{\int_{\T}ds\,\partial_j [|D\bo\Phi| \cdot\eta\circ\bo\Phi](s,\hat{\bo x}_1)}{\int_{\T}ds\,[|D\bo\Phi| \cdot\eta\circ\bo\Phi](s,\hat{\bo x}_1)}\le a_j
\end{align*}
where we used that, by point i), $\partial_j \left[|D\bo\Phi| \cdot\eta\circ\bo\Phi\right](s,\hat{\bo x}_1)\le a_j [|D\bo\Phi| \cdot\eta\circ\bo\Phi](s,\hat{\bo x}_1)$ for every $(s,\hat{\bo x}_1)\in \T^{N}$. 

Fix any $j\in[2,N]$ and $k\neq 1$ and $j$, and call $\tilde\eta:=\bo \Phi_*^{-1}\eta$. We proceed by estimating $A^{(j,k)}_{\hat{\bo \Pi}_1\tilde \eta,b}(\hat{\bo x}_{1,j})$ and $B^{(j,k)}_{\hat{\bo \Pi}_1\tilde \eta,b}(\hat{\bo x}_{1,j})$.  Recalling that by point i), $\tilde\eta\in \mc M^{(j,k)}_{a_j,b_j,L_{j,k}}$, we get

\begin{align}\label{Eq:EstApi_1tildeeta1}
\frac{\partial_k\hat{\bo \Pi}_1\tilde \eta }{\hat{\bo \Pi}_1\tilde \eta}(\hat{\bo x}_1)=\frac{\int_\T dx_1 \partial_k\tilde\eta(x_1;\hat{\bo x}_1)}{\int_\T dx_1\tilde\eta(x_1;\hat{\bo x}_1)}\le A^{(j,k)}_{\tilde \eta,b_j}(\hat{\bo x}_1)
\end{align}
where we used that, for every $(x_1;\hat{\bo x}_1)\in \T^N$, $\partial_k\tilde\eta(x_1;\hat{\bo x}_1)\le A^{(j,k)}_{\tilde \eta,b_j}(\hat{\bo x}_1) \cdot \tilde\eta(x_1;\hat{\bo x}_1)$. Similarly,
\begin{align}
\frac{b_j\partial_k\hat{\bo \Pi}_1\tilde \eta-\partial_k\partial_j\hat{\bo \Pi}_1\tilde \eta}{b_j\hat{\bo \Pi}_1\tilde \eta-\partial_j\hat{\bo \Pi}_1\tilde \eta}(\hat{\bo x}_1)=\frac{\int_\T dx_1[b_j\partial_k\tilde \eta-\partial_k\partial_j\tilde \eta](x_1;\hat{\bo x}_1)}{\int_\T dx_1[b_j \tilde \eta-\partial_j\tilde \eta](x_1;\hat{\bo x}_1)}\le A^{(j,k)}_{\tilde \eta,b_j}(\hat{\bo x}_1)\label{Eq:EstApi_1tildeeta2}
\end{align}
as  $[b_j\partial_k\tilde \eta-\partial_k\partial_j\tilde \eta](x_1;\hat{\bo x}_1)\le A^{(j,k)}_{\tilde \eta,b_j}(\hat{\bo x}_1) \cdot [b_j \tilde \eta-\partial_j\tilde \eta](x_1;\hat{\bo x}_1)$ for all $x_1\in \T$ and $\hat{\bo x}_1\in \T^{N-1}$, and
\begin{align}
\frac{b_j\partial_k\hat{\bo \Pi}_1\tilde \eta+\partial_k\partial_j\hat{\bo \Pi}_1\tilde \eta}{b_j\hat{\bo \Pi}_1\tilde \eta+\partial_j\hat{\bo \Pi}_1\tilde \eta}(\hat{\bo x}_1) \le A^{(j,k)}_{\tilde \eta,b_j}(\hat{\bo x}_1).\label{Eq:EstApi_1tildeeta3}
\end{align}
Inequalities \eqref{Eq:EstApi_1tildeeta1}-\eqref{Eq:EstApi_1tildeeta2} imply that 
\begin{equation}\label{Eq:EstApi_1tildeeta4}
A^{(j,k)}_{\hat{\bo \Pi}_1\tilde \eta,b}(\hat{\bo x}_{1,j})\le A^{(j,k)}_{\tilde \eta,b_j}(\hat{\bo x}_1).
\end{equation}
Analogously, one can estimate
\begin{equation}\label{Eq:EstApi_1tildeeta5}
B^{(j,k)}_{\hat{\bo \Pi}_1\tilde \eta,b}(\hat{\bo x}_{1,j})\ge B^{(j,k)}_{\tilde \eta,b_j}(\hat{\bo x}_1).
\end{equation}
Inequalities \eqref{Eq:EstApi_1tildeeta4} and \eqref{Eq:EstApi_1tildeeta5} together with  Proposition \ref{Prop:ABDef} applied to $\hat{\bo \Pi}_1\tilde \eta$, imply that
\[
\hat{\bo \Pi}_1\tilde \eta\in \mc M^{(j,k)}_{a_j,b_j,L_{j,k}}.
\]

\end{proof}

\subsection{Proof of Proposition \ref{Prop:RegUnderFibMapPlusNoise}}\label{Sec:ProofLemEvonfibers}
\begin{proof}[Proof of Proposition \ref{Prop:RegUnderFibMapPlusNoise}]

Let's begin by noticing that if $\bo H$ satisfies Assumption \ref{Ass:SecondDerivH}, then for every $i\in[1,N]$ and $\hat{\bo x}_i\in \T^{N-1}$, $h_{\hat{\bo x}_i}:\T\rightarrow\T$ defined as \[
h_{\hat{\bo x}_i}(\cdot):=H_i(\cdot;\,\hat{\bo x}_i)
\] is a local diffeomorphism. Let's call $ \{h_{\hat{\bo x}_i,\ell}^{-1}\}_{\ell\in \mc I}$ its inverse branches.  It also follows that $h_{\hat{\bo x}_i}$ are uniformly expanding with expansion bounded below by $\kappa>1$ and distortion bounded above by $\mc D=\frac{K'+E}{\kappa}$. Lemma \ref{Lem:GiHiclose} implies that, provided $N$ is sufficiently large,  $g_{\hat{\bo x}_i}(\cdot)=G_i(\cdot;\,\hat{\bo x}_i)$ is also a uniformly expanding local diffeomorphism with minimal expansion lower bounded by $\kappa+O(N^{-1})$ and distortion bounded above by $\mc D+O(N^{-1})$. 
Then $ g_{\hat{\bo x}_i*}\eta(\cdot;\,\hat{\bo x}_i)\in\mc V_{\kappa^{-1}(a+\mc D)+O(N^{-1})}$ and \eqref{Eq:Contractionparameterscones} implies that for $N$ sufficiently large $\kappa^{-1}(a+\mc D)+O(N^{-1})<a$. This implies that given a density $\eta:\T^N\rightarrow \R^+$, defining
\[
\zeta(\cdot;\,\hat{\bo x}_i):=(G_i;\,\bo\Id)_*\eta(\cdot;\,\hat{\bo x}_i)=g_{\hat{\bo x}_i*}\eta(\cdot;\,\hat{\bo x}_i),
\]  if $\eta(\cdot;\,\hat{\bo x}_i)\in \mc V_a$, then $\zeta(\cdot;\,\hat{\bo x}_i)\in \mc V_a$. 

Now fix any $b>a$ and let
\[
\Lambda':=1-e^{1-\diam(b,\kappa^{-1}(b+\mc D)+O(N^{-1}))}=\Lambda+O(N^{-1}). 
\]
Pick $\eta\in \mc M_{a,b,L}^{(i)}$, and for  $j\neq i$ consider  $\hat{\bo x}_i$, $\hat{\bo x}_i'\in \T^{N-1}$ differing only on their $j$-th coordinates $x_j,x_j'\in\T$. By triangle inequality
\begin{align*}
&\theta_{b}\left( \,g_{\hat{\bo x}_i*}\eta(\cdot;\,\hat{\bo x}_i),\, g_{\hat{\bo x}_i'*}\eta(\cdot;\,\hat{\bo x}_i')\,\right)\le  \\
&\quad\le\theta_{b}\left(g_{\hat{\bo x}_i*}\eta(\cdot;\,\hat{\bo x}_i),\, g_{\hat{\bo x}_i*}\eta(\cdot;\,\hat{\bo x}_i')\right)+\theta_{b}\left(g_{\hat{\bo x}_i*}\eta(\cdot;\,\hat{\bo x}_i'),\, g_{\hat{\bo x}_i'*}\eta(\cdot;\,\hat{\bo x}_i'))\right)\\
&\quad\le \Lambda'\theta_{b}(\eta(\cdot;\,\hat{\bo x}_i),\eta(\cdot;\,\hat{\bo x}_i'))+\mc K_\#N^{-1}|x_j-x_j'|\\
&\quad\le (\Lambda' L+\mc K_\#N^{-1})|x_j-x_j'|
\end{align*}
where for the second inequality we used that $d_{C^2}(g_{\hat{\bo x}_i},\,g_{\hat{\bo x}_i'})\le E{N^{-1}}$ and Proposition \ref{Prop:DistanceOpHilbMetric}.
\end{proof}

\subsection{Proof of Proposition \ref{Prop:Evetamarginal}}\label{App:Prop:Evetamarginal}
Before proceeding with the proof, we give a lemma. 
\begin{lemma}\label{Lem:ApplicationDiffeoComponent}
Let $\mu\in\mc M^{(i)}_{a,b,L}$ with density $\eta$, and for $j\neq i$ let $\{s_{\hat {\bo x}_{i,j} }\}_{\hat{\bo x}_{i,j}\in\T^{N-2}}$ be a family of local diffeomorphisms of $\T$ with $|s_{\hat {\bo x}_{i,j}} '|\ge \kappa_\#>0$ and $\left|\frac{s_{\hat {\bo x}_{1,2} }''}{(s_{\hat {\bo x}_{1,2} }')^2}\right|_{\infty}\le \mc D_\#$. Then there is $K_L>0$ satisfying $\lim_{L\rightarrow 0}K_L= 1$ such that, denoting by $\{s_{\hat {\bo x}_{i,j},\ell }^{-1}\}_{\ell\in\mc I'}$ the inverse branches of $s_{\hat {\bo x}_{i,j} }$
\[
\zeta(\cdot,\,x_j;\,\hat{\bo x}_{i,j}):=\sum_{\ell\in \mc I'} \frac{\eta(\cdot;\,s_{\hat {\bo x}_{i,j},\ell }^{-1}(x_j);\,\hat{\bo x}_{i,j})}{|s_{\hat {\bo x}_{i,j} }'|(s_{\hat {\bo x}_{i,j},\ell }^{-1}(x_j))}\in \mc M^{(i,j)}_{a,b,\mc L'\cdot L}
\]
with
\[
\mc L':=\kappa_\#^{-1}+a\kappa_\#^{-1}+K_L\mc D_\#.
\]
\end{lemma}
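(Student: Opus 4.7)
I would verify directly the two defining conditions of $\mc M^{(i,j)}_{a,b,\mc L' L}$ from Definition \ref{Def:Mabl}. Throughout, set $y_\ell(x_j):=s^{-1}_{\hat{\bo x}_{i,j},\ell}(x_j)$ and $w_\ell(x_j):=1/|s'_{\hat{\bo x}_{i,j}}(y_\ell(x_j))|$, so that
$$\zeta(x_i,x_j;\hat{\bo x}_{i,j})=\sum_{\ell\in\mc I'} w_\ell(x_j)\,\eta(x_i;y_\ell(x_j);\hat{\bo x}_{i,j}).$$
For the fibre condition, observe that at fixed $(x_j,\hat{\bo x}_{i,j})$ the map $x_i\mapsto \zeta(x_i,x_j;\hat{\bo x}_{i,j})$ is a positive linear combination of functions $\eta(\cdot;y_\ell(x_j);\hat{\bo x}_{i,j})\in\mc V_a$ (by $\mu\in\mc M^{(i)}_{a,b,L}$), and $\mc V_a$ is closed under such combinations, so $\zeta(\cdot,x_j;\hat{\bo x}_{i,j})\in\mc V_a$.

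For the Lipschitz condition I would pass to normalized probability densities: put $Z_\ell(x_j):=\int_\T \eta(z;y_\ell(x_j);\hat{\bo x}_{i,j})\,dz$, $\bar\rho_\ell(x_j):=\eta(\cdot;y_\ell(x_j);\hat{\bo x}_{i,j})/Z_\ell(x_j)\in\mc V_a$, $W(x_j):=\sum_\ell w_\ell(x_j)Z_\ell(x_j)$, and $\alpha_\ell(x_j):=w_\ell(x_j)Z_\ell(x_j)/W(x_j)$. Then $\zeta(\cdot,x_j;\hat{\bo x}_{i,j})=W(x_j)\sum_\ell\alpha_\ell(x_j)\bar\rho_\ell(x_j)$, and since $\theta_b$ is projective the global factor $W(x_j)$ drops out. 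A triangle inequality splits the remaining task into a \emph{density-change} step (fix weights $\alpha_\ell(x_j)$, vary $\bar\rho_\ell$) and a \emph{weight-change} step (fix densities $\bar\rho_\ell(x_j')$, vary $\alpha_\ell$). Proposition \ref{Prop:HilbConvCombDist}(ii) bounds the density-change step by $\max_\ell\theta_b(\bar\rho_\ell(x_j),\bar\rho_\ell(x_j'))$, which by projectivity equals $\max_\ell\theta_b(\eta(\cdot;y_\ell(x_j);\cdot),\eta(\cdot;y_\ell(x_j');\cdot))\le L\kappa_\#^{-1}|x_j-x_j'|$ via the hypothesis and $|(s^{-1})'|\le\kappa_\#^{-1}$; this yields the $\kappa_\#^{-1}L$ piece of $\mc L'$.

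The weight-change step is handled by Proposition \ref{Prop:ConvCombDiffCoeff}. The diameter parameter $D=2\min_\ell\max_{\ell'}\theta_b(\bar\rho_\ell(x_j'),\bar\rho_{\ell'}(x_j'))$ is bounded by $2L\cdot\max_{\ell,\ell'}|y_\ell(x_j')-y_{\ell'}(x_j')|\le 2L$, so the contraction factor $1-e^{-D}\to 0$ as $L\to 0$ is precisely what produces the claimed $K_L\to 1$. The accompanying log-of-ratios factor is controlled using $|(d/dx_j)\log w_\ell(x_j)|=|(s''/(s')^2)(y_\ell)|\le\mc D_\#$, after noting that the common factor $W$ cancels from $\alpha_\ell(x_j)\alpha_{\ell'}(x_j')/(\alpha_\ell(x_j')\alpha_{\ell'}(x_j))$; this gives a $K_L\mc D_\# L$ contribution. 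The remaining $a\kappa_\#^{-1}L$ summand of $\mc L'$ emerges by tracking the $Z_\ell$ normalizers: the hypothesis $\mu\in\mc M^{(i)}_{a,b,L}$ together with the explicit form of $\beta_b$ in Proposition \ref{Prop:HilbertDistCharacter} gives the pointwise bound $|\partial_y\log\eta(x_i;y;\hat{\bo x}_{i,j})|\le L$, so $|\log Z_\ell(x_j)/Z_\ell(x_j')|\le L\kappa_\#^{-1}|x_j-x_j'|$; combining this with $|\partial_{x_i}\log\eta|\le a$ coming from $\mc V_a$-regularity feeds the extra $a\kappa_\#^{-1}L$ correction into the density-change estimate.

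\textbf{Main obstacle.} The core difficulty is the constant bookkeeping: ensuring that the coefficient in front of $\mc D_\#$ is a genuine $K_L$ with $K_L\to 1$ (not some looser absolute constant) requires careful exploitation of the $1-e^{-D}$ gain in Proposition \ref{Prop:ConvCombDiffCoeff}, while recovering the precise $(1+a)\kappa_\#^{-1}$ prefactor demands tracking how the normalizers $Z_\ell$ interact with the $\mc V_a$-regularity of $\eta$ along $x_i$. Both depend on combining the $\theta_b$-Lipschitz hypothesis (which delivers pointwise control on $\partial_j\log\eta$) with the log-Lipschitz bound in $x_i$ at leading order in $L$, and ensuring the three quantities in the characterization of $A^{(i,j)}_{\zeta,b}-B^{(i,j)}_{\zeta,b}$ from Proposition \ref{Prop:ABDef} are all simultaneously controlled by $\mc L' L$.
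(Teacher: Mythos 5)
Your structure matches the paper's: verify the fibre condition $\zeta(\cdot,x_j;\hat{\bo x}_{i,j})\in\mc V_a$ by convexity of the cone, rewrite $\zeta$ as a convex combination of normalized conditional densities, split the $\theta_b$-distance between $\zeta(\cdot,x_j;\cdot)$ and $\zeta(\cdot,x_j';\cdot)$ into a density-change step and a weight-change step via the triangle inequality, bound the density-change step by $\kappa_\#^{-1}L$ using Proposition \ref{Prop:HilbConvCombDist}(ii), and invoke Proposition \ref{Prop:ConvCombDiffCoeff} for the weight-change step, with the diameter bound $D=O(L)$ supplying the factor $K_L\to 1$. All of that is the same decomposition the paper uses.

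However, your derivation of the $a\kappa_\#^{-1}$ summand in $\mc L'$ has a genuine gap. You claim that $\mu\in\mc M^{(i)}_{a,b,L}$, combined with the explicit formula for $\beta_b$ in Proposition \ref{Prop:HilbertDistCharacter}, gives a pointwise bound $|\partial_y\log\eta(x_i;y;\hat{\bo x}_{i,j})|\le L$. This is false. The Hilbert metric $\theta_b$ is projective, so the hypothesis $\theta_b(\eta(\cdot;y;\cdot),\eta(\cdot;y';\cdot))\le L|y-y'|$ only controls the \emph{oscillation in $x_i$} of $\log\bigl[\eta(x_i;y;\cdot)/\eta(x_i;y';\cdot)\bigr]$; the constant (in $x_i$) multiplicative factor relating $\eta(\cdot;y;\cdot)$ and $\eta(\cdot;y';\cdot)$ is not controlled at all by the $\theta_b$-Lipschitz hypothesis. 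Concretely, if $\eta(x_i;y;\hat{\bo x}_{i,j})=f(x_i)g(y)$ for arbitrary $g$, then $\theta_b(\eta(\cdot;y;\cdot),\eta(\cdot;y';\cdot))=0$ while $|\partial_y\log\eta|=|g'/g|$ can be huge, so your claimed pointwise bound cannot hold. What is actually needed (and what the paper uses, with the phrase \say{by assumption}) is the separate bound $|\partial_j\log\eta|_\infty\le a$, i.e.\ that the conditional density along the $j$-th coordinate is $\log$-Lipschitz with constant $a$; this is not contained in $\mc M^{(i)}_{a,b,L}$ alone and is essentially an additional assumption on $\eta$ that the lemma's hypothesis understates. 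That bound then controls the ratio of normalizers $Z_\ell(x_j)/Z_\ell(x_j')$ by $a\kappa_\#^{-1}|x_j-x_j'|$, and this contribution enters through the \emph{weight-change} step of Proposition \ref{Prop:ConvCombDiffCoeff} (the weights $p_\ell$ carry the factor $Z_\ell$), not through the density-change step as you write; the density-change step yields only $\kappa_\#^{-1}L$. So the skeleton of your argument is right, but the derivation of the $a\kappa_\#^{-1}$ piece needs to be redone with the correct $|\partial_j\log\eta|\le a$ input and placed in the weight-change estimate.
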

\begin{proof}
Let $i=1$ and $j=2$, and consider $\zeta(x_1, x_2;\,\hat{\bo x}_{1,2})$ as defined above.

Notice that $\zeta(\cdot, x_2;\,\hat{\bo x}_{1,2})\in \mc V_a$ and 
\[
\frac{\zeta(x_1,x_2;\hat{\bo x}_{1,2})}{\int_\T \zeta(y_1,x_2;\hat{\bo x}_{1,2})\;dy_1}= \sum_{\ell\in\mc I'}p_\ell(\hat{\bo x}_1)\,\frac{\eta(x_1,s_{\hat {\bo x}_{1,2},\ell }^{-1}(x_2);\hat{\bo x}_{1,2})}{\int_\T \eta(y_1,s_{\hat {\bo x}_{1,2},\ell }^{-1}(x_2);\hat{\bo x}_{1,2})\;dy_1 }
\]
where \[p_\ell(\hat{\bo x}_1):={|s_{\hat {\bo x}_{1,2} }'|^{-1}\left(s_{\hat {\bo x}_{1,2},\ell }^{-1}(x_2)\right)}\, \frac{\int_\T \eta(y_1,s_{\hat {\bo x}_{1,2},\ell }^{-1}(x_2);\hat{\bo x}_{1,2})\;dy_1 }
{\int_\T  \sum_{\ell'} \frac{\eta(y_1,s_{\hat {\bo x}_{1,2},\ell' }^{-1}(x_2);\,\hat{\bo x}_{1,2})}{|s_{\hat {\bo x}_{1,2} }'|(s_{\hat {\bo x}_{1,2},\ell' }^{-1}(x_2))} \;dy_1}>0.\]
Consider $\hat{\bo x}_1,\hat{\bo x}_1'\in \T^{N-1}$ differing only for their coordinates $x_2,x_2'\in \T$. Then 
\begin{align}
&\theta_b\left(\sum_{\ell\in\mc I'}p_\ell(\hat{\bo x}_1)\,\eta(\cdot,s_{\hat {\bo x}_{1,2},\ell }^{-1}(x_2);\hat{\bo x}_{1,2}),\, \sum_{\ell\in\mc I'}p_\ell(\hat{\bo x}_1')\,\eta(\cdot,s_{\hat {\bo x}_{1,2},\ell }^{-1}(x_2');\hat{\bo x}_{1,2})\right)\le \nonumber \\
&\quad\le \theta_b\left(\sum_{\ell\in\mc I'}p_\ell(\hat{\bo x}_1)\,\eta(\cdot,s_{\hat {\bo x}_{1,2},\ell }^{-1}(x_2);\hat{\bo x}_{1,2}),\, \sum_{\ell\in\mc I'}p_\ell(\hat{\bo x}_1)\,\eta(\cdot,s_{\hat {\bo x}_{1,2},\ell }^{-1}(x_2');\hat{\bo x}_{1,2})\right)+\label{Eq:CombEst1}\\
&\quad\quad+\theta_b\left(\sum_{\ell\in\mc I'}p_\ell(\hat{\bo x}_1)\,\eta(\cdot,s_{\hat {\bo x}_{1,2},\ell }^{-1}(x_2');\hat{\bo x}_{1,2}),\, \sum_{\ell\in\mc I'}p_\ell(\hat{\bo x}_1')\,\eta(\cdot,s_{\hat {\bo x}_{1,2},\ell }^{-1}(x_2');\hat{\bo x}_{1,2})\right)\label{Eq:CombEst2}
\end{align}

\medskip
{\it Term in \eqref{Eq:CombEst1}.}  By Proposition \ref{Prop:HilbConvCombDist}, this term can be upper bounded by
\begin{align*}
\max_{\ell\in \mc I'}\theta_b\left(\eta(\cdot,s_{\hat {\bo x}_{1,2},\ell }^{-1}(x_2);\hat{\bo x}_{1,2}),\, \eta(\cdot,s_{\hat {\bo x}_{1,2},\ell }^{-1}(x_2');\hat{\bo x}_{1,2}) \right) & \le L|s_{\hat {\bo x}_{1,2},\ell }^{-1}(x_2)-s_{\hat {\bo x}_{1,2},\ell }^{-1}(x_2')|\\
&\le L\left|\frac{1}{s_{\hat {\bo x}_{1,2}}'}\right|_{\infty}|x_1-x_2|
\end{align*}

\medskip
{\it Term in \eqref{Eq:CombEst2}.}
Let's apply Proposition \ref{Prop:ConvCombDiffCoeff}. To this end, let's start by noticing that 
\[
 \frac{p_\ell(\hat{\bo x}_1)}{p_\ell(\hat{\bo x}_1')} \le \left[\max_\ell\frac{{|s_{\hat {\bo x}_{1,2} }'|^{-1}(s_{\hat {\bo x}_{1,2},\ell }^{-1}(x_2'))} }{{|s_{\hat {\bo x}_{1,2} }'|^{-1}(s_{\hat {\bo x}_{1,2},\ell }^{-1}(x_2))} }\cdot \max_{\ell,y_1}\frac{\eta(y_1,s_{\hat {\bo x}_{1,2},\ell' }^{-1}(x_2);\,\hat{\bo x}_{1,2})}{{\eta(y_1,s_{\hat {\bo x}_{1,2},\ell' }^{-1}(x_2');\,\hat{\bo x}_{1,2})}} \right]^2
\]
and by the mean-value  theorem
\begin{align*}
\log \frac{|s_{\hat {\bo x}_{1,2} }'|(s_{\hat {\bo x}_{1,2},\ell }^{-1}(x_2'))}{|s_{\hat {\bo x}_{1,2} }'|(s_{\hat {\bo x}_{1,2},\ell }^{-1}(x_2))}
&\le \left|\partial_2 \log |s_{\hat {\bo x}_{1,2} }'|(s_{\hat {\bo x}_{1,2},\ell }^{-1}(\cdot))\right|_{\infty} |x_2-x_2'|\\
&\le\left| \frac{s_{\hat {\bo x}_{1,2} }''}{(s_{\hat {\bo x}_{1,2} }')^2}\right|_{\infty} |x_2-x_2'|
\end{align*}
and 
\begin{align*}
\log\frac{\eta(y_1,s_{\hat {\bo x}_{1,2},\ell' }^{-1}(x_2);\,\hat{\bo x}_{1,2})}{{\eta(y_1,s_{\hat {\bo x}_{1,2},\ell' }^{-1}(x_2');\,\hat{\bo x}_{1,2})}}&\le \left|\frac{\partial_2\eta}{\eta}\right|_{\infty}\left|\frac{1}{s_{\hat {\bo x}_{1,2}}'}\right|_{\infty}|x_2-x_2'|
\end{align*}
which imply
\[
\log \max\left\{ \frac{p_\ell(\hat{\bo x}_1)}{p_\ell(\hat{\bo x}_1')}\cdot \frac{p_{\ell'}(\hat{\bo x}_1')}{p_{\ell'}(\hat{\bo x}_1)}: \,\, \ell,\ell'\in \mc I'\right\}\le 4\left[\left|\frac{s_{\hat {\bo x}_{1,2} }''}{(s_{\hat {\bo x}_{1,2} }')^2}\right|_{\infty}+a \left|\frac{1}{s_{\hat {\bo x}_{1,2}}'}\right|_{\infty}\right]|x_2-x_2'|
\]
where we used that $\left|\frac{\partial_2\eta}{\eta}\right|_{\infty}<a$ by assumption.

Also,
\begin{align*}
\theta_{b}\left(\eta(\cdot,s_{\hat {\bo x}_{1,2},\ell }^{-1}(x_2');\hat{\bo x}_{1,2}), \eta(\cdot,s_{\hat {\bo x}_{1,2},\ell' }^{-1}(x_2');\hat{\bo x}_{1,2})\right)\le L\left|s_{\hat {\bo x}_{1,2},\ell }^{-1}(x_2')-s_{\hat {\bo x}_{1,2},\ell' }^{-1}(x_2')\right |\le \frac{L}{2}
\end{align*}
where we used that $\left|s_{\hat {\bo x}_{1,2},\ell }^{-1}(x_2')-s_{\hat {\bo x}_{1,2},\ell' }^{-1}(x_2')\right |$ is upper bounded by the diameter of $\T$, i.e. $\frac{1}{2}$. The conclusion of Proposition \ref{Prop:ConvCombDiffCoeff} states that the term in  \eqref{Eq:CombEst2} is upper bounded by
\begin{align*}
&4(1-e^{-L/2})\left[\left|\frac{s_{\hat {\bo x}_{1,2} }''}{(s_{\hat {\bo x}_{1,2} }')^2}\right|_{\infty}+a \left|\frac{1}{s_{\hat {\bo x}_{1,2}}'}\right|_{\infty}\right]||x_2-x_2'|\le\\
&\quad\quad\quad\le 2K_LL\left[\left|\frac{s_{\hat {\bo x}_{1,2} }''}{(s_{\hat {\bo x}_{1,2} }')^2}\right|_{\infty}+a \left|\frac{1}{s_{\hat {\bo x}_{1,2}}'}\right|_{\infty}\right]|x_2-x_2'|
\end{align*}
with $K_L\rightarrow 1$ as $L\rightarrow 0$.

\medskip
Putting the estimates of \eqref{Eq:CombEst1} and \eqref{Eq:CombEst2} together we get the claim.
\end{proof}

\begin{proof}[Proof of Proposition \ref{Prop:Evetamarginal}]
 It is a consequence of Lemma \ref{Lem:GiHiclose} that if $\bo H:\T^N\rightarrow \T^N$ satisfies Assumption \ref{Ass:CondH} and Assumption \ref{Ass:SecondDerivH},  for $N$ sufficiently large also $\hat{\bo G}_i$ does  with datum differing from that of of $\bo H$ only by  $O(N^{-1})$. Therefore one can apply Proposition \ref{Lem:Preimagefoliation} to $\hat{\bo G}_i:\T^{N-1}\rightarrow \T^{N-1}$ and, for any $j\neq i$, consider the foliation of $\T^{N-1}$ given by circles $\{\gamma_{\hat{\bo z}_{j},\ell}\}_{\hat{\bo z}_j,\ell\in\T^{N-2}}$  such that $\gamma_{\hat{\bo z}_{j},\ell}\subset \hat{\bo G}_i^{-1}(\T_{\hat{\bo z}_j})$\footnote{Here $\T_{\hat{\bo z}_j}:=\{\hat{\bo z}_j\}\times \T$ is the leaf over $\hat{\bo z}_j\in \T^{N-2}$ of the foliation of $\T^{N-1}$ along coordinate $j$. } and $\gamma_{\hat{\bo z}_{j},\ell}$ is the graph of $\bo\psi_\ell(\cdot;\,\hat{\bo z}_j)$ for some $\bo\psi_\ell:\T\times\T^{N-2}\rightarrow \T^{N-1}$. Then, analogously to Definition \ref{Def:DefinitionofG},  we can consider the change of coordinates $\bo\Psi_{i,j}:\T\times \T^{N-2}\rightarrow \T^{N-1}$ such that, denoting $\hat{\bo G}_i=(G_j;\,\hat{\bo G}_{i,j})$, for $(z_j;\,\hat{\bo z}_j)\in \T\times\T^{N-2}$ 
\[
\bo\Psi_{i,j}(z_j;\,\hat{\bo z}_j)=(z_j;\,\bo\psi_\ell(z_j;\,\hat{\bo G}_{i,j}(0;\,\hat{\bo z}_j)) ),
\] 
and let $\bo S:\T\times\T^{N-2}\rightarrow\T\times \T^{N-2}$ be the skew-product map defined as
\[
S_j(z_j;\,\hat{\bo z}_j)=G_j(z_j;\,\bo\Psi_{i,j}(z_j;\,\hat{\bo z}_j))\quad\quad \hat{\bo S}_j(z_j;\,\hat{\bo z}_j)=\hat{\bo G}_{i,j}(0;\,\hat{\bo z}_j).
\]
It follows  that
$
\hat{\bo G}_i = \bo S\circ  \bo \Psi_{i,j}^{-1},
$
and therefore
\[
(\Id_\T;\,\hat{\bo G}_i)_*\eta=(\Id_\T;\,\bo S)_*(\Id_\T;\,\bo \Psi_{i,j}^{-1})_*\eta.
\]

Without loss of generality let's put $i=1$ and pick any $j\neq 1$.

\medskip
{\it Step 1.} Notice that
\[
\eta_1(x_1;\hat{\bo x}_1):=(\Id_\T;\,\bo \Psi_{1,j}^{-1})_*\eta(x_1;\hat{\bo x}_1)=\eta(x_1;\bo \Psi_{1,j}(\hat{\bo x}_1))|D\bo \Psi_{1,j}|(\hat{\bo x}_1).
\] 
Calling $\zeta_1(x_1;\hat{\bo x}_1):=\eta(x_1;\bo \Psi_{1,j}(\hat{\bo x}_1))$, and arguing as in the proof of Lemma \ref{Lem:CompByPhi}, $\eta_1\in \mc M^{(1)}_{a,b,\mc L_1'L}$ with 
\[
\mc L_1'\le 1+\frac{\mc K_\#}{b-a}N^{-1}
\]
where we used that $|\partial_k \Psi_{1,j,k}|_{\infty}=1+O(N^{-1})$ and for $\ell\neq k$ $|\partial_k \Psi_{1,j,\ell}|_{\infty}=O(N^{-2})$ as implied by Proposition \ref{Lem:Preimagefoliation} applied to $\hat{\bo G}_1:\T^{N-1}\rightarrow \T^{N-1}$.

\medskip {\it Step 2.} Call
\[
\eta_2:=(\Id_\T;\,\bo S)_*\eta_1=(\Id_\T;\, \Id_\T;\, \hat{\bo S}_j)_*(\Id_\T;\, S_j;\,\bo\Id_{\T^{N-2}})_*\eta_1.
\] 
Let's first study 
\begin{align*}
\zeta_2(x_1;\, x_j;\, \hat{\bo x}_{1,j})&:=(\Id_\T;\, S_j;\,\bo\Id_{\T^{N-2}})_*\eta_1(x_1;\, x_j;\, \hat{\bo x}_{1,j})
\end{align*}
Calling $\{s_{\hat {\bo x}_{1,j},\ell}^{-1}\}_{\ell\in \mc I'}$ the inverse branches\footnote{By Lemma \ref{Lem:GiHiclose} applied to $\bo G$, $G_j(\cdot;\,\hat{\bo x}_{1,j})$ and $S_j(\cdot;\,\hat{\bo x}_{1,j})$ are at distance bounded by $O(N^{-1})$ in $C^2(\T,\,\T)$, therefore, for $N$ sufficiently large, $|\partial_jS_j|>0$ and $x_j\mapsto S_j(x_j;\,\hat{\bo x}_{1,j})$ is a local diffeomorphism.} of $s_{\hat {\bo x}_{1,j}}(\cdot):= S_j(\cdot;\, \hat {\bo x}_{1,j})$,
\begin{align*}
\zeta_2(\cdot;\, x_j;\, \hat{\bo x}_{1,j})=\sum_{\ell\in \mc I'}\frac{\eta_2(\cdot;\, s_{\hat {\bo x}_{1,j},\ell}^{-1}(x_j) ;\, \hat{\bo x}_{1,j})}{|s_{\hat {\bo x}_{1,j}}'|(s_{\hat {\bo x}_{1,j},\ell}^{-1}(x_j))}
\end{align*}
and  by Lemma \ref{Lem:ApplicationDiffeoComponent}, $\zeta_2\in \mc M^{(1,j)}_{a,b, \mc L' \mc L_1'L}$ with 
\[
\mc L'= \kappa^{-1}+a\kappa^{-1}+\mc D K_{\mc L_1'L}+O(N^{-1})
\] 
where in applying the lemma we used that $|s_{\hat {\bo x}_{i,j}} '|\ge \kappa+O(N^{-1})$ and $\left|\frac{s_{\hat {\bo x}_{1,2} }''}{(s_{\hat {\bo x}_{1,2} }')^2}\right|_{\infty}\le \mc D+O(N^{-1})$\footnote{Applying Lemma \ref{Lem:GiHiclose} twice, once to $\bo S$ and once to $\hat{\bo G}_1$, $s_{\hat{\bo x}_{1,j}}$ is $O(N^{-1})$ $C^2$-close to $G_j(\cdot;\, \hat{\bo x}_{1,j})$ that is $O(N^{-1})$ $C^2$-close to $F_j(\cdot; \,x_1,\,\hat{\bo x}_{1,j})$ for any $x_1\in \T$. }.

Letting $\{\hat{\bo S}_{j,\ell}^{-1}\}_{\ell\in \mc I''}$ be the inverse branches of $\hat{\bo S}_{j,\ell}:\T^{N-2}\rightarrow \T^{N-2}$,
 \begin{align*}
\eta_2(x_1;\, x_j;\, \hat{\bo x}_{1,j})&:=(\Id_\T;\, \Id_\T;\, \hat{\bo S}_j)_*\zeta_2(x_1;\, x_j;\, \hat{\bo x}_{1,j})\\
&=\sum_{\ell \in \mc I''} \frac{\zeta_2(x_1;\, x_j;\, \hat{\bo S}_{j,\ell}^{-1}(\hat{\bo x}_{1,j}))}{|D \hat{\bo S}_{j,\ell}|(\hat{\bo S}_{j,\ell}^{-1}(\hat{\bo x}_{1,j}))}
\end{align*}
and it's easy to check that $\eta_2\in \mc M^{(1,j)}_{a,b,\mc L'\mc L_1'L}$ with the same parameters as $\zeta_2$.

\medskip
Since the above is true for any $j\neq 1$, and
\[
\eta_2=(\Id_\T;\,\hat{\bo G}_1)_*\eta\in \mc M^{(1)}_{a,b,\mc L'\mc L_1'L}
\]
 the proposition is proved.
\end{proof}

\subsection{Proof of Proposition \ref{Prop:LipschitzReg2}}
\begin{proof}[Proof of Proposition \ref{Prop:LipschitzReg2}] The proof of this proposition is obtained by successive application of propositions \ref{Prop:RegPhieta}, \ref{Prop:RegUnderFibMapPlusNoise}, and \ref{Prop:Evetamarginal}.
For some $C>0$, pick $\mu\in \mc M_{a_0,b_0, CN^{-1}}\cap \mc C^2_\alpha$ and fix $i\in[1,N]$.

\smallskip
Applying Proposition \ref{Prop:RegPhieta}, 
\[
\bo \Phi_{i*}^{-1}\mu\in \mc M^{(i)}_{a_1,b_1,L_1}\quad\quad \hat{\bo\Pi}_i\bo\Phi_{i*}^{-1}\mu\in \mc M_{a_1',b_1',L_1'}
\]
with 
\begin{align*}
a_1&=\mc Ka_0+a_{\bo \Phi}\\
b_1&=\mc Kb_0+a_{\bo \Phi}\\
L_1&=[\mc L+O(N^{-1})]L+O(N^{-1})
\end{align*}
and
\begin{align*}
a_1'&=\hat{\mc K}a_0+a_{\bo \Phi}\\
b_1'&=\hat{\mc K}b_0+a_{\bo \Phi}\\
L_1'&=[\hat{\mc L}+O(N^{-1})]L+O(N^{-1})
\end{align*}
and also $\bo \Phi_{i*}^{-1}\mu\in \mc C^2_{\alpha'}$ with $\alpha'=\mc K^2\alpha+\mc K_\#$ independent of $N$.

\smallskip
Applying Proposition \ref{Prop:RegUnderFibMapPlusNoise},
\[
(G_i;\,\bo\Id_{\T^{N-1}})_*\bo \Phi_{i*}^{-1}\mu\in \mc M^{(i)}_{a_2,b_2,L_2}, 
\]
with 
\begin{align*}
a_2&:= \kappa^{-1}(a_1+\mc D)= \kappa^{-1}(\mc Ka_0+a_{\bo \Phi}+\mc D)<a_0  \\
b_2&:=  \kappa^{-1}(b_1+\mc D)< b_0 \\
L_2&:= \Lambda L_1+O(N^{-1})
\end{align*}
As the action on the coordinate different from $i$ is the identity, the marginal on this coordinates does not change and 
\[
\hat{\bo\Pi}_i(G_i;\,\bo\Id_{\T^{N-1}})_*\bo\Phi_{i*}^{-1}\mu\in \mc M_{a_1',b_1',L_1'}.
\]

\smallskip
Applying Proposition \ref{Prop:Evetamarginal},
\[
(\Id_\T;\,\hat{\bo G}_i)_*(G_i;\,\bo\Id_{\T^{N-1}})_*\bo \Phi_{i*}^{-1}\mu\in \mc M^{(i)}_{a_3,b_3,L_3},
\]
with $a_3:=a_2$, $b_3:=b_2$ and 
\begin{align*}
L_3&:= [(1+a_1')\kappa^{-1}+\mc DK_{L_2}+\mc K_\#N^{-1}]L_2\\
&=[(1+\hat{\mc K}a_0+a_{\bo \Phi})\kappa^{-1}+\mc DK_{L_2}+O(N^{-1})]\Lambda L_1+O(N^{-1})\\
&=[(1+\hat{\mc K}a_0)\kappa^{-1} +K_{L_2}\mc D]\Lambda\mc L \,L+LO(N^{-1})+O(N^{-1})
\end{align*}
since $L=CN^{-1}$, for $N\rightarrow \infty$ $K_{L_2}\rightarrow 1$, and this together with condition \eqref{Eq:CondTwo} implies that, for every $N$ sufficiently large
\[
[(1+\hat{\mc K}a_0)\kappa^{-1} +K_{L_2}\mc D]\Lambda\mc L+O(N^{-1})<1.
\]
Picking 
\[
C>\frac{O(1)}{1-(1+\hat{\mc K}a_0)\kappa^{-1} -\mc D}
\] 
ensures that for every $N$ sufficiently large
\[
L_3< CN^{-1}.
\]

Now we proceed to  estimate  the second derivatives. Consider $\mu\in \mc M_{a_0,b_0,CN^{-1}}\cap\mc C^2_\alpha$ with density $\eta$ and  let $i\in[1,N]$. 

\bigskip
{\it Step 1.} Let's start by estimating
\[
\left|\frac{\partial_i^2\bo H_*\rho}{\rho}\right|.
\] 
Tho this end, recall that  $\bo H_*\rho=(\Id_\T;\,\hat{\bo G}_i)_*(G_i;\,\bo{\Id}_{\T^{N-1}})_*\bo \Phi_{i*}^{-1}\rho$. It follows from point i) of Proposition \ref{Prop:RegPhieta} that 
\[
\rho_1:=\bo \Phi_{i*}^{-1}\rho\in \mc C^2_{\alpha_1},\quad\quad\alpha_1:= \mc K^2\alpha +\mc K_\#.
\]

Now call 
\[
\rho_2(x_i;\,\hat{\bo x}_i):=(G_i;\,\bo{\Id}_{\T^{N-1}})_*\rho_1(x_i;\,\hat{\bo x}_i)=\sum_\ell\frac{\rho_1(g_{\hat{\bo x}_i,\ell}^{-1}(x_i);\,\hat{\bo x}_i)}{|g_{\hat{\bo x}_i}'|\circ g_{\hat{\bo x}_i,\ell}^{-1}(x_i)}
\]
\begin{align*}
\partial_i^2\left(\frac{\rho_1(g_{\hat{\bo x}_i,\ell}^{-1}(x_i);\,\hat{\bo x}_i)}{|g_{\hat{\bo x}_i}'|\circ g_{\hat{\bo x}_i,\ell}^{-1}(x_i)}\right)&=\partial_i\left( \frac{\partial_i\rho_1(g_{\hat{\bo x}_i,\ell}^{-1}(x_i);\,\hat{\bo x}_i)}{|g_{\hat{\bo x}_i}'|^2\circ g_{\hat{\bo x}_i,\ell}^{-1}(x_i)}-\rho_1(g_{\hat{\bo x}_i,\ell}^{-1}(x_i);\,\hat{\bo x}_i)\frac{g_{\hat{\bo x}_i}''(x_i)}{|g_{\hat{\bo x}_i}'|^3(x_i)}\right)\\
&=\frac{\partial_i^2\rho_1}{|g_{\hat{\bo x}_i}'|^3}-2\frac{\partial_i\rho_1}{|g_{\hat{\bo x}_i}'|^4}g_{\hat{\bo x}_i}''-\frac{\partial_i\rho_1}{|g_{\hat{\bo x}_i}'|^4}g_{\hat{\bo x}_i}''-\rho_1\frac{g_{\hat{\bo x}_i}'''}{|g_{\hat{\bo x}_i}'|^4}+3\rho_1\frac{(g_{\hat{\bo x}_i}'')^2}{|g_{\hat{\bo x}_i}'|^5}.
\end{align*}
From which
\begin{align*}
&\left|\left(\frac{\rho_1(g_{\hat{\bo x}_i,\ell}^{-1}(x_i);\,\hat{\bo x}_i)}{|g_{\hat{\bo x}_i}'|\circ g_{\hat{\bo x}_i,\ell}^{-1}(x_i)}\right)^{-1}\partial_i^2\left(\frac{\rho_1(g_{\hat{\bo x}_i,\ell}^{-1}(x_i);\,\hat{\bo x}_i)}{|g_{\hat{\bo x}_i}'|\circ g_{\hat{\bo x}_i,\ell}^{-1}(x_i)}\right)\right|\le\\
&\quad\quad\le \alpha_1\kappa^{-2}+3a_1\kappa^{-3}K+K\kappa^{-3}+3K^2\kappa^{-4}+\mc K_\#N^{-1}
\end{align*}
therefore
\begin{align*}
\left|\frac{\partial_i^2\rho_2}{\rho_2}\right|\le \kappa^{-2}\mc K^2\alpha + O(1).
\end{align*}
Notice that $O(1)$ is uniformly bounded with $N$ and  goes to zero as $\kappa\rightarrow\infty$.

Call 
\[
\rho_3(x_i;\,\hat{\bo x}_i):=(\Id;\,\hat{\bo G}_i)_*\rho_2(x_i;\,\hat{\bo x}_i)=\sum_{\ell}\frac{\rho_2(x_i;\,\hat{\bo G}_{i,\ell}^{-1}(\hat{\bo x}_i))}{|D\hat{\bo G}_i|(\hat{\bo G}_{i,\ell}^{-1}(\hat{\bo x}_i))}.
\]
It is immediate from the above expression that 
\[
\left|\frac{\partial_i^2\rho_3}{\rho_3}\right|\le\left|\frac{\partial_i^2\rho_2}{\rho_2}\right|\le \kappa^{-2}\mc K^2\alpha + O(1).
\]

\bigskip
{\it Step 2.} We proceed to estimate
\[
\left|\frac{\partial_j\partial_i(\bo H_*\rho)}{\rho}\right|.
\] 
 $i,j\in[1,N]$ and $j\neq i$. 
 
Arguing as in the proof of Proposition \ref{Prop:Evetamarginal}, we can find a global change of charts $\bo \Psi_{i,j}:\T\times\T^{N-2}\rightarrow\T^{N-1}$ and a skew-product map $\bo S:\T\times\T^{N-2}\rightarrow \T\times\T^{N-2}$ such that 
\[
\bo H_*\rho= (\Id_\T;\,\Id_\T;\,\hat{\bo S}_j)_*(\Id_\T;\,S_j;\,\bo\Id)_* (G_i;\,\bo\Id_{\T^{N-1}})_*\bo \Psi_{i,j*}^{-1}\Phi_{i*}^{-1}\rho.
\]
Letting $\rho_1:=\bo \Psi_{i,j*}^{-1}\Phi_{i*}^{-1}\rho$, applying  point i) of Proposition \ref{Prop:RegPhieta} twice, one obtains that $\rho_1\in \mc C^2_{\alpha_1}$ with 
\[
\alpha_1:=\mc K^4\alpha+\mc K_\#.
\]

Call \begin{align*}
&\rho_2:=(\Id_\T;\,S_j;\,\bo\Id_{\T^{N-2}})_* (G_i;\,\Id_\T\,;\bo\Id_{\T^{N-2}})_*\rho_1(x_i; x_j;\hat{\bo x}_{i,j})=\\
&\quad\quad=\sum_{\ell,\ell'}\frac{\rho_1(g_{\hat{\bo x}_i,\ell}^{-1}(x_i); s_{\hat{\bo x}_{i,j},\ell'}^{-1}(x_j); \hat{\bo x}_{i,j} )}{|g_{\hat{\bo x}_i}'|(g_{\hat{\bo x}_i,\ell}^{-1}(x_i))\cdot |s_{\hat{\bo x}_{i,j}}'|(s_{\hat{\bo x}_{i,j},\ell'}^{-1}(x_j) )}.\end{align*}
Then for every $\ell$ and $\ell'$
\begin{align*}
&\partial_j\partial_i\left(\frac{\rho_1(g_{\hat{\bo x}_i,\ell}^{-1}(x_i); s_{\hat{\bo x}_{i,j},\ell'}^{-1}(x_j); \hat{\bo x}_{i,j} )}{|g_{\hat{\bo x}_i}'|(g_{\hat{\bo x}_i,\ell}^{-1}(x_i))\cdot |s_{\hat{\bo x}_{i,j}}'|(s_{\hat{\bo x}_{i,j},\ell'}^{-1}(x_j) )}\right)=\\
&\quad\quad=\partial_j\left( \frac{\partial_i\rho_1}{|g_{\hat{\bo x}_i}'|^2|s_{\hat{\bo x}_{i,j}}'|}-\rho_1\frac{g_{\hat{\bo x}_i}''}{|g_{\hat{\bo x}_i}'|^3|s_{\hat{\bo x}_{i,j}}'|}\right)\\
&\quad\quad=\frac{\partial_j\partial_i\rho_1}{|g_{\hat{\bo x}_i}'|^2|s_{\hat{\bo x}_{i,j}}'|^2}- \frac{\partial_i\rho_1}{|g_{\hat{\bo x}_i}'|^2|s_{\hat{\bo x}_{i,j}}'|^2}s_{\hat{\bo x}_{i,j}}''+\frac{\partial_i^2\rho_1}{|g_{\hat{\bo x}_i}'|^2|s_{\hat{\bo x}_{i,j}}'|}\partial_j(g_{\hat{\bo x}_i,\ell}^{-1})+\frac{\partial_i\rho_1}{|s_{\hat{\bo x}_{i,j}}'|}\partial_j\left(\frac{1}{|g_{\hat{\bo x}_i}'|^2\circ g_{\hat{\bo x}_i,\ell}^{-1}}\right)-\\
&\quad\quad-\partial_j\rho_1\frac{g_{\hat{\bo x}_i}''}{|g_{\hat{\bo x}_i}'|^3|s_{\hat{\bo x}_{i,j}}'|^2}-\partial_i\rho_1\,\partial_j(g_{\hat{\bo x}_i,\ell}^{-1})\,\frac{g_{\hat{\bo x}_i}''}{|g_{\hat{\bo x}_i}'|^3|s_{\hat{\bo x}_{i,j}}'|}+\rho_1\frac{g_{\hat{\bo x}_i}''s_{\hat{\bo x}_{i,j}}''}{|g_{\hat{\bo x}_i}'|^3|s_{\hat{\bo x}_{i,j}}'|^2}-\frac{\rho_1}{|s_{\hat{\bo x}_{i,j}}'|}\partial_j\left(\frac{g_{\hat{\bo x}_i}''}{|g_{\hat{\bo x}_i}'|^3}\right)
\end{align*}
and the above terms can be estimated as
\begin{align*}
&\left|\left(\frac{\rho_1(g_{\hat{\bo x}_i,\ell}^{-1}(x_i); s_{\hat{\bo x}_{i,j},\ell'}^{-1}(x_j); \hat{\bo x}_{i,j} )}{|g_{\hat{\bo x}_i}'|(g_{\hat{\bo x}_i,\ell}^{-1}(x_i))\cdot |s_{\hat{\bo x}_{i,j}}'|(s_{\hat{\bo x}_{i,j},\ell'}^{-1}(x_j) )}\right)^{-1}\partial_j\partial_i\left(\frac{\rho_1(g_{\hat{\bo x}_i,\ell}^{-1}(x_i); s_{\hat{\bo x}_{i,j},\ell'}^{-1}(x_j); \hat{\bo x}_{i,j} )}{|g_{\hat{\bo x}_i}'|(g_{\hat{\bo x}_i,\ell}^{-1}(x_i))\cdot |s_{\hat{\bo x}_{i,j}}'|(s_{\hat{\bo x}_{i,j},\ell'}^{-1}(x_j) )}\right)\right|\le\\
&\quad\quad\le \alpha_1\kappa^{-2}+a_1K\kappa^{-2}+\alpha_1\kappa^{-1}\mc K_\#N^{-1}+a_1\mc K_\#\kappa^{-1}N^{-1}+\\
&\quad\quad\quad+a_1K\kappa^{-3}-a_1\mc K_\#N^{-1}+K^2\kappa^{-3}-\mc K_\#N^{-1}.
\end{align*}
yielding 
\begin{align*}
\left|\frac{\partial_j\partial_i\rho_2}{\rho_2}\right|\le (\kappa^{-2}+\mc K_\#N^{-1})\mc K^4\alpha + O(1)
\end{align*}
where $O(1)$ goes to zero as $\kappa\rightarrow \infty$. So this proved that $\rho_2\in\mc C^2_{\alpha_2}$ with $\alpha_2:=(\kappa^{-2}+\mc K_\#N^{-1})\mc K^4\alpha + O(1)$. It is easy to check that 
\[
\rho_3:=(\Id_\T;\,\Id_\T;\,\hat{\bo S}_j)_*\rho_2\in\mc C^2_{\alpha_2}.
\]
The above estimate implies that provided $\kappa^{-2}\mc K^4<1$, then there is $\alpha_0\ge 0$ for which the statement holds.

\end{proof}

\bibliographystyle{amsalpha}
\bibliography{Bibliography}

\end{document}